%
%
%
%
%
%


\RequirePackage{fix-cm}
\documentclass{svjour3}                     
\smartqed  

\usepackage{subfig}
\usepackage{subfloat}
\usepackage{graphicx}

\usepackage{algorithm}
\usepackage{algorithmicx}
\usepackage{algpseudocode}
\usepackage{amsfonts,amsmath,yfonts}
\usepackage{amssymb}
\usepackage{color}
\usepackage{multirow}
\usepackage{hyperref}
\usepackage{wrapfig}

%
%
%

\newcommand{\field}[1]{\mathbb{#1}}

\newtheorem{observation}[theorem]{Observation}

%
%
\begin{document}

\title{Numerical analysis of shell-based geometric image inpainting algorithms and their semi-implicit extension\thanks{CBS acknowledges support from the Leverhulme Trust project ’Breaking the non-convexity barrier’, the EPSRC grant ’EP/M00483X/1’, EPSRC centre ’EP/N014588/1’, the Cantab Capital Institute for the Mathematics of Information, the CHiPS (Horizon 2020 RISE project grant), the Global Alliance project 'Statistical and Mathematical Theory of Imaging' and the Alan Turing Institute.
RH and TH acknowledge support from the EPSRC Cambridge Centre for Analysis and the Cambridge Trust.}
}
\titlerunning{Analysis of Shell-Based image inpainting}


\author{L. Robert Hocking, Thomas Holding, and Carola-Bibiane Sch\"onlieb}


\institute{L. Robert Hocking \at
              Department for Applied Mathematics and Theoretical Physics, University of Cambridge, Cambridge
CB3 0WA, UK \\
              Tel.: +44 1223 764284\\
              \email{rob.l.hocking@gmail.com}           
           \and
           Thomas Holding \at
              Mathematics Institute, Zeeman Building, University of Warwick, Coventry CV4 7AL \\
              Tel.: +44 (0)24 765 73495\\
              \email{t.holding@warwick.ac.uk} 
              \and
              Carola-Bibiane Sch\"onlieb \at
              Department for Applied Mathematics and Theoretical Physics, University of Cambridge, Cambridge
CB3 0WA, UK \\
              Tel.: +44 1223 764251\\
              \email{cbs31@cam.ac.uk} 
}

\date{\today}

\maketitle

\begin{abstract}
In this paper we study a class of fast geometric image inpainting methods based on the idea of filling the inpainting domain in successive shells from its boundary inwards. Image pixels are filled by assigning them a color equal to a weighted average of their already filled neighbors. However, there is flexibility in terms of the order in which pixels are filled, the weights used for averaging, and the neighborhood that is averaged over. Varying these degrees of freedom leads to different algorithms, and indeed the literature contains several methods falling into this general class.  All of them are very fast, but at the same time all of them leave undesirable artifacts such as ``kinking'' (bending) or blurring of extrapolated isophotes.  Our objective in this paper is to build a theoretical model, based on a continuum limit and a connection to stopped random walks, in order to understand why these artifacts occur and what, if anything, can be done about them.  At the same time, we consider a semi-implicit extension in which pixels in a given shell are solved for simultaneously by solving a linear system. We prove (within the continuum limit) that this extension is able to completely eliminate kinking artifacts, which we also prove must always be present in the direct method.  Although our analysis makes the strong assumption of a square inpainting domain, it makes weak smoothness assumptions and is thus applicable to the low regularity inherent in images.
\keywords{image processing, image inpainting, partial differential equations, stopped random walks, numerical analysis, convergence rates}
\subclass{68U10, 65M12, 65M15, 65F10, 60G50, 60G40, 35F15, 60G42, 65M15, 35Q68}

\end{abstract}

\section{Introduction}

Image inpainting refers to the filling in of a region in an image where information is missing or needs to be replaced (due to, for example, an occluding object), in such a way that the result looks plausible to the human eye.  The region to be filled is subsequently referred to as the inpainting domain.  Since the seminal work of Bertalmio et al. \cite{bertalmio2000image}, image inpainting has become increasingly important, with applications ranging from removing an undesirable or occluding object from a photograph, to painting out a wire in an action sequence, to 3D conversion of film \cite{Guidefill}, as well as 3D TV and novel viewpoint construction \cite{OtherPipelineDoneWell,DepthGuidedInpaintingAlgorithmForFreeViewPointVideo,DepthAidedInpainingForNovelViewSynthesis} in more recent years.  See  \cite{ImageInpaintingOverview} for a recent survey of the field.

Image inpainting methods can loosely be categorized as {\em exemplar-based} and {\em geometric}.  Exemplar-based methods generally operate by copying pieces of the undamaged portion of the image into the inpainting domain, in such a way as to make the result appear seamless.  Examples include \cite{Criminisi04regionfilling}, \cite{Spacetime}, \cite{Arias2011}.  Exemplar-based methods also operate behind the scenes in Photoshop's famous {\em Content-Aware Fill} tool.  These methods excel at interpolating texture across large gaps, but may produce artifacts in structure dominated images with strong edges, or be needlessly expensive if the inpainting domain is thin.

\begin{figure}
\centering
\begin{tabular}{cccc}
\subfloat[Original image.]{\includegraphics[width=.21\linewidth]{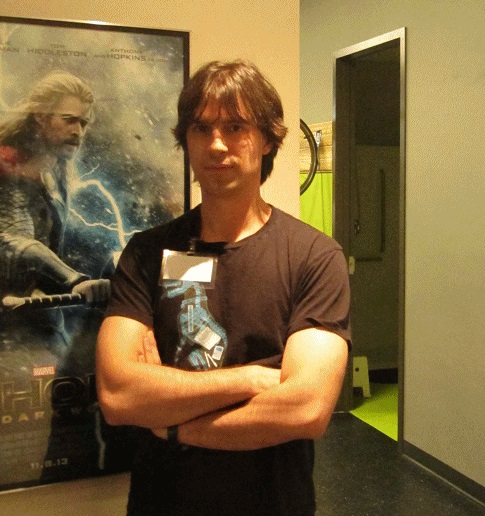}} & 
\subfloat[After filling 7 shells.]{\includegraphics[width=.21\linewidth]{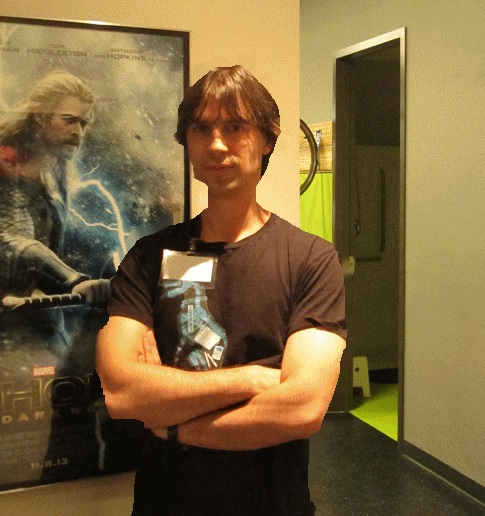}} &
\subfloat[After filling 20 shells.]{\includegraphics[width=.21\linewidth]{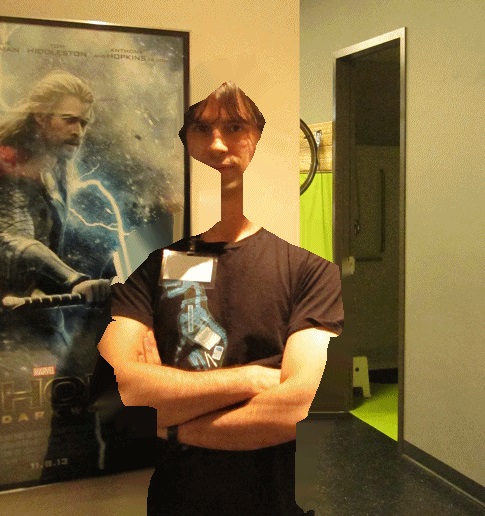}} &
\subfloat[After filling 31 shells.]{\includegraphics[width=.21\linewidth]{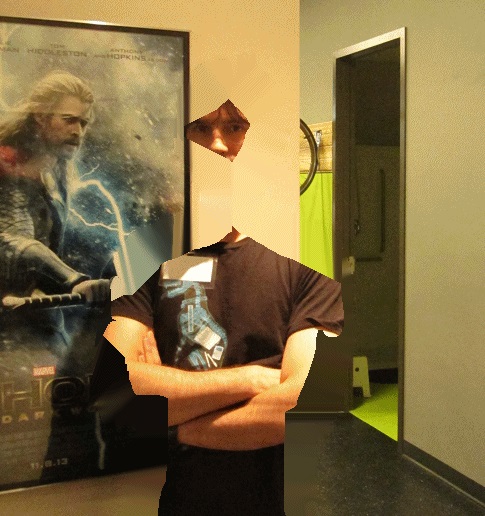}} \\
\end{tabular}
\caption{{\bf Shell-based inpainting: }  Here we illustrate the shell-based inpainting of an image including an undesirable human to be removed.  In (a), we see the original image, including a human that is gradually eroded in (b)-(d) as we fill more shells.  In this case the inpainting method is Guidefill \cite{Guidefill}, and the application is disocclusion for 3D conversion, which means that the human does not need to be removed entirely.  See \cite{Guidefill} or \cite[Chapter 9]{CarolaBook} for more details on this application.  }
\label{fig:erode}
\end{figure}

Geometric inpainting methods attempt to smoothly extend image structure into the inpainting domain, typically using partial differential equations (PDEs) or variational principles - see \cite{CarolaBook} for a comprehensive survey.  Examples based on PDEs include the seminal work of Bertalmio et al. \cite{bertalmio2000image}, methods based on anisotropic diffusion such as \cite{CurvatureSmoothingPDEs}, while examples based on variational principles include TV, TV-H$^{-1}$, Mumford-Shah, Cahn-Hilliard inpainting \cite{chan2005variational,Burger2009}, Euler's Elastica \cite{masnou1998level,chan2002euler}, as well as the joint interpolation of image values and a guiding vector field in Ballester et al. \cite{JointGreyVector}.  These approaches are typically iterative and convergence is often slow (due to either the high order of the underlying PDE model or the difficult nature of the variational problem).  On the other hand, Telea's Algorithm \cite{Telea2004}, coherence transport \cite{Marz2007,Marz2011}, and our previous work Guidefill \cite{Guidefill} are based on the simple idea of filling the inpainting domain in successive shells from the boundary inward, setting the color of pixels on the current boundary equal to a weighted average of their already filled neighbors - Figure \ref{fig:erode} illustrates this process.  In general, ``neighbors'' may include ``ghost pixels'' lying between pixel centers and defined using bilinear interpolation - this concept was introduced in \cite{Guidefill}.  By choosing the weights, fill order, and averaging neighborhood carefully, image isophotes may be extrapolated into the inpainting domain.  Compared to their iterative counterparts, these methods have the advantage of being very fast.  However, at the same time all of them create, to a greater or lesser degree, some potentially disturbing visual artifacts.  For example, extrapolated isophotes may ``kink'' (change direction), blur, or end abruptly.  Although generally these problems have been gradually reduced over time as newer and better algorithms have been proposed, they are still present to some degree even in the state of the art.  

The present paper has two goals.  Firstly, to analyze these algorithms as a class in order to understand the origins of these artifacts and what, if anything, can be done about them.  Secondly, to propose a simple extension in which the pixels in a given shell are solved simultaneously as a linear system, and to analyze its benefits.  We will refer to the original methods as ``direct'' and the extended methods as ``semi-implicit'' - see Figure \ref{fig:directVsExtension} for an illustration of the difference between them.  To our knowledge, this extension has never before been proposed in the literature.  We also propose a simple iterative method for solving the linear system arising in the semi-implicit extension, show that it is equivalent to damped-Jacobi or successive over-relaxation (SOR), and analyze its convergence rates when applied to the linear system arising in semi-implicit Guidefill.  Successive over-relaxation is shown to converge extremely quickly, provided unknowns are ordered appropriately.

\begin{figure}
\centering
\begin{tabular}{cc}
\subfloat[{\bf The direct method:} the color of a given pixel (highlighted in red) on the current inpainting domain boundary (blue) is computed as a weighted sum of its already known neighbors in the filled portion of the image (pale yellow).  Pixels included in the sum are highlighted in green.]{\includegraphics[width=.45\linewidth]{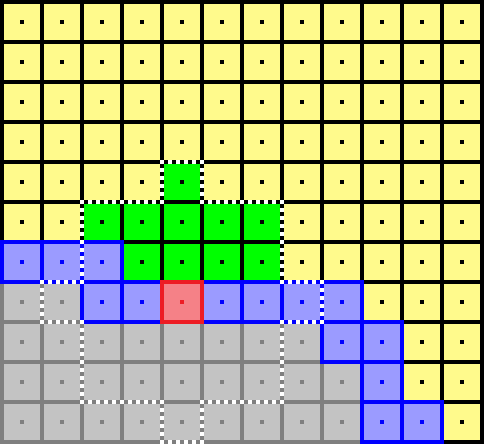}} & 
\subfloat[{\bf The semi-implicit extension: } the color of a given pixel (highlighted in red) on the current inpainting domain boundary (blue) is given implicitly as a weighted sum of its already known neighbors in the filled portion of the image (pale yellow), as well as unknown neighboring pixels on the current boundary.  Pixels included in the sum are highlighted in green.]{\includegraphics[width=.45\linewidth]{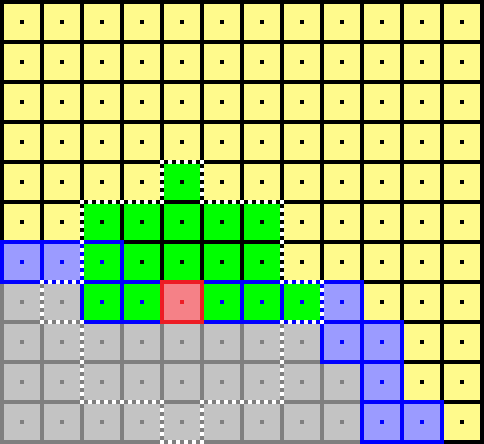}} \\
\end{tabular}
\caption{{\bf The direct method and its semi-implicit extension: } In this illustration, the filled portion of the image is highlighted in pale yellow, and the current inpainting domain is highlighted in both grey and blue, with the former denoting pixels in the interior of the inpainting domain and the latter pixels on its current boundary.  The boundary is the ``shell'' that is currently being filled -  note that the filled (known) portion of the image consists not only of the original undamaged region, but also of previously filled shells.  At this stage, the grey and blue pixels are both unknown.  In the direct method (a), the color of a given pixel (highlighted in red) is computed directly as a weighted average of pixels (in green) within a given neighboorhood (outlined in white) that are already known.  In the semi-implicit extension, the sum also includes pixels on the current boundary of the inpainting domain, but not pixels in its interior.  This results in linear system that needs to be solved, but this can be done relatively cheaply (see Section \ref{sec:guidefillGS}) and has benefits in terms of artifact reduction (see Section \ref{sec:kinking}).  }
\label{fig:directVsExtension}
\end{figure}

\begin{remark} \label{rem:Russ}
The original motivation for this paper comes from one of the author's experiences working in industry for the 3D conversion company Gener8 (see \cite{Guidefill} or \cite[Chapter 9]{CarolaBook} for a description of 3D conversion).  The programmers there had implemented a series of shell-based inpainting methods of the form discussed above (all essentially variants of Telea's algorithm \cite{Telea2004}, which they were unaware of) in order to solve an inpainting problem arising in their 3D conversion pipeline.  This type of algorithm was attractive to them because it is fast and simple, while generally adequate for their application (where inpainting domains are typically very thin).  However, they were puzzled by some of the artifacts they were seeing, most notably kinking artifacts.  After a literature review it became clear that the existing theory \cite{Marz2007,Marz2011,Marz2015} was enough to explain some, but not all, of what they observed.  This paper is an attempt to fill the gap.
\end{remark}

\subsection{Organization}  The remainder of this paper is organized as follows.  First, in Section \ref{sec:shellBased}, we present the general form of the class of inpainting methods under consideration, including pseudocode for both the direct method and an implementation of our proposed semi-implicit extension.  We also cover in detail known artifacts created by the direct form of the method, how they have been reduced over time, related work and our contributions.  Next, in Section \ref{sec:review} we review the main variants of Algorithm 1 present in the literature.  Section \ref{sec:fiction} describes an equivalence principle between weighted sums of ghost pixels and a sum over real pixels with different weights.  This principle will be applied repeatedly throughout our theoretical analysis.  Section \ref{sec:semiImplicit} describes our semi-implicit extension of Algorithm 1, including a description of an implementation of two alternatives for solving the resulting linear system, namely damped Jacobi and successive over-relaxation (SOR).  We also discuss in some detail the semi-implicit extension of Guidefill \cite{Guidefill}, a special case of Algorithm 1.  Section \ref{sec:analysis} contains our core analysis, and is divided into a number of subsections.  The first of these, Section \ref{sec:symmetry}, describes the simplifying assumptions that we make throughout.  Next, Section \ref{sec:guidefillGS} presents a theoretical analysis of the convergence rates of damped Jacobi and SOR for solving the linear system arising in semi-implicit Guidefill, and compares with real experiments.  In Section \ref{sec:continuumLimit1} we describe the connection between the class of algorithms under consideration and stopped random walks, and then prove convergence to our continuum limit.  Finally, Section \ref{sec:marzLimit} discusses the relationship between our continuum limit and the one proposed by Bornemann and M\"arz \cite{Marz2007}, and proves convergence to it as well.  In Section \ref{sec:consequences}, we apply our analysis to gain a theoretical understanding of kinking and blur artifacts.  Section \ref{sec:numerics} validates our analysis with numerical experiments, and finally in Section \ref{sec:conclusion} we draw some conclusions and sketch some directions for potential future research.  We also have a number of appendices giving technical details of certain proofs and some additional numerical experiments.

\subsection{Notation}

\begin{itemize}
\item $h = \mbox{ the width of one pixel}$.
\item $\field{Z}^2_h := \{ (nh,mh) : (n,m) \in \field{Z}^2 \}.$
\item Given ${\bf v} \in \field{R}^2$, we denote by $L_{\bf v}:=\{ \lambda {\bf v} : \lambda \in \field{R} \}$ the line through the origin parallel to ${\bf v}$.
\item Given ${\bf x} \in \field{R}^2$, we denote by $\theta({\bf x}) \in [0,\pi)$ the counter-clockwise angle $L_{\bf x}$ makes with the $x$-axis.  $\theta({\bf x})$ can also be thought of as the counterclockwise angle ${\bf x}$ makes with the $x$-axis, modulo $\pi$.
\item Given ${\bf v} \in \field{R}^2$, we denote by ${\bf v}^{\perp}$ the counterclockwise rotation of ${\bf v}$ by $90^{\circ}$.
\item $\Omega = [a,b] \times [c,d]$ and $\Omega_h = \Omega \cap \field{Z}^2_h$ are the continuous and discrete image domains.
\item $D_h = D^{(0)}_h \subset \Omega_h$ is the (initial) discrete inpainting domain.
\item $D^{(k)}_h \subseteq D^{(0)}_h$ is the discrete inpainting domain on step $k$ of the algorithm. 
\item $D \subset \Omega :=  \{ {\bf x} \in \Omega : \exists {\bf y} \in D_h \mbox{ s.t. } \|{\bf y}-{\bf x}\|_{\infty} < h \}$ is the continuous inpainting domain.
\item $D^{(k)}$ is the continuous inpainting domain on step $k$ of the algorithm, defined in the same way as $D$.
\item $u_0 : \Omega_h \backslash D_h \rightarrow \field{R}^d$ is the given (discrete) image.  In an abuse of notation, we also use $u_0$ to denote an assumed underlying continuous image $u_0 : \Omega \backslash D \rightarrow \field{R}^d$.
\item $u_h : \Omega_h \rightarrow \field{R}^d$ is the inpainted completion of $u_0$.
\item ${\bf g}({\bf x})$ is the guidance vector field used by coherence transport and Guidefill.
\item $B_{\epsilon}({\bf x})$ the solid ball of radius $\epsilon$ centered at ${\bf x}$.  
\item $A_{\epsilon,h}({\bf x}) \subset B_{\epsilon}({\bf x})$ denotes a generic discrete (but not necessarily lattice aligned) neighborhood of radius $\epsilon$ surrounding the pixel ${\bf x}$ and used for inpainting.
\item $\mbox{Supp}(A_{\epsilon,h}({\bf x})) \subset \field{Z}_h^2$ denotes the set of real pixels needed to define $A_{\epsilon,h}({\bf x})$ biased on bilinear interpolation.
\item $B_{\epsilon,h}({\bf x}) = \{ {\bf y} \in \Omega_h : \|{\bf x}-{\bf y}\| \leq \epsilon \}$, the discrete ball of radius $\epsilon$ centerd at ${\bf x}$ and the choice of $A_{\epsilon,h}({\bf x})$ used by coherence transport.
\item $r = \epsilon/h$ the radius of $B_{\epsilon,h}({\bf x})$ measured in pixels.
\item $\tilde{B}_{\epsilon,h}({\bf x}) = R(B_{\epsilon,h}({\bf x}))$, where $R$ is the rotation matrix taking $(0,1)$ to ${\bf g}({\bf x})$, the choice of $A_{\epsilon,h}({\bf x})$ used by Guidefill.
\item $\mathcal N({\bf x})=\{ {\bf x}+{\bf y} : {\bf y} \in \{-h,0,h\} \times \{-h,0,h\}\}$ is the 9-point neighborhood of ${\bf x}$.
\item Given $A_h \subset \field{Z}^2_h$, we define the dilation of $A_h$ by
$$D_h(A_h) = \cup_{{\bf x} \in A_h} \mathcal N({\bf x}).$$
If $h=1$ we write $D$ instead of $D_1$.
\item Given $A_h \subset \field{Z}^2_h$, we define the discrete (inner) boundary of $A_h$ by $$\partial A_h := \{ {\bf x} \in A_h : \mathcal N({\bf x}) \cap \field{Z}^2_h \backslash A_h \neq \emptyset \}.$$  For convenience we typically drop the word ``inner'' and refer to $\partial A_h$ as just the boundary of $A_h$.
\item $O$ denotes the zero matrix.
\item Given $c \in \field{R}$, we define $\{ y \leq c \} := \{ (x,y) \in \field{R}^2 : y \leq c \}.$
\item Given $x$, $y \in \field{R}$, we define $x \wedge y := \min(x,y)$.
\end{itemize}

\section{Shell-based geometric inpainting} \label{sec:shellBased}

\begin{algorithm}
\caption{Shell Based Geometric Inpainting}
\begin{algorithmic} \label{alg:general}
\State $u_h$ = damaged image, initialized to $0$ on inpainting domain.
\State $\Omega = [a,b] \times [c,d]$ = continuous image domain.
\State $\Omega_h = \Omega \cap \field{Z}^2_h$ = discrete image domain.
\State $D^{(0)}_h$ = initial inpainting domain.
\State $\partial D^{(0)}_h $= initial inpainting domain boundary.
\State {\color{blue} semiImplicit = false, unless we use the semi implicit extension (Section \ref{sec:semiImplicit})}.
\For{$k=0,\ldots$}
\If{$D^{(k)}_h = \emptyset$}
\State break
\EndIf
\State $\partial_{\mbox{ready}} D^{(k)}_h = \{ {\bf x} \in \partial D^{(k)}_h : \mbox{ready}({\bf x})\}$
\State $u_h$ = \Call{FillBoundary}{$u_h$, $D^{(k)}_h$, $\partial_{\mbox{ready}} D^{(k)}_h$}
\State $D^{(k+1)}_h = D^{(k)}_h \backslash \partial_{\mbox{ready}} D^{(k)}_h$
\vskip 2mm \color{blue}
\If{semiImplicit}
\State $u^{(0)}_h = u_h$
\For{$n=1,2,\dots$(until convergence)}
\State $u^{(n)}_h$ = \Call{FillBoundary}{$u^{(n-1)}_h$, $D^{(k+1)}_h$, $\partial_{\mbox{ready}} D^{(k)}_h$}
\EndFor
\EndIf \color{black}
\vskip 2mm
\State $\partial D^{(k+1)}_h = \{ {\bf x} \in \partial D^{(k+1)}_h : \mathcal N({\bf x}) \cap (\Omega_h \backslash D^{(k+1)}_h ) \neq \emptyset \}.$
\EndFor
\vskip 3mm
\Function{$u_h$ = FillBoundary}{$u_h$, $D_h,\partial D_h$}
\For{${\bf x} \in \partial D_h$}
\State compute $A_{\epsilon,h}({\bf x})$ = neighborhood of ${\bf x}$.
\State compute non-negative weights $w_{\epsilon}({\bf x},{\bf y}) \geq 0$ for $A_{\epsilon,h}({\bf x})$.
\If{ready({\bf x})}
\State 
\vspace{-5mm}
\begin{equation} \label{eqn:update}
u_h({\bf x})  = \frac{\sum_{{\bf y} \in (A_{\epsilon,h}({\bf x}) \backslash \{ {\bf x}\}) \cap (\Omega \backslash D)}w_{\epsilon}({\bf x},{\bf y})u_h({\bf y})  }{\sum_{{\bf y} \in (A_{\epsilon,h}({\bf x}) \backslash \{ {\bf x}\}) \cap (\Omega \backslash D) }w_{\epsilon}({\bf x},{\bf y})} \phantom{} 
\end{equation}
\vspace{-1mm}
\EndIf
\EndFor
\EndFunction
\end{algorithmic}
See \eqref{eqn:ready}  for a definition of the $\mbox{ready}$ function for Guidefill.  Coherence transport and Guidefill use the neighborhoods $A_{\epsilon,h}({\bf x}) = B_{\epsilon,h}({\bf x})$, $A_{\epsilon,h}({\bf x}) = \tilde{B}_{\epsilon,h}({\bf x})$ respectively - see Figure \ref{fig:ballRotate}.  They also both use the same weights \eqref{eqn:weight}.  Blue text is only relevant for the semi-implicit extension we introduce in Section \ref{sec:semiImplicit}.
\end{algorithm}

\begin{figure}
\centering \includegraphics[width=0.4\textwidth]{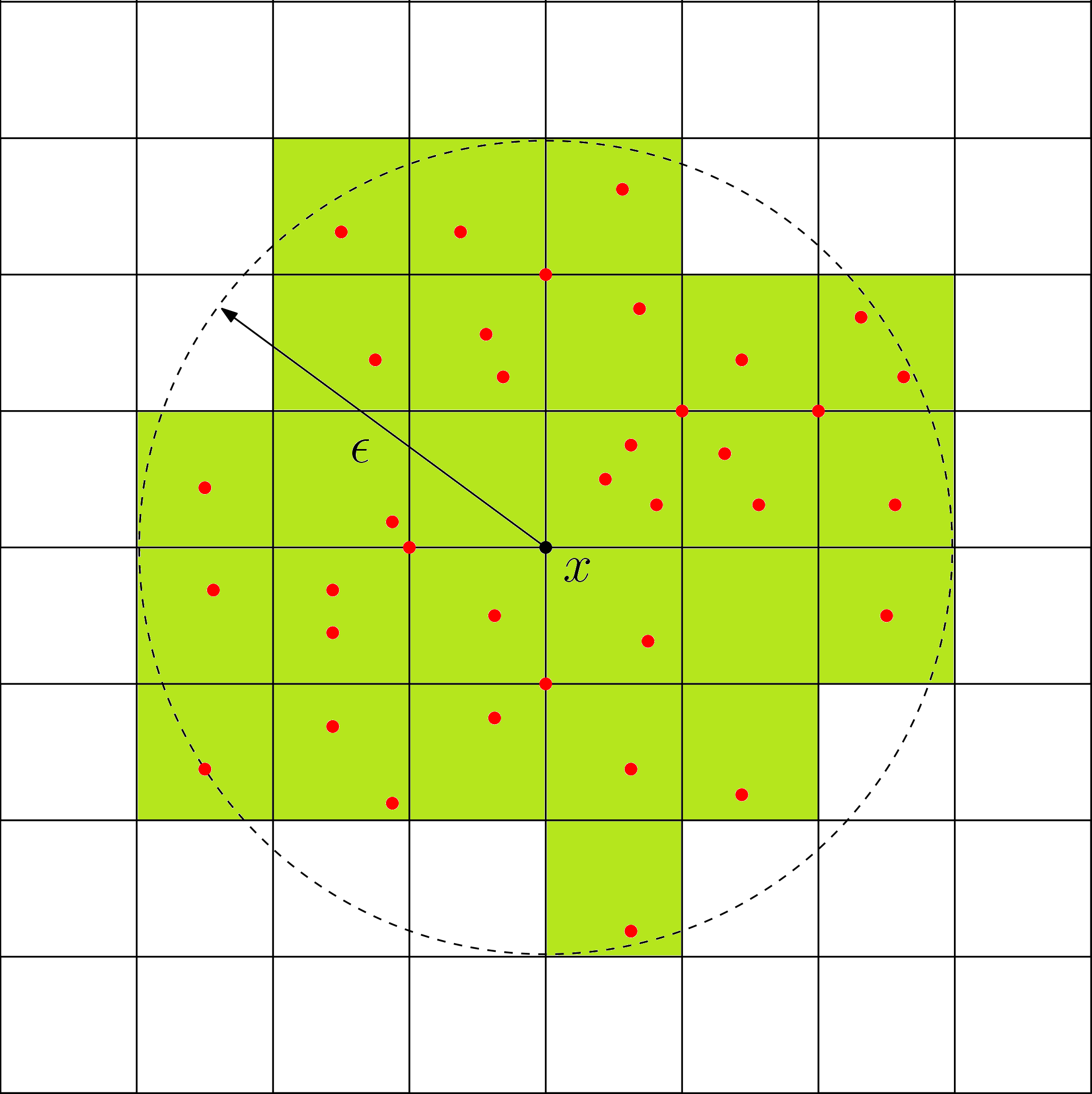}
\caption{{\bf Illustration of a generic set $A_{\epsilon,h}({\bf x})$ containing ghost pixels:}  In this illustration the overlaid grid is the lattice $\field{Z}^2_h$ with pixel centers at its vertices.  The elements of a generic $A_{\epsilon,h}({\bf x})$ are represented as red dots - they do not need to occupy pixel centers, but they must all lie within distance $\epsilon$ of ${\bf x}$.  Ghost pixels are defined based on bilinear interpolation of their real pixel neighbors.  Here we have highlighted in green the squares whose vertices are the real pixels needed to define $A_{\epsilon,h}({\bf x})$.  We call this set of real pixels $\mbox{Supp}(A_{\epsilon,h}({\bf x}))$.  Note that while $A_{\epsilon,h}({\bf x}) \subset B_{\epsilon}({\bf x})$, this inclusion is not in general true of $\mbox{Supp}(A_{\epsilon,h}({\bf x}))$.}
\label{fig:ghosts}
\end{figure}

In this paper we are interested in a simple class of shell-based geometric inpainting methods and their proposed semi-implicit extension.  These methods fill the inpainting domain in successive shells from the boundary inwards, as illustrated in Figure \ref{fig:erode}.  In the direct form of the method, the color of a given pixel ${\bf x}$ due to be filled is computed as a weighted average of its already filled neighbors within a discrete neighborhood $A_{\epsilon,h}({\bf x}) \subset B_{\epsilon}({\bf x})$.  In the semi-implicit extension, this sum also includes unknown pixels within the current shell, resulting in a linear system (Figure \ref{fig:directVsExtension}).  We will cover the resulting linear system in detail in Section \ref{sec:semiImplicit}, where we also propose an iterative method for its solution.  The direct method as well as this proposed iterative solution to the semi-implicit extension are illustrated in Algorithm 1 with pseudo code (the blue code indicates the parts relevant to the semi-implicit extension).  The neighborhood $A_{\epsilon,h}({\bf x})$ need not be axis aligned and may contain ``ghost pixels'' lying between pixel centers - see Figure \ref{fig:ghosts} for an illustration.  Ghost pixels were introduced in \cite{Guidefill} (where they were shown to be beneficial for reducing ``kinking artifacts'' - see Figure \ref{fig:taleOfTwo}), and the color of a given ghost pixel is defined as the bilinear interpolation of its four real pixel neighbors, but is undefined if one or more of them has not yet been assigned a color.  We denote by $\mbox{Supp}(A_{\epsilon,h}({\bf x})) \subset \field{Z}_h^2$ the set of real pixels needed to define $A_{\epsilon,h}({\bf x})$ in this way.  Here $h$ and $\epsilon$ denote respectively the width of one pixel and the radius of the bounding disk $B_{\epsilon}({\bf x}) \supset A_{\epsilon,h}({\bf x})$.  The averaging weights $w_{\epsilon}$ are non-negative and are allowed to depend on ${\bf x}$, but must scale proportionally with the size of the neighborhood $A_{\epsilon,h}({\bf x})$, like so:
\begin{equation} \label{eqn:scalinglaw}
w_{\epsilon}({\bf x},{\bf y})=\hat{w}\left({\bf x}, \frac{{\bf y}-{\bf x}}{\epsilon}\right)
\end{equation}
for some function $\hat{w}(\cdot,\cdot) : \Omega \times B_1({\bf 0}) \rightarrow [0,\infty]$.  Note that we will sometimes write $w_r$ or $w_1$ in place of $w_{\epsilon}$ - in this case we mean \eqref{eqn:scalinglaw} with $\epsilon$ replaced by $r$ or $1$ in the denominator on the right hand side.    As the algorithm proceeds, the inpainting domain shrinks, generating a sequence of inpainting domains $D_h = D^{(0)}_h \supset D^{(1)}_h \supset \ldots \supset D^{(K)}_h = \emptyset$.  We will assume the non-degeneracy condition
\begin{equation} \label{eqn:technical}
\sum_{{\bf y} \in A_{\epsilon,h}({\bf x}) \cap (\Omega \backslash D^{(k)})}w_{\epsilon}({\bf x},{\bf y}) > 0
\end{equation}
\noindent holds at all times, this ensures that the average \eqref{eqn:update} in Algorithm 1 is always well defined.  One trivial way of ensuring this is by having strictly positive weights $\hat{w}$, which all the methods considered do (see Section \ref{sec:review}).  At iteration $k$, only pixels belonging to the current boundary $\partial D^{(k)}_h$ are filled, but moreover we fill only a subset $\partial_{\mbox{ready}} D^{(k)}_h \subseteq \partial D^{(k)}_h$ of pixels deemed to be ``ready'' (In Section \ref{sec:review} we will review the main methods in the literature and give their ``ready'' functions).  The main inpainting methods in the literature of the general form given by Algorithm 1 are (in chronological order) Telea's Algorithm \cite{Telea2004}, coherence transport \cite{Marz2007}, coherence transport with adapted distance functions \cite{Marz2011}, and our previous work Guidefill \cite{Guidefill}.  As we will see, these methods essentially differ only in the choice of weights \eqref{eqn:scalinglaw}, the choice of fill order as dictated by the ``ready'' function, and the choice of neighborhood $A_{\epsilon,h}({\bf x})$.  We will review these methods in Section \ref{sec:review}.  

\vskip 2mm

\begin{remark} \label{remark:genericSinglePass}
It is worth mentioning that this class of algorithms is nearly exactly the same as the ``generic single-pass algorithm'' first systematically studied by Bornemann and M\"arz in \cite{Marz2007}.  The two main differences are 
\begin{enumerate}
\item They assume $A_{\epsilon,h}({\bf x})=B_{\epsilon,h}({\bf x})$, while we allow for greater flexibility.  
\item They consider only the direct form of Algorithm 1, not the semi-implicit extension.
\end{enumerate}
Beyond this, they also phrase things in terms of a pre-determined fill order, rather than a ``ready'' function, but the former may easily be seen, mathematically at least, to be a special case of the latter (Section \ref{sec:review}).
\end{remark}
\vskip 2mm 
\subsection{Advantages of Algorithm 1}  The main appeal of Algorithm 1 is its simplicity and parallelizability, which enable it to run very fast.   A second advantage, first noted by Bornemann and M\"arz \cite{Marz2007}, is the stability property
\begin{equation} \label{eqn:stability}
\min_{{\bf y} \in B}u_0({\bf y}) \leq u_h({\bf x}) \leq \max_{{\bf y} \in B} u_0({\bf y}) \qquad \mbox{ for all }{\bf x} \in D_h,
\end{equation}
(which holds channelwise) where $u_0 : \Omega_h \backslash D_h \rightarrow \field{R}^d$ is the given image and $B$ is the band of width $\epsilon$ pixels surrounding $\partial D_h$.  This property holds because we have chosen non-negative weights summing to one.

\begin{remark} \label{remark:telea}
Although we have presented Telea's algorithm \cite{Telea2004} as an example of Algorithm 1, this is not strictly true as its update formula \eqref{eqn:Telea} (see Section \ref{sec:review}) contains a gradient term that, after it has been approximated with finite differences, effectively violates the rule of non-negative weights summing to one.  This means that Telea's algorithm does not satisfy the stability property \eqref{eqn:stability}.  See Figure \ref{fig:stability}.
\end{remark}

\subsection{Disadvantages and artifacts}  The main disadvantage of Algorithm 1 is obviously loss of texture, and in cases where texture is important, these methods should not be used.   However, beyond loss of texture, inpainting method of the general form given in Algorithm 1 can also introduce a host of of other artifacts, which we list below.
\begin{itemize}
\item {\bf``kinking'' of isophotes}  where extrapolated isophotes change direction at the boundary of the inpainting domain - see Figure \ref{fig:taleOfTwo} and Figure \ref{fig:artifacts2}.
\item {\bf ``blurring'' of isophotes}  where edges that are sharp in the undamaged region may blur when extended into the inpainting domain - see Figure \ref{fig:taleOfTwo} and Figure \ref{fig:artifacts2}.
\item {\bf``cutting off'' of isophotes} where isophotes extrapolated into the inpainting domain end abruptly - see Figure \ref{fig:cutoff}.
\item {\bf formation of shocks}  where sharp discontinuities may form in the inpainting domain - see Figure \ref{fig:cutoff}.
\item {\bf bright or dark spots} that are only a problem if the stability condition \eqref{eqn:stability} is violated, as it is for Telea's algorithm.  See Figure \ref{fig:stability} and Figure \ref{fig:artifacts2}.
\end{itemize}

\begin{figure}
\centering
\begin{tabular}{cc}
\subfloat[Coherence transport with default onion shell ordering.  Isophotes are cut off.]{\includegraphics[width=.47\linewidth]{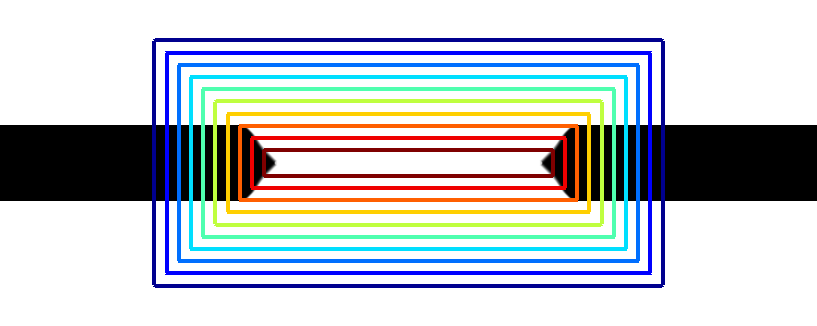}} & 
\subfloat[Guidefill with smart pixel ordering is able to make a successful connection (M\"arz's adapted distance functions \cite{Marz2011} would also do the job).]{\includegraphics[width=.47\linewidth]{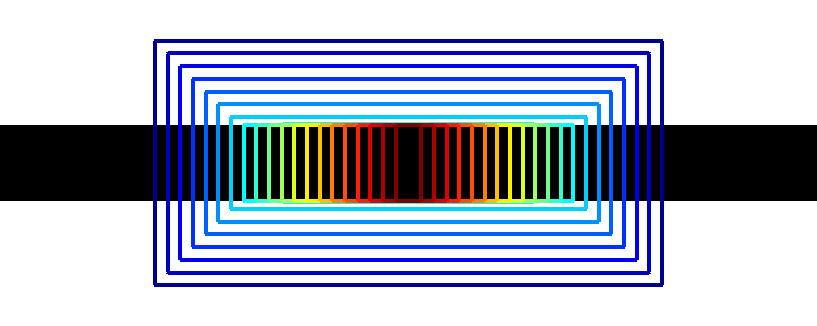}} \\
\end{tabular}
\begin{tabular}{ccc}
\subfloat[Guidefill's smart pixel ordering is not able to prevent a shock in this case because of incompatible boundary conditions.]{\includegraphics[width=.47\linewidth]{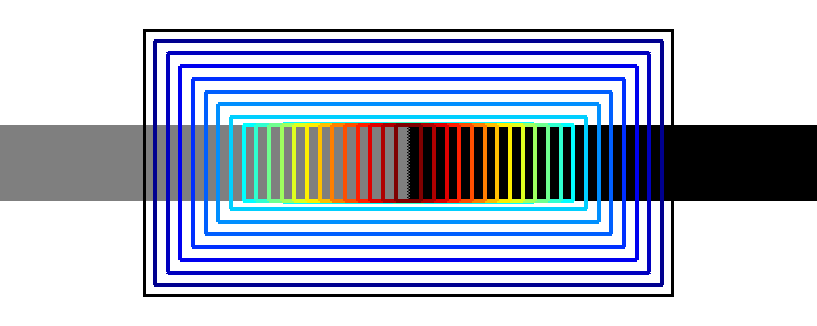}} &
\subfloat[Guidefill with onion shell ordering results in a cut off picture frame that ends abruptly.]{\includegraphics[width=.21\linewidth]{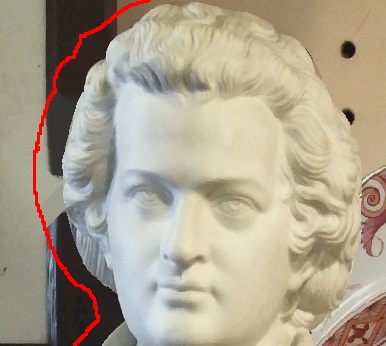}} &
\subfloat[Guidefill with smart ordering connects the picture frame, but creates a shock.]{\includegraphics[width=.21\linewidth]{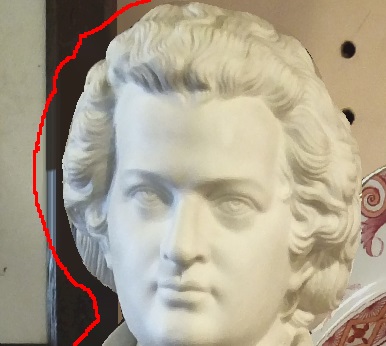}} \\
\end{tabular}
\caption{{\bf Cut off isophotes and shocks:} Because Algorithm 1 fills the inpainting domain from many directions at once, ``cut off isophotes'' or shocks can sometimes be formed.  In (a), this is due to the (superimposed) fill order, which is the default onion shell ordering and a bad choice in this case.  In (b), we have a chosen a new fill order better adapted to the image and the problem is solved in this case.  However, the shock in (c) is due to incompatible boundary conditions and it is unlikely any special fill order could solve the problem.  If (c) seems a little contrived, consider the ``real life'' examples (d)-(e).  In (d), we inpainted using Guidefill with the default onion shell ordering, resulting in the picture frame being cut off.  In (e) we used Guidefill's build in smart order, which successfully completes the picture frame, but creates a shock in the middle.  This shock is due to incompatible lighting conditions at either end of the inpainting domain, which is outlined in red.  }
\label{fig:cutoff}
\end{figure}

\begin{figure}
\centering
\begin{tabular}{ccc}
\subfloat[Inpainting problem with $D_h$ colored yellow and outlined in black.]{\includegraphics[width=.3\linewidth]{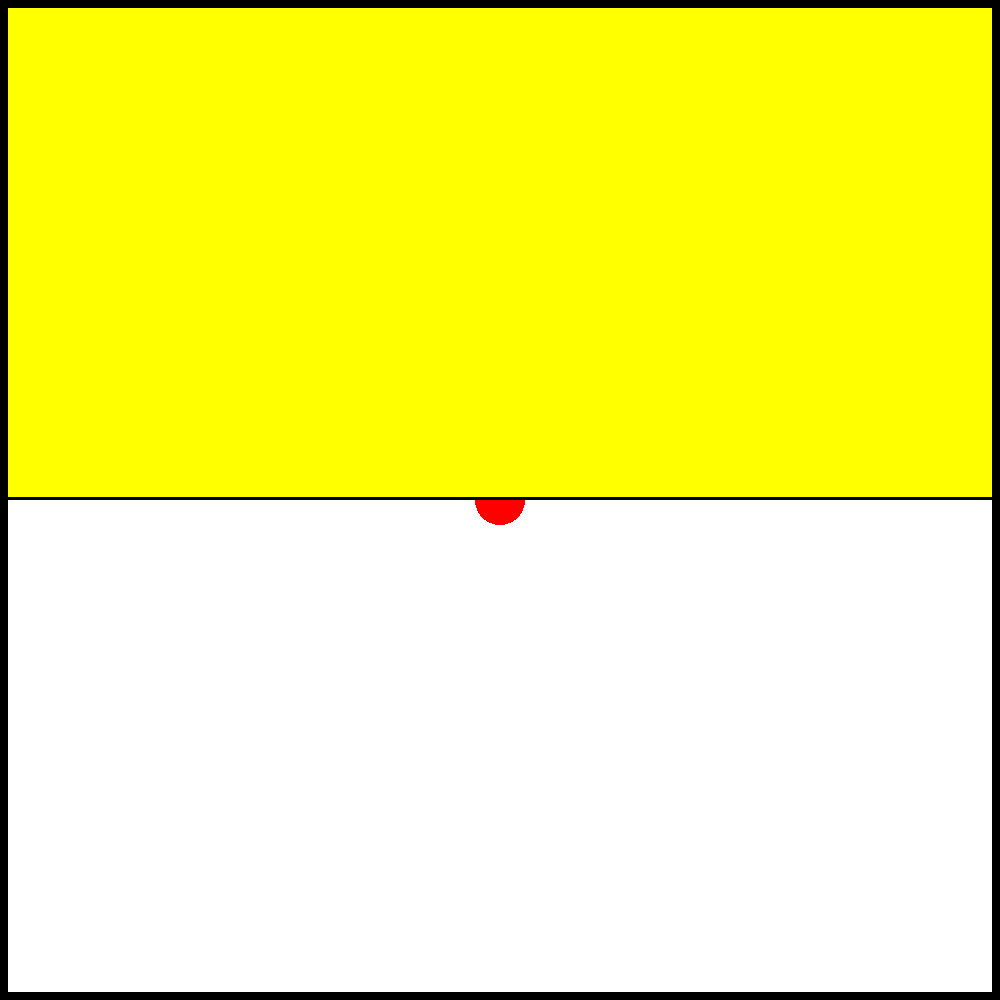}} & 
\subfloat[Superposition of multiple inpaintings of (a) using coherence transport with guidance direction ${\bf g}$ sweeping out an arc from $1^{\circ}$ up to $179^{\circ}$.  In this case $\epsilon = 3$px and $B_{\epsilon,h}({\bf 0})$ is superimposed.]{\includegraphics[width=.3\linewidth]{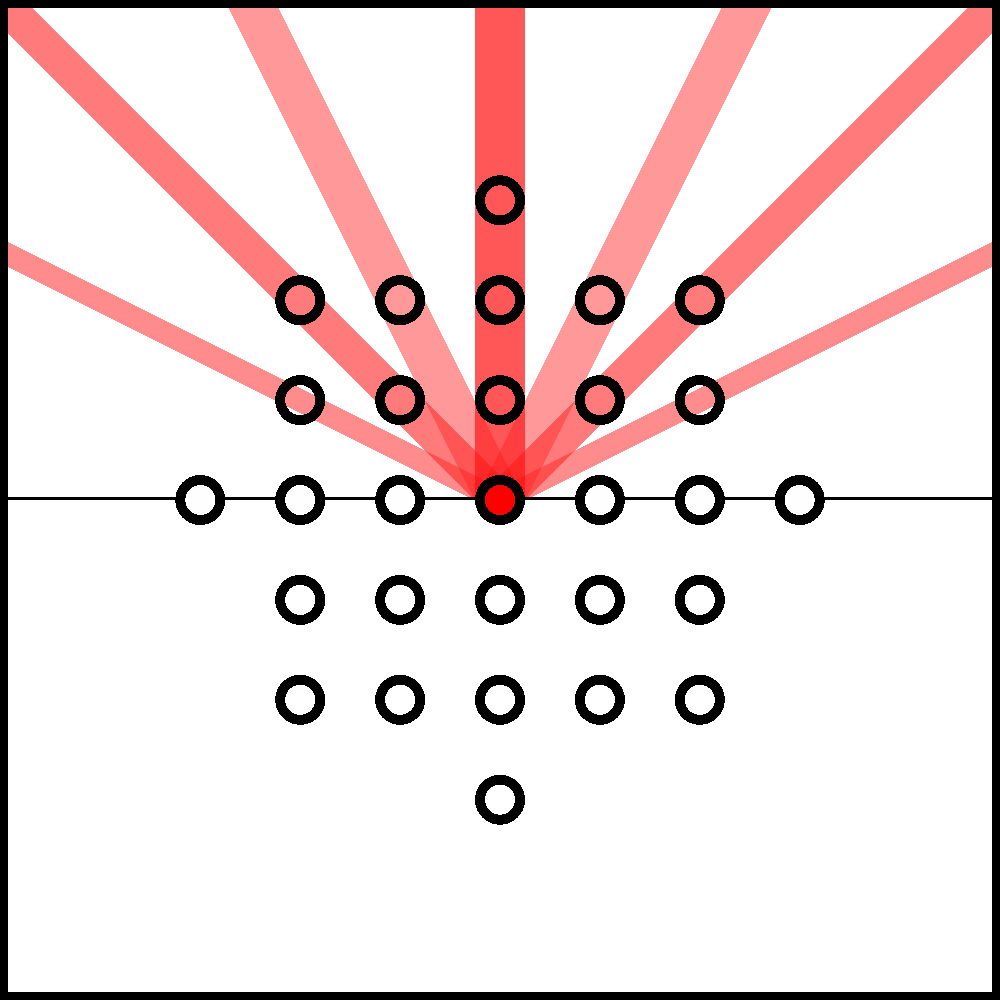}} &
\subfloat[Now Guidefill is used instead of coherence transport, but all parameters including $\epsilon=3$px and $\mu = 100$ are kept the same.  This time we superimpose the dilated ball $D_h(B_{\epsilon,h}({\bf 0}))$.  New points are shown in grey.]{\includegraphics[width=.3\linewidth]{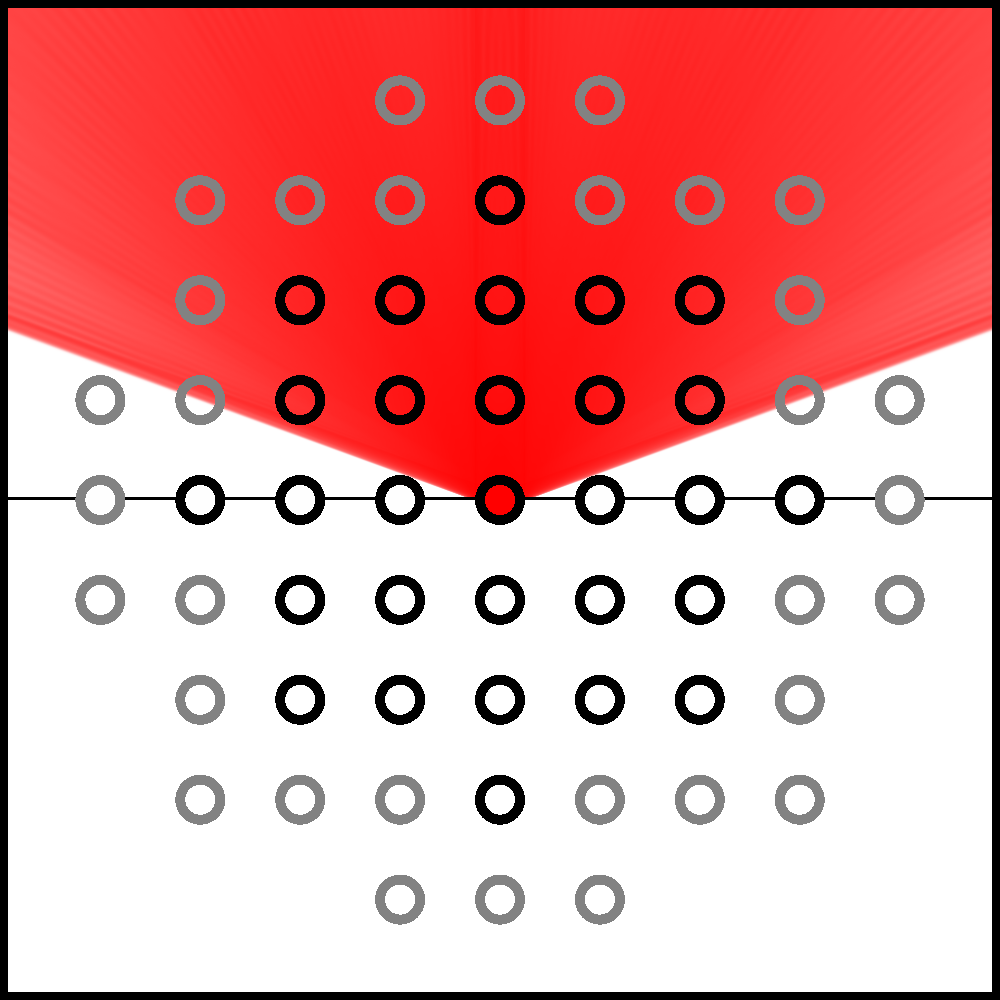}} \\
\end{tabular}
\caption{{\bf Special directions:}  For a given guidance direction ${\bf g}=(\cos\theta,\sin\theta)$, coherence transport \cite{Marz2007,Marz2011} can successfully extrapolate isophotes parallel to ${\bf g}$ only if ${\bf g}=\lambda {\bf v}$, for some ${\bf v} \in B_{\epsilon,h}({\bf 0})$.  This is illustrated in (b), where have solved the inpainting problem posed in (a) multiple times using coherence transport with $\epsilon = 3$px with a sequence of guidance directions ${\bf g}_k =(\cos\theta_k,\sin\theta_k)$ (with $\theta_k$ ranging from $1^{\circ}$ up to $179^{\circ}$ in increments of one degree), combined the results (specifically, we took a weighted average of all 179 solutions in which the weight of a given pixel decreases exponentially as its color moves away from pure red), and superimposed $B_{\epsilon,h}({\bf 0})$ (the parameter $\mu$ in \eqref{eqn:weight} is $\mu = 100$).  Instead of a smoothly varying line sweeping through the upper half plane and filling it with red, we see a superposition of finitely many lines, each passing through some ${\bf v} \in B_{\epsilon,h}({\bf 0})$.  When we repeat the experiment in (c) using Guidefill \cite{Guidefill}, we see that it is not free of problems either.  In this case Guidefill can extrapolate along ${\bf g}=(\cos\theta,\sin\theta)$ so long as $0<\theta_c \leq \theta \leq \pi-\theta_c < \pi$, where $\theta_c$ is a critical angle, and we get a red cone bounded on either side by $\theta_c$.  Here we have superimposed the dilated ball $D_h(B_{\epsilon,h}({\bf 0}))$, and it is evident that $\theta_c$ is in some way related to this dilation - this will be explained in Section \ref{sec:kinkingAndConvex}.}
\label{fig:specialDir}
\end{figure}

\begin{figure}
\centering
\begin{tabular}{cccc}
\subfloat[Inpainting problem with  $\theta = 63^{\circ}$.]{\includegraphics[width=.2\linewidth]{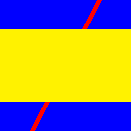}} & 
\subfloat[Telea's algorithm.]{\includegraphics[width=.2\linewidth]{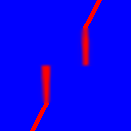}} &
\subfloat[coherence transport.]{\includegraphics[width=.2\linewidth]{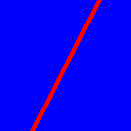}} &
\subfloat[Guidefill.]{\includegraphics[width=.2\linewidth]{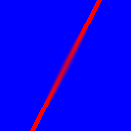}} \\
\subfloat[Inpainting problem with  $\theta = 73^{\circ}$.]{\includegraphics[width=.2\linewidth]{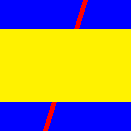}} & 
\subfloat[Telea's algorithm.]{\includegraphics[width=.2\linewidth]{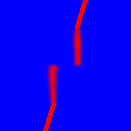}} &
\subfloat[coherence transport.]{\includegraphics[width=.2\linewidth]{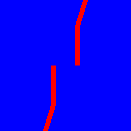}} &
\subfloat[Guidefill.]{\includegraphics[width=.2\linewidth]{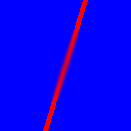}} \\
\end{tabular}
\begin{tabular}{cc}
\subfloat[Midpoint cross-sections for $\theta = 63^{\circ}$.]{\includegraphics[width=.45\linewidth]{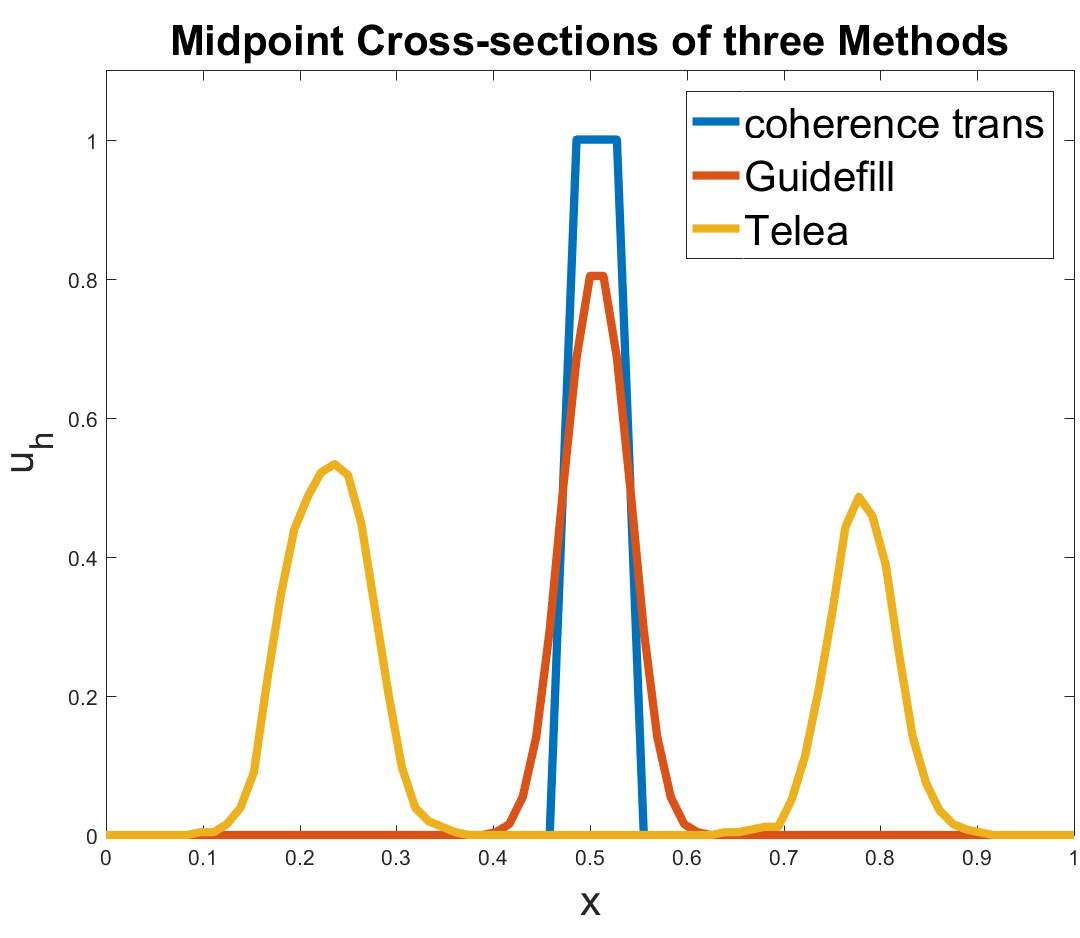}} &
\subfloat[Midpoint cross-sections for $\theta = 73^{\circ}$.]{\includegraphics[width=.45\linewidth]{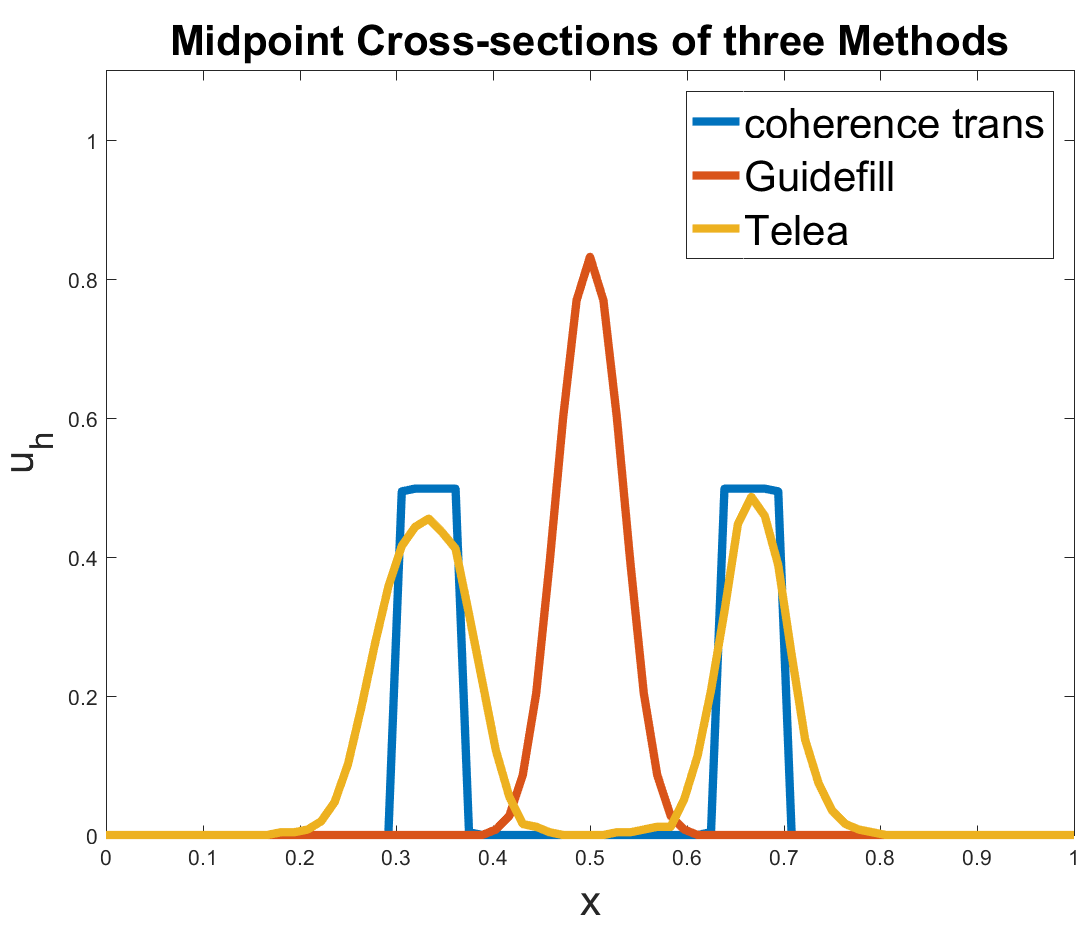}} \\
\end{tabular}
\caption{{\bf A tale of two inpainting problems:}  In (a)-(d), a line making an angle of $\theta = 63^{\circ}$ with the horizontol is inpainted using each of Telea's algorithm \cite{Telea2004}, coherence transport, \cite{Marz2007,Marz2011}, and Guidefill \cite{Guidefill} (the inpainting domain is shown in yellow).  In this case the radius of $A_{\epsilon,h}({\bf x})$ is $\epsilon = 3$px, and since $63^{\circ} \approx \arctan(2) \approx 63.44^{\circ}$ is close to one of the ``special directions'' in which coherence transport can extend isophotes successfully for this value of $\epsilon$ (see Figure \ref{fig:specialDir}), both coherence transport and Guidefill make a successful connection.  In (e)-(h) we change the angle of the line slightly to $\theta = 73^{\circ}$.  This isn't one of coherence transport's admissable directions for $\epsilon = 3$px, so it fails to make the connection, while Guidefill continues to have no problems, at the expense of some blurring.  Telea's algorithm, on the other hand, propagates in the direction of the normal to the inpainting domain boundary regardless of the undamaged image content, and thus fails to make the connection in both cases while also introducing significant blur.  In (i)-(j), we examine horizontal cross sections (of the red channel) of all three methods at the midpoint of the inpainting domain.  Here, a disadvantage of Guidefill in terms of blur becomes more apparent - coherence transport by contrast produces a much sharper result.  The reasons for this are explored in Section \ref{sec:blur}.
}
\label{fig:taleOfTwo}
\end{figure}

\begin{figure}
\centering
\begin{tabular}{ccc}
\subfloat[Inpainting problem (inpainting domain in yellow).]{\includegraphics[width=.2\linewidth]{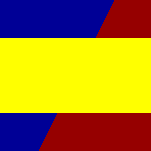}} & 
\subfloat[Inpainting with Telea's algorithm.]{\includegraphics[width=.2\linewidth]{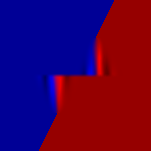}} &
\subfloat[Inpainting with coherence transport.]{\includegraphics[width=.2\linewidth]{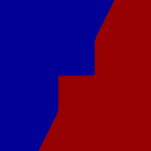}} \\
\end{tabular}
\begin{tabular}{cc}
\subfloat[Red channel cross-section for Telea's algorithm.]{\includegraphics[width=.45\linewidth]{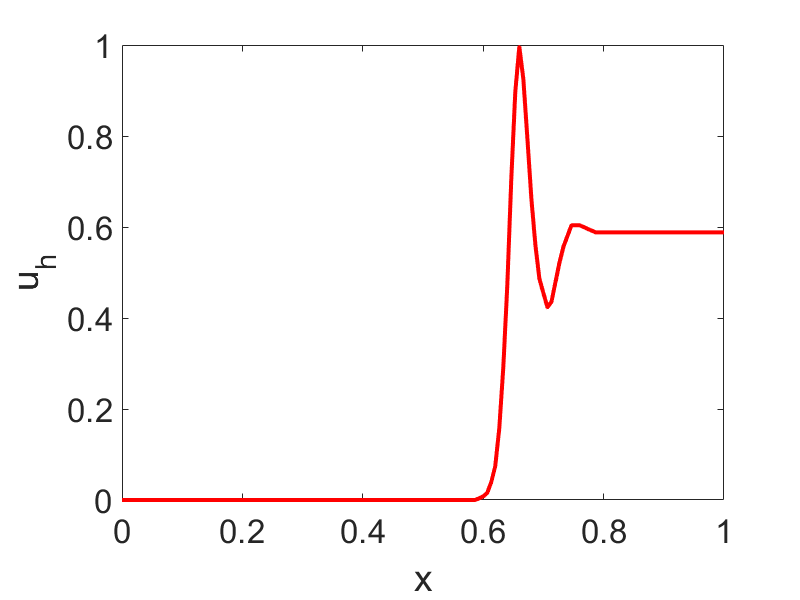}} &
\subfloat[Red channel cross-section for coherence transport.]{\includegraphics[width=.45\linewidth]{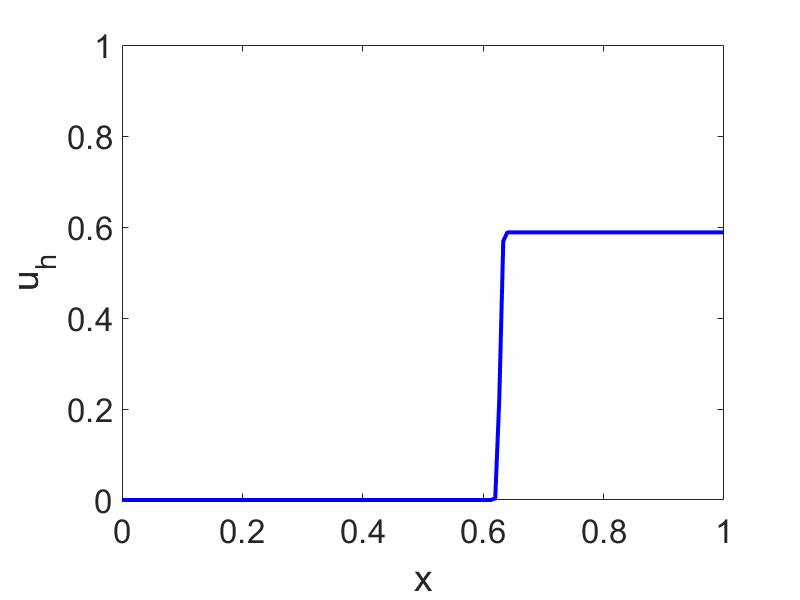}} \\
\end{tabular}
\caption{{\bf Bright spots in Telea's algorithm:}  In this example we consider the inpainting problem shown in (a) consisting of a line separating a region of dark blue from a region of dark red.  We inpaint both using Telea's algorithm (b) and coherence transport (c).  Coherence transport obeys the stability property \eqref{eqn:stability} and hence the brightness of the inpainted solution remains bounded above by the brightness on the exterior of the inpainting domain.  This is not true of Telea's algorithm, which exhibits bright spots outside the the original color range.  These were not visible in Figure \ref{fig:taleOfTwo}, because the brightness of each color channel was already saturated, and Telea's algorithm uses clamping to prevent the solution from going outside the admissible color range.  This is further illustrated in (d)-(e), where we plot horizontal cross sections of the red channel of each inpainted solution.  These slices are located ten rows of pixels above the midpoint of the inpainting domain.
}
\label{fig:stability}
\end{figure}

\begin{figure}
\centering
\begin{tabular}{ccc}
\subfloat[Damaged image with inpainting domain in red.]{\includegraphics[width=.3\linewidth]{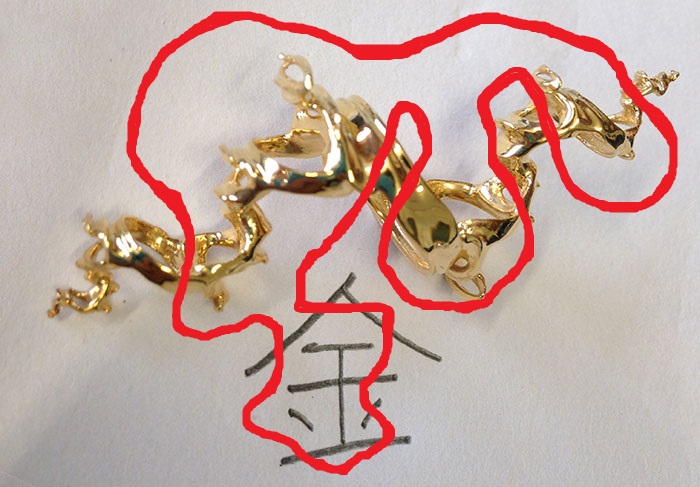}} & 
\subfloat[The result of Telea's algorithm \cite{Telea2004}.]{\includegraphics[width=.3\linewidth]{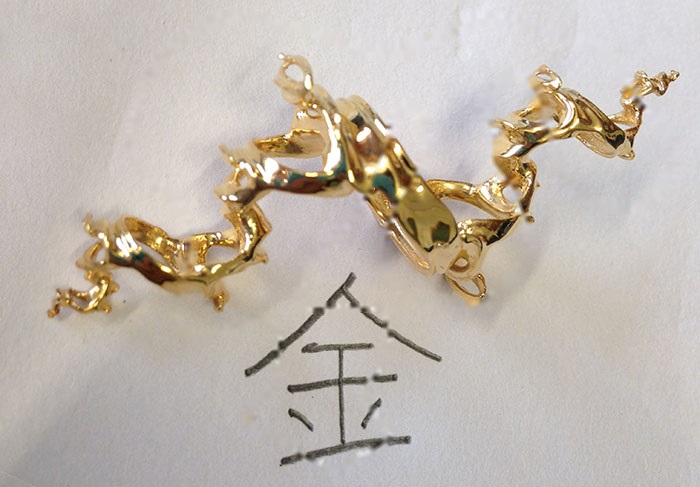}} &
\subfloat[Closeup of (b).  Note the bright spots and disconnected isophotes.]{\includegraphics[width=.21\linewidth]{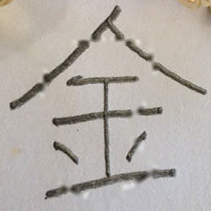}} \\
\end{tabular}
\begin{tabular}{ccc}
\subfloat[Further closeup of (b), with blurry, disconnected isophotes circled in red and a bright spot circled in blue.]{\includegraphics[width=.24\linewidth]{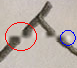}} &
\subfloat[The result of coherence transport \cite{Marz2007}.]{\includegraphics[width=.3\linewidth]{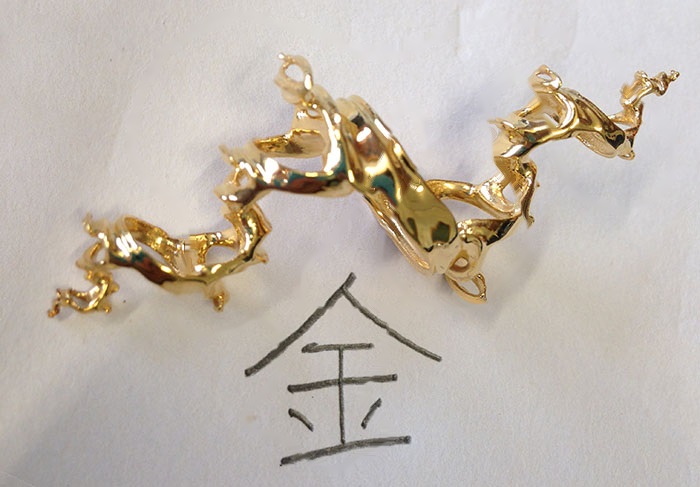}} &
\subfloat[Closeup of (d) - note the better reconstruction of 金.]{\includegraphics[width=.21\linewidth]{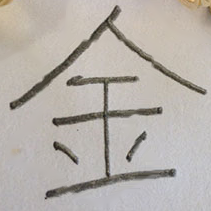}} \\
\end{tabular}
\caption{{\bf Blurring, kinking, and bright spots with Telea's algorithm:}  Even for problems such as (a) where the inpainting domain is very thin, Telea's algorithm (b)-(d) still creates strong blurring artifacts and fails to connect isophotes effectively.  Also, due to the presence of the gradient term in \eqref{eqn:Telea}, Telea's algorithm violates the stability condition \eqref{eqn:stability} and as a result can ``overshoot'' when filling pixels close to edges in the filled area, where the (numerical) gradient changes rapidly.  This leads to the bright spots near the reconstructed character in (c)-(d).  In this case coherence transport (e)-(f) is a much better choice. }
\label{fig:artifacts2}
\end{figure}


\subsection{Related work (artifact reduction)}  
Broadly speaking, there has been incremental progress as follows:  Telea's algorithm \cite{Telea2004}, the earliest variant to appear in the literature, suffers from strong artifacts of every type.  In particular, the weights make no attempt to take into account the orientation of undamaged isophotes in $\Omega_h \backslash D_h$, and the result shows strong kinking artifacts (see Figure \ref{fig:taleOfTwo}).  Bornemann and M\"arz  identified and sought to address this problem with coherence transport \cite{Marz2007}, which proposed carefully chosen weights that are proven (in a high resolution and vanishing viscosity limit) to extend isophotes in any desired guidance direction ${\bf g}$ not parallel to the inpainting domain boundary.  This was combined with a method aimed at robustly measuring the orientation of isophotes at the boundary, so that a suitable ${\bf g}$ allowing for a seamless transition could be found.  The problem of ``kinking'' ostensibly resolved, in a follow up work M\"arz proposed coherence transport with adapted distance functions \cite{Marz2011} designed to minimize the problem of ``cut off'' isophotes and shocks.  This was accomplished by recognizing that artifacts such as the incomplete line in Figure \ref{fig:cutoff}(a) are often the byproduct of a suboptimal fill order such as the one superimposed (in this case the default onion shell ordering).  The situation can often be corrected as in Figure \ref{fig:cutoff}(b), by using an ordering better adapted to the image such as the one illustrated there.  Rather than filling pixels in an order proportional to their distance from the boundary, i.e. having the $\mbox{ready}$ function in Algorithm 1 always return ``true'', M\"arz proposed a number of ways of generating improved orderings based on non-Euclidean distance from boundary maps.  At the same time, recognizing that the presence of shocks was related to the ``stopping set'' \cite{Marz2011} of the distance map, M\"arz was able to exert some measure of control over those as well, if not prevent them entirely.  Guidefill \cite{Guidefill} brought the focus back to the reduction of kinking artifacts, by noting that coherence transport is actually only able to propagate along a given guidance vector ${\bf g}$ if it points in one of a finite set of special directions - see Figure \ref{fig:specialDir}(b).  Whereas previous improvements to Algorithm 1 had focused first on improving the choice of weights, then the fill order (equivalently the choice of $\mbox{ready}$ function), Guidefill proposed for the first time to change the averaging neighborhood $A_{\epsilon,h}({\bf x})$, which until now had always been the discrete ball $B_{\epsilon,h}({\bf x})$ (Figure \ref{fig:ballRotate}(a)).  Specifically, it proposed to replace $A_{\epsilon,h}({\bf x})=B_{\epsilon,h}({\bf x})$ with $A_{\epsilon,h}({\bf x})=\tilde{B}_{\epsilon,h}({\bf x})$, where $\tilde{B}_{\epsilon,h}({\bf x})$ is the {\em rotated} discrete ball shown in Figure \ref{fig:ballRotate}(b), aligned with the guidance direction ${\bf g}$.  Since $A_{\epsilon,h}({\bf x})$ is in this case no longer axis aligned, it contains what the authors called ``ghost pixels'' lying between pixel centers, which they defined based on bilinear interpolation.  This small change enabled Guidefill to propagate along most guidance directions, but it too has problems when the angle between ${\bf g}$ and the boundary to the inpainting domain is too shallow - see Figure \ref{fig:specialDir}(c).  However, Guidefill pays a price for its reduction in kinking artifacts in the form of an increase in blur artifacts.  See Figure \ref{fig:taleOfTwo}, where coherence transport produces a sharp extension of image isophotes, albeit possibly in the wrong direction, whereas Guidefill extrapolates in the right direction, but the extrapolation suffers from blur.  Guidefill also proposed its own ``smart order'' computed on the fly as an alternative to M\"arz's adapted distance functions, but this does not have any serious advantage in terms of the quality of the results.  Either approach will do for preventing ``cut off'' isophotes.

\subsection{Related theoretical work}  The direct form of Algorithm 1 has been studied from a theoretical point of view by proposing two distinct continuum limits.  The first of these is the high resolution and vanishing viscosity limit proposed by Bornemann and M\"arz, in which $h \rightarrow 0 $ and then $\epsilon \rightarrow 0$ \cite{Marz2007}.  The second is a fixed-ratio limit proposed in our previous work \cite{Guidefill} in which $(h,\epsilon) \rightarrow (0,0)$ along the line $\epsilon = r h$.  The non-negative integer $r$ is simply the radius of $A_{\epsilon,h}({\bf x})$ measured in pixels and will play a key role in our analysis.  Although both are perfectly valid mathematically, we will argue (see Section \ref{sec:marzLimit} and in particular Remark \ref{rem:marzLimit}) that this second limit gives a better indication of the behaviour of Algorithm 1 in practice.  This is supported by numerical evidence both in \cite{Guidefill} and in this paper.  There has also been significant work in studying the well-posedness of the high resolution and vanishing viscosity limit of Algorithm 1, both in \cite{Marz2007} and especially in \cite{Marz2015}.  See Figure \ref{fig:limits} for an illustration of these two separate limits.

\begin{figure}
\centering
\begin{tabular}{cccc}
\subfloat[$\epsilon=1$, $h=\frac{1}{4}$.]{\includegraphics[width=.22\linewidth]{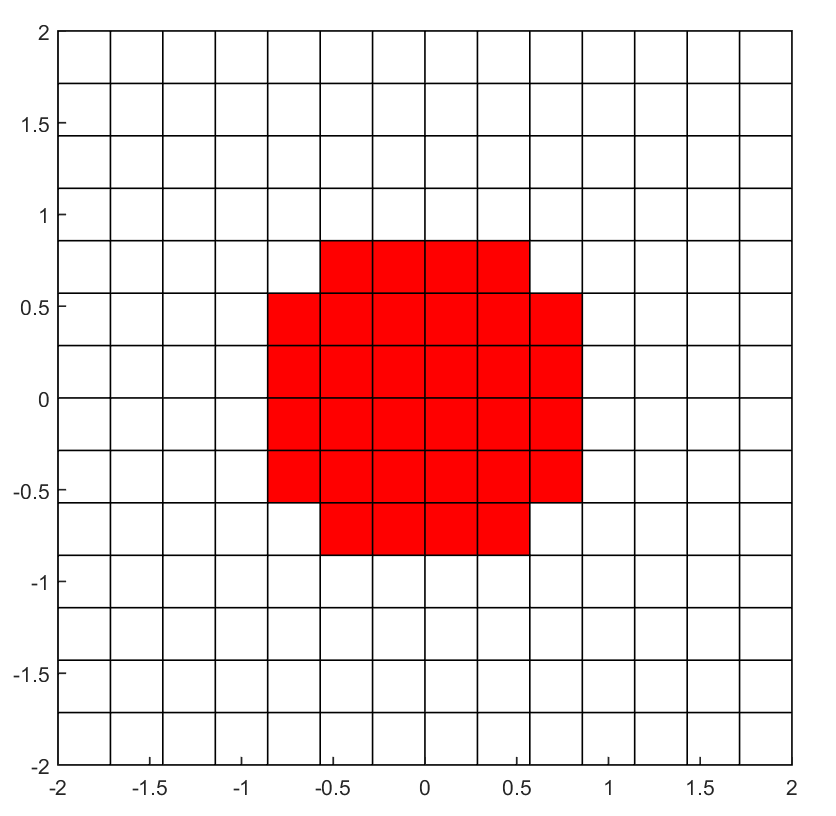}} & 
\subfloat[$\epsilon=1$, $h=\frac{1}{8}$.]{\includegraphics[width=.22\linewidth]{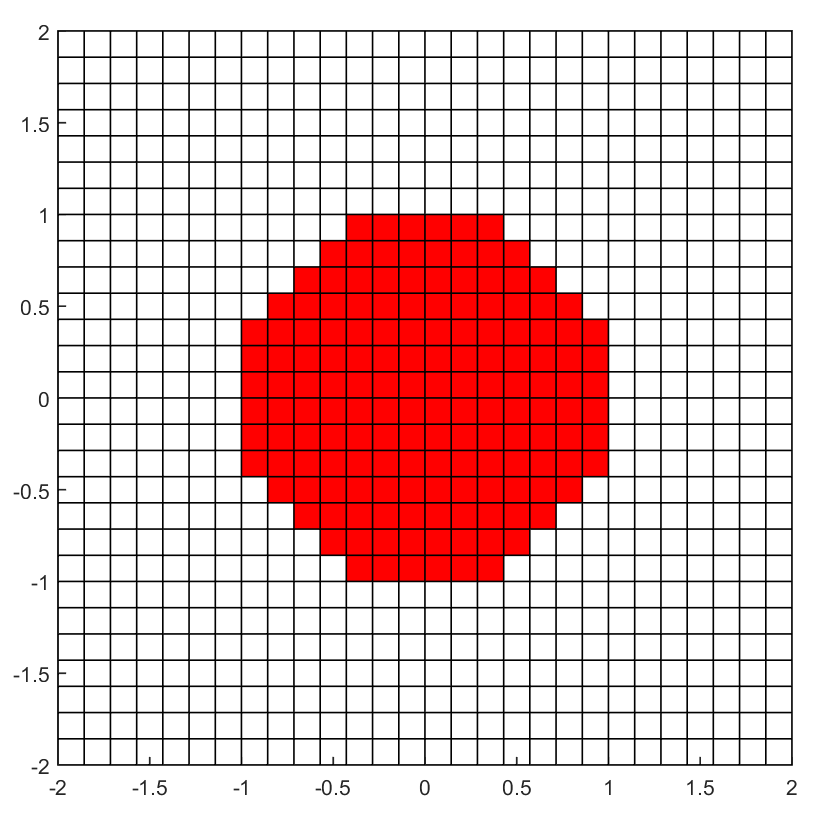}} &
\subfloat[$\epsilon=1$, $h=\frac{1}{16}$.]{\includegraphics[width=.22\linewidth]{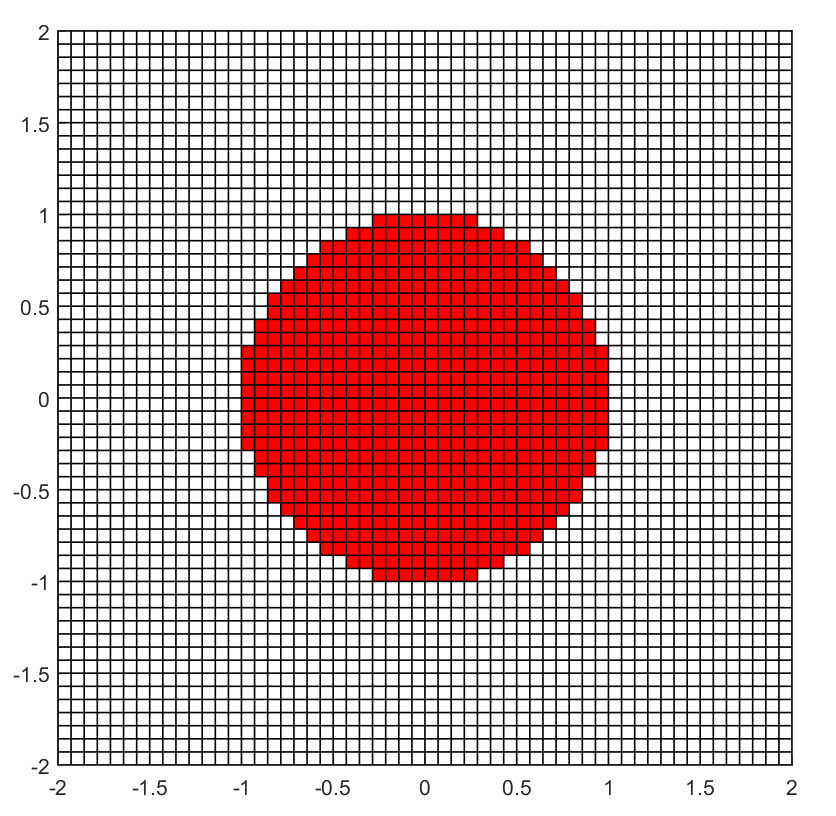}} &
\subfloat[$\epsilon=1$, $h=\frac{1}{32}$.]{\includegraphics[width=.22\linewidth]{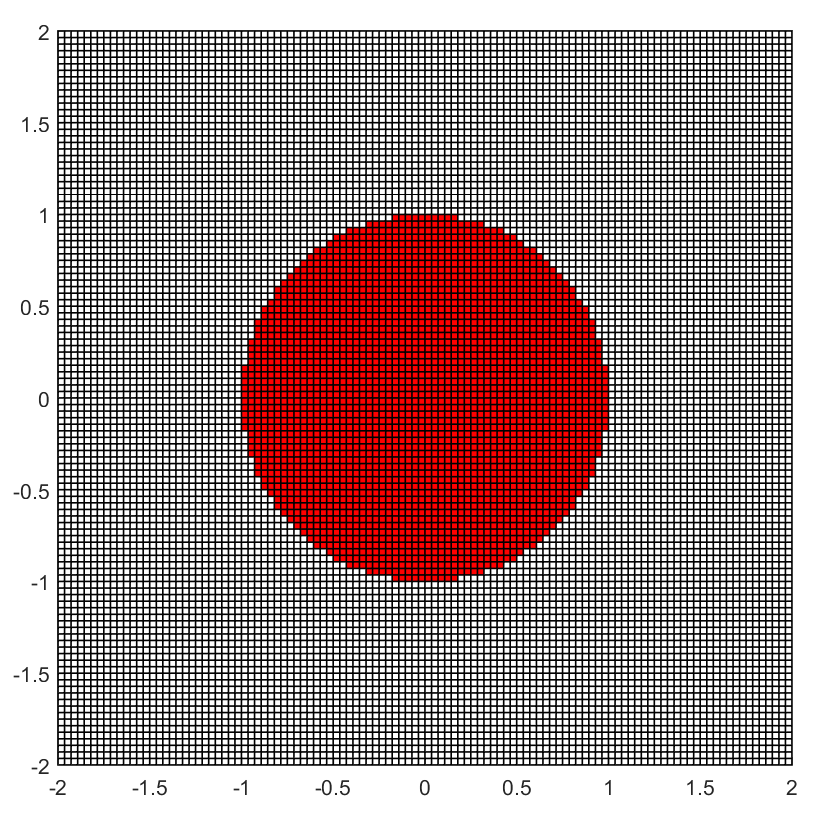}}\\
\subfloat[$\epsilon=1$, $h=0$.]{\includegraphics[width=.22\linewidth]{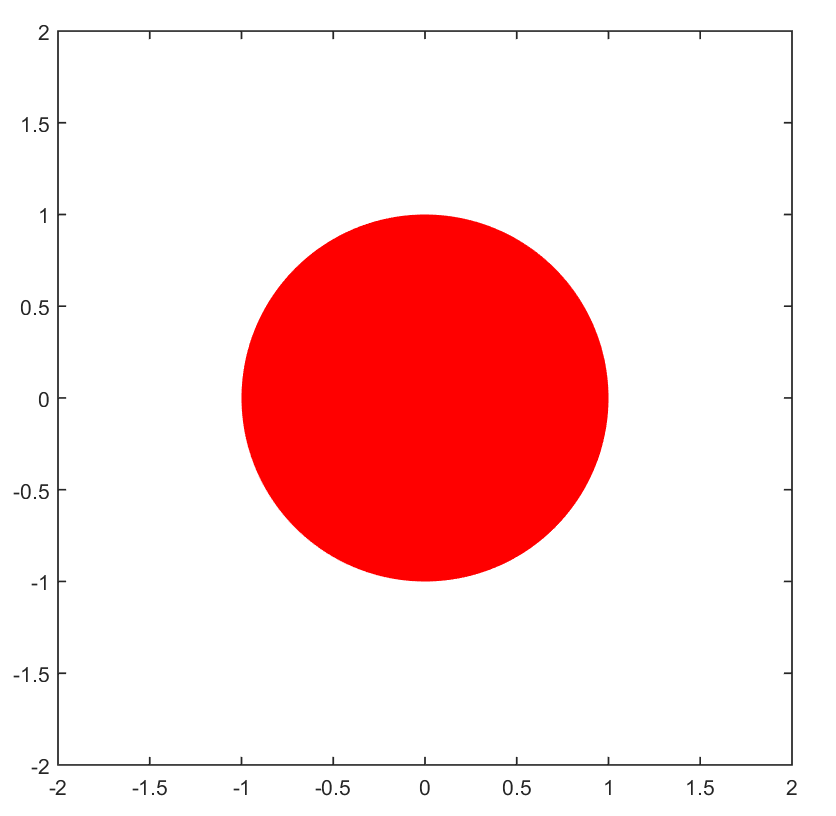}} & 
\subfloat[$\epsilon=\frac{1}{2}$, $h=0$.]{\includegraphics[width=.22\linewidth]{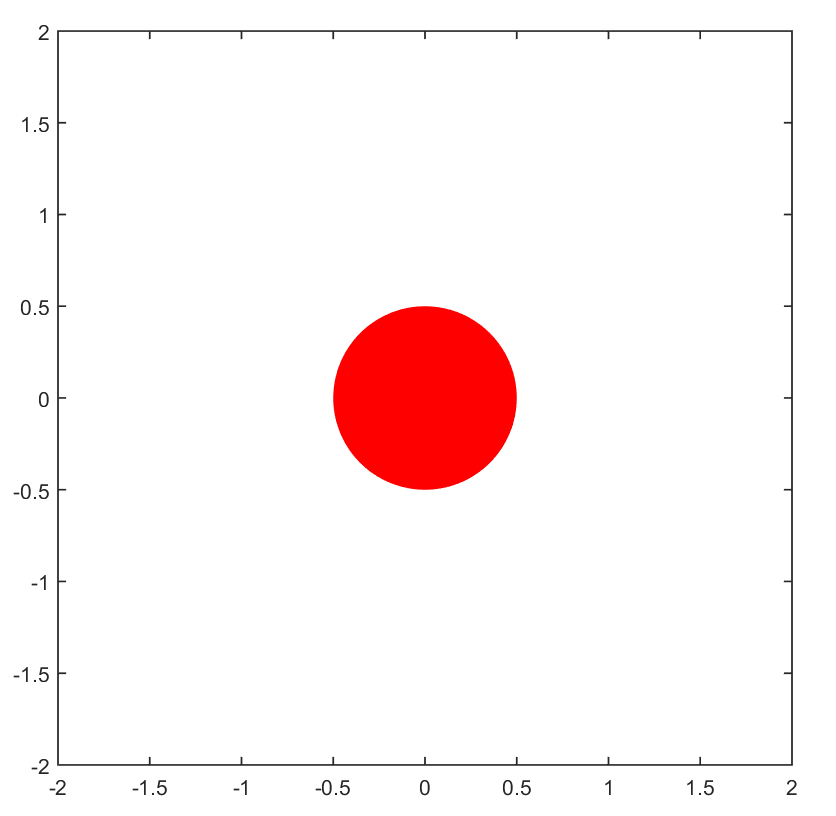}} &
\subfloat[$\epsilon=\frac{1}{4}$, $h=0$.]{\includegraphics[width=.22\linewidth]{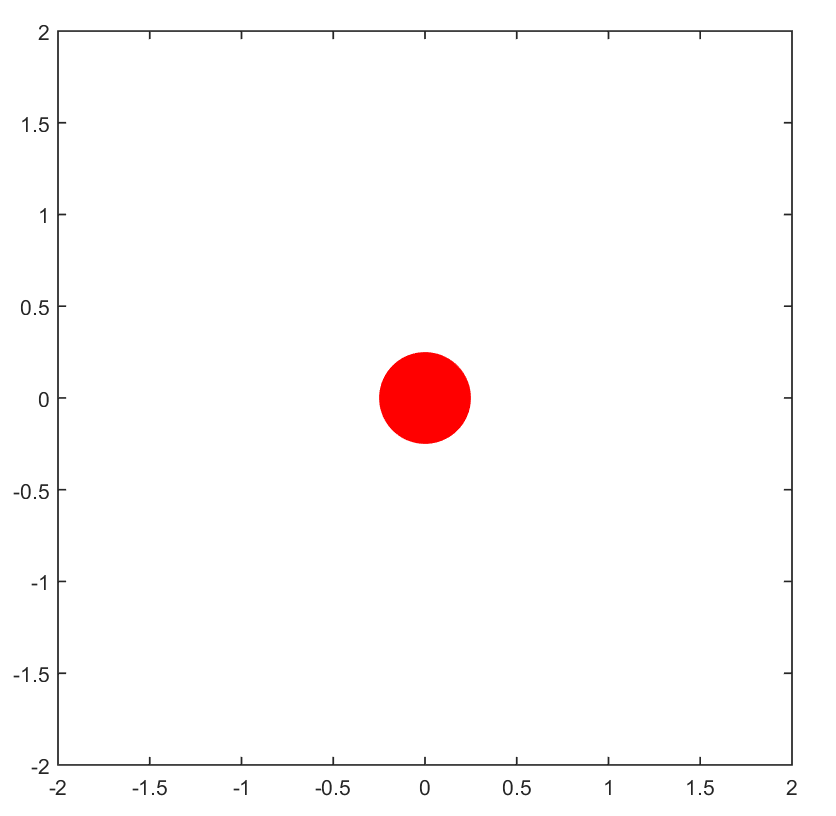}} &
\subfloat[$\epsilon=\frac{1}{8}$, $h=0$.]{\includegraphics[width=.22\linewidth]{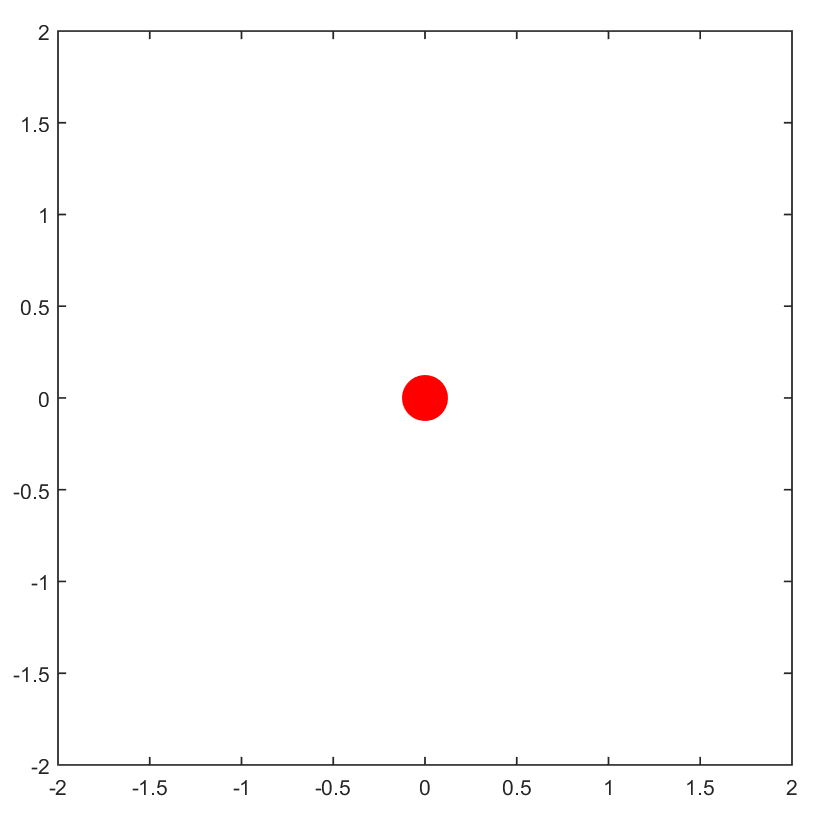}}\\
\subfloat[$\epsilon=1$, $h=\frac{1}{4}$.]{\includegraphics[width=.22\linewidth]{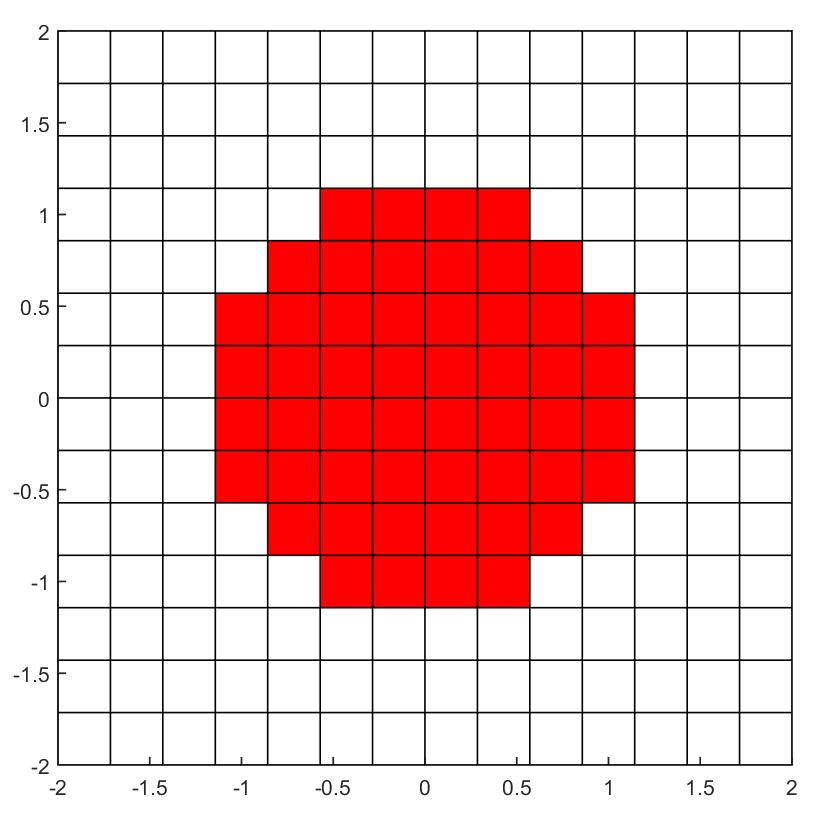}} & 
\subfloat[$\epsilon=\frac{1}{2}$, $h=\frac{1}{8}$.]{\includegraphics[width=.22\linewidth]{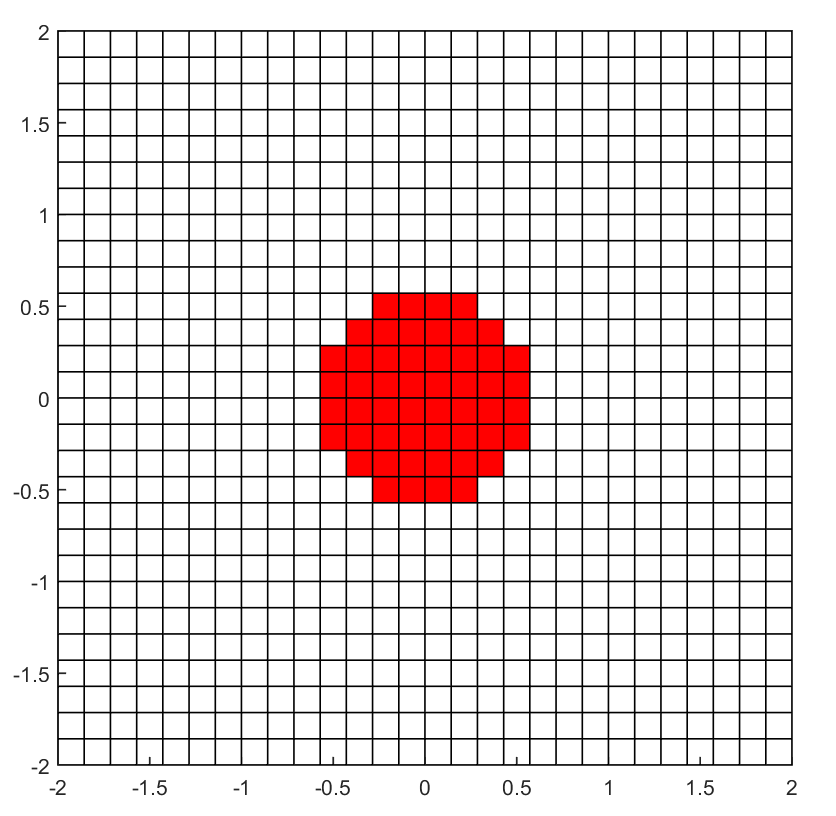}} &
\subfloat[$\epsilon=\frac{1}{4}$, $h=\frac{1}{16}$.]{\includegraphics[width=.22\linewidth]{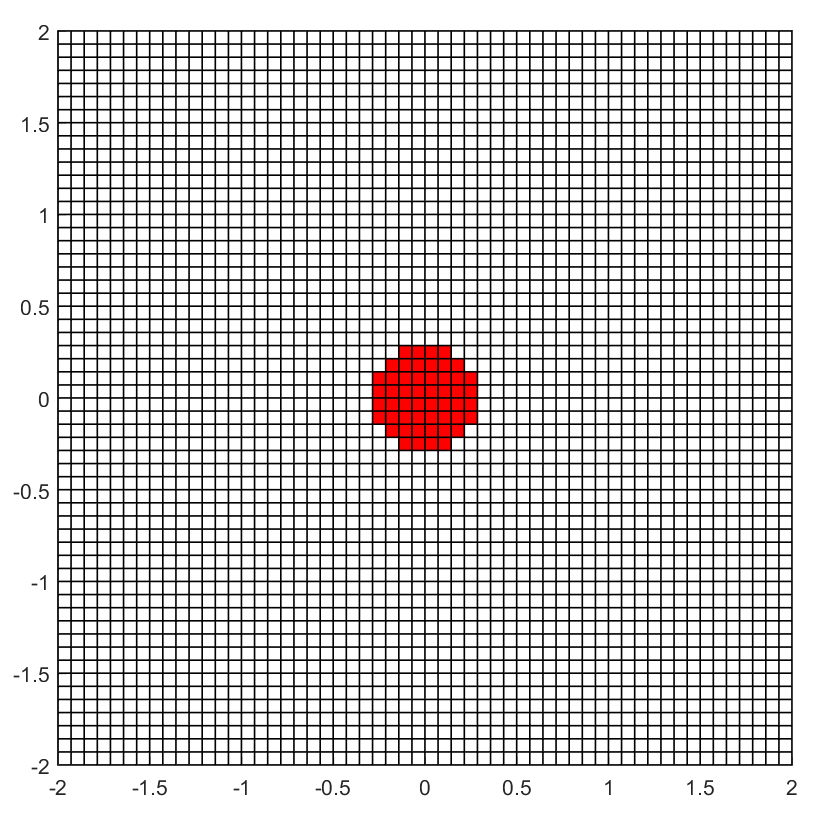}} &
\subfloat[$\epsilon=\frac{1}{8}$, $h=\frac{1}{32}$.]{\includegraphics[width=.22\linewidth]{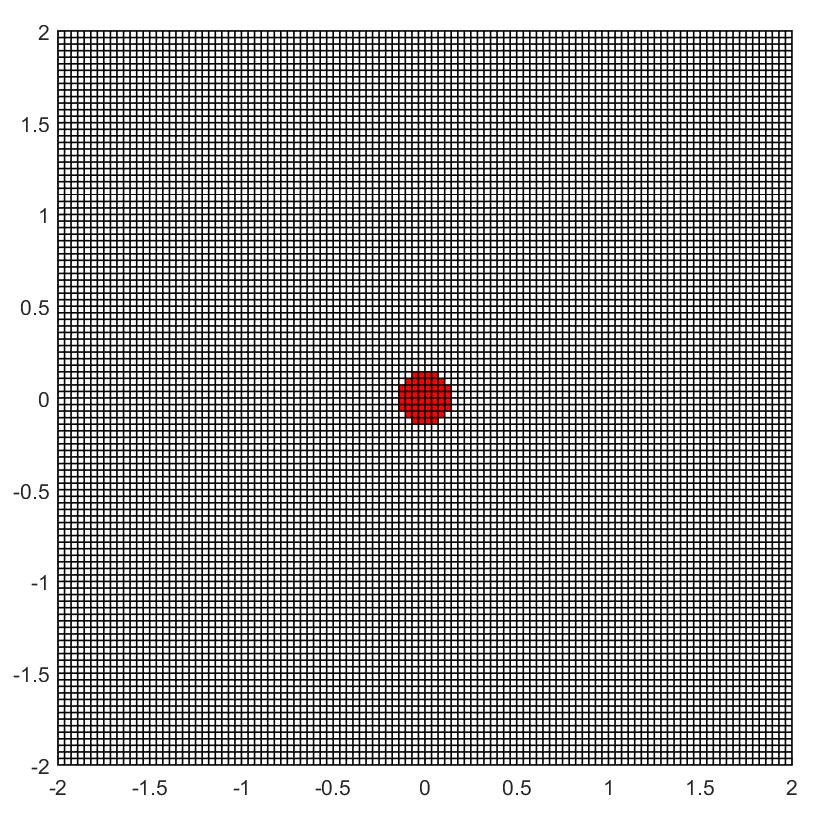}}\\
\end{tabular}
\caption{{\bf Two distinct continuum limits:}  In this paper we study two separate continuum limits of Algorithm 1, illustrated here for $A_{\epsilon,h}({\bf x})=B_{\epsilon,h}({\bf x})$.  The first, illustrated in (a)-(h), is the high-resolution vanishing viscosity double limit proposed by Bornemann and M\"arz \cite{Marz2007}, in which $h \rightarrow 0 $ (a)-(d) and then $\epsilon \rightarrow 0$ (e)-(h).  The second is the fixed-ratio limit single limit $(\epsilon,h) \rightarrow (0,0)$ with $r=\frac{\epsilon}{h}$ fixed proposed in our previous work \cite{Guidefill}, illustrated in (i)-(l) for $r=4$.  We will prove that while they are both valid limits of Algorithm 1, they predict very different behaviour.  Moreover, we will see that the predictions of the latter correspond much more closely than the former to the actual behaviour of Algorithm 1, especially when $r$ is small (which it typically is in applications - \cite{Marz2007} recommends $r$ between $3$ and $5$, for example).  See Theorems \ref{thm:convergence} and \ref{thm:convergence2}, as well as Sections \ref{sec:kink3main} and \ref{sec:MarzIsFar}.}
\label{fig:limits}
\end{figure}

\vskip 2mm

\noindent {\bf Motivation for ghost pixels.}  In Section \ref{sec:fiction} we will prove that any weighted sum over a set $A_{\epsilon,h}({\bf x})$ of ghost pixels is equivalent to a sum over the real pixels in $\mbox{Supp}(A_{\epsilon,h}({\bf x}))$ with equivalent weights.  While this makes ghost pixels in some sense redundant, they are useful concept.  Specifically, in Theorem \ref{thm:convergence} we will prove that the fixed ratio continuum limit described above and illustrated in Figure \ref{fig:limits} is a partial differential equation with coefficients that depend continuously on the weights $w_{\epsilon}$ and on the elements of $A_{\epsilon,h}({\bf x})$.  It will be desirable to control this limit  by making suitable choices for $w_{\epsilon}$ and $A_{\epsilon,h}({\bf x})$, and this is easier if the elements of the latter may be varied continuously.

\subsection{Contributions}  Our contributions are both theoretical and practical, aimed at a deeper understanding of the mathematical properties of both the direct and semi-implicit versions of Algorithm 1, and through that understanding, a better grasp of the underlying causes of the artifacts described above and what, if anything, can be done about them.  Our main targets are ``kinking'' and ``blurring'' artifacts, the others having already been thoroughly analyzed in \cite{Marz2007,Marz2015} and well understood.  Firstly, we establish convergence in $L^p$ for $p \in [1,\infty]$ of $u_h$ produced by either form of Algorithm 1 to the fixed-ratio continuum limit described above and illustrated in Figure \ref{fig:limits}, and obtain tight bounds on the rate of convergence, albeit under strong simplifying assumptions including a rectangular inpainting domain.  This is a step overlooked in the analysis of Bornemann and M\"arz \cite{Marz2007}, who take for granted that $u_h$ convergences as $h \rightarrow 0$, and perform all of their analysis in the continuum (they are, however, able to tackle more general inpainting domains as a result).  Moreover, unlike the analysis in \cite{Marz2007} and in our previous work \cite{Guidefill}, which both made strong smoothness assumptions about the given image $u_0$, our analysis makes very weak assumptions regarding the regularity of $u_0$.  We are, in particular, able to prove convergence when $u_0$ has jump discontinuities, is nowhere differentiable, or both.  Second, we analyze mathematically the effect of ``ghost pixels'' in $A_{\epsilon,h}({\bf x})$ lying between pixel centers on the resulting continuum limit.  This is something we did not do in our earlier work \cite{Guidefill}, but which is required to understand Guidefill's relative success in avoiding ``kinking artifacts''.  Third, we aim to understand the phenomena of ``kinking'' from broader perspective - we prove that not only Guidefill, but {\em any} variant of the direct version of Algorithm 1 {\em must} create ``kinking'' artifacts when the angle between the guidance direction ${\bf g}$ and the tangent to the inpainting domain boundary becomes smaller than a universal threshold dependent only on the radius $r$ (at least, under the same assumptions under which our limit was derived).  However, fourth, we also propose a semi-implicit version of Algorithm 1, in which pixels on the current inpainting boundary are inpainted simultaneously by solving a linear system.  As far as we know this idea has not yet been proposed in the literature.  We prove that the previous result is not true for the semi-implicit form of Algorithm 1, and prove that this extension applied to Guidefill ``kinks'' only if ${\bf g}$ is {\em exactly} parallel to $\partial D_h$.
Fifth, we tackle the problem of blur.  In particular, by considering an {\em asymptotic} limit (distinct from our continuum limit) where $h$ is small but non-zero, we are able to make quantitative predictions regarding the severity of blur artifacts.  Although a proof currently alludes us, this asymptotic limit is supported by strong numerical evidence.  Sixth, we fill a gap in the literature by proving convergence is also possible to the original limit proposed by Bornemann and M\"arz \cite{Marz2007}, but with a nuance:  to guarantee convergence, it is not enough that $h \rightarrow 0$ and $\epsilon \rightarrow 0$, what is required is that this occur while the ratio $r = \epsilon/h$ simultaneously diverges to infinity.  This observation helps us to put some mathematical rigor into our claim that our fixed ratio limit gives a better indication of the behaviour of Algorithm 1 in practice (where one typically fixes $r$ as some small non-negative integer).

\section{Review of main methods} \label{sec:review}

Here we briefly review the main inpainting methods of the general form sketched in Algorithm 1.

\vskip 2mm
\noindent {\bf Telea's algorithm.}  The earliest algorithm (to our knowledge) appearing in the literature and of the form sketched in Algorithm 1, Telea's algorithm \cite{Telea2004} is also the only such algorithm to use a different formula for $u_h({\bf x})$ than the expression \eqref{eqn:update} appearing in Algorithm 1 (see Remark \ref{remark:telea}).  Instead of computing $u_h({\bf x})$ a weighted average of $u_h({\bf y})$ evaluated at nearby already filled pixels ${\bf y}$, it takes a weighted average of the {\em predictions} that each of these pixels makes, based on linear extrapolation, for $u_h({\bf x})$.  That is,
\begin{equation} \label{eqn:Telea}	
u_h({\bf x}) = \frac{\sum_{{\bf y} \in B_{\epsilon,h}({\bf x}) \cap (\Omega_h \backslash D^{(k)}_h )} w_{\epsilon}({\bf x},{\bf y}) (u_h({\bf y})+\nabla_h u_h({\bf y}) \cdot ({\bf x}-{\bf y}))  }{\sum_{{\bf y} \in B_{\epsilon,h}({\bf x})\cap (\Omega_h \backslash D^{(k)}_h )} w_{\epsilon}({\bf x},{\bf y})},
\end{equation}
where $\nabla_h u_h({\bf y})$ denotes the centered difference approximation to the gradient of $u_h$ at ${\bf y}$, that is
$$\nabla_h u_h({\bf y}) :=  \frac{1}{2}\left(u_h({\bf y}+e_1)-u_h({\bf y}-e_1),u_h({\bf y}+e_2)-u_h({\bf y}-e_2)\right).$$
As we have already noted in Remark \ref{remark:telea}, this approach has a disadvantage in that it results in the loss of the stability property \eqref{eqn:stability}.  Moreover, the ``predictions'' 
$$u_{\mbox{predicted}}({\bf x}) := u_h({\bf y})+\nabla_h u_h({\bf y}) \cdot ({\bf x}-{\bf y})$$
can become highly inaccurate when ${\bf y}$ is on an edge $\Omega_h \backslash D^{(k)}_h$, leading to significant over or undershooting, visible as bright or dark spots as in Figure \ref{fig:stability} and Figure \ref{fig:artifacts2}.  Perhaps in recognition of this, the gradient term was dropped from \eqref{eqn:Telea} in all subsequent algorithms.  The weights in this case are 
$$w_{\epsilon}({\bf x},{\bf y}) = \mbox{dir}({\bf x},{\bf y}) \cdot \mbox{dst}({\bf x},{\bf y}) \cdot \mbox{lev}({\bf x},{\bf y}),$$
where
$$\mbox{dir}({\bf x},{\bf y}) := \frac{{\bf x}-{\bf y}}{\|{\bf x}-{\bf y}\|} \cdot N({\bf x}), \hskip 1mm 
\mbox{dst}({\bf x},{\bf y}) := \frac{d_0^2}{\|{\bf x}-{\bf y}\|^2}, \hskip 1mm 
\mbox{lev}({\bf x},{\bf y}) :=\frac{T_0}{1+|T({\bf y})-T({\bf x})|},$$
and $T({\bf x})$ denotes the Euclidean distance from ${\bf x}$ to the (original) boundary of the inpainting domain, and $N({\bf x}) = \nabla_h T({\bf x})$ (estimated based on central differences).  $T$ is precomputed using the fast marching method.  Telea's algorithm uses the default onion shell ordering, that is ``$\mbox{ready}({\bf x}) \equiv \mbox{true}$''.

\vskip 2mm

\noindent {\bf Coherence transport.}  Coherence transport \cite{Marz2007} improves upon Telea's algorithm by adapting the weights in order to encourage extrapolation of isophotes in the direction of their tangent.  This is done by calculating a ``local coherence direction'' ${\bf g}({\bf x})$ in terms of a modified structure tensor.  Coherence transport calculates the color of a given pixel to be filled using the formula \eqref{eqn:update} in Algorithm 1 with weights
\begin{equation} \label{eqn:weight}
w_{\epsilon}({\bf x},{\bf y}) = \frac{1}{\|{\bf y}-{\bf x}\|}\exp\left(-\frac{\mu^2}{2\epsilon^2}({\bf g}^{\perp}({\bf x}) \cdot ({\bf y}-{\bf x}))^2\right),
\end{equation}		
and with $A_{\epsilon,h}({\bf x}) = B_{\epsilon,h}({\bf x})$ - see Figure \ref{fig:ballRotate}(a) and Figure \ref{fig:ballRotate}(c).  Like Telea's algorithm, coherence transport uses the default onion shell ordering, that is ``$\mbox{ready}({\bf x}) \equiv \mbox{true}$''.

\vskip 2mm

\noindent {\bf Coherence transport with adapted distance functions.}  In a subsequent work \cite{Marz2011}, M\"arz made improvements to coherence transport by replacing the default onion shell ordering with one based on a variety of non-Euclidean distance functions.  One such distance function defines an ``active boundary'' $\Gamma_h \subseteq \partial D_h$ defined by
$$\Gamma_h := \{ \partial D_h : \langle{\bf g}({\bf x}), {\bf N}({\bf x}) \rangle^2 > \gamma \}$$
where $\gamma > 0$ is a small constant.  The non-Euclidean distance to boundary $T^*_h$ is then computed as the Euclidean distance to the active boundary.  The algorithm is modified so that at any given iteration, only a subset of boundary pixels are filled - namely those minimizing $T^*_h$.  That is
$$\mbox{ready}({\bf x}) = \mbox{true} \Leftrightarrow {\bf x} \in \operatorname{argmin}_{{\bf y} \in \partial D_h} T^*_h({\bf y}).$$
This adaptation leads to improvements in the long range extrapolation of isophotes, as in Figure \ref{fig:cutoff}.

\begin{figure}
\centering
\begin{tabular}{ccc}
\subfloat[$A_{\epsilon,h}({\bf x})=B_{\epsilon,h}({\bf x})$.]{\includegraphics[width=.28\linewidth]{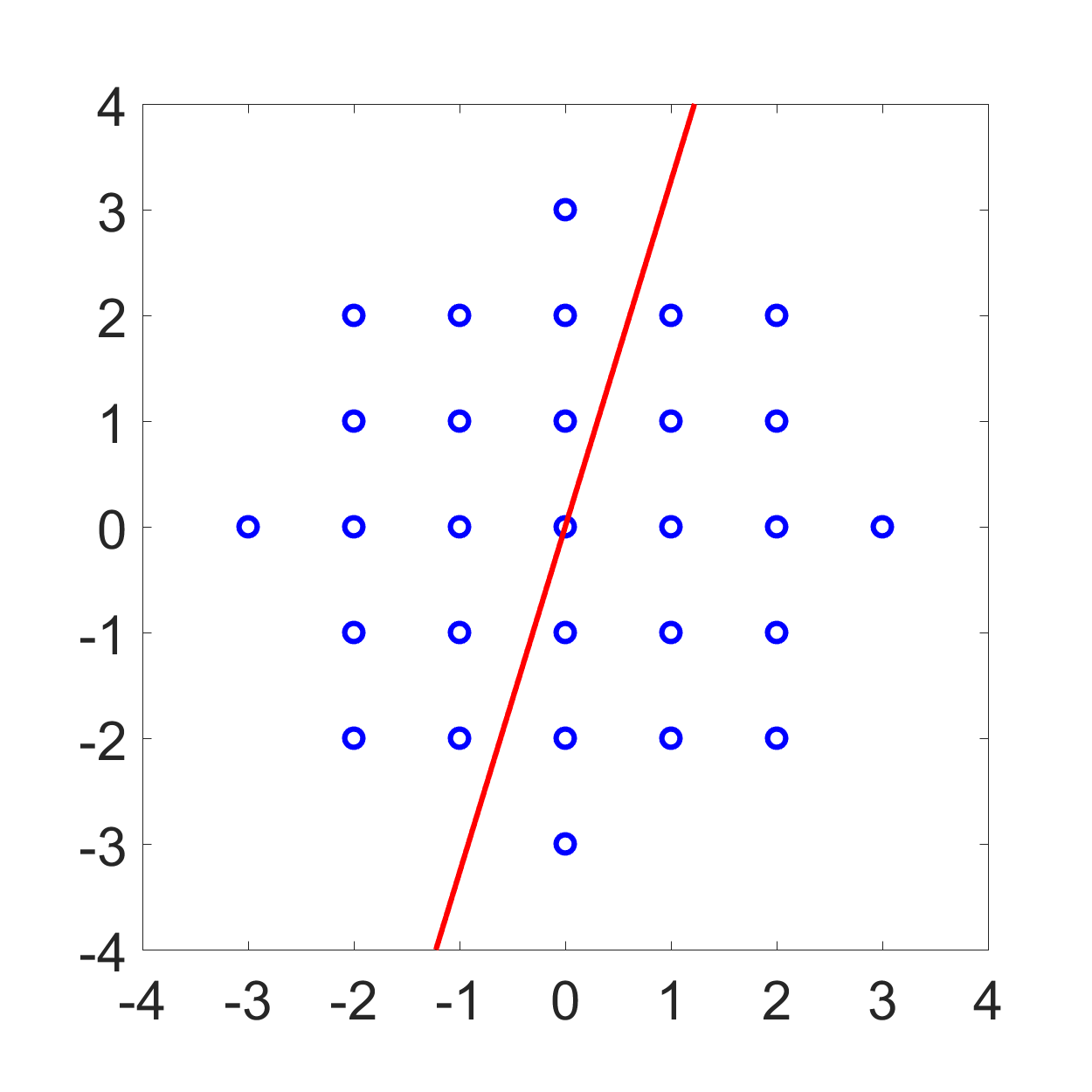}} & 
\subfloat[$A_{\epsilon,h}({\bf x})= \tilde{B}_{\epsilon,h}({\bf x})$.]{\includegraphics[width=.28\linewidth]{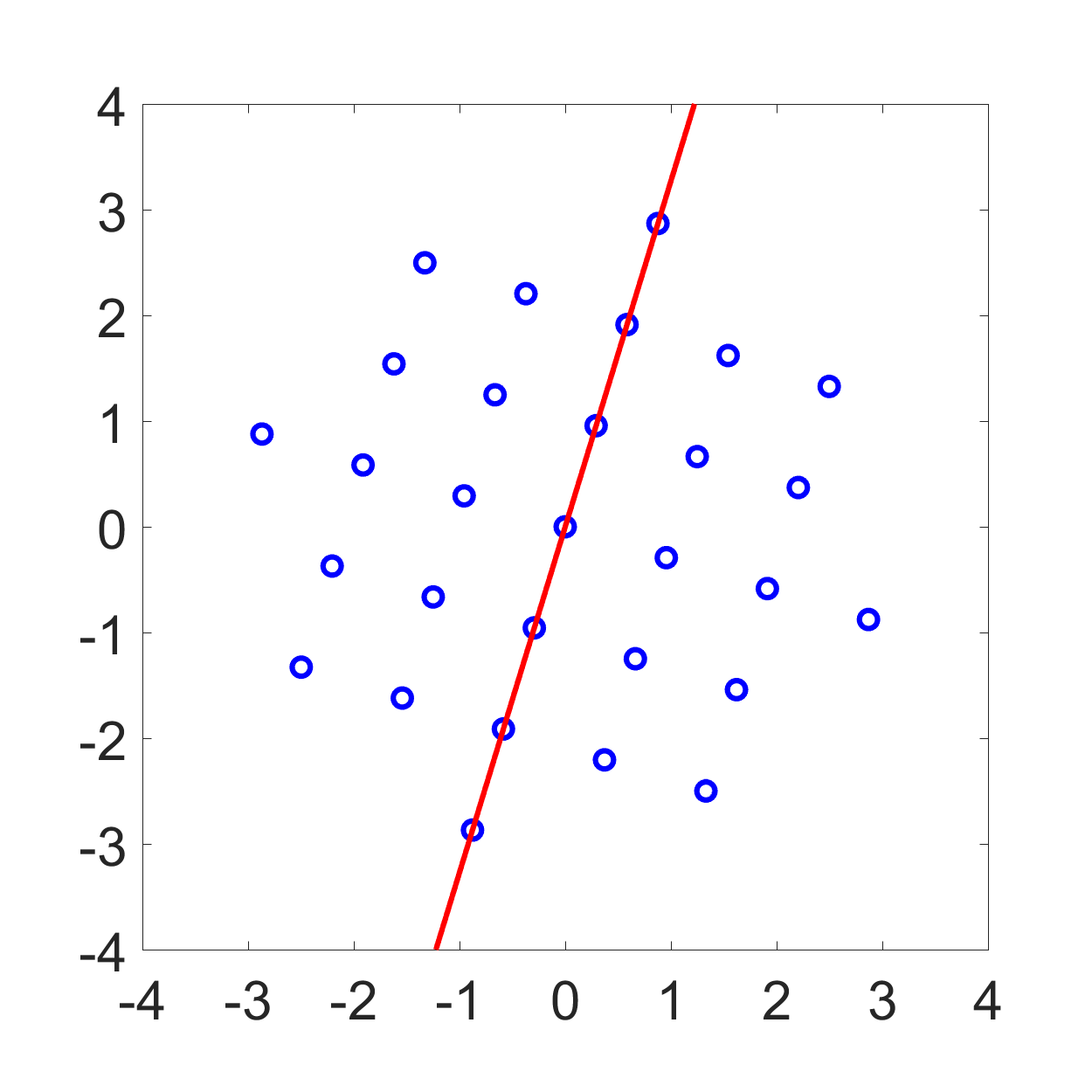}} &
\subfloat[Illustration of the (normalized) weights \eqref{eqn:weight} for $\mu = 10$.]{\includegraphics[width=.33\linewidth]{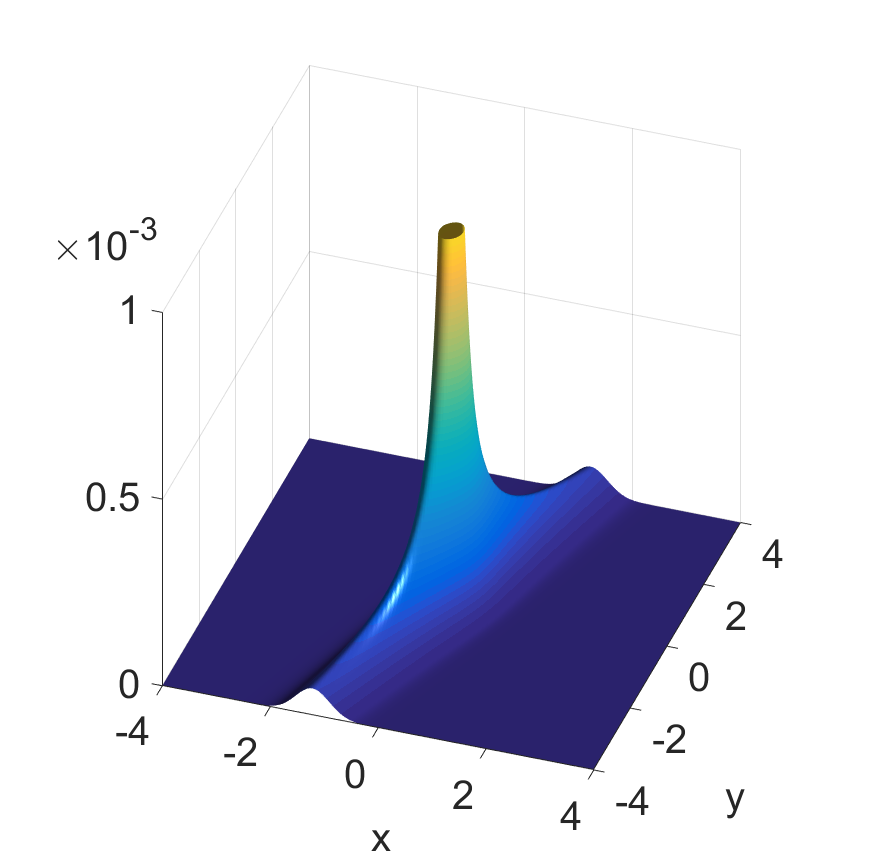}}\\
\end{tabular}
\caption{{\bf Neighborhoods and weights for coherence transport and Guidefill:} Here we illustrate the neighborhoods $A_{\epsilon,h}({\bf x})$ and weights \eqref{eqn:weight} used by coherence transport and Guidefill.  In each case $\epsilon = 3$px and ${\bf g}({\bf x})=(\cos73^{\circ},\sin73^{\circ})$.  Coherence transport (a) uses the lattice-aligned discrete ball $A_{\epsilon,h}({\bf x})=B_{\epsilon,h}({\bf x})$, while Guidefill (b) uses the rotated discrete ball $A_{\epsilon,h}({\bf x})=\tilde{B}_{\epsilon,h}({\bf x})$.  The ball $\tilde{B}_{\epsilon,h}({\bf x})$ is rotated so that it is aligned with the line $L$ (shown in red) passing through ${\bf x}$ parallel to ${\bf g}({\bf x})$.  In general $\tilde{B}_{\epsilon,h}({\bf x})$ contains ``ghost pixels'' lying between pixel centers, which are defined using bilinear interpolation of their ``real'' pixel neighbors.  Both use the same weights \eqref{eqn:weight} illustrated in (c).  The parameter $\mu$ controls the extent to which the weights are biased in favor of points lying on or close to the line $L$.}
\label{fig:ballRotate}
\end{figure}

\vskip 2mm

\noindent {\bf Guidefill.}  Guidefill \cite{Guidefill} is a recent inpainting algorithm designed to address, among other things, the kinking issues in Figure \ref{fig:specialDir}(b) and Figure \ref{fig:taleOfTwo}.  While coherence transport is able to extrapolate along guidance direction ${\bf g}({\bf x})$ only if ${\bf g}({\bf x}) = \lambda ({\bf v}-{\bf x})$ for some ${\bf v} \in B_{\epsilon,h}({\bf x})$ (see Figure \ref{fig:specialDir}(b)), Guidefill replaces the lattice aligned discrete ball $B_{\epsilon,h}({\bf x})$ with the {\em rotated discrete ball} $\tilde{B}_{\epsilon,h}({\bf x})$ aligned with the local transport direction ${\bf g}({\bf x})$, so that ${\bf g}({\bf x}) = \lambda ({\bf v}-{\bf x})$ for some ${\bf v} \in \tilde{B}_{\epsilon,h}({\bf x})$ is {\em always} true.   The rotated ball $\tilde{B}_{\epsilon,h}({\bf x})$ contains ``ghost pixels'' lying between pixel centers which are defined using bilinear interpolation.  See Section \ref{sec:fiction} for a deeper discussion of ghost pixels, as well as Figure \ref{fig:ballRotate}(a)-(b) for an illustration of $B_{\epsilon,h}({\bf x})$ and $\tilde{B}_{\epsilon,h}({\bf x})$.

Guidefill uses the same weights \eqref{eqn:weight} as coherence transport (illustrated in Figure \ref{fig:ballRotate}(c)) and similarly to the latter's extension \cite{Marz2011}, it has a way of automatically determining a good fill order.  Unlike coherence transport which computes ${\bf g}({\bf x})$ concurrently with inpainting, Guidefill computes a guide field ${\bf g}({\bf x}) : D_h \rightarrow \field{R}^2$ prior to inpainting.  The guide field is computed based on splines which the user may adjust in order to influence the results.  It is used to automatically compute a good fill order by computing for each ${\bf x} \in \partial D_h$ a confidence $C({\bf x}) \in [0,1]$ inspired by Criminisi et al. \cite{Criminisi04regionfilling} and given by
\begin{equation} \label{eqn:confidence}
C({\bf x})=\frac{\sum_{{\bf y} \in \tilde{B}_{\epsilon,h}({\bf x}) \cap (\Omega \backslash D^{(k)}  )}w_{\epsilon}({\bf x},{\bf y})}{\sum_{{\bf y} \in \tilde{B}_{\epsilon,h}({\bf x}) }w_{\epsilon}({\bf x},{\bf y})},
\end{equation}
and then only filling those pixels for which $C({\bf x}) > c$, where $ c \in (0,1)$ is a small constant.  That is 
\begin{equation} \label{eqn:ready}
\mbox{ ready}({\bf x}) = 1( C({\bf x}) > c)
\end{equation}
Guidefill was designed for use as part of a 3D conversion pipeline, and as such makes use of a set $B_h$ of ``bystander pixels'' which are neither inpainted nor may be used for inpainting.  However, this is not relevant to our current investigation and we will assume $B_h = \emptyset$ throughout.  As shown in Figure \ref{fig:specialDir}(c) - Guidefill is able to largely, but not completely, eliminate kinking artifacts.  It was in the hope of overcoming this that we designed the semi-implicit version of Algorithm 1 discussed in Section \ref{sec:semiImplicit}.

\begin{remark} \label{rem:punctured}

Note that we have deliberately excluded the point ${\bf x}$ from the update formula \eqref{eqn:update} in Algorithm 1, even if the set $A_{\epsilon,h}({\bf x})$ contains ${\bf x}$.  This is {\em not} done in any of the methods \cite{Telea2004,Marz2007,Marz2011,Guidefill} we have just discussed, but it makes no difference to them or any other variant of the direct form of Algorithm 1, because the subroutine FillRow only involves sums taken over $A_{\epsilon,h}({\bf x}) \cap (\Omega \backslash D^{(k)})$, which never contains ${\bf x}$.  However, the semi-implicit extension of Algorithm 1 expresses $u_h({\bf x})$ as a sum of $u_h({\bf y})$ over a set of points that might include ${\bf x}$.  This creates problems with weights such as \eqref{eqn:weight} for which $w_{\epsilon}({\bf x},{\bf x})=\infty$.  See Appendix \ref{app:punctured} for further details.

\end{remark}

\section{Ghost pixels and equivalent weights} \label{sec:fiction}  Because ghost pixels are defined using bilinear interpolation, any sum over a finite set of ghost pixels $A({\bf x})$ can be converted into a sum over an equivalent set of real pixels with equivalent weights\footnote{note that here we mean a general family of finite sets $A({\bf x}) \in \field{R}^2$ and general weights $w({\bf x},{\bf y})$.  We do not mean the specific family of sets $A_{\epsilon,h}({\bf x})$ or the specific weights $w_{\epsilon}({\bf x},{\bf y})$, which have special properties. }, that is
$$\sum_{{\bf y} \in A({\bf x})} w({\bf x},{\bf y}) u_h({\bf y}) = \sum_{{\bf y} \in \mbox{Supp}(A({\bf x}))} \tilde{w}({\bf x},{\bf y}) u_h({\bf y})$$
where $\mbox{Supp}(A({\bf x}))$ denotes the set of real pixels needed to define $u_h({\bf y})$ for each ${\bf y} \in A({\bf x})$ and $\tilde{w}$ denotes a set of equivalent weights.  This works because each $u_h({\bf y})$ is itself a weighted sum of the form
$$u_h({\bf y}) = \sum_{{\bf z} \in \field{Z}_h^2}\Lambda_{{\bf z},h}({\bf y})u_h({\bf z}),$$
where $\{ \Lambda_{{\bf z},h} \}_{{\bf z} \in \field{Z}^2_h}$ denote the basis functions of bilinear interpolation associated with the lattice $\field{Z}^2_h$.  This is illustrated in Figure \ref{fig:equivWeights}(a)-(b), where we show a heat map of the weights \eqref{eqn:weight} over the set $\tilde{B}_{\epsilon,h}({\bf x}) \backslash \{ {\bf x}\}$ for $\mu = 50$ and $\epsilon=3$px, as well as a similar heat map of the modified weights over $\mbox{Supp}(\tilde{B}_{\epsilon,h}({\bf x})\backslash \{ {\bf x}\}) \subseteq D_h(B_{\epsilon,h}({\bf x}))$.  Note that even though $\tilde{B}_{\epsilon,h}({\bf x})\backslash \{ {\bf x}\}$ does not contain the point ${\bf x}$, the support of this set does.  This will be important in Section \ref{sec:semiImplicit}.
Here we briefly list some properties of equivalent weights, including an explicit formula.  A proof is sketched, but details are deferred to Appendix \ref{app:ghostPixels}.
\begin{figure}
\centering
\begin{tabular}{ccc}
\subfloat[Heatmap of weights \eqref{eqn:weight} over $\tilde{B}_{\epsilon,h}({\bf x}) \backslash \{ {\bf x} \}$.]{\includegraphics[width=.3\linewidth]{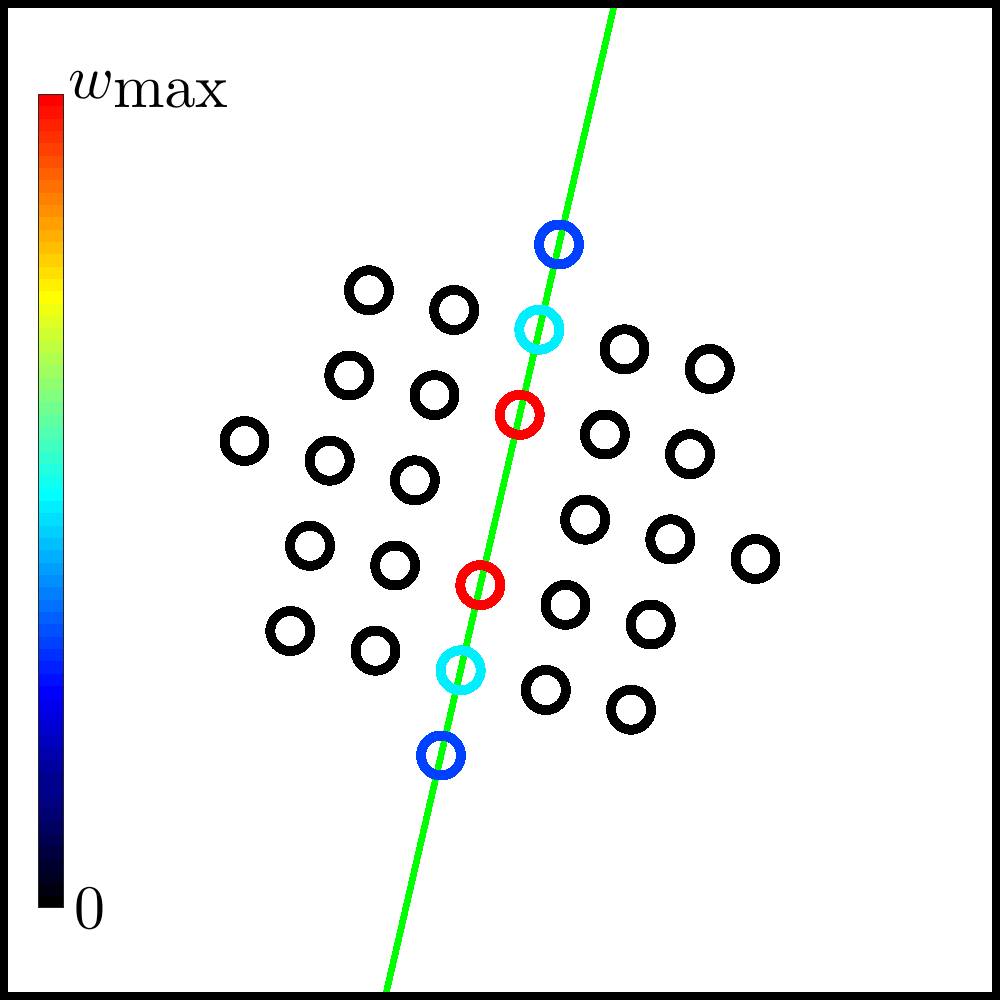}} & 
\subfloat[Heatmap of equivalent weights over $\mbox{Supp}(\tilde{B}_{\epsilon,h}({\bf x})\backslash \{ {\bf x} \})$.]{\includegraphics[width=.3\linewidth]{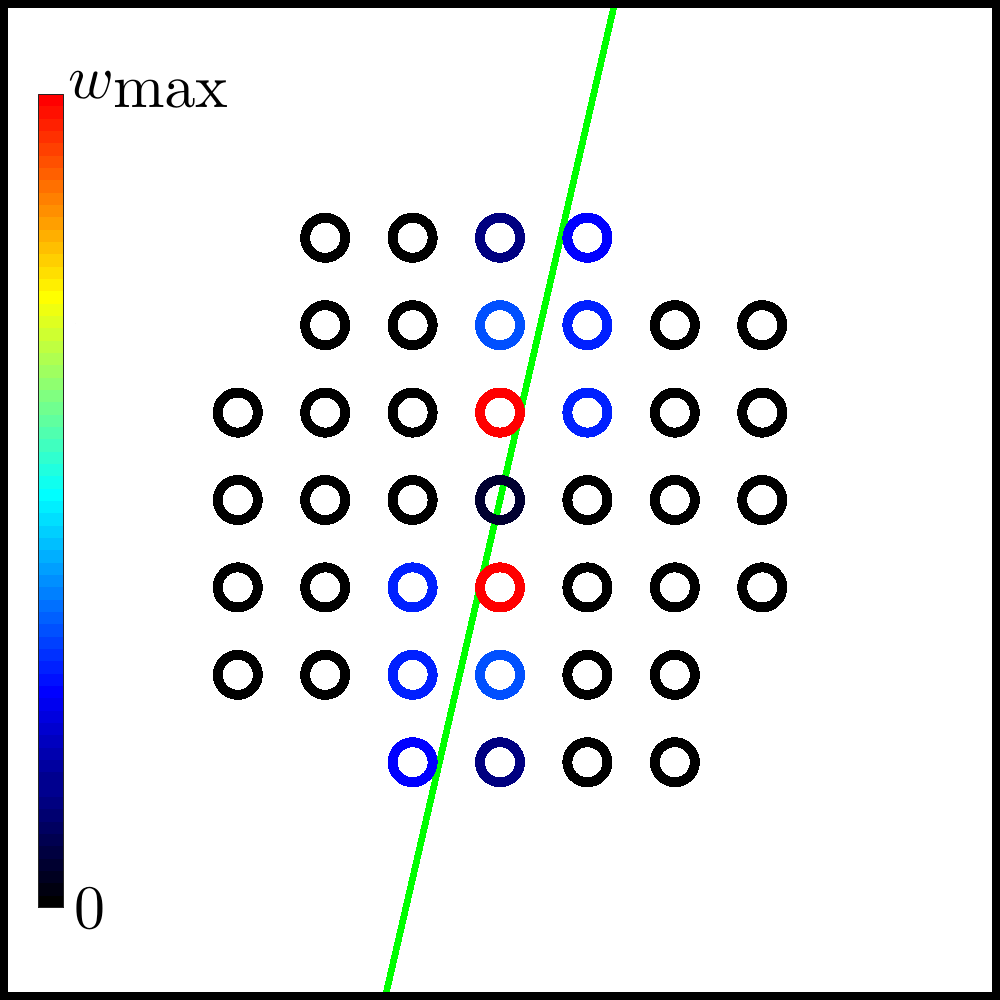}} &
\subfloat[Heatmap of equivalent weights over $D_h(B_{\epsilon,h}({\bf x}))$.]{\includegraphics[width=.3\linewidth ]{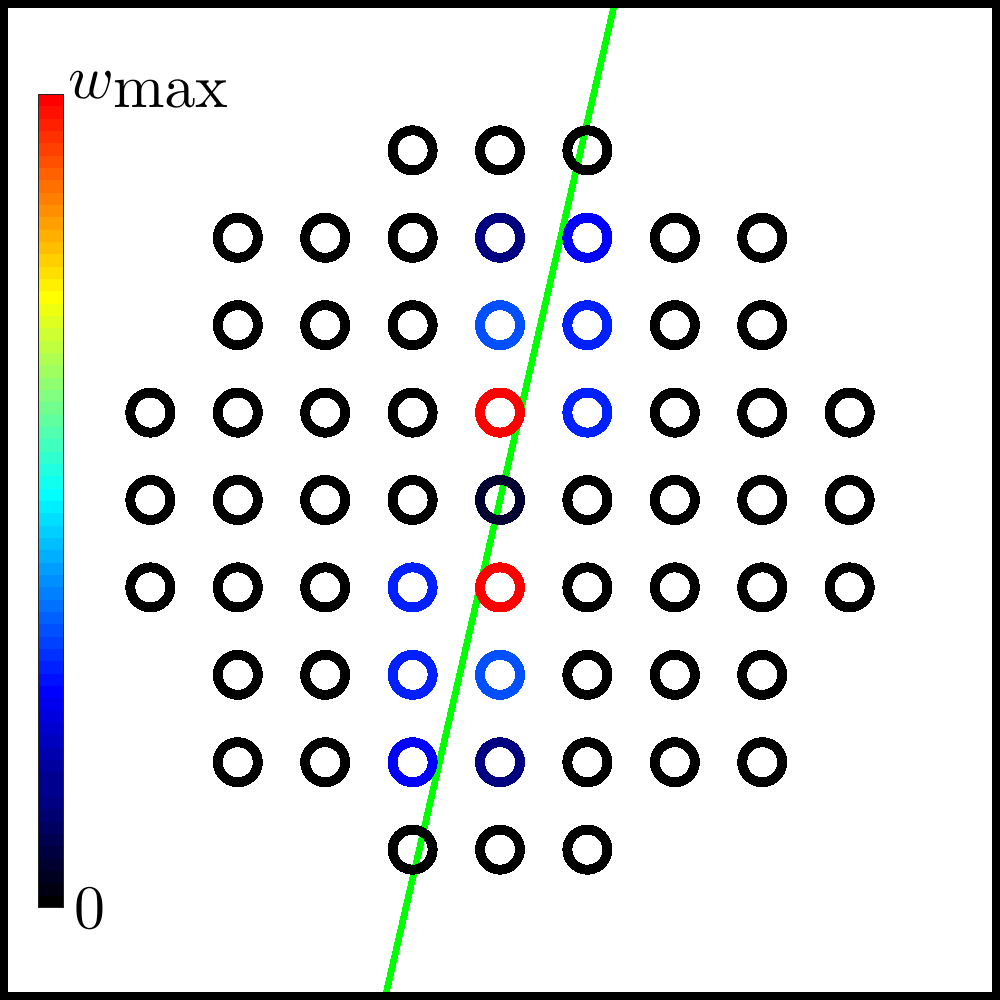}}\\
\end{tabular}
\caption{{\bf Ghost pixels and equivalent weights:}  Because ghost pixels are defined using bilinear interpolation, any weighted sum over a set of ghost pixels $A_{\epsilon,h}({\bf x})$ 
is equivalent to a sum with equivalent weights over real pixels in the set $\mbox{Supp}(A_{\epsilon,h}({\bf x}))$, defined as the set of real pixels needed to define each ghost pixel ${\bf y}$ in $A_{\epsilon,h}({\bf x})$. We illustrate this in (a)-(c) using Guidefill with $\epsilon = 3$px, ${\bf g}=(\cos77^{\circ},\sin77^{\circ})$, and $\mu = 100$. In (a), the (normalized) weights \eqref{eqn:weight} are visualized as a heat map over $\tilde{B}_{\epsilon,h}({\bf x}) \backslash \{ {\bf x} \}$. In (b), we show the equivalent weights over $\mbox{Supp}(\tilde{B}_{\epsilon,h}({\bf x}) \backslash \{ {\bf x} \} ) \subseteq D_h(B_{\epsilon,h}({\bf x}))$ (this containment comes from \eqref{eqn:ballContain} in Remark \ref{rem:equivWeights2}).  Note that even though $\tilde{B}_{\epsilon,h}({\bf x}) \backslash \{ {\bf x} \}$ does not contain the point ${\bf x}$, the support of this set does.  In (c), we visualize the equivalent weights over the set $D_h(B_{\epsilon,h}({\bf x}))$, which is strictly larger than $\mbox{Supp}(\tilde{B}_{\epsilon,h}({\bf x}) \backslash \{ {\bf x} \} )$.  For reference, we include the line parallel to ${\bf g}$ in green.}
\label{fig:equivWeights}
\end{figure}





\vskip 1mm
{\bf Properties of equivalent weights}  Properties 1-3 deal with a general finite set $A({\bf x})$ and general weights $w({\bf x},{\bf y})$, while properties 4-6 deal with the specific set $A_{\epsilon,h}({\bf x}) \subset B_{\epsilon}({\bf x})$ and the specific weights $w_{\epsilon}({\bf x},{\bf y})$ obeying \eqref{eqn:technical}.
\begin{enumerate}
\item Explicit formula:
\begin{equation} \label{eqn:explicitFormula}
\tilde{w}({\bf x},{\bf z}) = \sum_{{\bf y} \in A({\bf x})} \Lambda_{{\bf y},h}({\bf z})w({\bf x},{\bf y})
\end{equation}
\item Preservation of total mass:
\begin{equation} \label{eqn:totalMass}
\sum_{{\bf y} \in A({\bf x})} w({\bf x},{\bf y}) = \sum_{{\bf y} \in \mbox{Supp}(A({\bf x}))} \tilde{w}({\bf x},{\bf y}).
\end{equation}
\item Preservation of center of mass (or first moment):
\begin{equation} \label{eqn:centerMass}
\sum_{{\bf y} \in A({\bf x})} w({\bf x},{\bf y}){\bf y} = \sum_{{\bf y} \in \mbox{Supp}(A({\bf x}))} \tilde{w}({\bf x},{\bf y}){\bf y}.
\end{equation}
\item Inheritance of non-negativity:
\begin{equation} \label{eqn:nonNegativeInheritance}
\tilde{w}_{\epsilon}({\bf x},{\bf z}) \geq  0 \quad \mbox{ for all } {\bf z} \in \mbox{Supp}(A_{\epsilon,h}({\bf x})).
\end{equation}
\item Inheritance of non-degeneracy condition \eqref{eqn:technical}:
\begin{equation} \label{eqn:technical_equiv}
\sum_{{\bf y} \in \mbox{Supp}(A_{\epsilon,h}({\bf x}) \cap (\Omega \backslash D^{(k)}))}\tilde{w}_{\epsilon}({\bf x},{\bf y}) > 0.
\end{equation}
\item Universal support:
For any $n \in \field{Z}$, we have
\begin{equation} \label{eqn:universalSupport}
\mbox{Supp}(A_{\epsilon,h}({\bf x}) \cap \{ y \leq nh\} ) \subseteq D_h(B_{\epsilon,h}({\bf x})) \cap \{ y \leq nh\}  \subseteq B_{\epsilon+2h,h}({\bf x}) \cap \{ y \leq nh\}.
\end{equation}
where $\{y \leq nh \} := \{ (x,y) \in \field{R}^2 : y \leq nh\}$, and where $D_h$ is the dilation operator defined in our section on notation.
\end{enumerate}
\vskip 1mm
\begin{proof}
Most of these properties are either obvious or are derived based on a simple exercise in changing the order of nested finite sums.  Properties \eqref{eqn:totalMass} and \eqref{eqn:centerMass} are slightly more interesting - they follow from the fact that the bilinear interpolant of a polynomial of degree at most one is just the polynomial again.  Note that an analogous formula for preservation of the second moment does {\em not} hold, because a quadratic function and its bilinear interpolant are not the same thing.  The last identity is based on an explicit formula for the support of a point.  Details are provided in Appendix \ref{app:ghostPixels}. 
\qed
\end{proof}

\vskip 2mm

\begin{remark} \label{rem:equivWeights}
Although we have explicit formula \eqref{eqn:explicitFormula} for the equivalent weights which will occasionally be useful, most of the time it is more fruitful to think about them in the following way:  To compute $\tilde{w}_{\epsilon}({\bf x},{\bf y})$ for some real pixel ${\bf y}$, loop over the ghost pixels ${\bf z}$ such that ${\bf y} \in \mbox{Supp}({\bf z})$.  Then, each such ${\bf z}$ redistributes to $\tilde{w}_{\epsilon}({\bf x},{\bf y})$ a fraction of its weight $w_{\epsilon}({\bf x},{\bf z})$ equal to the proportion of $u_h({\bf y})$ that went into $u_h({\bf z})$.  
\end{remark}

\vskip 2mm

\begin{remark} \label{rem:equivWeights2}
An obvious corollary of the universal support property \eqref{eqn:universalSupport} is that we also have the containment
\begin{equation} \label{eqn:ballContain}
\mbox{Supp}(A_{\epsilon,h}({\bf x})  ) \subseteq D_h(B_{\epsilon,h}({\bf x}))  \subseteq B_{\epsilon+2h,h}({\bf x}).
\end{equation}
Figure \ref{fig:equivWeights} illustrates an example where this containment is strict, and in fact it is not hard to show that this holds in general.  However, \eqref{eqn:universalSupport} is tight enough for our purposes in this paper.
\end{remark}

\section{Semi-implicit extension of Algorithm 1} \label{sec:semiImplicit}

Here we present a semi-implicit extension of Algorithm 1, to our knowledge not previously proposed in the literature, in which instead of computing $u_h({\bf x})$ for each ${\bf x} \in \partial_{\mbox{ready}} D^{(k)}_h$ independently, we solve for them simultaneously by solving a linear system.  We call our method semi-implicit in order to distinguish it from fully implicit methods in which the entire inpainting domain $\{u_h({\bf x}) : {\bf x} \in D_h\}$ is solved simultaneously, as is typically the case for most inpainting methods based on PDEs or variational principles, e.g. \cite{bertalmio2000image,chan2002euler,Burger2009}.  Specifically, we solve
\begin{equation} \label{eqn:Linearboundary}
\mathcal L{\bf u} = {\bf f} \quad \mbox{ where } {\bf u} = \{ u_h({\bf x}) : {\bf x} \in \partial_{\mbox{ready}} D^{(k)}_h \}.
\end{equation}
and ${\bf f}$ is a vector of length $|\partial_{\mbox{ready}} D^{(k)}_h|$.  The explicit entries of $\mathcal L$ are written in terms of the equivalent weights  $\tilde{w}_{\epsilon}$ introduced in Section \ref{sec:fiction}.  Defining
$$S^{(k)}_{\epsilon,h}({\bf x}) := \mbox{Supp}(A_{\epsilon,h}({\bf x})) \cap \partial_{\mbox{ready}} D^{(k)}_h,$$  
it follows that $\mathcal L$ couples each ${\bf x} \in  \partial_{\mbox{ready}} D^{(k)}_h$ to its immediate neighbors in $S^{(k)}_{\epsilon,h}({\bf x})$.  In particular, we have
\begin{equation} \label{eqn:Matrix}
(\mathcal L{\bf u})({\bf x}) =\left(1-\frac{\tilde{w}_{\epsilon}({\bf x},{\bf x})}{W}\right)u_h({\bf x}) - \sum_{{\bf y} \in S^{(k)}_{\epsilon,h}({\bf x}) \backslash \{ {\bf x} \}}\frac{\tilde{w}_{\epsilon}({\bf x},{\bf y})}{W}u_h({\bf y}),
\end{equation}
where $W$ is the total mass and can be computed in one of two ways, using either the original weights $w_{\epsilon}$ or the equivalent weights $\tilde{w}_{\epsilon}$, exploiting preservation of mass \eqref{eqn:totalMass}:
\begin{eqnarray}
W&:=&\sum_{{\bf y} \in S^{(k)}_{\epsilon,h}({\bf x}) \cup \left(\mbox{Supp}(A_{\epsilon,h}({\bf x})) \cap (\Omega_h \backslash D^{(k)}_h)\right) }\tilde{w}_{\epsilon}({\bf x},{\bf y}) \label{eqn:1stway} \\ 
&=&\sum_{{\bf y} \in (A_{\epsilon,h}({\bf x}) \backslash \{ {\bf x}\}) \cap (\Omega \backslash D^{(k)}) } w_{\epsilon}({\bf x},{\bf y}). \label{eqn:2ndway}
\end{eqnarray}
Generally, \eqref{eqn:2ndway} is more convenient to work with than \eqref{eqn:1stway}, but \eqref{eqn:1stway} combined with the inherited non-degeneracy condition \eqref{eqn:technical_equiv} tells us that
$$\sum_{{\bf y} \in S^{(k)}_{\epsilon,h}({\bf x})} \tilde{w}_{\epsilon}({\bf x},{\bf y}) < W,$$
because the latter implies that a non-zero proportion of the total weight goes into the known pixels in $\mbox{Supp}(A_{\epsilon,h}({\bf x})) \cap (\Omega \backslash D^{(k)}_h)$ rather than the unknown pixels in $S^{(k)}_{\epsilon,h}({\bf x})$.  From this it immediately follows that $\mathcal L$ is strictly diagonally dominant - a property we will use later.  To compute ${\bf f}$, we do not need the concept of equivalent weights.  We have
\begin{equation} \label{eqn:f}
{\bf f}({\bf x}) = \sum_{{\bf y} \in (A_{\epsilon,h}({\bf x}) \backslash \{ {\bf x} \}) \cap ( \Omega \backslash D^{(k)})}\frac{w_{\epsilon}({\bf x},{\bf y})}{W}u_h({\bf y}).
\end{equation}

\subsection{Solving the linear system}  Designing maximally efficient methods for solving \eqref{eqn:Linearboundary} is beyond the scope of this paper - our main purpose lies in understanding the effect of this extension on the continuum limit that we will derive later.  Therefore, in this paper we consider only two very simple methods:  damped Jacobi and SOR (successive over-relaxation).  These are natural choices for a number of reasons.  First, since $\mathcal L$ is strictly diagonally dominant, these methods are both guaranteed to converge \cite[Theorem 3.10, pg. 79]{varga1962matrix}, at least in the case $\omega=1$, where they reduce to Jacobi and Gauss-Seidel.  Second, at least for the semi-implicit extension of Guidefill, the performance of SOR is already satisfactory, (see Section \ref{sec:guidefillGS}, Proposition \ref{prop:rates}).  Third, both methods can be implemented with minimal changes to the direct form of Algorithm 1.  In fact, changing the variable ``semiImplicit'' from ``false'' to ``true'' in Algorithm 1 and executing the ``FillBoundary'' subroutine in parallel is equivalent to solving \eqref{eqn:Linearboundary} using damped Jacobi, with underrelaxation parameter
\begin{equation} \label{eqn:specialomega}
\omega^* = \left(1 - \frac{\tilde{w}_{\epsilon}({\bf x},{\bf x})}{W}\right) \leq 1.
\end{equation}
Similarly, executing the ``FillBoundary'' subroutine sequentially results in SOR with the same underrelaxation parameter - see Proposition \ref{prop:secretlyDampedJacobi} in Appendix \ref{app:secret} for a proof.  Note that $\omega^*$ is typically very close $1$, so these methods are very similar to plain Jacobi and Gauss-Seidel.  
\begin{remark}
The reason Algorithm 1 with ``semiImplicit'' set to ``true'' results in damped Jacobi/SOR, rather than plain Jacobi/Gauss-Seidel, is because even though the update formula \eqref{eqn:update} for the $n$th iterate $u_h^{(n)}({\bf x})$ is expressed as a sum of the $(n-1)$st iterate evaluated at {\em ghost pixels} that do not include ${\bf x}$, some of those ghost pixels may indirectly depend on $u^{(n-1)}_h({\bf x})$ because they are defined using bilinear interpolation.  The result is that the $n$th iterate $u_h^{(n)}({\bf x})$ depends on $u_h^{(n-1)}({\bf x})$, which is true of damped Jacobi/SOR but not of Jacobi/Gauss-Seidel.  
\end{remark}

\begin{figure}
\centering
\begin{tabular}{cccc}
\subfloat[Original inpainting problem, inpainting domain in yellow.]{\includegraphics[width=.22\linewidth]{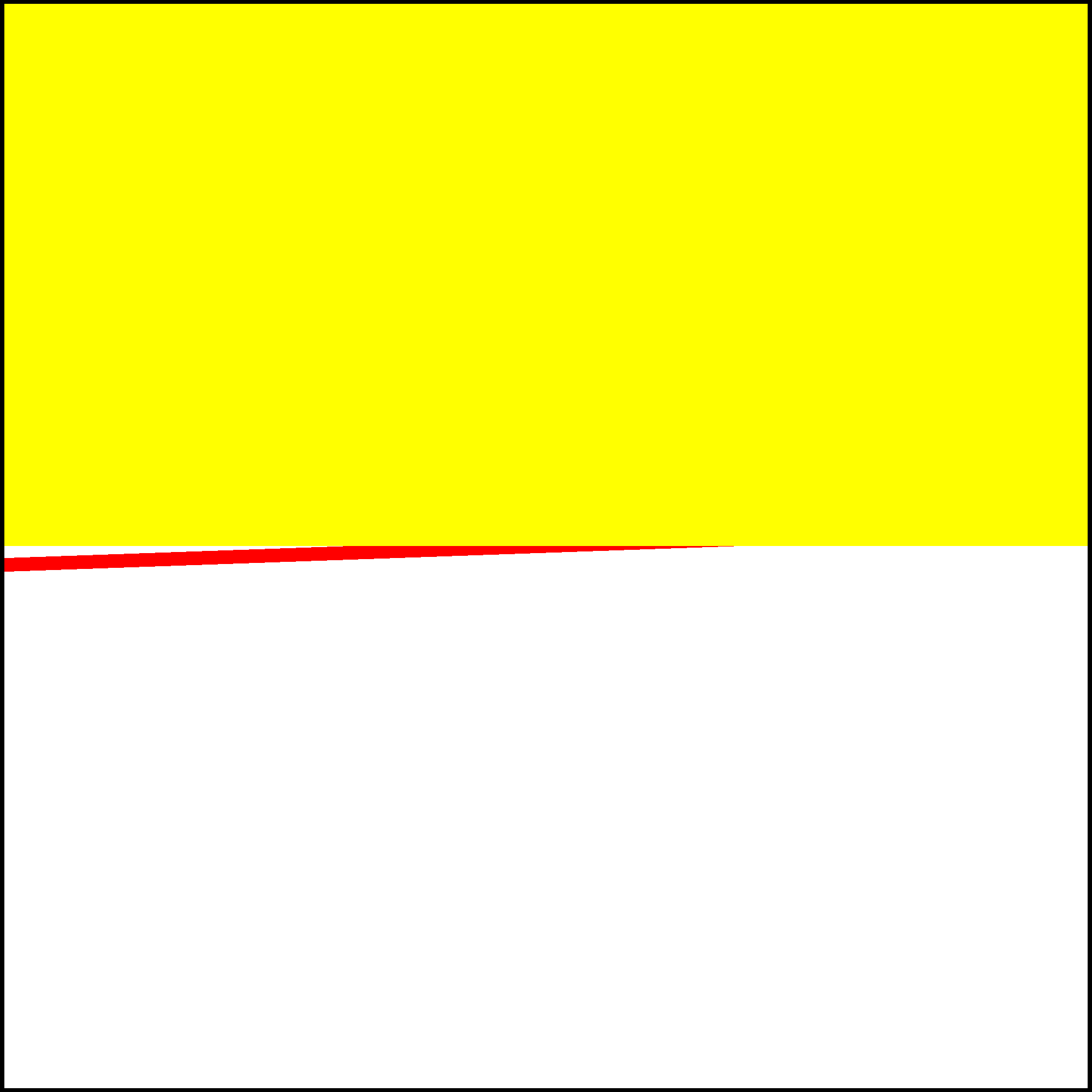}} & 
\subfloat[Inpainting with Guidefill.]{\includegraphics[width=.22\linewidth]{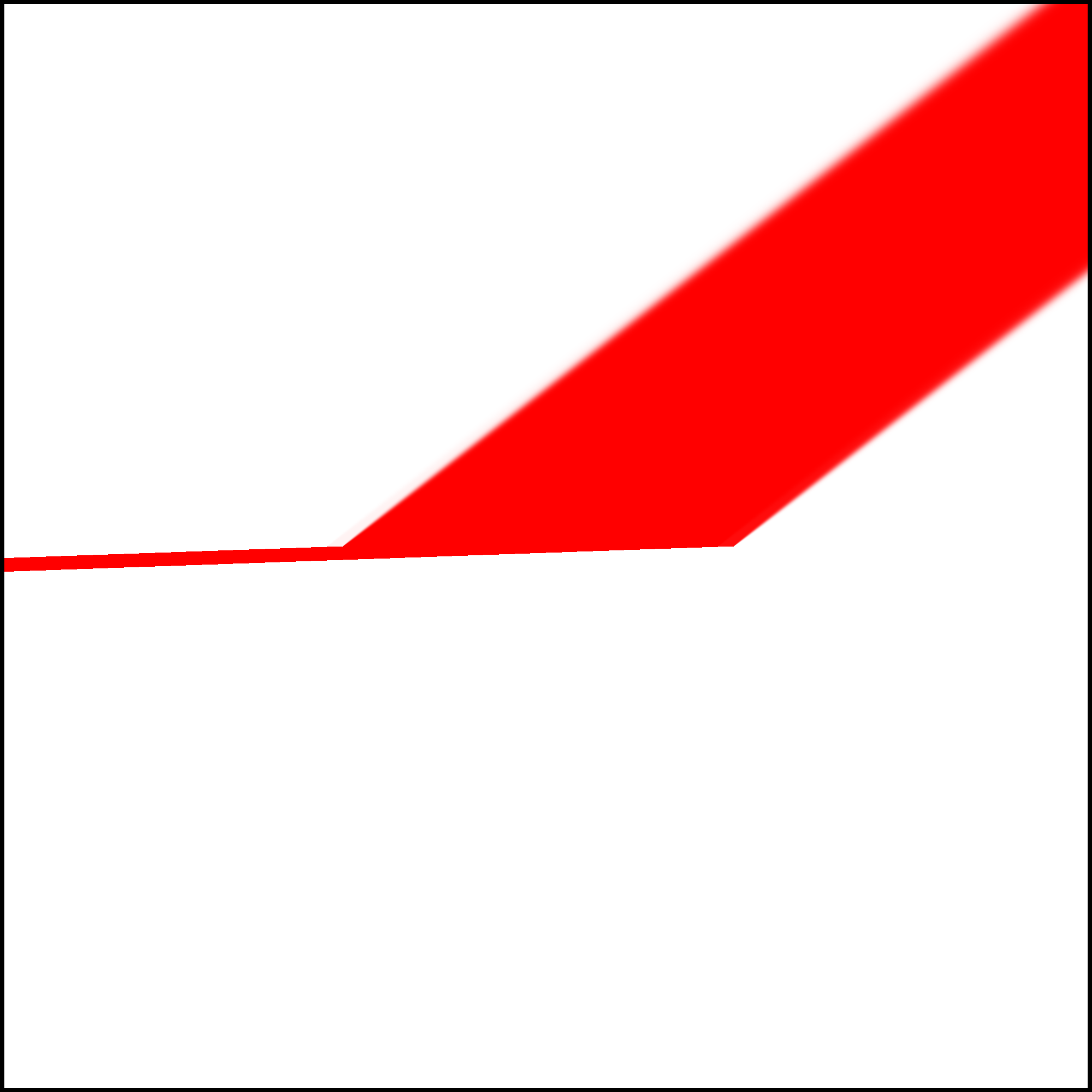}} &
\subfloat[Inpainting with semi-implicit Guidefill, 5 SOR iterations per shell.]{\includegraphics[width=.22\linewidth ]{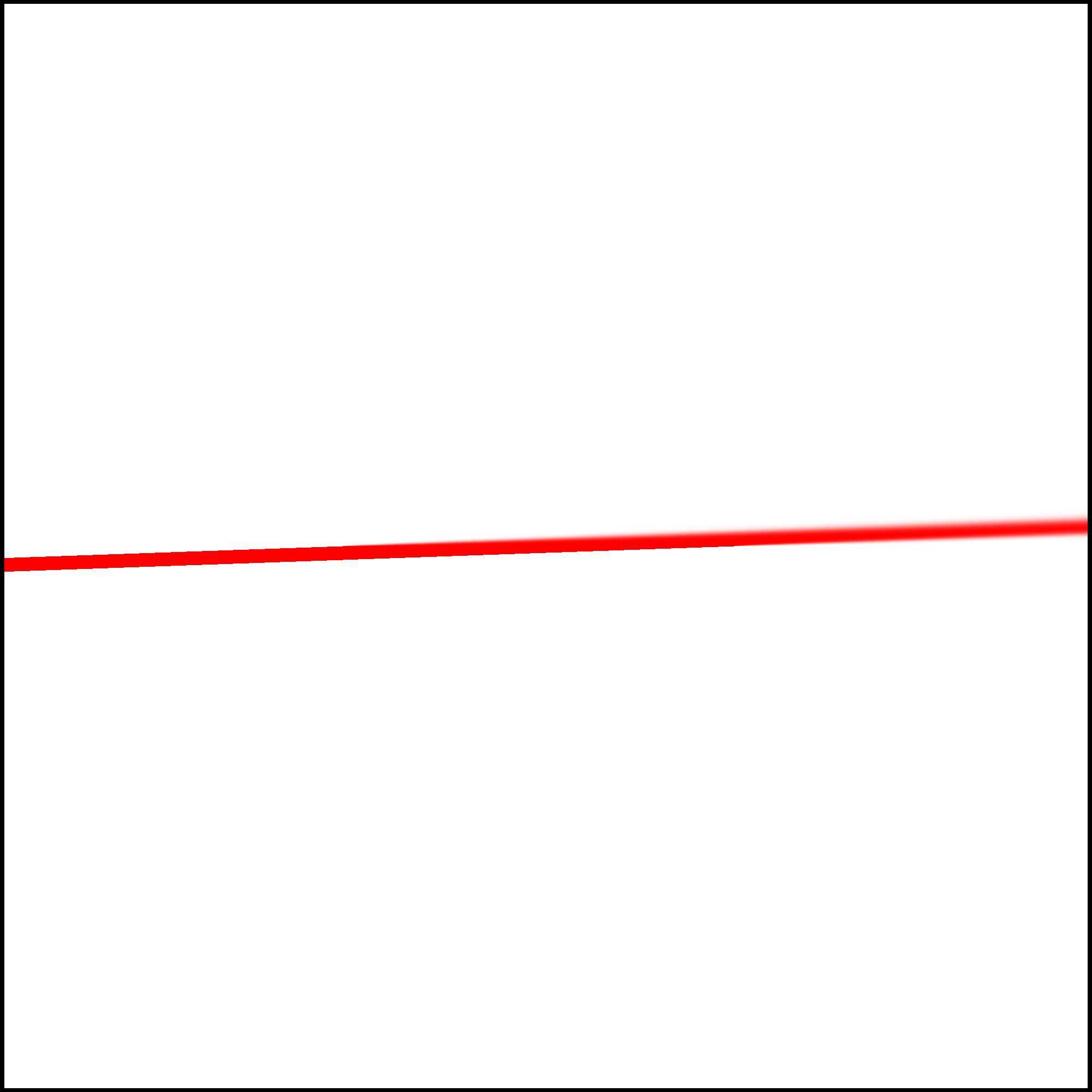}} &
\subfloat[Inpainting with semi-implicit Guidefill, 50 damped Jacobi iterations per shell.]{\includegraphics[width=.22\linewidth ]{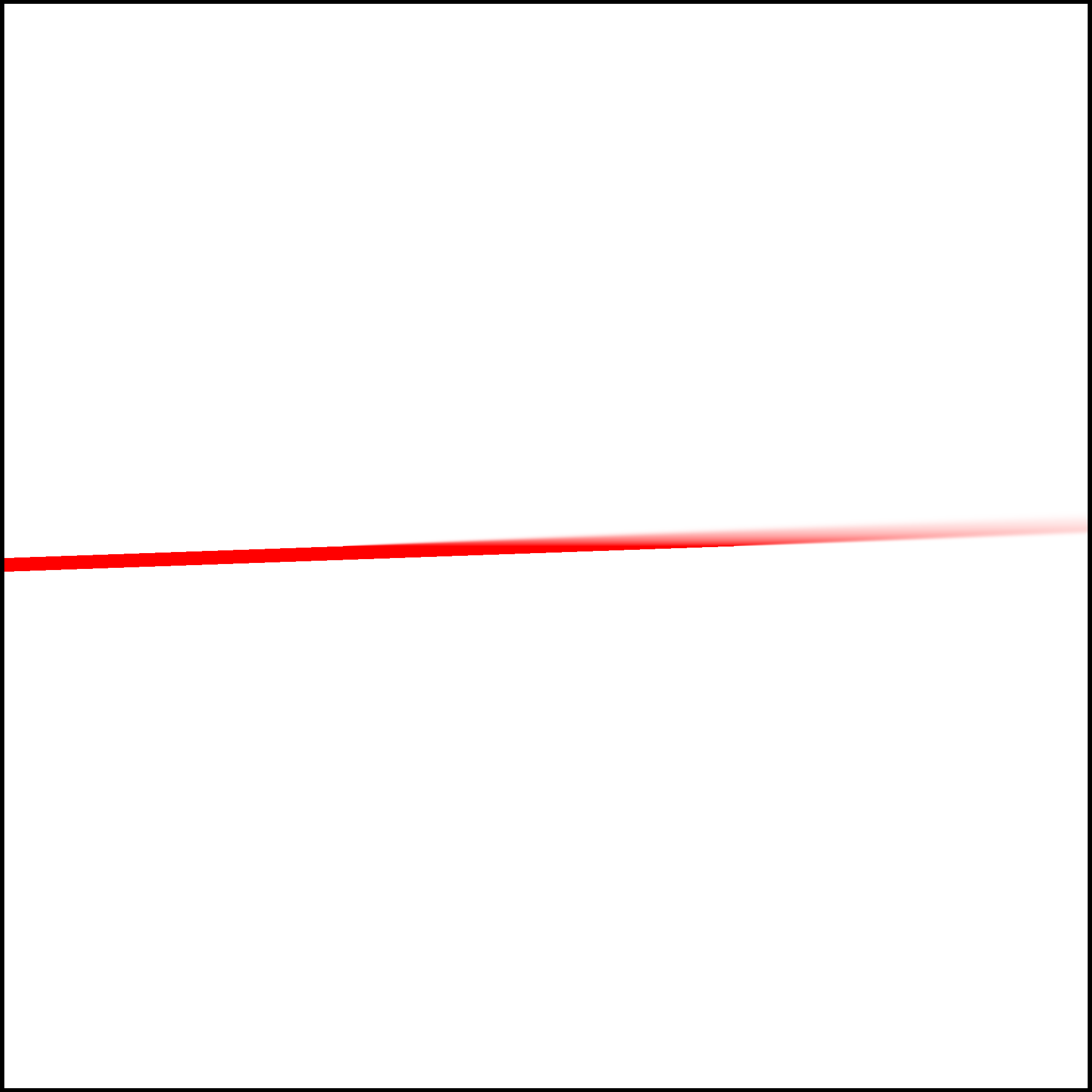}} \\
\end{tabular}
\caption{{\bf Semi-Implicit Guidefill and Shallow Inpainting Directions:}  In (a), a line makes a very shallow angle of just $2^{\circ}$ with the inpainting domain, shown in yellow.  This line is then inpainted using first Guidefill (b) and then the semi-implicit extension thereof (c).  The latter uses 5 iterations of SOR per shell to solve the linear system \eqref{eqn:Linearboundary} arising in every shell (in this case, the original image is $2000\times2000$px, so there are $1000$ shells).  Visually identical results can be obtained using damped Jacobi, but more than 100 iterations per shell are required - see Proposition \ref{prop:rates}.  Both methods use relaxation parameter $\omega = \omega^*$ \eqref{eqn:specialomega} and are given ${\bf g}=(\cos 2^{\circ},\sin 2^{\circ})$, $\mu = 100$, $\epsilon=3$px, and use the default onion shell ordering (smart-order is turned off).  Guidefill kinks while its extension does not.  In (d), we see the result of failing to solve the  linear system \eqref{eqn:Linearboundary} to sufficient accuracy by applying too few iterations of damped Jacobi.  In this case only 50 iterations per shell are used and the extrapolated line gradually fades away. }
\label{fig:shallowAngles}
\end{figure}

\subsection{Semi-implicit Guidefill}  In this paper we particularly interested in the extension of Guidefill since, as illustrated in Figure \ref{fig:shallowAngles} and as we will prove in Section \ref{sec:kink3main}, it is able to overcome the issue of kinking that we saw with Guidefill for shallow angles  in Figure \ref{fig:specialDir} (and see again in \ref{fig:shallowAngles}(b)).  In Section \ref{sec:guidefillGS} we will analyze the convergence of damped Jacobi and SOR for solving the linear system \eqref{eqn:Linearboundary} arising in this method - see Proposition \ref{prop:rates} in particular.  Special attention will be paid to the parameter value $\omega = \omega^*$ \eqref{eqn:specialomega} that naturally arises from the implementation we have proposed in Algorithm 1 (blue text).  First, however, we note that beyond changing the boolean variable ``semiImplicit'' to ``true'' in Algorithm 1, this extension performs optimally only if we also change the definition of the ``ready'' function.  If we didn't do this, whenever a very shallow line such as the one in Figure \ref{fig:shallowAngles} was encountered, the ready function would tell us not to try to inpaint it.  This is because \eqref{eqn:ready} takes into account only information about pixels that have already been filled, but now $u_h({\bf x})$ is constructed using information from its neighbors in $\partial D^{(k)}_h$ that are being filled at the same time.  The modified ``ready'' function must reflect this.  The idea is to allow $\mbox{ready}({\bf x})$ to depend on pixels belonging to the current shell, but only if those pixels are also ``ready'' to be filled.  Thus, just like the colors of pixels in the current shell are now coupled together, the binary values $\{ \mbox{ready}({\bf x}) : {\bf x} \in \partial D^{(k)}_h\}$ are coupled as well.

First, let us fix some notation.  We define $\tilde{D}_h^{(k+1)} = D^{(k)}_h \backslash \partial D^{(k)}_h$, that is, the inpainting domain as it would be on the next step if all of $\partial D^{(k)}_h$ were filled (clearly $D^{(k)}_h \supseteq \tilde{D}^{(k)}_h$, as we never fill {\em more} than $\partial D^{(k)}_h$ on iteration $k$).  Then we denote the continuous version of $\tilde{D}^{(k+1)}_h$ by $\tilde{D}^{(k+1)}$, defined in the usual way as in the notational section.  Next, defining a pixel that has already been filled to be ``ready'' by default, and a ghost pixel to be ``ready'' if and only if the real pixels needed to define it are all  ``ready'', we write the modified confidence $C^*({\bf x})$ in terms of $\mbox{ready}({\bf y})$ for ${\bf y}$ neighboring ${\bf x}$ as
\begin{eqnarray*} \label{eqn:confidence2}
C^*({\bf x})&=&\frac{\sum_{{\bf y} \in \tilde{B}_{\epsilon,h}({\bf x}) \cap (\Omega \backslash \tilde{D}^{(k+1)}  )}w_{\epsilon}({\bf x},{\bf y})\mbox{ready}({\bf y})}{\sum_{{\bf y} \in \tilde{B}_{\epsilon,h}({\bf x}) }w_{\epsilon}({\bf x},{\bf y})} \\
& = &  C({\bf x}) + \frac{\sum_{{\bf y} \in \tilde{B}_{\epsilon,h}({\bf x}) \cap (D^{(k)} \backslash \tilde{D}^{(k+1)}  )}w_{\epsilon}({\bf x},{\bf y})\mbox{ready}({\bf y})}{\sum_{{\bf y} \in \tilde{B}_{\epsilon,h}({\bf x}) }w_{\epsilon}({\bf x},{\bf y})} \\
&= & C({\bf x}) +\Delta C({\bf x},{\bf r})
\end{eqnarray*}
where $C({\bf x})$ is defined by \eqref{eqn:confidence} and ${\bf r} : \partial D^{(k)}_h \rightarrow \{0,1\}$ is defined by 
$${\bf r}({\bf x}) = \mbox{ready}({\bf x}).$$
Finally the ``ready'' function at ${\bf x}$ is coupled to that of its neighbors by
\begin{equation} \label{eqn:ready2}
\mbox{ ready}({\bf x}) = 1( C({\bf x})+\Delta C({\bf x},{\bf r}) > c)
\end{equation}
Let $\partial^0_{\mbox{ready}} D^{(k)}_h$, $\partial_{\mbox{ready}} D^{(k)}_h$ denote the portion of $\partial D^{(k)}_h$ that is ``ready'' to be filled, according to standard and semi-implicit Guidefill respectively.  Then, since $\Delta C({\bf x},{\bf r}) \geq 0$, we clearly have
$$\partial^0_{\mbox{ready}} D^{(k)}_h \subseteq \partial_{\mbox{ready}} D^{(k)}_h \subseteq \partial D^{(k)}_h.$$
In other words, semi-implicit Guidefill will always agree with Guidefill that a pixel is ready to be filled, but may decide that other pixels Guidefill believes are not ready to be filled are actually ready.  In Algorithm 2 we propose a simple, iterative, and parallel algorithm to solve for $\partial_{\mbox{ready}} D^{(k)}_h$, at least approximately.  

\begin{algorithm}
\caption{Semi-Implicit Guidefill's ready function}
\begin{algorithmic} \label{alg:ready}
\State ${\bf r} : \partial D^{(k)}_h \rightarrow \{0,1\}$, initialized to $0$ everywhere. 
\State $\partial_{\mbox{ready}} D^{(k)}_h = \{ {\bf x} \in \partial D^{(k)}_h : r({\bf x}) = 1\}$.
\State $\mbox{maxIt}$ : maximum number of iterations.
\State $k=0$.
\While{$|\partial_{\mbox{ready}} D^{(k)}_h|$ keeps growing and $k<\mbox{maxIt}$}
\For{${\bf x} \in \partial D^{(k)}_h$}
\If{$C({\bf x}) + \Delta C({\bf x},{\bf r}) > c$}
\State ${\bf r}({\bf x}) = 1.$
\EndIf
\EndFor
\State $\partial_{\mbox{ready}} D^{(k)}_h = {\bf r}^{-1}(\{1\})$.
\State $k = k+1$
\EndWhile
\end{algorithmic}

\end{algorithm}

Note that in iteration $0$, we have $\Delta C({\bf x},{\bf r}) \equiv 0$, so that $C^*({\bf x})=C({\bf x})$ and hence after one iteration $\partial_{\mbox{ready}} D^{(k)}_h=\partial^0_{\mbox{ready}} D^{(k)}_h$ (that is, it is the same as the set of ready pixels determined by standard, non-implicit Guidefill).  In subsequent iterations $\partial_{\mbox{ready}} D^{(k)}_h$ can only grow, but it must stop growing within finitely many iterations, even if we set $\mbox{maxIt}=\infty$, since $|\partial D^{(k)}_h| < \infty$.  We will not attempt to prove that the $\partial_{\mbox{ready}} D^{(k)}_h$ output by Algorithm 2 is equal the $\partial_{\mbox{ready}} D^{(k)}_h$ defined by \eqref{eqn:ready2}.  For the sake of space, in the present work we do not analyze the benefits of this modified ``ready'' function - this will be the subject of future work.

\section{Analysis} \label{sec:analysis}

This section contains our core analysis, firstly of the convergence properties of semi-implicit Guidefill, then of the convergence of both Algorithm 1 and its semi-implicit extension to a continuum limit as $(h,\epsilon) \rightarrow (0,0)$ along the ray $\epsilon = r h$, then finally how under certain additional hypotheses we can also converge to the original high resolution and vanishing viscosity limit proposed by Bornemann and M\"arz ($h \rightarrow 0$ first and then $\epsilon \rightarrow 0$).  Next, in Section \ref{sec:consequences}, we will apply our results to explain some of the artifacts discussed in Section \ref{sec:shellBased}.  We begin with some symmetry assumptions that will hold from now on.

\subsection{Symmetry assumptions} \label{sec:symmetry}

We will assume throughout that the inpainting domain $D$ is the unit square \\$(0,1] \times (0,1]$ while the image domain is $\Omega=(0,1] \times (-\delta, 1]$ equipped with Dirichlet or periodic boundary conditions at $x=0$ and $x=1$, and no condition at $y=1$.  We denote the undamaged portion of the image by 
\begin{equation} \label{eqn:undamaged}
\mathcal U := (0,1] \times (-\delta,0] \qquad \mathcal U_h := \mathcal U \cap \field{Z}^2_h.
\end{equation}
We discretize $D = (0,1]^2$ as an $N \times N$ array of pixels $D_h=D \cap (h \cdot \field{Z}^2)$ with pixel width $h:=1/N$.  In order to ensure that the update formula \eqref{eqn:update} is well defined, we need $\epsilon+2h < \delta$, which we achieve by assuming  $h < \frac{\delta}{r+2}$ (this follows from the inclusion \eqref{eqn:ballContain} ).  We assume that the default onion shell ordering is used, so that 
$$\partial D^{(k)}_h = \{ (jh,kh) \}_{j=1}^N.$$
We also assume that the sets $A_{\epsilon,h}({\bf x})$ are translations of one another, and the weights $w_{\epsilon}({\bf x},{\bf y})$ depend only on $\frac{{\bf y}-{\bf x}}{\epsilon}$, that is
$$w_{\epsilon}({\bf x},{\bf y})=\hat{w}\left(\frac{{\bf y}-{\bf x}}{\epsilon}\right)$$
For coherence transport and Guidefill this means that the guidance direction ${\bf g}$ is a constant.

\begin{remark}
We make the above assumptions not because we believe they are necessary, but because they enable us to make our analysis as simple as possible while still capturing the phenomena we would like to capture.  In particular, the above two assumptions on the weights $w_{\epsilon}$ and neighborhood $A_{\epsilon,h}({\bf x})$ ensure that the matrix $\mathcal L$ given by \eqref{eqn:Linearboundary} is either Toeplitz or circulant (depending on the boundary conditions), and also ensures that the random walk we connect Algorithm 1 to in Section \ref{sec:continuumLimit1} has i.i.d. (independent identically distributed) increments.  Without these simplifications, our already lengthy analysis would become even more technical.  Numerical experiments in Section \ref{sec:numerics} suggest that these assumptions can be weakened, but proving this is beyond the scope of the present work.
\end{remark}

These assumptions give us a high level of symmetry with which we may rewrite the update formula \eqref{eqn:update} of Algorithm 1 in the generic form
\begin{equation} \label{eqn:generic}
u_h({\bf x}) = \frac{\sum_{{\bf y} \in a^*_r} w_r({\bf 0},{\bf y}) u_h({\bf x}+h{\bf y})}{\sum_{{\bf y} \in a^*_r} w_r({\bf 0},{\bf y})}
\end{equation}
where
$$a^*_r = \left( \frac{1}{h}A_{\epsilon,h}({\bf 0}) \backslash \{ {\bf 0} \}\right) \cap \{ y \leq \delta\},$$
and $\delta = -1$ for the direct method, while $\delta = 0$ for the semi-implicit extension.  In particular, for coherence transport we have $a^*_r=b^-_r$ for the direct method and $a^*_r = b^0_r$ for the extension, where 
\begin{eqnarray} 
b^0_r &:=& \{ (n,m) \in \field{Z}^2 :  0<n^2+m^2 \leq r^2, m \leq 0 \}. \label{eqn:b0r}\\
b^-_r &:=& \{ (n,m) \in \field{Z}^2 : n^2+m^2 \leq r^2,  m \leq -1 \}. \label{eqn:bmr}
\end{eqnarray}
Similarly, for Guidefill, we have $a^*_r=\tilde{b}^-_r$ for the direct method and $a^*_r=\tilde{b}^0_r$ for the semi-implicit extension, where
\begin{eqnarray*} 
\tilde{b}^0_r &:=& \{ n\hat{\bf g}+m\hat{\bf g}^{\perp} : (n,m) \in \field{Z}^2,  0<n^2+m^2 \leq r^2,  n\hat{\bf g}\cdot e_2+m\hat{\bf g}^{\perp} \cdot e_2 \leq 0 \}  \label{eqn:bt0r}\\
\tilde{b}^-_r &:=& \{ n\hat{\bf g}+m\hat{\bf g}^{\perp} : (n,m) \in \field{Z}^2,  n^2+m^2 \leq r^2,  n\hat{\bf g}\cdot e_2+m\hat{\bf g}^{\perp} \cdot e_2 \leq -1 \}, \label{eqn:btmr}
\end{eqnarray*}
and $\hat{\bf g} := {\bf g}/\|{\bf g}\|$ (if ${\bf g} = {\bf 0}$ we set $\tilde{b}^-_r = b^-_r$).  The sets $b^-_r$, $b^0_r$, $\tilde{b}^-_r$, $\tilde{b}^0_r$ may be visualized by looking at the portion of Figure \ref{fig:ballRotate}(a)-(b) on or below the lines $y=0$ and $y=-1$ respectively.  Also important are the dilated sets $\bar{b}^0_r=D(b^0_r) \cap \{ y \leq 0\}$ and $\bar{b}^-_r=D(b^-_r) \cap \{ y \leq -1\}$.  These sets are given explicitly by
\begin{eqnarray} 
\bar{b}^0_r &:=& \{ (n+\Delta n,m+\Delta m) \nonumber \\
& : &  (n,m)\in b^0_r, (\Delta n,\Delta m) \in \{-1,0,1\} \times \{-1,0,1\}, m+\Delta m \leq 0\} \label{eqn:bb0r} \\
\bar{b}^-_r &:=& \{ (n+\Delta n,m+\Delta m) \nonumber \\
& : & (n,m) \in b^-_r, (\Delta n,\Delta m) \in \{-1,0,1\} \times \{-1,0,1\}, m+\Delta m \leq -1\}.  \label{eqn:bbmr}
\end{eqnarray}
This is because the universal support property \eqref{eqn:universalSupport} gives us the inclusion
\begin{equation} \label{eqn:universalContainment}
\mbox{Supp}(a^*_r)  \subseteq \begin{cases} \bar{b}^-_r \subseteq b^-_{r+2}  & \mbox{ if we use the direct form of Algorithm 1.} \\ \bar{b}^0_r \subseteq b^0_{r+2}  & \mbox{ if we the semi-implicit extension,} \end{cases}
\end{equation}
which will be critical later.  The sets $\bar{b}^-_r$ and $\bar{b}^0_r$ are illustrated in Figure \ref{fig:barSets}.

\begin{figure}
\centering
\begin{tabular}{cc}
\subfloat[Illustration of $\bar{b}^-_r$ for $r=3$.]{\includegraphics[width=.45\linewidth]{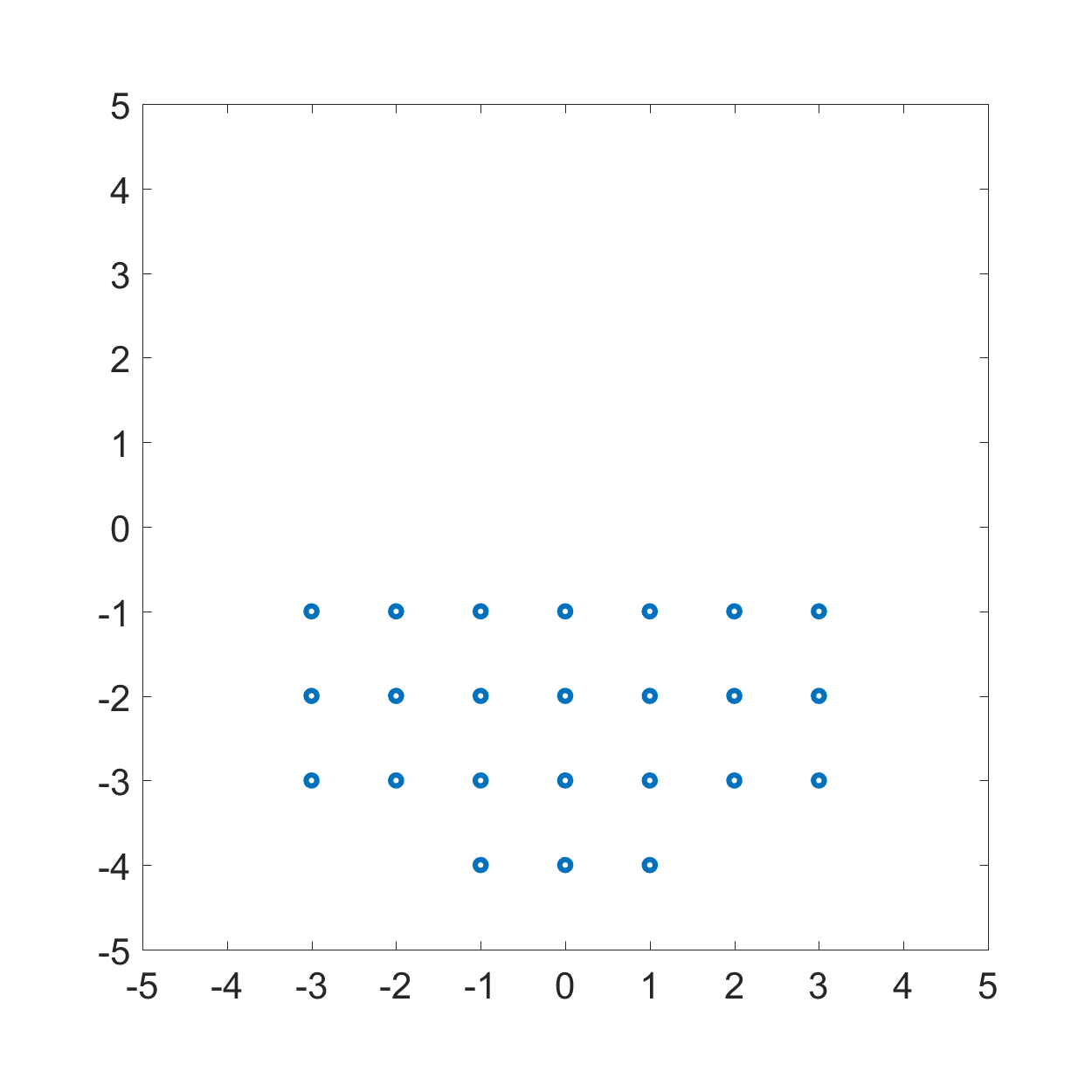}} & 
\subfloat[Illustration of $\bar{b}^0_r$ for $r=3$.]{\includegraphics[width=.45\linewidth]{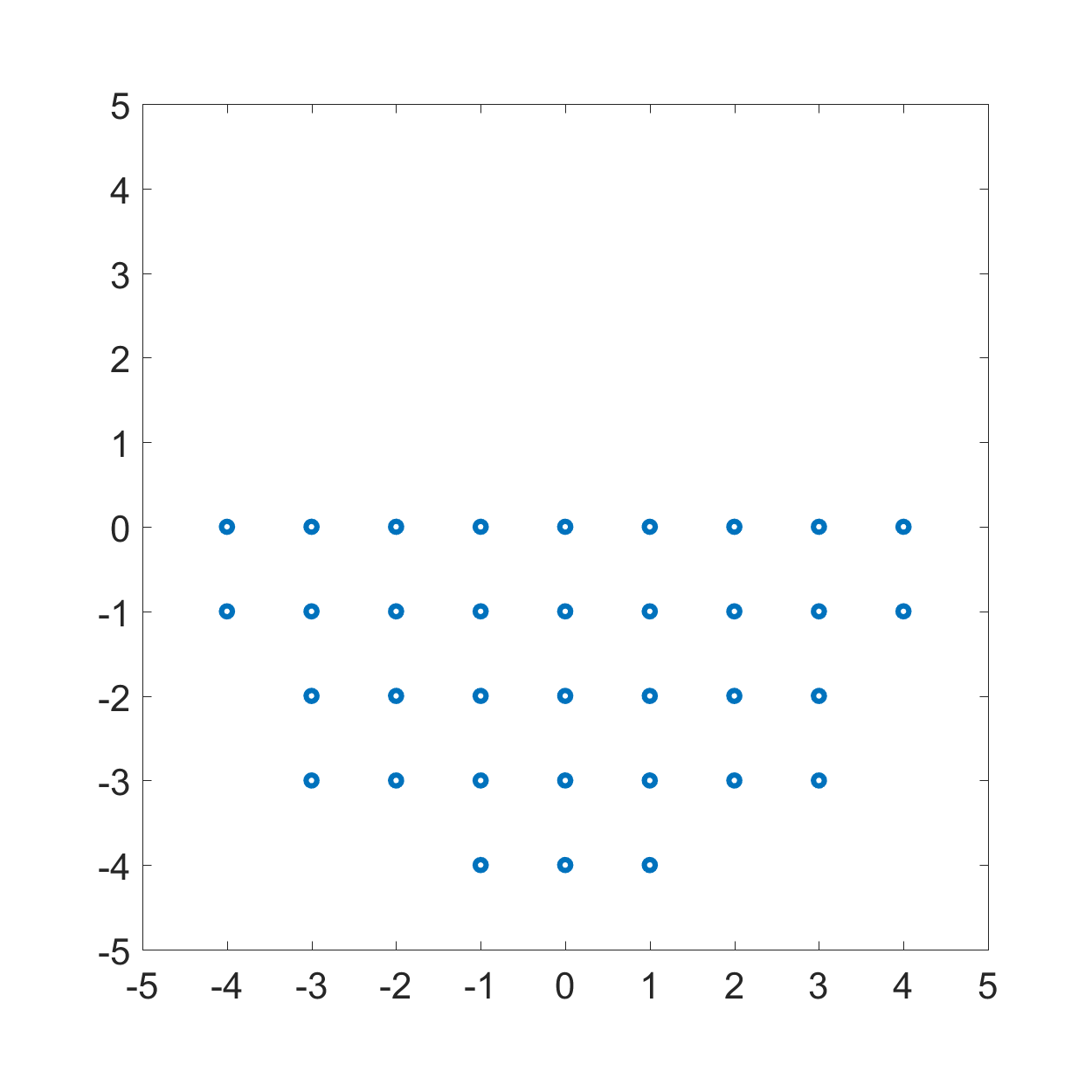}} \\
\end{tabular}
\caption{{\bf Visualization $\bar{b}^-_r$ and $\bar{b}^0_r$:}  Here we illustrate the sets \\$\bar{b}^-_r:=D(b^-_r) \cap \{ y \leq -1\}$ and $\bar{b}^0_r:=D(b^0_r) \cap \{ y \leq 0\}$ in the case $r=3$.  These sets are defined explicitly in \eqref{eqn:bmr} and \eqref{eqn:bb0r}, and $D$ is the dilation operator defined in the notation section.  These sets are important because depending on whether we use the direct form of Algorithm 1 or its semi-implicit extension, $\mbox{Supp}(a^*_r)$ is always contained in one or the other.  See \eqref{eqn:universalContainment} in the text.}
\label{fig:barSets}
\end{figure}

\vskip 2mm

\begin{definition} \label{def:stencil}
We call the set $a^*_r$ and the weights $\{ w_r({\bf 0},{\bf y}) : {\bf y} \in a^*_r\}$ the {\em stencil} and {\em stencil weights} of a method.  The {\em center of mass} of $a^*_r$ is defined in the following two equivalent ways:
$$\mbox{C.M.}=\frac{\sum_{{\bf y} \in a^*_r} w_r(0,{\bf y}){\bf y}}{\sum_{{\bf y} \in a^*_r} w_r(0,{\bf y})}=\frac{\sum_{{\bf y} \in \mbox{Supp}(a^*_r)} \tilde{w}_r(0,{\bf y}){\bf y}}{\sum_{{\bf y} \in \mbox{Supp}(a^*_r)} \tilde{w}_r(0,{\bf y})}.$$
Here $\tilde{w}_r$ denote the equivalent weights from Section \ref{sec:fiction}, and the above identity follows from \eqref{eqn:totalMass} and \eqref{eqn:centerMass}.
The center of mass of $a^*_r$ will play a critical role both in the continuum limit of Algorithm 1 (where it is the transport direction of the resulting transport PDE) and in the connection to random walks (where, after multiplication by $h$, it is the mean of the increments of the walk).
\end{definition}

\vskip 2mm

Under these assumptions, the matrix $\mathcal L$ from \eqref{eqn:Linearboundary} is independent of $k$ (that is, we solve the same linear system for every shell), and moreover $\mathcal L$ becomes a Toeplitz matrix (Dirichlet boundary conditions) or circulant matrix (periodic boundary conditions).   For a given pixel ${\bf x}$ at least $r+2$ pixels away from the boundary at $x=0$ and $x=1$, that is ${\bf x} \in \{ (jh,kh) : r+2 \leq j \leq N-r-2\}$, it takes on the form
\begin{equation} \label{eqn:Lsimplified}
(\mathcal L{\bf u})({\bf x}) =\left(1-\frac{\tilde{w}_r({\bf 0},{\bf 0})}{W}\right)u_h({\bf x}) - \sum_{{\bf y} \in s_r \backslash \{ {\bf 0} \}}\frac{\tilde{w}_{r}({\bf 0},{\bf y})}{W}u_h({\bf x}+h{\bf y}),
\end{equation}
where by \eqref{eqn:universalContainment} we have
$$s_r = \mbox{Supp}(a^*_r) \cap ( \field{Z} \times \{ 0 \}  ) \subseteq \{-(r+2)e_1,-(r+1)e_1,\ldots,(r+1)e_1,(r+2)e_1\}.$$
If ${\bf x}$  is {\em not} at least $r+2$ pixels away from the boundaries, then the formula changes in the usual way for Toeplitz and circulant matrices - we assume the reader is familiar with this and no further discussion is needed.  Under the same assumptions the vector ${\bf f}$ becomes
$${\bf f} = \sum_{{\bf y} \in \mbox{Supp}(a^*_r) \backslash ( \field{Z} \times \{ 0\} ) }\frac{\tilde{w}_{r}({\bf 0},{\bf y})}{W}u_h({\bf x}+h{\bf y})$$
where $\mbox{Supp}(a^*_r) \backslash ( \field{Z} \times \{ {\bf 0}\} ) \subseteq b^-_{r+2}$.  We also define
\begin{equation} \label{eqn:leakedMass}
\tilde{W} := \sum_{j=-r-2}^{r+2} \tilde{w}_r({\bf 0},je_1) \quad \mbox{ and } \quad \tilde{w}_{0,0}:=\tilde{w}_r({\bf 0},{\bf 0}).
\end{equation}
For a given point ${\bf x} \in \partial D^{(k)}_h$, (again, assuming ${\bf x}$ is far enough from the boundary) the ratio $\frac{\tilde{W}}{W}$ gives the fraction of the mass of the stencil (Definition \ref{def:stencil}) centered at ${\bf x}$ that gets ``leaked'' to the unknown pixels in $\partial D^{(k)}_h$, while $\frac{\tilde{w}_{0,0}}{W}$ gives the fraction that gets leaked to ${\bf x}$.  Together, these give a measure of the diagonal dominance of $\mathcal L$, as we have
$$\frac{\sum_{j \neq i} |\mathcal L_{ij}|}{|\mathcal L_{ii}|} = \frac{\tilde{W}-\tilde{w}_{0,0}}{W-\tilde{w}_{0,0}}.$$
The smaller this ratio is, the stronger the diagonal dominance of $\mathcal L$, and the faster damped Jacobi and SOR can be expected to converge - see Proposition \ref{prop:rates} for explicit formulas.

For semi-implicit Guidefill with guidance direction ${\bf g}=(\cos\theta,\sin\theta)$, it can be shown that $\mathcal L$ becomes a lower triangular matrix in the limit $\mu \rightarrow \infty$, provided we order unknowns left to right if $\cos\theta > 0$ and right to left otherwise (see Appendix \ref{app:rates}).  This gives us a hint that Gauss-Seidel and SOR might be very effective for the solution of \eqref{eqn:Linearboundary} in this case, and indeed Proposition \ref{prop:rates} confirms this.

\subsection{Convergence rates of damped Jacobi and SOR for semi-implicit Guidefill} \label{sec:guidefillGS}

Here we derive tight bounds on the convergence rates of damped Jacobi and SOR for solving \eqref{eqn:Linearboundary} in the semi-implicit extension of Guidefill described in Section \ref{sec:semiImplicit}, under the symmetry assumptions discussed above, and in the limit $\mu \rightarrow \infty$ (recall that $\mu$ is the parameter from the weights \eqref{eqn:weight} controlling the extent to which weights are biased in favor of pixels in the directions $\pm {\bf g}$).  We will prove that in each case the parameter value $\omega = 1$ is optimal, but also pay special attention to the case $\omega = \omega^*$ given by \eqref{eqn:specialomega}, since this is the value of $\omega$ that our proposed implementation of the semi-implicit extension in Algorithm 1 uses.  We consider general $\omega$ mainly in order to demonstrate that the choice $\omega = \omega^*$, while not optimal, is close enough to optimal not to matter in practice.

We will assume that $D=(0,1]^2$ with Dirichlet boundary conditions, as this simplifies our analysis of SOR - for damped Jacobi, we could just as easily have assumed periodic boundary conditions.  We will measure convergence rates with respect to the induced infinity norm, which obeys the identity
\begin{equation} \label{eqn:infinityNormDefinition}
\|A\|_{\infty} = \max_{i=1}^N \sum_{j=1}^N |a_{ij}|
\end{equation}
for any $N \times N$ matrix $A$.  Note that the iterates of the error ${\bf e}^{(0)}$, ${\bf e}^{(1)}$, $\ldots$ associated with any stationary iterative method with iteration matrix $M$ obey the bounds
\begin{equation} \label{eqn:convergenceBound}
\|{\bf e}^{(n)}\| \leq \|M\|^n\|{\bf e}^{(0)}\| \quad \mbox{ and } \quad R({\bf e}) := \sqrt[\leftroot{-3}\uproot{3}n]{\frac{\|{\bf e}^{(n)}\|}{\|{\bf e}^{(0)}\|}} \leq \|M\|
\end{equation}
for {\em any} vector norm $\|\cdot\|$ and induced matrix norm.  We will be interested in these identities in the particular case that the vector norm is $\|\cdot \|_{\infty}$, and the stationary iterative method is damped Jacobi or SOR.  Here 
$${\bf e}^{(n)} := u_h - u^{(n)}_h$$ 
denotes the difference between the exact solution to \eqref{eqn:Linearboundary}, found by first solving \eqref{eqn:Linearboundary} to machine precision, with the approximate solution at iteration $n$.

\vskip 2mm

\begin{proposition} \label{prop:rates}
Suppose semi-implicit Guidefill with guidance direction \\${\bf g}=(\cos\theta,\sin\theta)$ is used to inpaint the square $[0,1)^2$ under the assumptions above, using either damped Jacobi or SOR to solve \eqref{eqn:Linearboundary}.  Suppose that in the case of SOR, $\partial D^{(k)}_h= \{ kh\}_{k \in \field{Z}}$ is ordered from left to right if $\cos\theta \geq 0$ and from right to left otherwise.  Let $\mathcal L$ be as in \eqref{eqn:Matrix} and define $\mathcal L = D-L-U$, where $D$, $-L$, and $-U$ are the diagonal, strictly lower triangular, and strictly upper triangular parts of $\mathcal L$ respectively.  Let $J_{\omega}$ and $G_{\omega}$ denote the iteration matrices of damped Jacobi and SOR respectively with relaxation parameter $\omega$, that is
$$J_{\omega} = I - \omega D^{-1} \mathcal L \qquad G_{\omega} = (I-\omega D^{-1}L)^{-1}((1-\omega)I+D^{-1}U).$$
Let $r=\epsilon/h$ and define $\theta_c = \arcsin(1/r)$.  Define, for $\theta_c \leq \theta \leq \pi - \theta_c$, \\$j^* = \lfloor \frac{1}{\sin\theta} \rfloor \leq r$.  Let $W$ be the total weight \eqref{eqn:2ndway}, let $\tilde{W}$ and $\tilde{w}_{0,0}$ be as in \eqref{eqn:leakedMass}.
Then, in the limit as $\mu \rightarrow \infty$, we have 
\begin{eqnarray*}
W &=& \sum_{j=1}^r \frac{1}{j}, \qquad \tilde{w}_{0,0}=(1-\sin\theta)(1-|\cos\theta|)\\
\tilde{W} &=&
\begin{cases} \sum_{j=1}^r \frac{1}{j} - r \sin\theta & \mbox{ if } \theta \in (0,\theta_c] \cup [\pi-\theta_c,\pi) \\ \sum_{j=1}^{j^*} \frac{1}{j} - j^* \sin\theta & \mbox{ if } \theta \in (\theta_c , \pi - \theta_c) 
\end{cases}.
\end{eqnarray*}
and
\begin{eqnarray*}
\|J_{\omega}\|_{\infty} &=& |1-\omega|+\omega \left(  \frac{\tilde{W}-\tilde{w}_{0,0}}{W-\tilde{w}_{0,0}} \right) \quad \mbox{ for } \omega \in (0,2) \\
\|G_{\omega}\|_{\infty} &=& \frac{|1-\omega|}{1-\omega \frac{\tilde{W}-\tilde{w}_{0,0}}{W-\tilde{w}_{0,0}}} \quad \mbox{ for } \omega \in (0,1], \\
\end{eqnarray*}
where $\|\cdot\|_{\infty}$ is the induced infinity matrix norm \eqref{eqn:infinityNormDefinition}.  The optimal $\omega \in (0,2)$ is in both cases independent of $\theta$ and equal to $\omega = 1$, where we obtain
\begin{eqnarray*}
\|J_1\|_{\infty} &=&  \begin{cases} 1 - \frac{r\sin\theta}{\sum_{j=1}^r \frac{1}{j}-(1-\sin\theta)(1-|\cos\theta|)}& \mbox{ if } \theta \in (0, \theta_c] \cup [\pi-\theta_c,\pi) \\ 1 - \frac{\sum_{j=j^*+1}^r \frac{1}{j} + j^*\sin\theta}{\sum_{j=1}^r \frac{1}{j}-(1-\sin\theta)(1-|\cos\theta|)} & \mbox{ if } \theta \in (\theta_c, \pi - \theta_c) .\end{cases} \\
\|G_1\|_{\infty} &=& 0.\\
\end{eqnarray*}
\end{proposition}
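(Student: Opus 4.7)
The plan is to establish the three groups of explicit formulas in sequence: first the stencil quantities $W$, $\tilde{w}_{0,0}$, $\tilde{W}$; then $\|J_\omega\|_\infty$; then $\|G_\omega\|_\infty$. The first step feeds directly into both matrix-norm computations through the equivalent-weight identities \eqref{eqn:Matrix} and \eqref{eqn:Lsimplified}, so it is natural to carry it out at the start.

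For the stencil quantities, I would argue that as $\mu \to \infty$ the Gaussian factor in \eqref{eqn:weight} annihilates every element of $\tilde{b}^0_r$ off the line $L_{\bf g}$, leaving only the ghost pixels $\{-k\hat{\bf g}\}_{k=1}^r$ (for $\sin\theta > 0$; the degenerate case collapses to $\tilde{b}^-_r = b^-_r$ by the convention in the excerpt). Since $w_r({\bf 0},-k\hat{\bf g}) = 1/k$, the harmonic-sum formula for $W$ is immediate. For $\tilde{w}_{0,0}$, only $-\hat{\bf g}$ has the origin as a corner of its bilinear-interpolation square (for $k \geq 2$, $-k\hat{\bf g}$ sits outside all four squares adjacent to the origin regardless of $\theta$), and the corner weight is $(1-|\cos\theta|)(1-\sin\theta)$ after a short case split on the sign of $\cos\theta$. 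For $\tilde{W}$, the ghost pixel $-k\hat{\bf g}$ redistributes mass to the row $y = 0$ iff $k\sin\theta \leq 1$, and the partition of unity of the $y$-factor of bilinear interpolation shows the total redistributed mass equals $(1/k)(1 - k\sin\theta)$; summing over $k = 1,\dots,\min(r,\lfloor 1/\sin\theta\rfloor)$ and splitting at the threshold $\sin\theta = 1/r$ (equivalently, at $\theta = \theta_c$ and $\theta = \pi-\theta_c$) yields the two stated expressions.

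For $\|J_\omega\|_\infty$, I would observe that $J_\omega = I - \omega D^{-1}\mathcal{L}$ has diagonal entries $1-\omega$ and non-negative off-diagonal entries $\omega\tilde{w}_r({\bf 0},{\bf y})/(W-\tilde{w}_{0,0})$ on each interior row. Because stencils near the boundary $x=0,1$ are truncated, the interior rows maximise the absolute row sum in \eqref{eqn:infinityNormDefinition}, producing $\|J_\omega\|_\infty = |1-\omega|+\omega C$ with $C := (\tilde{W}-\tilde{w}_{0,0})/(W-\tilde{w}_{0,0}) \in [0,1)$. Minimising $|1-\omega|+\omega C$ on $(0,2)$ is elementary: the function is strictly decreasing on $(0,1]$ and strictly increasing on $[1,2)$, so $\omega=1$ is the unique minimiser with value $C$; substituting the values of $\tilde{W}$ from the first step gives the two explicit formulas for $\|J_1\|_\infty$.

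The main obstacle is the SOR case, which hinges on the triangularity fact recorded in Appendix \ref{app:rates}: for $\cos\theta > 0$, every active ghost pixel $-k\hat{\bf g}$ has strictly negative $x$-coordinate, so the columns of its bilinear support are all non-positive, and hence the row of $\mathcal{L}$ indexed by $je_1$ has no non-zero entries at columns $j' > j$ when unknowns are ordered left-to-right; the case $\cos\theta < 0$ is symmetric after reversing the ordering, so under the stated ordering we have $U = 0$ and $G_\omega = (1-\omega)(I - \omega D^{-1}L)^{-1}$. Because $D^{-1}L$ has non-negative entries, the Neumann series $\sum_n (\omega D^{-1}L)^n$ shows $(I - \omega D^{-1}L)^{-1} \geq 0$ entrywise, so its infinity norm equals the largest entry of $(I - \omega D^{-1}L)^{-1}{\bf 1}$. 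Solving the triangular system $(I - \omega D^{-1}L){\bf x} = {\bf 1}$ by forward substitution and comparing with the constant fixed point $x^* = 1/(1-\omega C)$ (obtained by inserting $x^*{\bf 1}$ into the interior recurrence) yields $\max_i x_i = x^*$ at interior rows, hence $\|G_\omega\|_\infty = |1-\omega|/(1-\omega C)$ for $\omega \in (0,1]$. Minimisation on $(0,1]$ is trivial: the numerator vanishes and the denominator stays bounded, so $\omega = 1$ is optimal with $\|G_1\|_\infty = 0$, consistent with Gauss--Seidel solving a triangular system in a single sweep.
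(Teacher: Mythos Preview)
Your treatment of the stencil quantities $W$, $\tilde W$, $\tilde w_{0,0}$ and of $\|J_\omega\|_\infty$ is essentially the paper's own argument; the only cosmetic difference is that the paper defers the explicit computation of $W$, $\tilde W$, $\tilde w_{0,0}$ to the end of the proof rather than doing it first. Your triangularity argument (support of $-k\hat{\bf g}$ lies in non-positive columns when $\cos\theta>0$) is also exactly how the paper establishes $U=O$.

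For $\|G_\omega\|_\infty$ you take a genuinely different route. The paper sets $A:=I-\omega D^{-1}L$, checks that $A$ is strictly diagonally dominant on $(0,1]$, and invokes Varah's classical bound $\|A^{-1}\|_\infty\le 1/\min_i\Delta_i(A)$ for the upper estimate; for the matching lower bound it exhibits the test vector $e=(0,\dots,0,1,\dots,1)$ with $r+2$ leading zeros and computes $\|Ae\|_\infty/\|e\|_\infty$. Your approach---entrywise non-negativity of $A^{-1}$ from the Neumann series, then $\|A^{-1}\|_\infty=\max_i[(A^{-1}\mathbf 1)]_i$ and forward substitution on $Ax=\mathbf 1$---is more self-contained and avoids the literature reference; the comparison with the constant fixed point $x^*=1/(1-\omega C)$ gives the upper bound $x_i\le x^*$ by an obvious induction. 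Both routes are natural, and your final remark that $\|G_1\|_\infty=0$ follows immediately from Gauss--Seidel being exact on triangular systems is a nice shortcut.

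There is one gap. You assert $\max_i x_i=x^*$ ``at interior rows'', but under Dirichlet boundary conditions the forward recurrence starts at $x_1=1<x^*$ and the interior recursion $x_i=1+\omega\sum_k (D^{-1}L)_{i,i-k}\,x_{i-k}$ produces a monotone sequence that \emph{approaches} $x^*$ from below without reaching it for finite $N$. So your argument as written delivers only $\|G_\omega\|_\infty\le|1-\omega|/(1-\omega C)$. The equality stated in the proposition needs a separate lower-bound argument; the paper's strategy is to bound $\|A^{-1}\|_\infty^{-1}$ from above via the test vector $e$ described above, rather than to analyse the forward-substitution iterates directly.
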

\begin{proof}  
Appendix \ref{app:rates}.
\qed
\end{proof}

\vskip 2mm

\begin{corollary} \label{corollary:actualRates}
For the special case of
$$\omega^* = \left(1 - \frac{\tilde{w}_{0,0}}{W}\right) = \left(1-\frac{(1-\sin\theta)(1-|\cos\theta|)}{ \sum_{j=1}^r \frac{1}{j}}\right)$$
that is, the parameter value equivalent to running Algorithm 1 with ``semiImplicit'' set to true, we obtain
\begin{eqnarray*}
\|G_{\omega^*}\|_{\infty} &=& \frac{\tilde{w}_{0,0}}{W-\tilde{W}_k+\tilde{w}_{0,0}}\\
&=&\begin{cases} \frac{(1-\sin\theta)(1-|\cos\theta|)}{r\sin\theta+(1-\sin\theta)(1-|\cos\theta|)}   & \mbox{ if } \theta \in (0, \theta_c] \cup [\pi-\theta_c,\pi) \\ \frac{(1-\sin\theta)(1-|\cos\theta|)}{\sum_{j=j^*+1}^r \frac{1}{j} + j^*\sin\theta+(1-\sin\theta)(1-|\cos\theta|)} & \mbox{ if } \theta \in (\theta_c, \pi - \theta_c) .\end{cases}  \\
\|J_{\omega^*}\|_{\infty} & = &  \frac{\tilde{W}_k}{W}=\begin{cases} 1 - \frac{r\sin\theta}{\sum_{j=1}^r \frac{1}{j}}& \mbox{ if } \theta \in (0, \theta_c] \cup [\pi-\theta_c,\pi) \\ 1 - \frac{\sum_{j=j^*+1}^r \frac{1}{j} + j^*\sin\theta}{\sum_{j=1}^r \frac{1}{j}} & \mbox{ if } \theta \in (\theta_c, \pi - \theta_c) .\end{cases}
\end{eqnarray*}
See Figure \ref{fig:rates} for a plot of $\|G_{\omega^*}\|_{\infty}$ and $\|J_{\omega^*}\|_{\infty}$ as a function of $\theta$ for $r=3$.
\end{corollary}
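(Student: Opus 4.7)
The plan is to derive both identities by direct substitution of $\omega = \omega^*$ into the general formulas for $\|J_\omega\|_\infty$ and $\|G_\omega\|_\infty$ supplied by Proposition \ref{prop:rates}, followed by routine algebraic simplification and insertion of the explicit expressions for $W$, $\tilde{W}$, and $\tilde{w}_{0,0}$ also given there. No new analytic input is required — the corollary is a mechanical specialization.

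First I would observe that $\omega^* = 1 - \tilde{w}_{0,0}/W \in (0,1]$. This is immediate from two facts already established in the paper: the equivalent weights are non-negative (property \eqref{eqn:nonNegativeInheritance}), so $\tilde{w}_{0,0} \geq 0$, while the strict diagonal dominance derived from the inherited non-degeneracy condition \eqref{eqn:technical_equiv} gives $\tilde{w}_{0,0} < W$. Consequently $|1-\omega^*| = \tilde{w}_{0,0}/W$, which removes the absolute values from the formulas of Proposition \ref{prop:rates}.

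For damped Jacobi, substituting and using $1 - \omega^* = (W-\tilde{w}_{0,0})/W$ I would find that the factor $W - \tilde{w}_{0,0}$ in the numerator of the fraction cancels exactly the same factor in the denominator, leaving
\begin{equation*}
\|J_{\omega^*}\|_\infty = \frac{\tilde{w}_{0,0}}{W} + \frac{\tilde{W}-\tilde{w}_{0,0}}{W} = \frac{\tilde{W}}{W},
\end{equation*}
which rearranges as $1 - (W-\tilde{W})/W$. For SOR, the same substitution in the denominator gives
\begin{equation*}
1 - \omega^* \cdot \frac{\tilde{W}-\tilde{w}_{0,0}}{W-\tilde{w}_{0,0}} = 1 - \frac{\tilde{W}-\tilde{w}_{0,0}}{W} = \frac{W - \tilde{W} + \tilde{w}_{0,0}}{W},
\end{equation*}
and combining with the numerator $\tilde{w}_{0,0}/W$ yields $\|G_{\omega^*}\|_\infty = \tilde{w}_{0,0}/(W - \tilde{W} + \tilde{w}_{0,0})$.

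Finally, I would plug in the closed-form expressions for $W$, $\tilde{w}_{0,0}$, and $\tilde{W}$ from Proposition \ref{prop:rates}, splitting into the two cases $\theta \in (0,\theta_c] \cup [\pi-\theta_c,\pi)$ and $\theta \in (\theta_c,\pi-\theta_c)$. In the first case $W - \tilde{W} = r\sin\theta$, while in the second $W - \tilde{W} = \sum_{j=j^*+1}^{r} 1/j + j^*\sin\theta$; inserting these together with $\tilde{w}_{0,0} = (1-\sin\theta)(1-|\cos\theta|)$ reproduces exactly the piecewise formulas in the statement. There is no real obstacle here; the only thing to be careful about is the sign convention that makes $|1-\omega^*| = 1-\omega^*$, which is why I single it out at the beginning.
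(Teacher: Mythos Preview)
Your proposal is correct and follows exactly the approach the paper takes: the paper's proof consists of the single sentence ``This follows from direct substitution of $\omega^*$ given by \eqref{eqn:specialomega} into Proposition \ref{prop:rates},'' and you have carried out precisely that substitution with the algebra spelled out in full.
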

\begin{proof}
This follows from direct substitution of $\omega^*$ given by \eqref{eqn:specialomega} into Proposition \ref{prop:rates}.
\qed
\end{proof}

\begin{figure}
\centering
\begin{tabular}{cc}
\subfloat[]{\includegraphics[width=.44\linewidth]{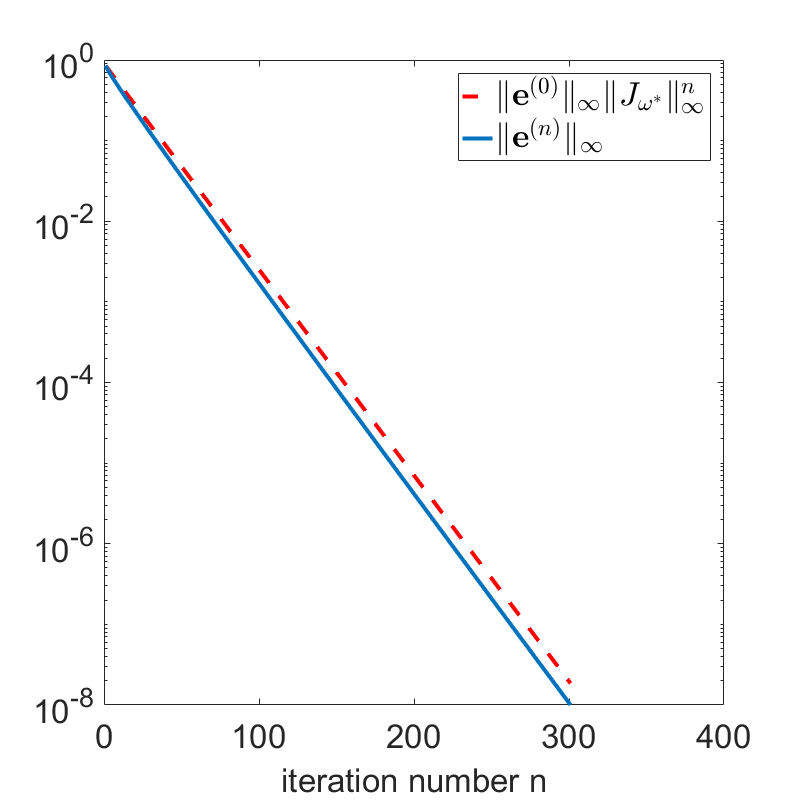}} & 
\subfloat[]{\includegraphics[width=.44\linewidth]{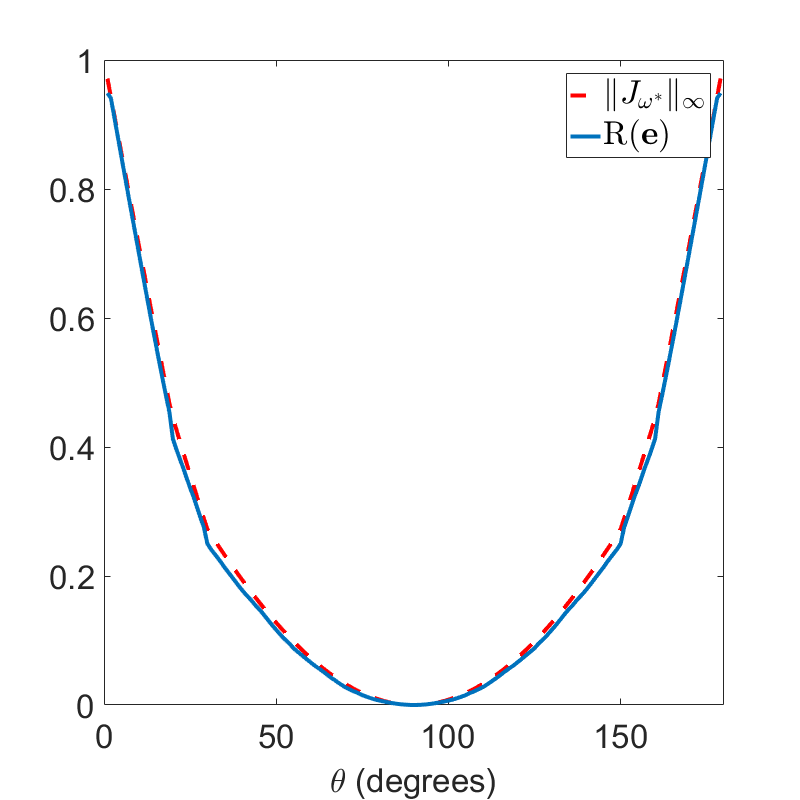}} \\
\subfloat[]{\includegraphics[width=.44\linewidth]{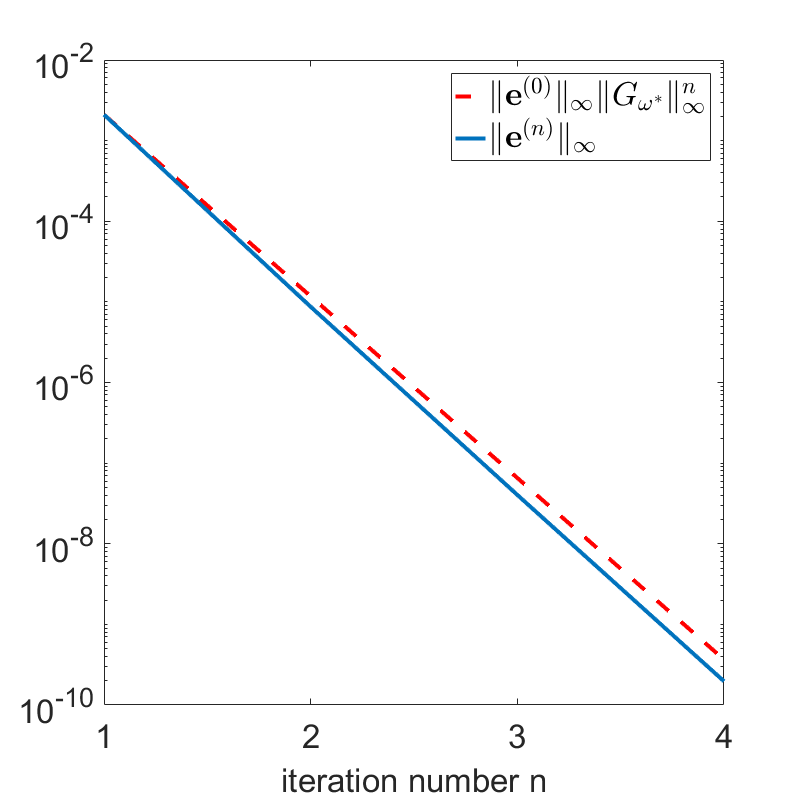}} & 
\subfloat[]{\includegraphics[width=.44\linewidth]{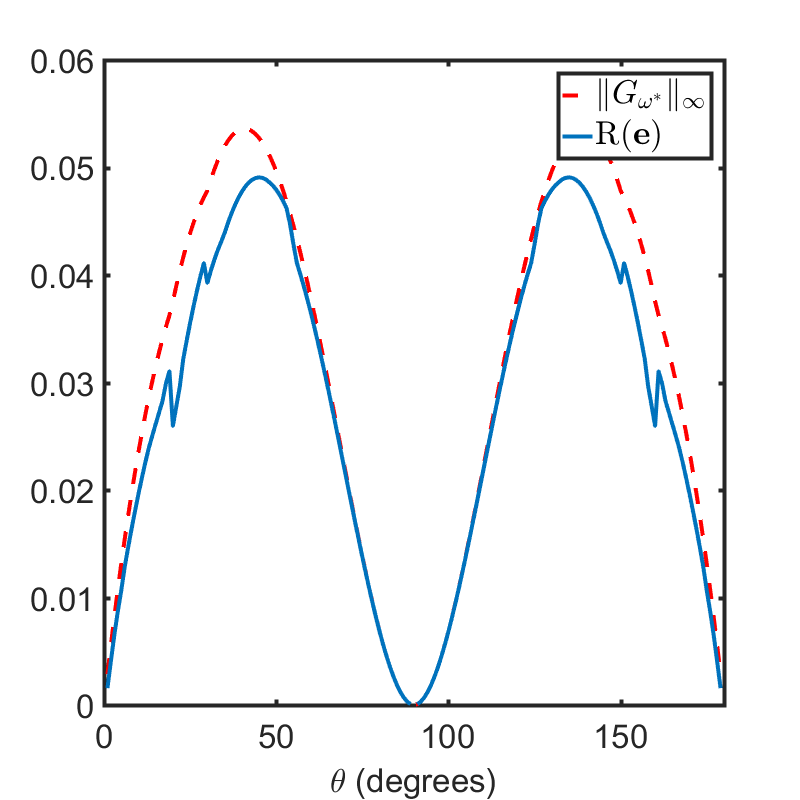}}
\end{tabular}
\caption{{\bf Convergence rates of damped Jacobi and SOR for semi-implicit Guidefill:}  As noted in Section \ref{sec:semiImplicit}, the implementation of semi-implicit Guidefill outlined in Algorithm 1 (blue text) is equivalent to solving the linear system \eqref{eqn:Linearboundary} iteratively using damped Jacobi (parallel implementation) or SOR (sequential implementation), with relaxation parameter $\omega^*$ given by \eqref{eqn:specialomega}.  Here we compare the experimentally measured convergence rates of this implementation ($r=3$, ${\bf g}=(\cos\theta,\sin\theta)$ and $\mu = 100$) with the theoretical bounds on $\|J_{\omega^*}\|_{\infty}$ and $\|G_{\omega^*}\|_{\infty}$ from Corollary \ref{corollary:actualRates}.  Specifically, (a) and (c) confirm experimentally the first bound in \eqref{eqn:convergenceBound} in the cases $M=J_{\omega^*}$ and $M=G_{\omega^*}$, that is, damped Jacobi and SOR with relaxation parameter $\omega^*$, for the case $\theta = 2^{\circ}$.  The inpainting problem in this case is the same as in Figure \ref{fig:shallowAngles}(a), and all the parameters of semi-implicit Guidefill are the same.  The ``exact'' solution $u_h$ was found by first solving \eqref{eqn:Linearboundary} to machine precision.  In each case, we measured convergence rates only within the first ``shell'' of the inpainting problem.  Next, (b)-(d) confirm experimentally the second bound in \eqref{eqn:convergenceBound}, as a function of $\theta$.  The inpainting problem is the same as the one in Figure \ref{fig:specialDir}(a), and all parameters are the same.  In this case we vary $\theta$ from $1^{\circ}$ up to $179^{\circ}$ in increments of one degree, in each case iteratively solving \eqref{eqn:Linearboundary} (again, only for the first shell), computing $R({\bf e})$, and comparing with $\|J_{\omega^*}\|_{\infty}$ and $\|G_{\omega^*}\|_{\infty}$.  Note the excellent performance of SOR in comparison with damped Jacobi.}
\label{fig:rates}
\end{figure}

\vskip 1mm

\begin{remark}  Although our choice of $\omega^*$ is non-optimal, it is convenient to implement and the difference in performance is negligible.  For example, even for the optimal value $\omega = 1$, for Jacobi we have $\|J_1\|_{\infty} \rightarrow 1$ as $\theta \rightarrow 0$ or $\theta \rightarrow \pi$.    At the same time, for SOR we have  $\|G_{\omega^*}\|_{\infty} \leq 0.06$ (Figure \ref{fig:rates}) for all $\theta$ and all $r \geq 0$ (it follows trivially from Corollary \ref {corollary:actualRates} that $\|G_{\omega^*}\|_{\infty}$ is decreasing function of $r$ for each fixed $\theta$) - while not quite as good as the optimal value $\|G_1\|_{\infty} \equiv 0$, it is more than satisfactory and moreover, we have $\|G_{\omega^*}\|_{\infty} \rightarrow 0$ as $\theta \rightarrow 0$ or $\theta \rightarrow \pi$.  As we will see in Section \ref{sec:kinkingAndConvex}, it is precisely these shallow angles where the semi-implicit extension has an advantage over the direct method.  Unfortunately, however, Guidefill was designed to be a parallel algorithm but SOR is a sequential.  While ideally we would like a method to solve the linear system \eqref{eqn:Linearboundary} that is both fast and parallel, this is beyond the scope of the present work.
\end{remark}

\subsection{Convergence of Algorithm 1 to a continuum limit} \label{sec:continuumLimit1}

Our objective in this section is to prove that the direct and semi-implicit forms of Algorithm 1 both converge to a continuum limit $u$ when we take $(h,\epsilon) \rightarrow (0,0)$ along the path $\epsilon = rh$.  Before we can do that, we need to define what that limit is, in what sense $u_h$ converges to it, and to lay out our assumptions regarding the regularity of $u_0$.  We also describe a property which, if satisfied, will allow us to connect our limit with the one studied by Bornemann and M\"arz in \cite{Marz2007} (we also show that all methods considered in this paper have this property).  When $u_0$ is smooth, convergence is straightforward to prove - we did so in \cite[Theorem 1]{Guidefill} for the direct form of Algorithm 1 using a simple argument based on Taylor series.  However, in this paper we are interested in a more general setting where $u_0$ may not be smooth and Taylor series may not be available.  Instead of Taylor series, our main tool will be a connection to stopped random walks.  We will explain this connection briefly before continuing on to the main result of this section.  Throughout this section we assume for convenience that $u_0$ and $u_h$ are both greyscale images, that is $u_0 : \Omega \backslash D \rightarrow \field{R}$, $u_h : \Omega_h \rightarrow \field{R}$.  In the color case $u_0 : \Omega \backslash D \rightarrow \field{R}^d$, $u_h : \Omega_h \rightarrow \field{R}^d$ one may easily show that our results hold channel-wise.

\vskip 2mm

\noindent {\bf Definition of the continuum limit.}  We wish to prove convergence of the direct and semi-implicit forms of Algorithm 1 to a continuum limit $u$ given by the transport equation
\begin{equation} \label{eqn:transport}
\nabla u \cdot {\bf g}_r^* = 0, \qquad u \Big |_{y = 0} = u_0 \Big |_{y = 0} \qquad u\Big |_{x = 0}=u\Big |_{x = 1}
\end{equation}
Because of the assumptions we have made in Section \ref{sec:symmetry}, ${\bf g}_r^*$ will turn out to be a constant equal to the center of mass of the stencil $a^*_r$ with respect to the stencil weights $\{ w_r({\bf 0},{\bf y}) : {\bf y} \in a^*_r \}$ (Definition \ref{def:stencil}), that is
\begin{equation} \label{eqn:transportIsCenter}
{\bf g}_r^* = \frac{\sum_{{\bf y} \in a^*_r} w_r(0,{\bf y}){\bf y}}{\sum_{{\bf y} \in a^*_r} w_r(0,{\bf y})}.
\end{equation}
As we will allow discontinuous boundary data $u_0$, the solution to \eqref{eqn:transport} must be defined in a weak sense.  However, since ${\bf g}_r^*$ is a constant, this is simple.  So long as ${\bf g}_r^* \cdot e_2 \neq 0$, we simply define the solution to the transport problem \eqref{eqn:transport} to be 
\begin{equation} \label{eqn:weak}
u({\bf x}) = u_0(\Pi_{\theta_r^*}({\bf x})), \quad \mbox{ where } \quad \Pi_{\theta_r^*}(x,y)=(x-\cot(\theta_r^*)y \mbox{ mod } 1,0).
\end{equation}
We call  $\Pi_{\theta_r^*}: D \rightarrow \partial D$ the transport operator associated with \eqref{eqn:weak}.  The $\mbox{mod}$ $1$ is due to our assumed periodic boundary conditions and
$$\theta_r^* = \theta({\bf g}_r^*) \in (0,\pi)$$
is the counterclockwise angle between the x-axis and the line $L_{{\bf g}_r^*} := \{ \lambda {\bf g}_r^* : \lambda \in \field{R} \}$.

\vskip 2mm

\noindent {\bf Modes of convergence (discrete $L^p$ norms).}  Given $f_h : D_h \rightarrow \field{R}$, we introduce the discrete $L^{\infty}$ norm
\begin{equation} \label{eqn:Linfinity}
\|f_h \|_{\infty} := \max_{{\bf x} \in D_h } |f_h({\bf x})|
\end{equation}
and the discrete $L^p$ norm
\begin{equation} \label{eqn:Lp}
\|f_h \|_p := \big(\sum_{{\bf x} \in D_h } |f_h({\bf x})|^p h^2\big)^{\frac{1}{p}}.
\end{equation}
These norms will be used to measure the convergence of $u_h$ to the continuum limit $u$.

\vskip 2mm

\noindent{{\bf Convergence of center of mass.}  There is one additional property which, if satisfied, will enable us to connect our limit with the one studied by Bornemann and M\"arz.  Specifically, if a method obeys 
\begin{equation} \label{eqn:centerOfMass}
\lim_{r \rightarrow \infty} \frac{{\bf g}^*_r}{r} \rightarrow {\bf g}^* \in \field{R}^2 \qquad \mbox{ with rate at least } O(r^{-q}) \quad \mbox{ for some } q > 0,
\end{equation}
then it is possible to make sense of a (generalized version) of Bornemann and M\"arz's limit.  Moreover, as we will see in Section \ref{sec:marzLimit}, the vector ${\bf g}^*$ {\em is} the transport direction obtained in their limit.  When $A_{\epsilon,h}({\bf x}) = B_{\epsilon,h}({\bf x})$ as in Telea's algorithm and coherence transport, or $A_{\epsilon,h}({\bf x}) = \tilde{B}_{\epsilon,h}({\bf x})$ as in Guidefill, it is easy to see that this condition is satisfied.  In fact, if $a^*_r \in \{ b^-_r, b^0_r, \tilde{b}^-_r, \tilde{b}^0_r\}$ we have
$$\lim_{r \rightarrow \infty} \frac{{\bf g}^*_r}{r} = \lim_{r \rightarrow \infty} \frac{\sum_{{\bf y} \in a^*_r} \hat{w}\left(0,\frac{\bf y}{r}\right)\frac{\bf y}{r}\frac{1}{r}}{\sum_{{\bf y} \in a^*_r} \hat{w}\left(0,\frac{\bf y}{r}\right)\frac{1}{r}}\rightarrow \frac{\int_{{\bf y} \in B^-_1({\bf 0})}\hat{w}(0,{\bf y}){\bf y} d{\bf y}}{\int_{{\bf y} \in B^-_1({\bf 0})}\hat{w}(0,{\bf y}) d{\bf y}}$$
(where $\hat{w}$ is the function we assumed existed in \eqref{eqn:scalinglaw}), because the LHS can be interpreted as a Riemann sum for the RHS.  We also know, by an elementary argument in quadrature, that the convergence rate is $O(r^{-1})$ or better, so $q=1$ in this case.  In fact, it is straightforward to show that the integral on the RHS of the above equation is the same for coherence transport, Guidefill, and semi-implicit Guidefill.  In other words, the original limit of Bornemann and M\"arz is the same for all three of these methods.  However, our limit predicts that they behave very differently - see Section \ref{sec:kink3main}.  
\vskip 2mm

\begin{figure}
\centering\includegraphics[width=1\linewidth]{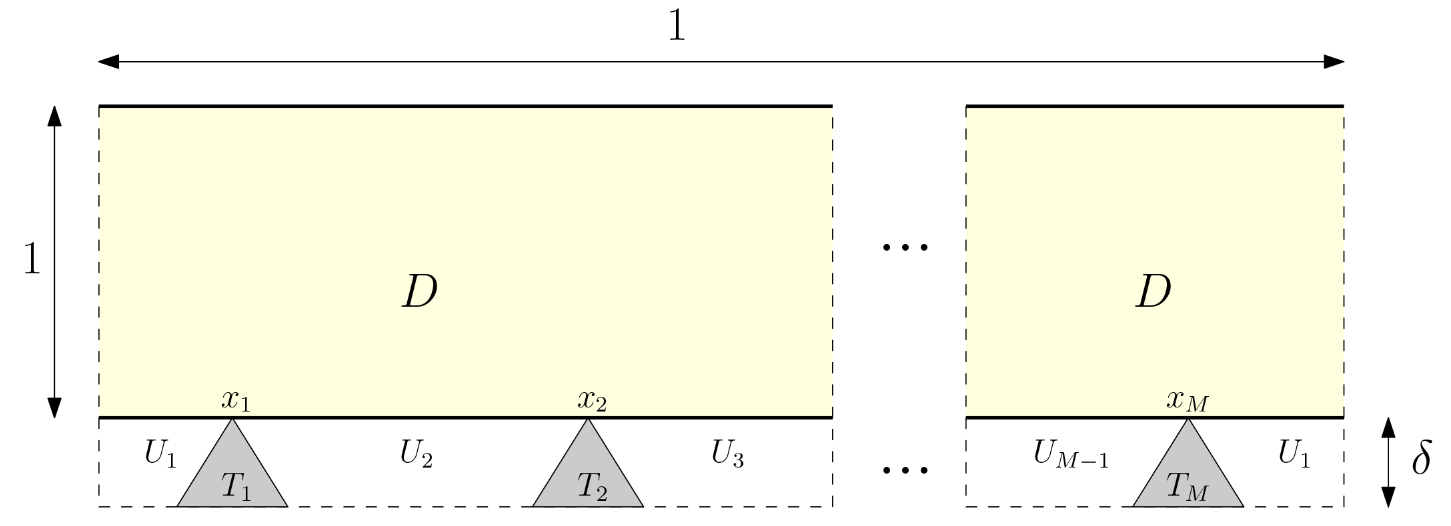}
\caption{{\bf Regularity assumptions:}  The inpainting domain $D=(0,1]^2$ together with data region $\mathcal U = (0,1] \times (-\delta,0]$.  The image data $u_0$ has high regularity when restricted to each of the sets $U_i$ but has (possibly) lower regularity on $\mathcal U$ as a whole due to problems within finitely many triangles $T_i$ (where we might have, for example, jump discontinuities in $u_0$ or its derivatives, if they exist).
In particular, we have $u_0 \in W^{s,\infty}(U_i)$ for all $i$ and for some $s > 0$ but only $u_0 \in W^{s',\infty}(\mathcal U)$ for some $0 \leq s' \leq s$, where $W^{s,\infty}(U_i)$, $W^{s',\infty}(\mathcal U)$ denote the fractional sobolev spaces described in the text.}
\label{fig:setup}
\end{figure}

\noindent {\bf Regularity of the boundary data}

\vskip 1mm

\noindent One of the major goals of this paper is to extend the analysis in \cite{Marz2007} and \cite{Guidefill}, which assume a high degree of regularity ($C^1$ and $C^2$ respectively) of the boundary data $u_0$,  to a generalized setting that is more realistic for images.  In particular, our analysis should include the case that $u_0$ is piecewise smooth.  Indeed we actually develop an even more general setting that includes piecewise smooth $u_0$ as a special case.  

Specifically, we consider $u_0$ that belongs to the fractional Sobolev space $W^{s,\infty}(U_i)$ where $0<s<\infty$, when restricted to each of a series of subsets $\{U_i\}_{i=1}^N \subseteq \mathcal U$ such that $\mathcal U \backslash \{U_i\}_{i=1}^N$ is ``small'' in some sense, while belonging to a potentially lower regularity  fractional Sobolev space  $W^{s',\infty}(\mathcal U)$ for some $0 \leq s' \leq s$ when considered on $\mathcal U$ as a whole.  Note that for $s'>0$ we have the identity $W^{s',\infty}(\mathcal U) = C^{s',s' - \lfloor s' \rfloor}(\mathcal U)$ - see also \cite{Hitchhiker} for a review of fractional Sobolev spaces.  While most authors avoid defining $W^{s',\infty}(\mathcal U)$ in the case $s'=0$, for the purposes of this paper we define 
\begin{equation} \label{eqn:sZero}
W^{0,\infty}(\mathcal U) = L^{\infty}(\mathcal U).
\end{equation}
This will enable us to seamlessly incorporate the case of $u_0$ that is piecewise continuous, which is of particular interest.  With this definition for every $s \geq 0$ the space $W^{s,\infty}(\mathcal U)$ is a Banach space with norm $\|\cdot \|_{W^{s,\infty}(\mathcal U)}$ such that any $u_0 \in W^{s,\infty}(\mathcal U)$ obeys the H\"older property
\begin{equation} \label{eqn:HolderLike}
\| \partial_{\alpha} u_0({\bf x}) - \partial_{\alpha} u_0({\bf y})\| \leq \| u_0 \|_{W^{s,\infty}(\mathcal U)}\|{\bf x} -{\bf y}\|^{s - \lfloor s \rfloor}
\end{equation}
for all multi-indices $\alpha$ s.t. $|\alpha|=\lfloor s \rfloor$.  This property will be one of the key tools of our analysis in this section.

\vskip 1mm

\noindent {\em Detailed Assumptions.}  We assume that the strip $\mathcal U = (0,1] \times (-\delta,0]$ contains $M$ closed triangles $\{ T_i \}_{i=1}^M$ with tips touching the $x$-axis at $M$ points ${\bf x}_i:=(x_i,0)$ with $1 \leq i \leq M$ as in Figure \ref{fig:setup}.  Each $T_i$ is defined by the inequality
$$T_i = \{ (x,y) \in (0,1] \times [-\delta,0] : y \leq -L|x_i - x|\}$$
for some constant $L > 0$.  We label the region between triangles $T_{i-1}$ and $T_i$ as $U_i$, where $T_{0}:=T_M$. 

We assume that $u_0 \in W^{s,\infty}(U_i)$ for each $U_i$, but has a smaller Sobolev exponent $s' < s$ on $(0,1] \times (-\delta,0]$ as a whole.  The $T_i$ can be thought of as ``bounding cones'' of a series of curves $\mathcal C_i$ intersecting $\partial \Omega$ at $\{ {\bf x}_i \}$, such that $u_0$ has lower regularity on each $\mathcal C_i$.  For example, these curves might be places where either $u_0$ or $\nabla u_0$ (if it exists) have jump discontinuities.

The assumption $\mathcal C_i \subset T_i$ ensures that each $\mathcal C_i$ intersects $\partial \Omega$ ``nicely''.  In particular, if $\mathcal C_i$ is smooth, this means that the angle between the tangent to $\mathcal C_i$ at $x_i$ and $\partial D$ is bounded away from $0$ (however, our assumption does not require that $\mathcal C_i$ be smooth).  See Figure \ref{fig:triangle}.

\begin{figure}
\centering\includegraphics[width=0.7\linewidth]{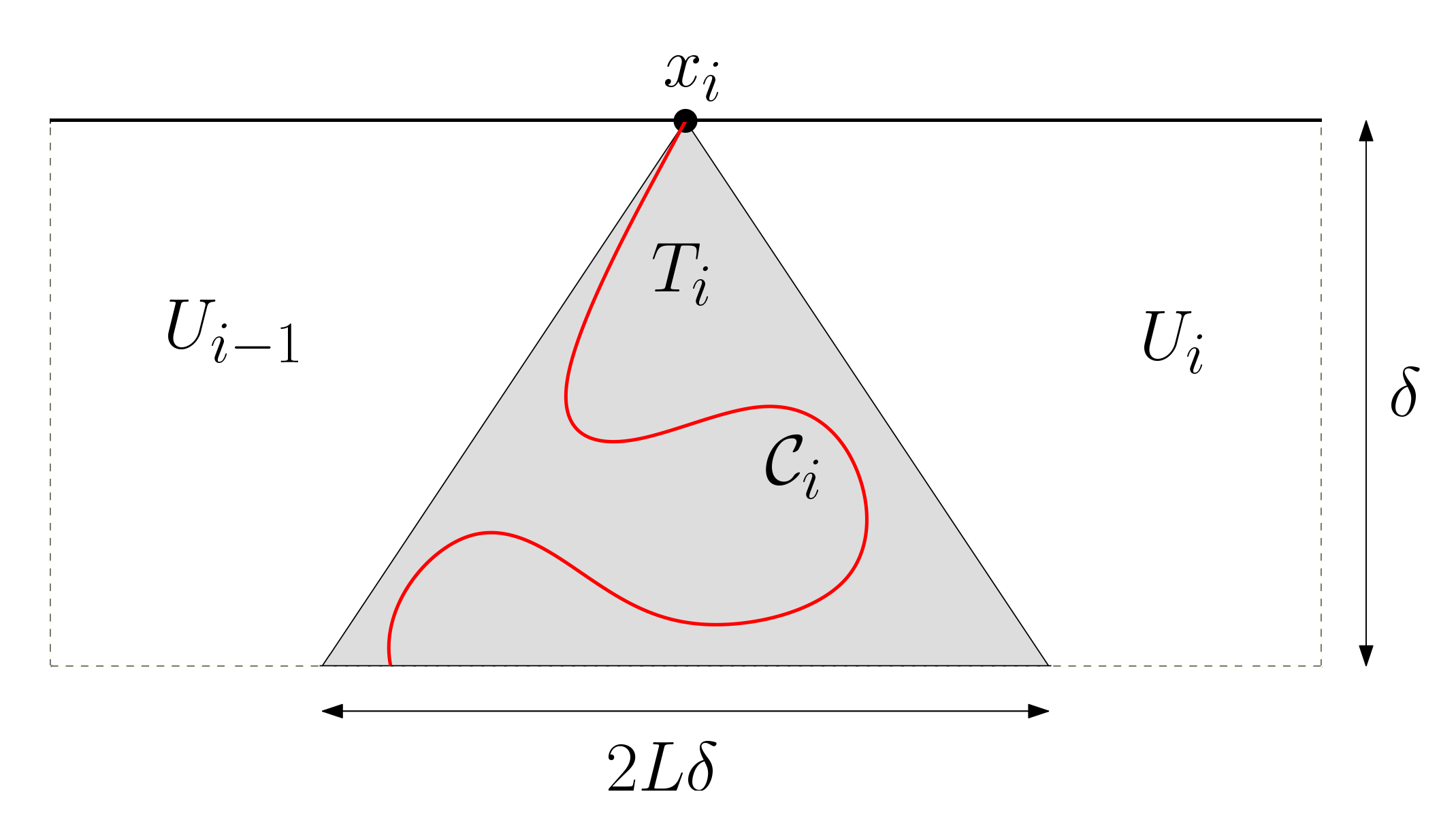}
\caption{{\bf Close up of one of the triangles $T_i$:}  The source of the problems in $T_i$ may be, for example, a curve $\mathcal C_i$ such that the regularity condition \eqref{eqn:HolderLike} with exponent $s$ fails if ${\bf x}$ and ${\bf y}$ lie on opposite sides of $\mathcal C_i$ (in which case it still holds with exponent $s' \leq s$).  In this case $T_i$ is the ``bounding cone'' of $\mathcal C_i$, ensuring that it intersects $D$ in a simple way.}
\label{fig:triangle}
\end{figure}

\vskip 2mm

\noindent {\bf Connection to stopped random walks.}  Note that the update formula \eqref{eqn:generic} gives a relationship between $u_h({\bf x})$ and its immediate neighbors in $a^*_r$, which for now we assume obeys $a^*_r \subseteq b^-_r$ or $a^*_r \subseteq b^0_r$ (if this is not the case we can apply the method of equivalent weights from Section \ref{sec:fiction}).  Now suppose we modify \eqref{eqn:generic} iteratively by repeated application of the following rule: for each ${\bf y} \in a^*_r$, if \\${\bf x}+h{\bf y} \in D_h$, replace $u_h({\bf x}+h{\bf y})$ in the RHS of \eqref{eqn:generic} with the RHS of a version of \eqref{eqn:generic} where the LHS is evaluated at ${\bf x}+h{\bf y}$ (in other words, we are substituting \eqref{eqn:generic} into itself).  Otherwise, if ${\bf x}+h{\bf y} \in \mathcal U_h$, we are already in the undamaged portion of the image, and we may replace $u_h({\bf x}+h{\bf y})$ with $u_0({\bf x}+h{\bf y})$.  Repeat this procedure until $u_h({\bf x})$ is entirely expressed as a weighted sum of $u_0 \Big|_{\mathcal U_h}$, that is
\begin{equation} \label{eqn:hope}
u_h({\bf x}) = \sum_{{\bf y} \in \mathcal U_h} \rho({\bf y}) u_0({\bf y}),
\end{equation}
for some as of yet unknown weights $\rho$.  Denoting ${\bf x}:=(nh,mh)$, then for the direct form of Algorithm 1 this procedure will terminate after $m$ steps, as in this case \eqref{eqn:generic} expresses $u_h({\bf x})$ in terms of neighbors at least $h$ units below it.  On the other hand, for the semi-implicit extension, \eqref{eqn:hope} has to be interpreted as a limit.

\begin{figure}
\centering
\begin{tabular}{cccc}
\subfloat[$k=0$]{\includegraphics[width=.22\linewidth]{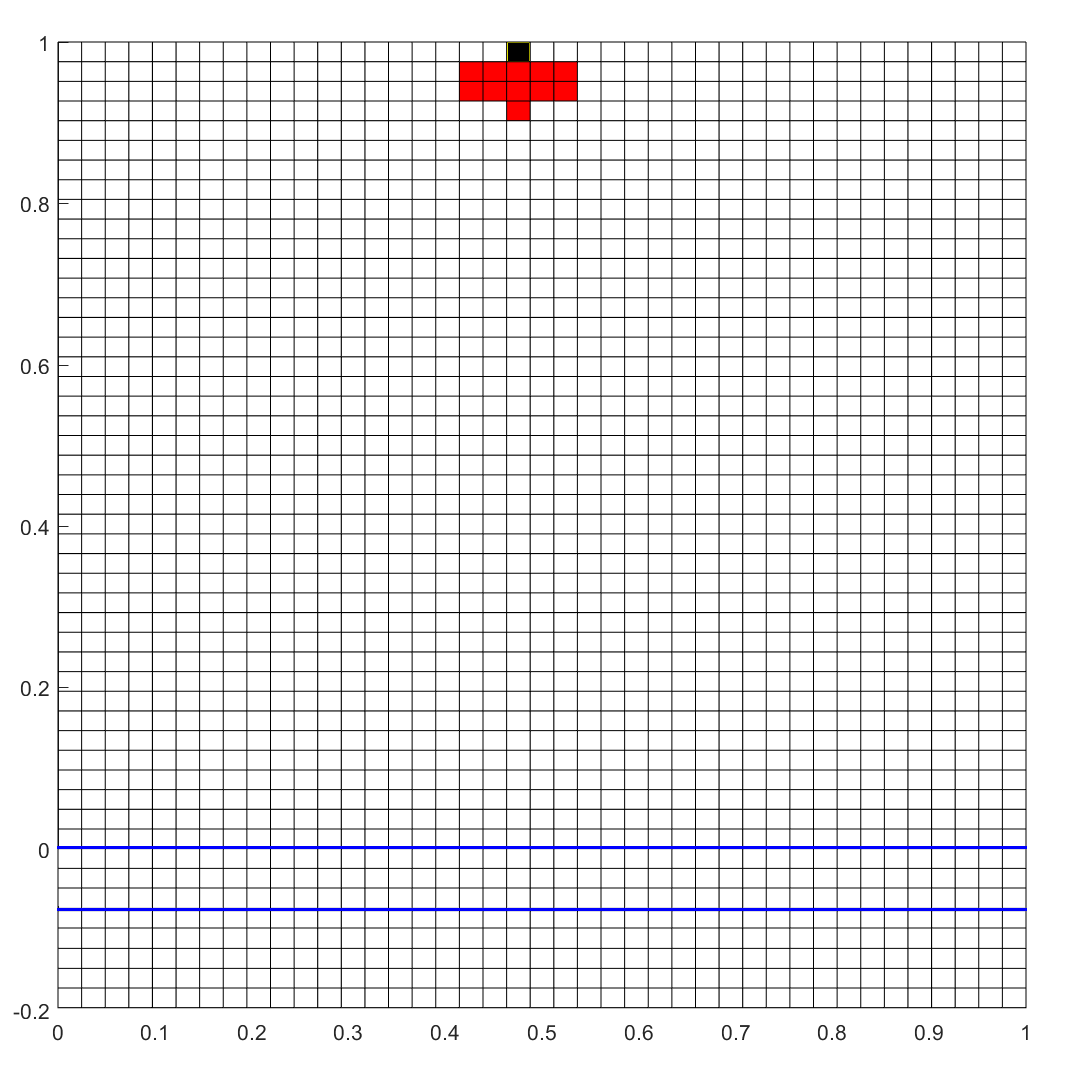}} & 
\subfloat[$k=4$]{\includegraphics[width=.22\linewidth]{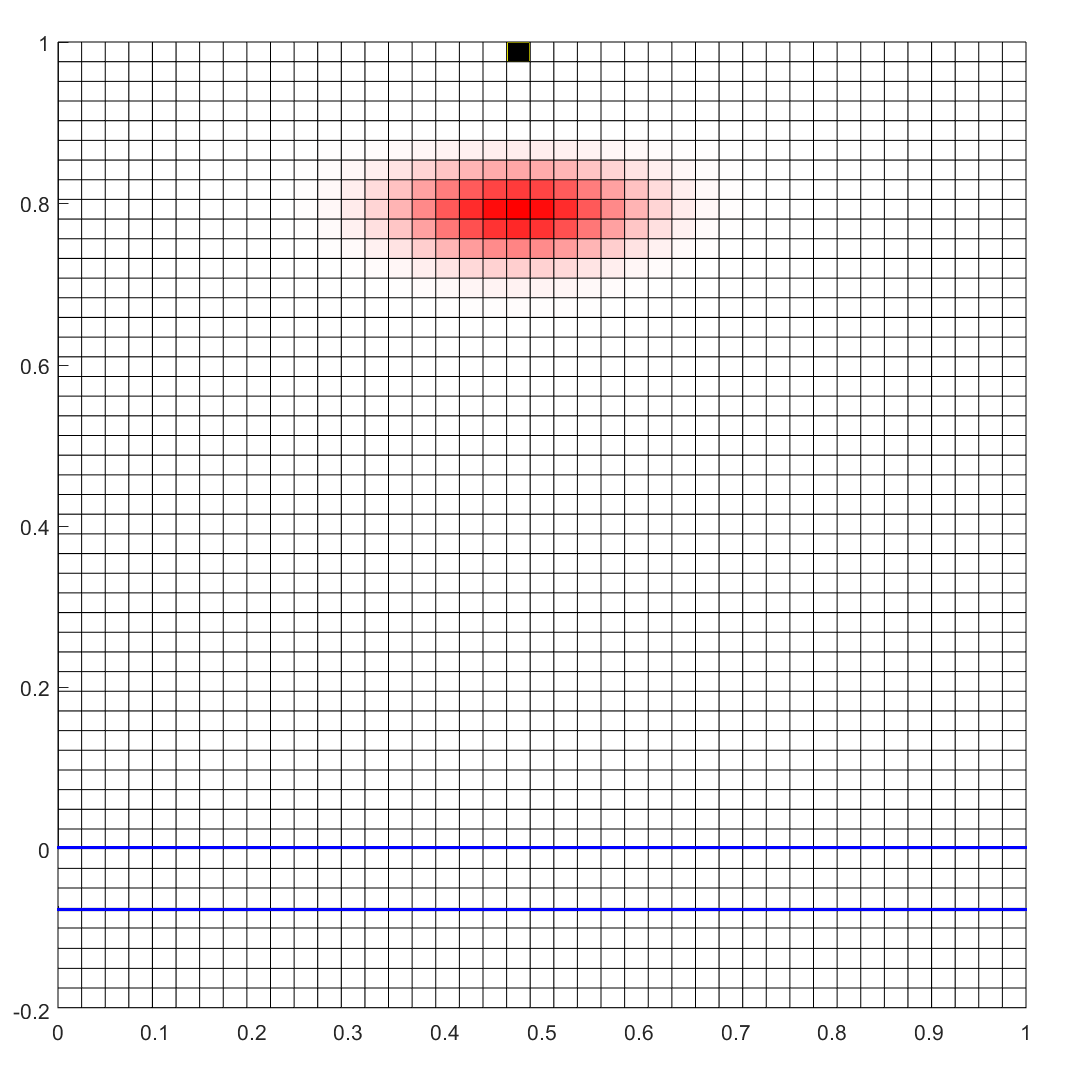}} &
\subfloat[$k=17$]{\includegraphics[width=.22\linewidth]{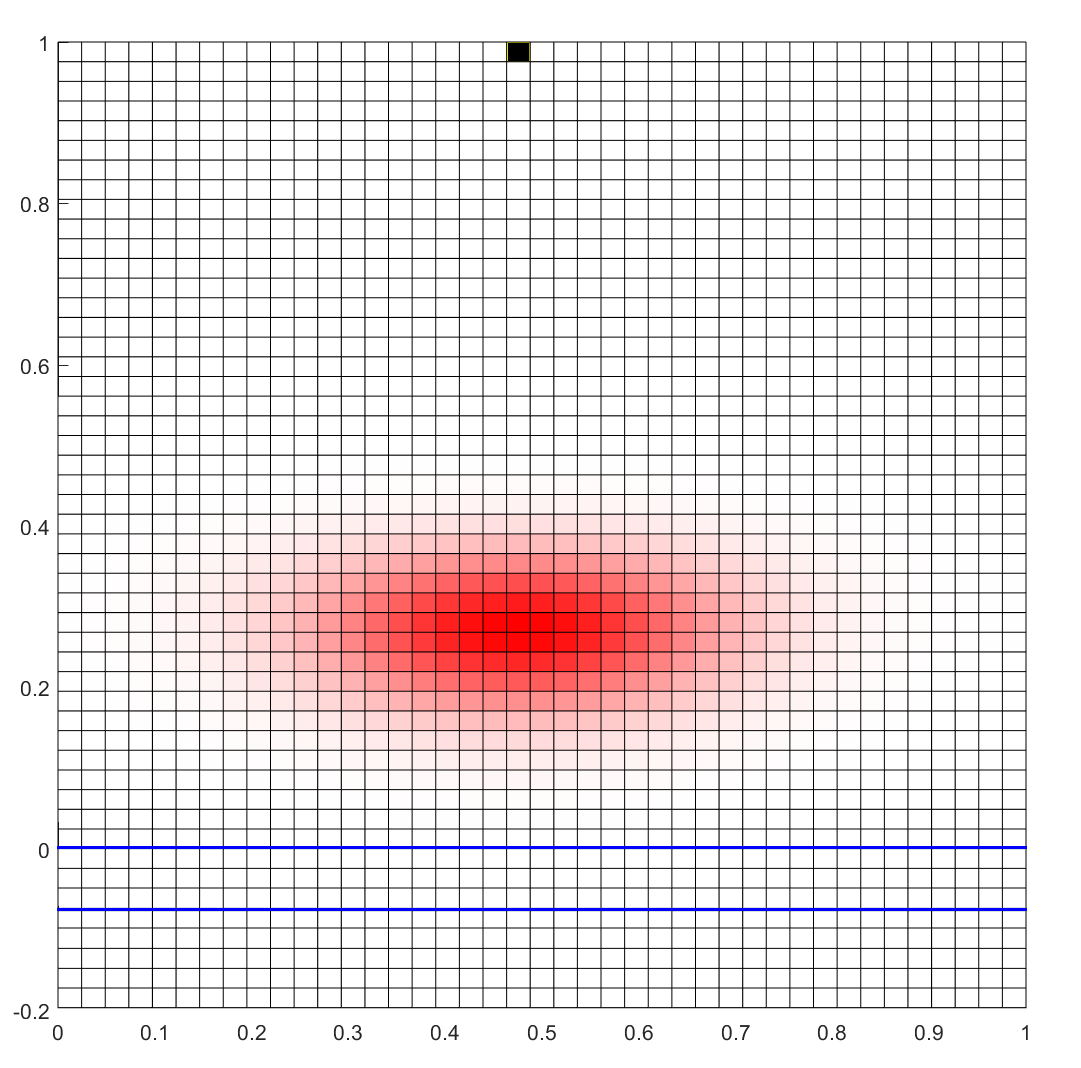}} & 
\subfloat[$k=40$]{\includegraphics[width=.22\linewidth]{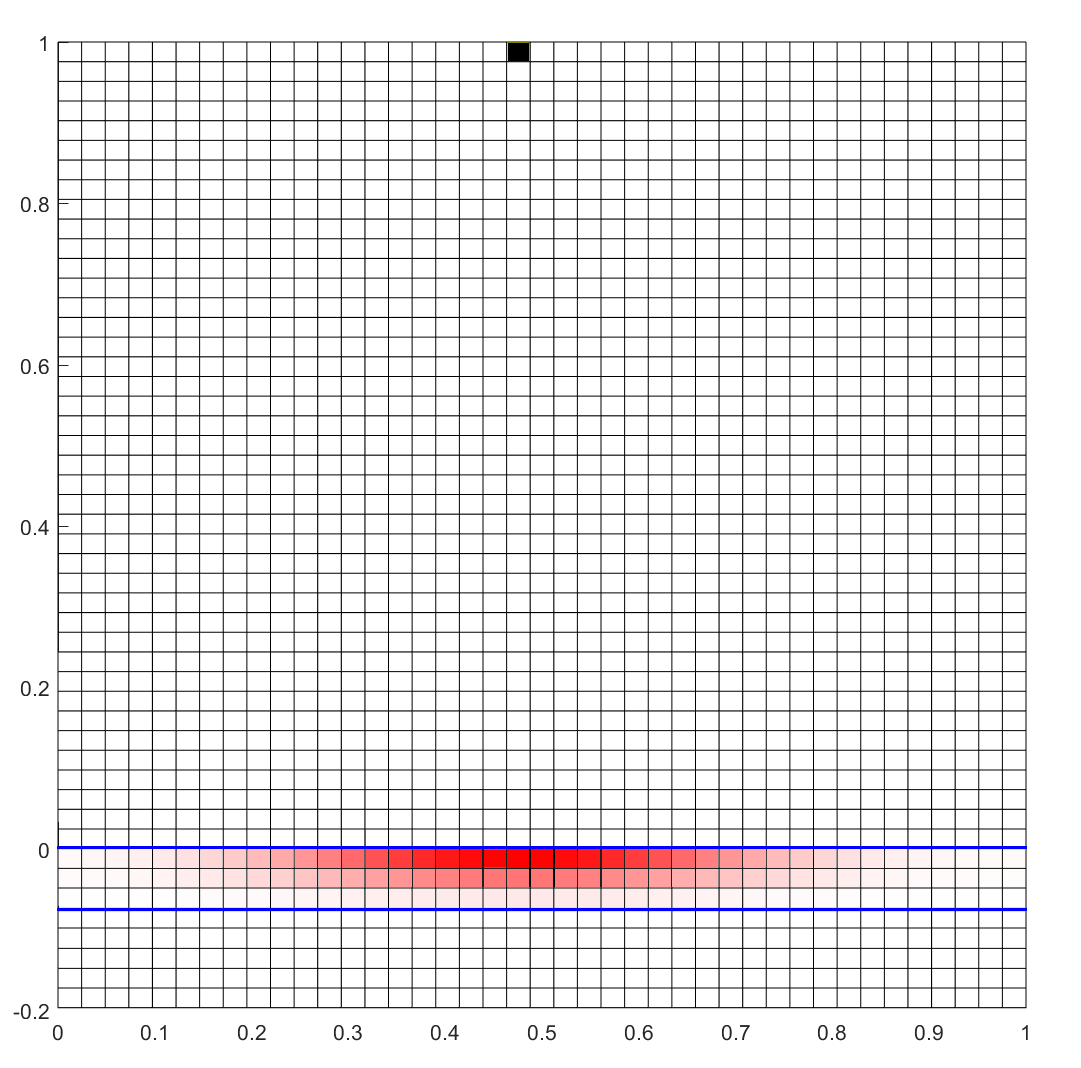}} \\
\end{tabular}
\caption{{\bf Connection to Stopped Random Walks:}  Here we illustrate the connection between the elimination procedure described in the text and stopped random walks.  In (a) the pixel ${\bf x}$ (colored black) is expressed as a weighted average \eqref{eqn:update} of its neighbors in in $b^-_r$ (colored in red).  In this case we use the direct form of Algorithm 1 with $r=3$ and uniform weights (which is why each neighbor in $b^-_r$ is the same shade of red).  In (c)-(d) we have applied $k=4$, $k=17$, and $k=40$ steps of our elimination procedure (which essentially consists of substituting \eqref{eqn:update} into itself repeatedly - details in the text), and $u_h({\bf x})$ is now expressed as weighted sum of $u_h({\bf y})$ for whichever pixels ${\bf y}$ are colored red, with darker shades of red indicating greater weight.  Since the stencil weights $\{ \frac{w_r({\bf 0},{\bf y})}{W} : {\bf y} \in b^-_r\}$ are nonegative and sum to one, this is procedure is related to a stopped random walk ${\bf X}_{\tau}:=(X_{\tau},Y_{\tau})$ started from ${\bf x}$ with stopping time $\tau=\inf\{ n : Y_n \leq 0\}$.  Specifically, after $k$ steps of elimination we have
$u_h({\bf x}) = \sum_{{\bf y} \in \Omega_h } \rho_{{\bf X}_{\tau \wedge k}}({\bf y})u_h({\bf y}),$
where $\rho_{{\bf X}_{\tau \wedge k}}$ is the density of
${\bf X}_{\tau \wedge k}:={\bf x}+h \sum_{i=1}^{\tau \wedge k} {\bf Z}_i,$
a random walk with increments ${\bf Z}_i$ i.i.d. taking values in $b^-_r$ with density $P({\bf Z}_i ={\bf y})=\frac{w_r({\bf 0},{\bf y})}{W}$.  Since $b^-_r$ is below the line $y=-1$, the density of $\rho_{{\bf X}_{\tau \wedge k}}$ necessarily moves at least one pixel downwards each iteration, and since ${\bf x}$ is only 40 pixels above $y=0$, by $k=40$ it the entire density is below $y=0$ and the walk terminates.  This process is illustrated in (b)-(d), where we see the density after $4$ steps (b), $17$ steps (c), and the final stopped density (d).  Note that as ${\bf X}_{\tau}$ has increments of size at most $rh$, the density $\rho_{{\bf X}_{\tau}}$ necessarily lies between the lines $y=-rh$ and $y=0$ (outlined in blue).  The purpose of this procedure is to express the color of a given pixel ${\bf x}$ deep inside the inpainting domain entirely in terms of the colors of known pixels below $y=0$.
}
\label{fig:Elimination}
\end{figure}

This elimination procedure has a natural interpretation in terms of stopped random walks.  Since the weights $\{ \frac{w({\bf 0},{\bf y})}{W} \}_{{\bf y} \in a^*_r}$ are non-negative and sum to $1$, we can interpret them as the density of a two dimensional random vector ${\bf Z} := (V,W)$ taking values in $b^-_r$ or $b^0_r$ .  Moreover, defining the random walk
\begin{equation} \label{eqn:randomWalk}
{\bf X}_j :=(X_j,Y_j) = (nh,mh) + h \sum_{i=1}^j {\bf Z}_i
\end{equation}
with $\{ {\bf Z}_i \}$ i.i.d. and equal to ${\bf Z}$ in distribution, and defining
\begin{equation} \label{eqn:tau}
\tau = \inf\{ j : {\bf X}_j \in \mathcal U_h  \}=\inf\{ j : Y_j \leq 0  \},
\end{equation}
then after $k$ steps of elimination, we have
$$u_h({\bf x}) = \sum_{{\bf y} \in \Omega_h} \rho_{{\bf X}_{j \wedge \tau }}({\bf y}) u_h({\bf y}),$$
where $\rho_{{\bf X}_{j \wedge \tau }}$ denotes the density of ${\bf X}_{j \wedge \tau }$.  Denoting the mean of ${\bf Z}$ by $(\mu_x,\mu_y)$ note that by \eqref{eqn:transportIsCenter} we have the equivalence
$$(\mu_x, \mu_y) = {\bf g}^*_r.$$
In other words, the mean of ${\bf Z}$ is precisely the transport direction of our limiting equation \eqref{eqn:transport}.  The condition ${\bf g}^*_r \cdot e_2 \neq 0$, which we needed for \eqref{eqn:transport} to be defined, implies $\mu_y < 0$.  In the nomenclature of random walks, this means that ${\bf X}_k$ has negative drift in the $y$ direction, while $\tau$ is the first passage time through $y=0$.  Fortunately, this type of random walk and this type of first passage time have been studied and are well understood.  See for example \cite[Chapter 4]{gut2009stopped}, \cite{GutAndJanson1983}, \cite{gut1974}, \cite{GUT1974115}.  The book \cite{gut2009stopped} also provides an good overview of stopped random walks in general.  In particular, we know immediately that $\tau$ is a stopping time, $P(\tau = \infty)=0$, and $\tau$ has finite moments of all orders \cite[Chapter 3, Theorems 1.1 and 3.1]{gut2009stopped}.  It follows that
\begin{equation} \label{eqn:reformulation}
u_h({\bf x}) = \sum_{{\bf y} \in \mathcal U_h} \rho_{{\bf X}_{\tau }}({\bf y}) u_0({\bf y}) = E[u_0({\bf X}_{\tau})].
\end{equation}  
This insight is central to our convergence argument and will also play a central role when we analyze smoothing artifacts in Section \ref{sec:blur}.  See Figure \ref{fig:Elimination} for an illustration of these ideas.

\vskip 2mm

\begin{theorem} \label{thm:convergence}
Let the continuous inpainting domain $D$ and undamaged area $\hskip 0.5mm \mathcal U$, as well as their discrete counterparts $D_h$, $\mathcal U_h$ be as described in Section \ref{sec:symmetry}.  Suppose we inpaint $D_h$ using the direct form of Algorithm 1 or the semi-implicit extension, and denote the result by $u_h : D_h \rightarrow \field{R}$.  Assume the assumptions of Section \ref{sec:symmetry} hold and let $a^*_r$ and $\left\{ w_r({\bf 0},{\bf y}) : {\bf y} \in a^*_r \right\}$ denote the stencil and stencil weights respectively (defined in Definition \ref{def:stencil}) of our inpainting method.  Let $u$ denote the weak solution to the transport equation \eqref{eqn:transport}, with transport direction equal to the center of mass of our stencil with respect to the stencil weights, that is
$${\bf g}^*_r = \sum_{{\bf y} \in a^*_r } \frac{w({\bf 0},{\bf y})}{W}{\bf y} \quad \mbox{ where } W = \sum_{{\bf y} \in a^*_r } w({\bf 0},{\bf y}).$$
Suppose the boundary data $u_0 : \mathcal U \rightarrow \field{R}$ obeys the regularity assumptions above, and let $\{ U_i\}_{i=1}^M$ be as defined above and illustrated in Figure \ref{fig:setup}.  Then for any $p \in [0,\infty]$ we have
$$\|u - u_h \|_{p} \leq K \cdot (rh)^{\left(\frac{s'}{2}+\frac{1}{2p}\right) \wedge \frac{s}{2} \wedge 1}$$
(the case $p=\infty$ is included by defining $1/\infty :=0$) where $K$ is a constant depending only on $u_0$, $\mathcal U$, $\{U_i\}_{i=1}^M$, $\frac{r}{{\bf g}^*_r \cdot e_2}$ and $\theta^*_r$.  Moreover, $K$ depends continuously on $\theta^*_r$ and $\frac{r}{{\bf g}^*_r \cdot e_2}$, is a monotonically increasing function of $\frac{r}{{\bf g}^*_r \cdot e_2}$, and $K \rightarrow \infty$ as $\theta^*_r \rightarrow 0$ or $\theta^*_r \rightarrow \pi$.
\end{theorem}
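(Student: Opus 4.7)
The natural approach is to combine the stopped--random--walk representation $u_h({\bf x}) = E[u_0({\bf X}_\tau)]$ from \eqref{eqn:reformulation} with the closed form $u({\bf x}) = u_0(\Pi_{\theta^*_r}({\bf x}))$ for the continuum solution, and then to quantify how sharply ${\bf X}_\tau$ concentrates around $\Pi_{\theta^*_r}({\bf x})$. I would first establish two moment estimates on the stopped walk. Wald's identity applied to the zero-drift walk $X_n - (\mu_x/\mu_y)Y_n$ gives $E[X_\tau] = \Pi_{\theta^*_r}({\bf x}) \cdot e_1 + O(h)$, with the bias coming from the deterministically bounded overshoot $Y_\tau \in [-rh,0]$. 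Wald's second identity, together with $E[\tau] = y_0/(h|\mu_y|) + O(1)$ and the fact that each increment ${\bf Z}_i$ has variance $O(1)$, yields ${\rm Var}(X_\tau) \leq C (r/|{\bf g}^*_r \cdot e_2|)\, h$, so ${\bf X}_\tau - \Pi_{\theta^*_r}({\bf x})$ has $L^2(P)$-size of order $(rh)^{1/2}$. Finiteness of all moments of $\tau$, valid uniformly for both the direct and semi-implicit variants by the results of Gut cited in the text, is what makes the representation and these estimates apply in both regimes.

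Once these are in hand, the pointwise bound
\[
|u_h({\bf x}) - u({\bf x})| \leq E\bigl[|u_0({\bf X}_\tau) - u_0(\Pi_{\theta^*_r}({\bf x}))|\bigr]
\]
is the starting point for the regularity analysis. When $\Pi_{\theta^*_r}({\bf x})$ and ${\bf X}_\tau$ both lie in a common $U_i$, the H\"older property \eqref{eqn:HolderLike} for the $W^{s,\infty}$-norm produces a bound of order $(rh)^{s/2}$: for $s \leq 1$ this is direct from Jensen and the variance estimate; for $s \in (1,2]$ a preliminary Taylor expansion whose linear term is annihilated by the $O(h)$ bias above reduces matters to the $s$-H\"older remainder; and for $s > 2$ the residual linear term saturates at $O(rh)$, giving the $\wedge 1$ factor. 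In the absence of a common $U_i$, only the weaker global $W^{s',\infty}$ estimate applies, producing the fallback bound $(rh)^{s'/2}$.

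The principal obstacle, and the source of the $1/(2p)$ exponent, is the $L^p$-balancing near the triangle tips $\{{\bf x}_i\}$. I would split $D$ into a \emph{safe} set $D^{\mathrm{safe}}_h$, on which $\Pi_{\theta^*_r}({\bf x})$ lies at distance at least $C(rh)^{1/2}$ from every tip, and its \emph{risky} complement of measure $O((rh)^{1/2})$, which is just an $(rh)^{1/2}$-neighborhood of finitely many characteristic rays. Gaussian-type tail bounds (Azuma--Hoeffding suffices since increments are bounded) applied inside each characteristic strip, together with the linearly-opening geometry of the bounding cones $T_i$ --- which is precisely why the triangles $T_i$ were introduced ---  show that on $D^{\mathrm{safe}}_h$ the walk terminates inside the same $U_i$ as $\Pi_{\theta^*_r}({\bf x})$ except on an event of probability $O((rh)^K)$ for every $K$; the safe contribution to $\|u-u_h\|_p$ is therefore at most $C(rh)^{s/2 \wedge 1}$. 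On the risky set the weaker bound $(rh)^{s'/2}$ multiplied by the measure factor $O((rh)^{1/(2p)})$ contributes $C(rh)^{s'/2 + 1/(2p)}$, and taking the worse of the two yields the stated exponent $(s'/2 + 1/(2p)) \wedge s/2 \wedge 1$. Dependence of the constant $K$ on $r/({\bf g}^*_r \cdot e_2)$ enters through the expected hitting time and the variance; continuity in $\theta^*_r$ and the blow-up as $\theta^*_r \to 0,\pi$ both arise from the $|\mu_y|^{-1}$ factor, while monotonicity in $r/({\bf g}^*_r \cdot e_2)$ simply reflects the fact that longer random walks have larger spread.
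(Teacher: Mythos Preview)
Your overall strategy matches the paper's: stopped-walk representation, moment estimates for ${\bf X}_\tau$, H\"older/Taylor bounds on $u_0$, Azuma tails near the triangle tips, and a geometric decomposition of $D$.  The gap is in your safe-set estimate.

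You claim that on $D^{\mathrm{safe}}_h$ --- defined by $\Pi_{\theta^*_r}({\bf x})$ lying at distance at least $C(rh)^{1/2}$ from every tip --- the walk lands in the wrong $U_i$ with probability $O((rh)^K)$ for every $K$.  But Azuma only gives $P(|X_\tau - \hat x| > \Delta) \leq \exp(-c\Delta^2/(rh))$, and with $\Delta = C(rh)^{1/2}$ this is $\exp(-cC^2)$, a constant independent of $h$.  So near the inner edge of your safe set the rogue event has probability bounded away from zero, and its contribution there is of order $(rh)^{s'/2}\exp(-cC^2)$, which does not vanish as $h\to 0$ when $s'=0$.  The safe contribution is therefore not $O((rh)^{s/2\wedge 1})$ as you assert, and the argument breaks.

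The paper avoids this by \emph{not} thresholding.  It keeps the pointwise bound $P(E^c_s)\leq 3\exp(-\Delta({\bf x})^2/(crh))$ as a function of the distance $\Delta({\bf x})$ to the band edge and then computes $\|P(E^c_s)^{1/2}\|_p$ over the whole good band; the resulting Gaussian integral is exactly what produces the factor $(rh)^{1/(2p)}$ on the \emph{good} region, not only on the bad one.  Your safe/risky split can be repaired by a dyadic layering at distances $2^k(rh)^{1/2}$ from each tip, summing $\exp(-c\,4^k)\cdot (2^k(rh)^{1/2})^{1/p}$, but as written it does not work.

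Two smaller points.  First, the paper bounds $\|{\bf X}_\tau - E[{\bf X}_\tau]\|_{L^4}$ rather than just the variance; the fourth moment is what allows them, in their conditioning lemma, to extract a clean $P(E^c_s)^{1/2}$ factor whose exponent is independent of $s,s'$.  With only the $L^2$ estimate the exponent on $P(E^c_s)$ in that lemma would depend on $s'$, which degrades the final rate.  (A repaired dyadic version of your split would sidestep this.)  Second, you have not addressed stencils $a^*_r$ containing ghost pixels, i.e.\ $a^*_r\not\subset\field{Z}^2$; the paper handles this in a final stage by passing to the equivalent weights on $\mbox{Supp}(a^*_r)\subseteq b^0_{r+2}$, which preserves the center of mass and hence ${\bf g}^*_r$, at the cost of replacing $r$ by $r+2$ in the constant.
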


\vskip 3mm

\begin{remark}  Here we list some notable special cases of Theorem \ref{thm:convergence}, together with their associated convergence rates:

\vskip 2mm

\noindent {\em Uniform regularity:}  If $u_0 \in C^{k,\alpha}(\mathcal U)$, then 
 $$\|u - u_h \|_{p} \leq  \begin{cases}  K \cdot (rh)^{\frac{k+\alpha}{2}} & \mbox{ if } k \in \{0,1\} \\
  K \cdot (rh) & \mbox{ if } k \geq 2. \\
 \end{cases}$$
In other words, the rate of convergence depends smoothly on the regularity of $u_0$ up until the $C^2$ level, beyond which additional smoothness has no effect.  Note also that in this case the rate of convergence is independent of $p$.

\vskip 2mm

\noindent {\em Piecewise smooth:}  If $u_0 \in C^{k,\alpha}(\mathcal U \backslash \{ \mathcal C_i\}_{i=1}^M)$ where $k \geq 1$ and $\{ \mathcal C_i\}_{i=1}^M$ are a series of curves such that $\mathcal C_i \subseteq T_i$ for each $i$ as in Figure \ref{fig:triangle}, but $u_0$ is discontinuous on $\mathcal U$ as a whole, then we have
 $$\|u - u_h \|_{p} \leq K \cdot (rh)^{\frac{1}{2p}}.$$
Thus, in the piecewise smooth case our rates of convergence are independent of the regularity of $u_0$ on $\mathcal U \backslash \{ \mathcal C_i\}_{i=1}^M$ so long as it is at least $C^1$, but now depend on $p$.  In particular, the convergence rate is a monotonically decreasing function of $p$, and we converge in $L^p$ for every $1 \leq p < \infty$, but not necessarily in $L^{\infty}$.  

\vskip 2mm

\noindent {\em Piecewise H\"older continuous:}  If $u_0 \in C^{0,\alpha}(\mathcal U \backslash \{ \mathcal C_i\}_{i=1}^M)$  but discontinuous on $\mathcal U$ as a whole (where $\{ \mathcal C_i\}_{i=1}^M$ are the same as in the previous example), then we have
 $$\|u - u_h \|_{p} \leq 
 \begin{cases} K \cdot (rh)^{\frac{\alpha}{2}} & \mbox{ if } 1\leq p \leq \frac{1}{\alpha} \\
 K \cdot (rh)^{\frac{1}{2p}} & \mbox{ if } \frac{1}{\alpha} < p \leq \infty. \\
 \end{cases}$$
In this case the convergence rate is independent of $p$ for $1 \leq p \leq \frac{1}{\alpha}$, but is the same as the previous example for $p>\frac{1}{\alpha}$.

\end{remark}

\section{Proof of Theorem \ref{thm:convergence}}

First a note on notation.  Let us define for convenience $\hat{\bf x}=\Pi_{\theta^*_r}({\bf x})$, so that the solution to \eqref{eqn:transport} may be more compactly written as 
$$u({\bf x})=u_0(\hat{\bf x}).$$
For brevity, even though ${\bf X}_{\tau}$, $\hat{\bf x}$, and $\tau$ depend implicitly on ${\bf x}=(nh,mh)$, we do not write this dependence down explicitly.  We will also adopt the notation that if ${\bf X}=(X_1,X_2)$ and $X$ denote vector and scalar valued random variables respectively, then
$$\| {\bf X} \|_{L^p} := E[\|{\bf X}\|^p]^{\frac{1}{p}} \quad \mbox{ and } \quad  \| X \|_{L^p} := E[|{\bf X}|^p]^{\frac{1}{p}},$$
where $\|{\bf X}\|:=\|{\bf X}\|_2$ denotes the {\em euclidean} norm of ${\bf X}$.  The identity
\begin{equation} \label{eqn:usefulSometimes}
\|{\bf X}\|_{L^p} \leq \|X_1\|_{L^p}+\|X_2\|_{L^p}
\end{equation}
(which follows trivially from the identity $\|{\bf X}\|_2 \leq \|{\bf X}\|_1$ and the triangle inequality with respect to the norm $\|\cdot\|_{L^p}$) will occasionally be useful.  Finally, note that by the linearity of expectation, we have
\begin{equation} \label{eqn:observation}
\|u - u_h \|_p = \| E[u_0({\bf X}_{\tau}) - u_0(\hat{\bf x})]\|_p
\end{equation}

\vskip 2mm
\noindent {\bf Roadmap.}  Our proof consists of six stages, which we go through before proceeding.  Stages 1-5 assume that $a^*_r \subseteq b^-_r$ or $a^*_r \subseteq b^0_r$ for simplicity.  Stage 6 generalizes to arbitrary $a^*_r$.
\vskip 1mm
\begin{enumerate}
\item {\bf Stage 1:} Obtain bounds, as a function of $h$, on the rate at which ${\bf X}_{\tau}$ is concentrating around its mean $E[{\bf X}_{\tau}]$, as well as the rate that $E[{\bf X}_{\tau}]$ itself is converging to $\hat{\bf x}$.  For technical reasons, we will require in particular bounds on $\|{\bf X}_{\tau}-E[{\bf X}_{\tau}]\|_{L^4}$.  In effect what is happening is that 
$$\rho_{{\bf X}_{\tau}} \rightarrow \delta_{\hat{\bf x}} \qquad \mbox{ as } h \rightarrow 0,$$
where $\delta_{\hat{\bf x}}$ denotes the Dirac delta distribution centered at $\hat{\bf x}$ (see Figure \ref{fig:intuition}).  Formally speaking, we have
$$u_h({\bf x}) =  \sum_{{\bf y} \in \mathcal U_h} \rho_{{\bf X}_{\tau }}({\bf y}) u_0({\bf y}) \rightarrow \sum_{{\bf y} \in \mathcal U_h} \delta_{\hat{\bf x}}({\bf y}) u_0({\bf y})=u_0(\hat{\bf x})$$
We will not attempt to make this argument rigorous, and mention it only for the sake of building intuition.
For this part of the proof, known results from \cite[Chapter 4]{gut2009stopped} will save us some effort.
\item {\bf Stage 2:}  Use the assumed regularity of $u_0$ to obtain bounds on \\$|E[u_0({\bf X}_{\tau}) - u_0(\hat{\bf x})]|$.  In particular, we will obtain: 
\begin{enumerate}
\item A bound that holds for {\em any} starting position ${\bf x}$ of the random walk ${\bf X}_{\tau}$, and depends only on $\|{\bf X}_{\tau}-E[{\bf X}_{\tau}]\|_{L^4}$ and $\|E[{\bf X}_{\tau}]-\hat{\bf x}\|$.
\item A tighter bound that holds assuming $E[{\bf X}_{\tau}]$ and $\hat{\bf x}$ both belong to one of the well behaved sets $U_i$ (see Figure \ref{fig:setup}), and depends not only on \\$\|{\bf X}_{\tau}-E[{\bf X}_{\tau}]\|_{L^4}$ and $\|E[{\bf X}_{\tau}]-\hat{\bf x}\|$, but also on the ``rogue event'' that even though $\hat{\bf x} \in U_i$ and $E[{\bf X}_{\tau}] \in U_i$, we nonetheless have ${\bf X}_{\tau} \notin U_i$.
\end{enumerate}
\item {\bf Stage 3:} Partition $D$ into bands $\{B_i\}_{i=1}^M$ and sub-bands $\tilde{B}_i \subset B_i$ such that the area of the complement $B_i \backslash \tilde{B}_i$ goes to zero as $h \rightarrow 0$, and such that the starting position ${\bf x} \in \tilde{B}_i$ guarantees $E[{\bf X}_{\tau}] \in U_i$ and $\hat{\bf x} \in U_i$.  See Figure \ref{fig:bands} for an illustration.
\item {\bf Stage 4:}  Bound the probability of the rogue event from stage 2, under the assumption that the starting position ${\bf x} \in \tilde{B}_i$ for some $1\leq i \leq M$.  
\item {\bf Stage 5:}  Substitute the bounds from the previous four stages into \eqref{eqn:observation} to obtain a bound for $\|u-u_h\|_p$.
\item {\bf Stage 6:}  Generalize to arbitrary $a^*_r$ containing ghost pixels by exploiting the idea of equivalent weights from Section \ref{sec:fiction}.
\end{enumerate}
\vskip 1mm
Steps $1$ and $4$ will occasionally require us to use some elementary results from martingale theory, including Wald's first identity and Azuma's inequality.  For a review of martingale theory, see for example \cite{williams1991probability}.

\begin{figure}
\centering
\begin{tabular}{cccc}
\subfloat[$h=1e-2$.]{\includegraphics[width=.22\linewidth]{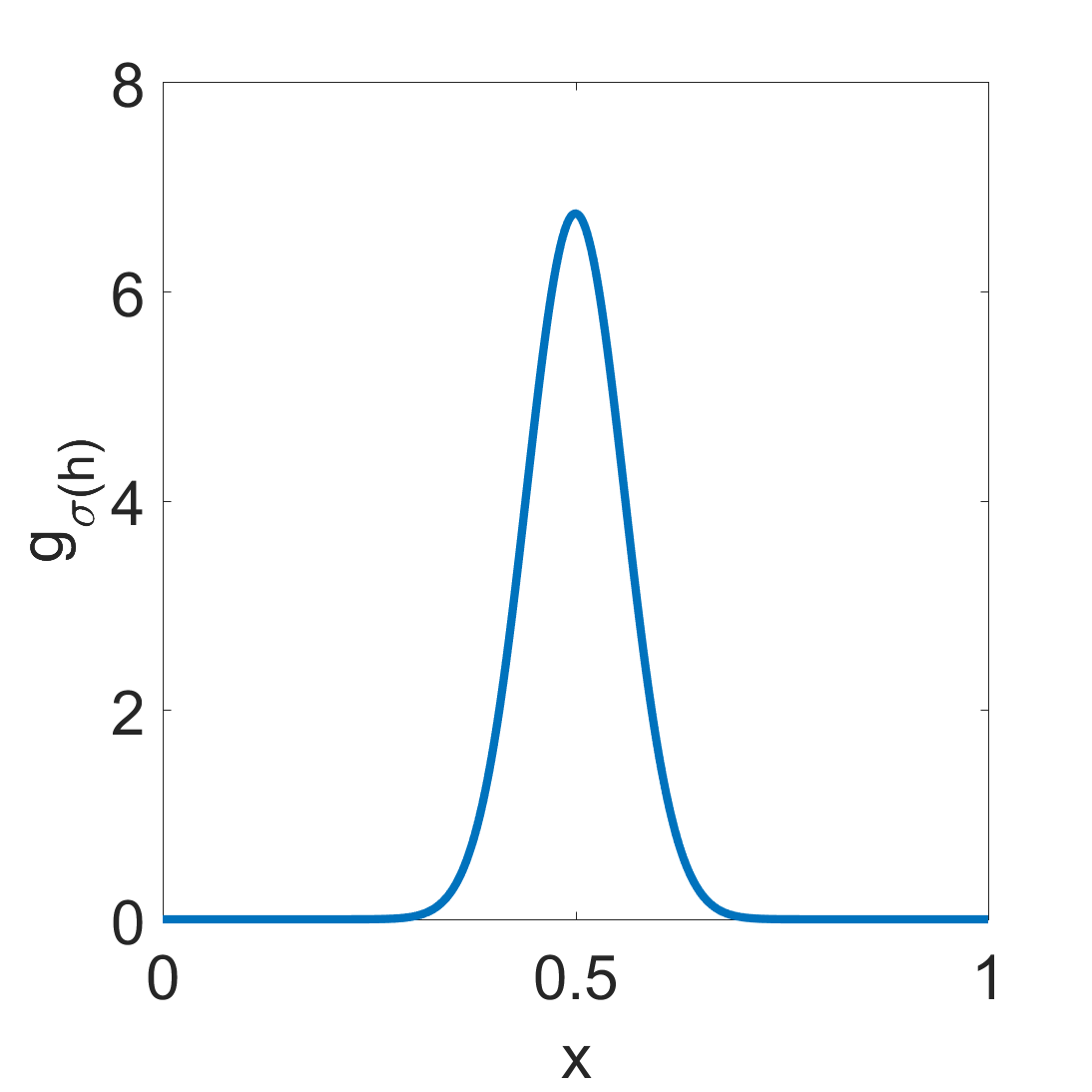}} & 
\subfloat[$h=1e-3$.]{\includegraphics[width=.22\linewidth]{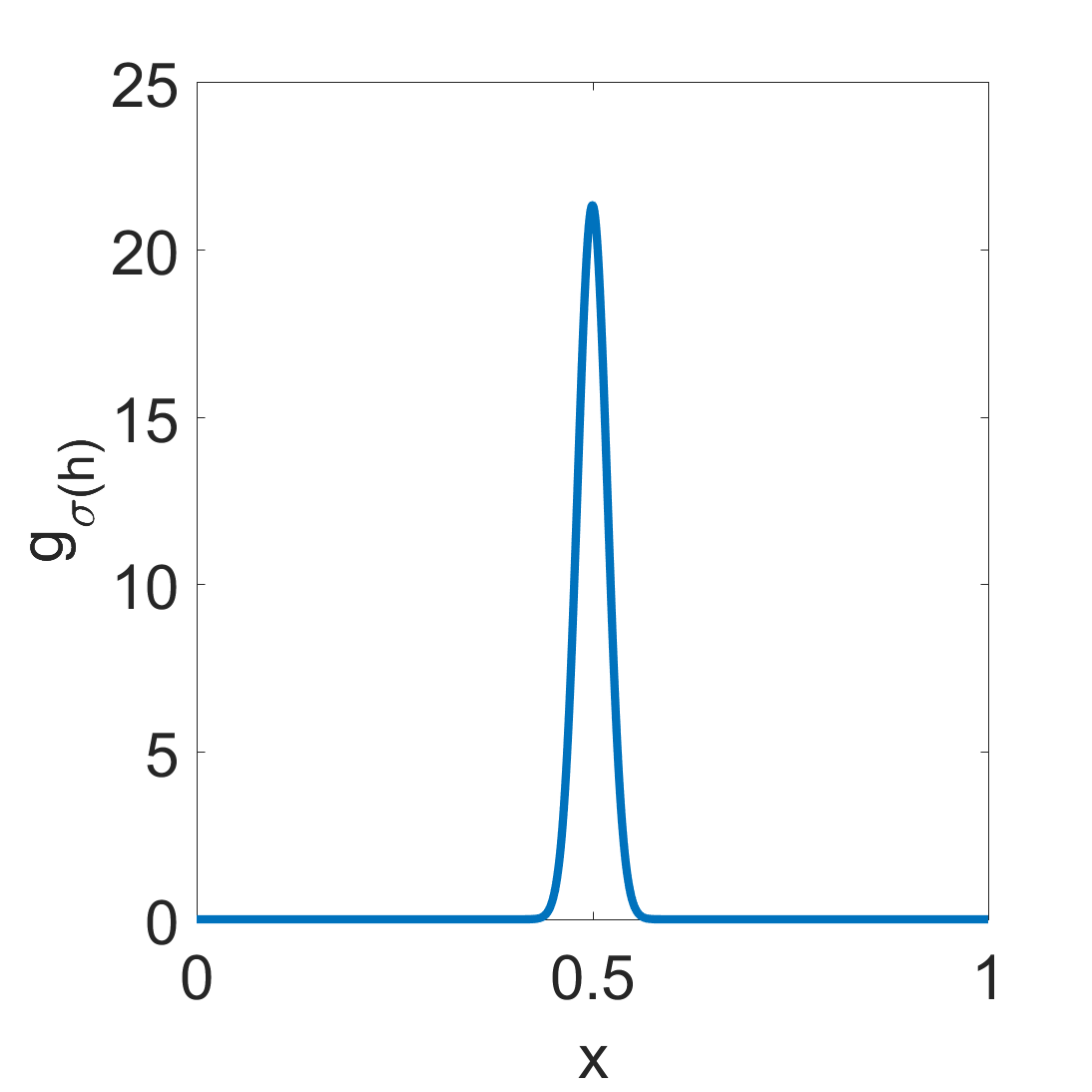}} &
\subfloat[$h=1e-4$.]{\includegraphics[width=.22\linewidth]{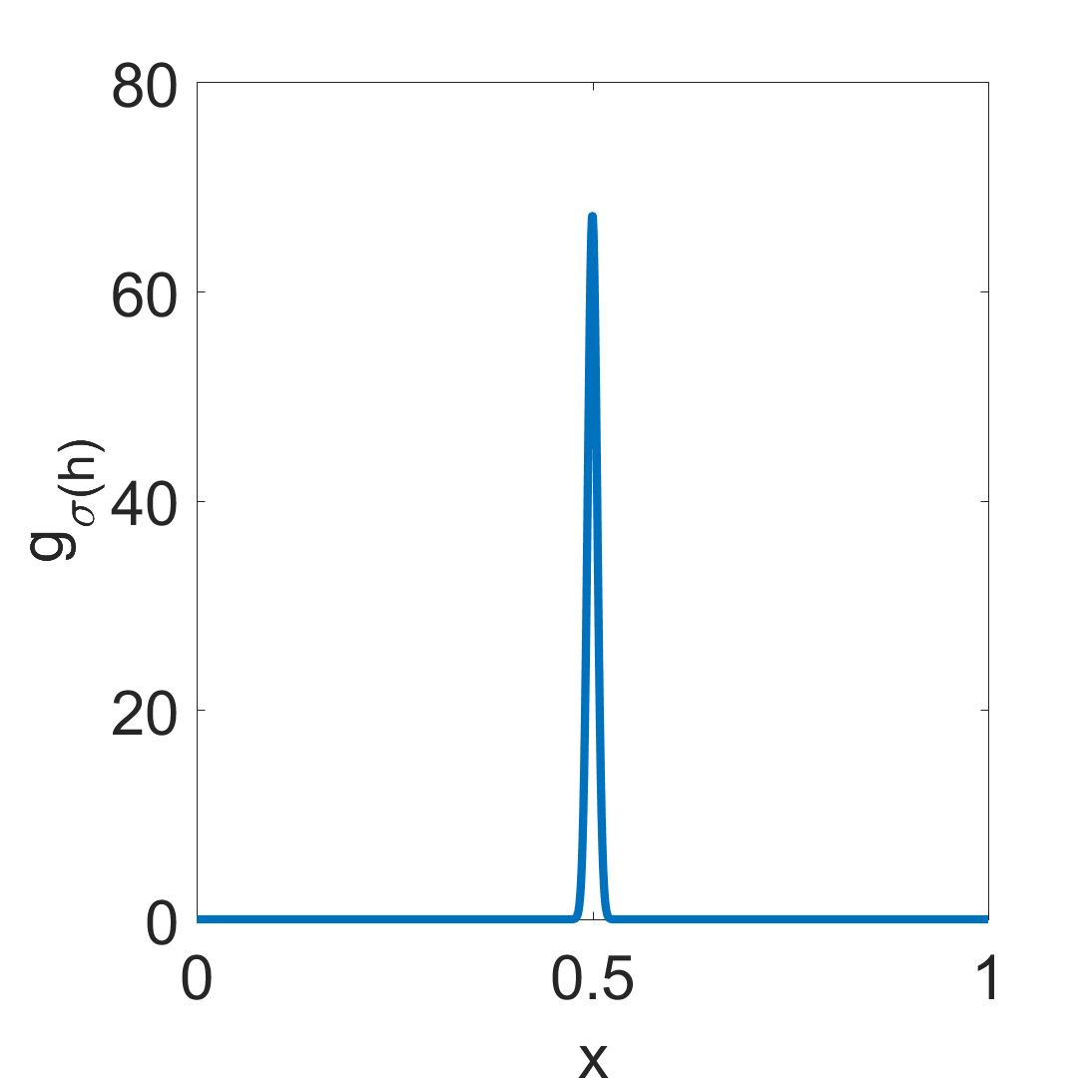}} & 
\subfloat[$h=1e-5$.]{\includegraphics[width=.22\linewidth]{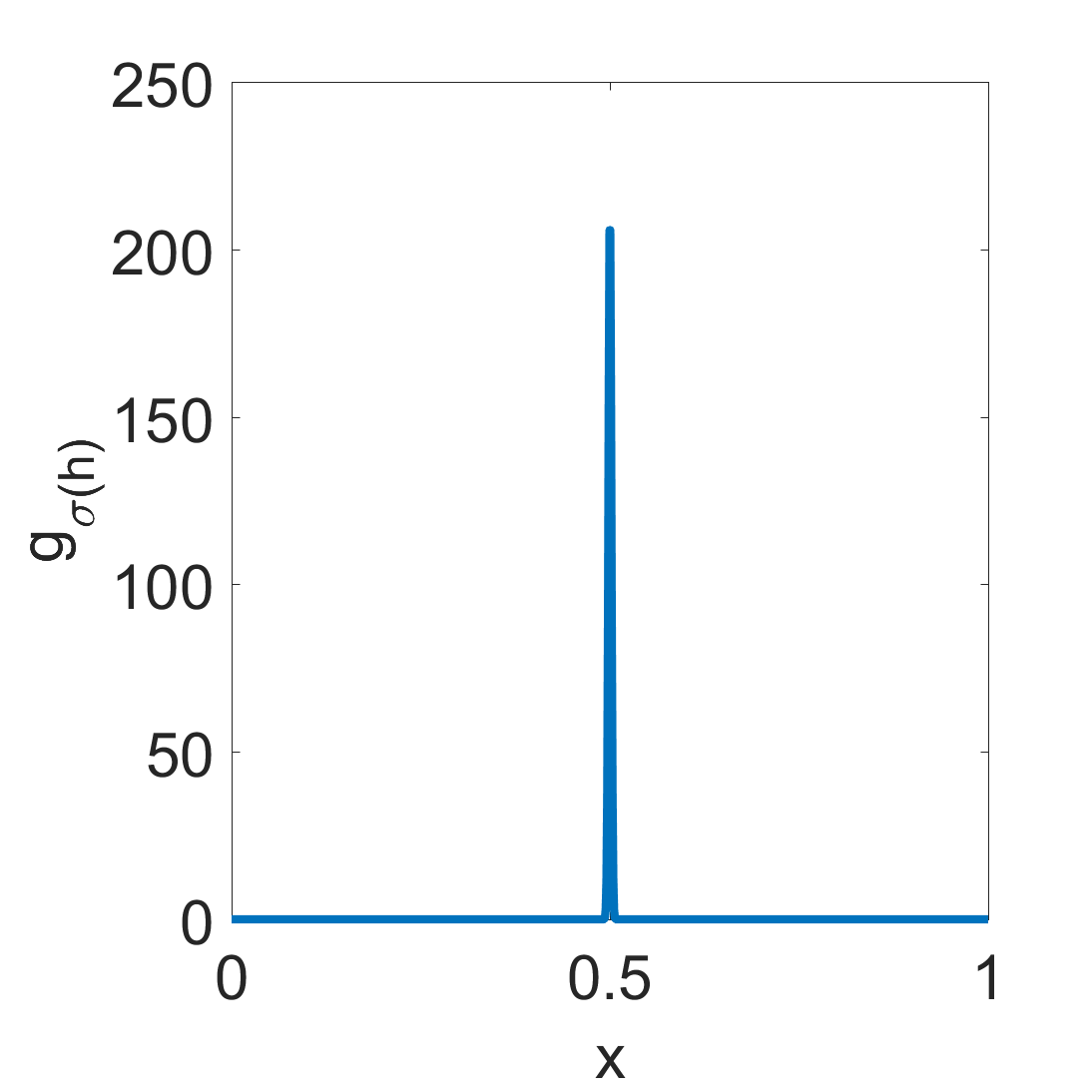}} \\
\end{tabular}
\caption{{\bf Theorem \ref{thm:convergence} through the lense of distribution theory:}  Theorem \ref{thm:convergence} says that for ${\bf x} \in D_h$, $u_h({\bf x}) \rightarrow u_0(\hat{\bf x})$ in $L^p$ as $h \rightarrow 0$ for every $p \in [1,\infty]$ if $u_0$ is continuous, but possibly not in $L^{\infty}$ if $u_0$ contains discontinuities.  Here $\hat{\bf x}=\Pi_{\theta^*_r}({\bf x})$ is the transport operator \eqref{eqn:weak} applied to ${\bf x}$.  One way of understanding this is via the identity $u_h({\bf x}) = \sum_{{\bf y} \in \mathcal U_h} \rho_{{\bf X}_{\tau }}({\bf y}) u_0({\bf y})$ \eqref{eqn:reformulation}.  As we will see in Section \ref{sec:blur}, for each fixed ${\bf x} \in D_h$, the x-coordinate $X_{\tau}$ of the stopped random walk ${\bf X}_{\tau}$ started from ${\bf x}$ converges in distribution to a Gaussian $g_{\sigma(h)}$ with mean $\hat{\bf x}$ and $h$-dependent variance $\sigma(h)^2$, such that $g_{\sigma(h)}$ itself is converging to a one dimensional Dirac delta distribution centered at $\hat{\bf x}$ as $h \rightarrow 0$.  At the same time, since ${\bf X}_{\tau}$ can only overshoot $y=0$ by distance at most $rh$, it follows that the density of ${\bf X}_{\tau}$ lies entirely between the lines $y=-rh$ and $y=0$.  These two facts together imply that ${\bf X}_{\tau}$ converges in distribution as $h \rightarrow 0$ to a two dimensional Dirac delta distribution centered at $\hat{\bf x}$.  Hence we expect $u_h({\bf x}) \rightarrow u_0(\hat{\bf x})$, at least when $\hat{\bf x}$ is a continuity point of $u_0$ (if $\hat{\bf x}$ is not a continuity point, then we need to apply the theory of distributions with discontinuous test functions - see for example \cite{derr2009} - and we do not expect $u_h({\bf x}) \rightarrow u_0(\hat{\bf x})$ in general).  We illustrate this in (a)-(d), which show plots of $g_{\sigma(h)}$ for Guidefill with $r=3$, ${\bf g}=(\cos 45^{\circ},\sin 45^{\circ})$ and $\mu \rightarrow \infty$ for various values of $h$.  We fix ${\bf x}=(0.5,1)$ so that $\hat{\bf x}=(0.5,0)$ (remember the periodic boundary conditions).}
\label{fig:intuition}
\end{figure}

\vskip 2mm
\noindent {\bf Stage 1.}  Throughout this stage we will be using the following Theorem from \cite[Chapter 1]{gut2009stopped}:
\begin{theorem} \label{thm:frombook}
Suppose $E[|X_1|^r] < \infty$ for some $0 < r < \infty$, $E[X_1]=0$, and define the stopped random walk $S_{\tau}=\sum_{i=1}^{\tau} X_i$, where $\{ X_i \}$ are i.i.d.  Then 
$$E[|S_{\tau}|^p] \leq B_p E[|X_1|^p] E[\tau^{\frac{p}{2}}].$$
where $B_p$ is a constant depending only on $p$.
\end{theorem}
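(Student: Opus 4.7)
The plan is to combine the Burkholder--Davis--Gundy (BDG) inequality for martingales with Wald-type identities for i.i.d.\ increments. Since $E[X_1]=0$ and the $X_i$ are i.i.d., the partial sum process $S_n=\sum_{i=1}^n X_i$ is a martingale with respect to its natural filtration $(\mathcal{F}_n)$, and hence so is the stopped process $S_{n\wedge\tau}$ for every stopping time $\tau$. For each fixed $n$, BDG supplies a universal constant $c_p$ such that
\begin{equation*}
E[|S_{n\wedge\tau}|^r] \;\le\; c_r\, E\!\left[\left(\sum_{i=1}^{n\wedge\tau}X_i^2\right)^{r/2}\right].
\end{equation*}
The remaining task is to dominate the right-hand side by a constant multiple of $E[|X_1|^r]\,E[(n\wedge\tau)^{r/2}]$ and then pass to the limit $n\to\infty$.

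For $r \ge 2$, the convexity of $x\mapsto x^{r/2}$ together with Jensen's inequality applied to the uniform average over $\{1,\ldots,k\}$ gives the pointwise bound
\begin{equation*}
\left(\sum_{i=1}^k X_i^2\right)^{r/2} \;\le\; k^{r/2-1}\sum_{i=1}^k |X_i|^r, \qquad k\ge 1.
\end{equation*}
Specialized to $k=n\wedge\tau$, this reduces the problem to bounding a weighted Wald-type quantity $E\!\left[(n\wedge\tau)^{r/2-1}\sum_{i=1}^{n\wedge\tau}|X_i|^r\right]$. The key structural fact is that $\{\tau\ge i\}\in\mathcal{F}_{i-1}$ is independent of $X_i$, so that in the unweighted case one immediately recovers Wald's identity $E\!\left[\sum_{i=1}^{\tau}|X_i|^r\right] = E[\tau]\,E[|X_1|^r]$. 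A H\"older step (or, equivalently, a Rosenthal-type inequality) then trades the $(n\wedge\tau)^{r/2-1}$ factor for a higher moment of $\tau$ and assembles the desired bound with a constant of the form $B_r = c_r \cdot (\text{H\"older constant})$. For $1\le r<2$, one uses instead the sub-additivity $(\sum X_i^2)^{r/2}\le \sum |X_i|^r$ (valid because $a\mapsto a^{r/2}$ is subadditive on $[0,\infty)$), Wald's identity, and interpolation with the $r=2$ case, where Wald's second identity $E[S_\tau^2]=E[\tau]\,E[X_1^2]$ is exact.

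The final step sends $n\to\infty$. Since $\tau<\infty$ almost surely and $E[\tau^{r/2}]<\infty$ under the hypotheses, monotone convergence on the right-hand side and Fatou's lemma on the left promote the uniform-in-$n$ bound to
\begin{equation*}
E[|S_\tau|^r] \;\le\; B_r\,E[|X_1|^r]\,E[\tau^{r/2}],
\end{equation*}
as claimed. The main obstacle is the middle step, namely controlling the weighted expectation in the $r \ge 2$ regime: $\tau$ is $\mathcal{F}_\tau$-measurable and therefore depends on the entire trajectory $(X_1,\ldots,X_\tau)$, so one cannot factor $E\!\left[\tau^{r/2-1}\sum_{i=1}^\tau |X_i|^r\right]$ as a product of expectations. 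The independence structure is present only at the level of the individual events $\{\tau\ge i\}$ against $X_i$, and any clean bound must be engineered from this filtration-level independence combined with H\"older, rather than from joint independence. Verifying that the resulting constant $B_r$ depends only on $r$ (and not on the distribution of $X_1$ or on $\tau$) is the subtlety that forces this careful bookkeeping.
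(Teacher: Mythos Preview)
The paper does not prove this theorem at all: it is quoted verbatim as a known result from \cite[Chapter 1]{gut2009stopped} and then applied as a black box in Stage~1 of the proof of Theorem~\ref{thm:convergence}. So there is no in-paper argument to compare against; your task was in effect to reconstruct Gut's proof.

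Your outline follows the standard route (BDG on the stopped martingale, then control the stopped quadratic variation, then pass to the limit), and the framing is correct. But the decisive middle step is not actually carried out. After Jensen you arrive at
\[
E\!\left[(n\wedge\tau)^{r/2-1}\sum_{i=1}^{n\wedge\tau}|X_i|^r\right],
\]
and you propose that ``a H\"older step'' converts this into $C\,E[\tau^{r/2}]\,E[|X_1|^r]$. A direct H\"older with exponents $a=\frac{r}{r-2}$ and $b=\frac{r}{2}$ does produce the factor $E[\tau^{r/2}]^{ (r-2)/r}$, but the conjugate factor is $E\big[(\sum_{i=1}^{\tau}|X_i|^r)^{r/2}\big]^{2/r}$, which requires $E[|X_1|^{r^2/2}]<\infty$ --- far more than the hypothesis $E[|X_1|^r]<\infty$. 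So the H\"older suggestion, taken at face value, fails. You do flag this as ``the subtlety,'' but you do not supply the actual mechanism that closes the gap. In Gut's treatment this is handled by a separate moment inequality for randomly stopped sums of nonnegative i.i.d.\ variables (applied to $Y_i=X_i^2$), which itself requires a nontrivial argument exploiting $\{\tau\ge i\}\in\mathcal{F}_{i-1}$ at a finer level than a single H\"older split. Without that lemma or an equivalent device, the proposal is an outline with the key estimate missing rather than a proof.
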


\noindent  The result of this stage is the following lemma:

\begin{lemma}

Let ${\bf X}_{\tau} := (X_{\tau},Y_{\tau}) = {\bf x} + h \sum_{i=1}^{\tau} {\bf Z}_i = (nh,mh) + h \sum_{i=1}^{\tau}(V_i,W_i)$ with $\{{\bf Z}_i\}$ i.i.d. as above.  Then 

(i)  $\| E[{\bf X}_{\tau}] - \hat{\bf x} \| \leq rh. $

In addition, there is a constant $C > 0$ dependent only on $\frac{\mu_y}{r}$ and $\theta^*_r$ such that

(ii) $\|{\bf X}_{\tau} - E[{\bf X}_{\tau}]\|_{L^4} \leq C \sqrt{rh}.$

Moreover, $C \rightarrow \infty$ as $\theta^*_r \rightarrow \{0,\pi\}$ or $\frac{\mu_y}{r} \rightarrow 0$.

\end{lemma}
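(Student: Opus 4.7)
Both claims follow from Wald's identity and the classical moment bound for stopped sums (Theorem \ref{thm:frombook}), applied to the random walk ${\bf X}_j$ with negative $y$-drift $\mu_y<0$ and stopping time $\tau$ the first passage through $\{y\le 0\}$. Throughout I use two elementary facts: (a) $\tau$ is a.s.\ finite with moments of all orders, which is standard for bounded-increment walks with negative drift (see \cite[Chapter 3]{gut2009stopped}); (b) the overshoot satisfies $Y_\tau\in[-rh,0]$, since the increments $W_i$ are non-positive and bounded by $r$ in absolute value.

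For (i), Wald's identity applied to each component gives $E[Y_\tau]=mh+hE[\tau]\mu_y$ and $E[X_\tau]=nh+hE[\tau]\mu_x$. Eliminating $E[\tau]$ yields the clean identity $E[{\bf X}_\tau]-\hat{\bf x}=(E[Y_\tau]/\mu_y)\,{\bf g}_r^*$, and the overshoot bound $|E[Y_\tau]|\le rh$ then finishes the claim (up to the geometric factor $\|{\bf g}_r^*\|/|\mu_y|=1/\sin\theta_r^*$, absorbed into the statement or handled componentwise).

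For (ii), work componentwise; the $y$-part is trivial since $|Y_\tau-E[Y_\tau]|\le 2rh$. For the $x$-part, decompose
\[
X_\tau-E[X_\tau]\;=\;h\sum_{i=1}^\tau\tilde V_i+h(\tau-E[\tau])\mu_x,\qquad \tilde V_i:=V_i-\mu_x,
\]
with $\tilde V_i$ centered and $|\tilde V_i|\le 2r$. Theorem \ref{thm:frombook} applied to $\{\tilde V_i\}$, combined with $E[\tilde V_1^4]\le 4r^2\sigma_V^2$, the first-passage estimate $E[\tau^2]=O((m/|\mu_y|)^2)$ from \cite[Chapter 3]{gut2009stopped}, and $mh\le 1$, yields $\|h\sum\tilde V_i\|_{L^4}\le C_1\sqrt{rh}$ with $C_1=O(\sqrt{\sigma_V/|\mu_y|})$. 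For the second summand, use the identity $h\sum_{i=1}^\tau W_i=Y_\tau-mh$ to rewrite
\[
\tau-E[\tau]\;=\;\frac{Y_\tau-E[Y_\tau]}{h\mu_y}-\frac{1}{\mu_y}\sum_{i=1}^\tau\tilde W_i,
\]
and bound each piece in $L^4$ by $|Y_\tau-E[Y_\tau]|\le 2rh$ and Theorem \ref{thm:frombook} respectively; multiplying by $h|\mu_x|$ and collecting factors of $r/|\mu_y|$ produces another contribution of the form $C_2\sqrt{rh}$. Summing the two contributions and expressing the prefactors purely in terms of $\mu_y/r$ (using $\sigma_V\le r$) and of $\|{\bf g}_r^*\|/|\mu_y|=1/\sin\theta_r^*$ gives the stated bound, with $C$ diverging as $\mu_y/r\to 0$ or $\theta_r^*\to\{0,\pi\}$.

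The principal obstacle is obtaining the sharp scaling $\sqrt{rh}$ rather than the weaker $r\sqrt{h}$ that a naive application of Theorem \ref{thm:frombook} to the stopped sum would give: the decomposition of $\tau-E[\tau]$ above is the key, as it trades a direct fourth-moment calculation for a bounded overshoot term plus a centered i.i.d.\ sum, each of which contributes the correct power of $r$. A secondary subtlety is that in the semi-implicit case $\tau$ is only a.s.\ finite (as opposed to bounded by $m$ in the direct case, where each step drops by at least $h$), so every moment bound on $\tau$ must be imported from first-passage theory, which applies precisely because ${\bf g}_r^*\cdot e_2\ne 0$ guarantees a strictly negative drift.
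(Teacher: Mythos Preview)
Your proposal is correct and follows essentially the same route as the paper: Wald's identity plus the overshoot bound for (i), and for (ii) the componentwise split, the decomposition $X_\tau - E[X_\tau] = h\sum_{i=1}^\tau\tilde V_i + h\mu_x(\tau - E[\tau])$ with $\tau - E[\tau]$ handled via the $Y$-overshoot and the centered $\tilde W_i$-sum, and Theorem~\ref{thm:frombook} applied throughout. The only difference is cosmetic: the paper derives the estimate $E[\tau^2] = O((|\mu_y|h)^{-2})$ internally by first running your $\tau - E[\tau]$ identity at $p=2$ and bootstrapping from $E[\tau] \le 2/(|\mu_y|h)$, whereas you import it from first-passage theory.
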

\begin{proof}
The idea of the proof is to combine repeated application of Theorem \ref{thm:frombook} with geometric observations of the situation at hand - specifically, the fact that $|{\bf Z}_i| \leq r$ means that ${\bf X}_{\tau}$ can overshoot $y=0$ by a distance of at most $rh$.  First, note that since $E[\tau] < \infty$, Wald's first identity
$$E[{\bf X}_{\tau}]=(nh,mh)+E[\tau](\mu_x h,\mu_y h)$$
is applicable.  This implies that the ray joining $(nh,mh)$ with $E[{\bf X}_{\tau}]$ is parallel to the ray from $(nh,mh)$ to $\hat{\bf x}$.  At the same time, since $|{\bf X}_{k+1}-{\bf X}_k| \leq rh$, it follows that $E[{\bf X}_{\tau}]$ ``overshoots'' $\hat{\bf x}$ by a distance of at most $rh$, from which we immediately have
$$\|E[{\bf X}_{\tau}]-\hat{\bf x}\| \leq rh.$$
This proves the first claim.  For the second claim, first note that by \eqref{eqn:usefulSometimes}, we have
\begin{equation} \label{eqn:overallBound}
\|{\bf X}_{\tau}-E[{\bf X}_{\tau}]\|_{L^4} \leq \|X_{\tau}-E[X_{\tau}]\|_{L^4}+\|Y_{\tau}-E[Y_{\tau}]\|_{L^4} \leq \|X_{\tau}-E[X_{\tau}]\|_{L^4}+rh,
\end{equation}
where we have used the overshooting observation again to obtain the bound \\$|Y_{\tau}-E[Y_{\tau}]| \leq rh$.  Thus it suffices to bound $\|X_{\tau}-E[X_{\tau}]\|_{L^4}$.  To do so, we are going to use Theorem \ref{thm:frombook}.  This means we are going to need an estimate for $E[\tau^2]$, and our first task is to find one.  To that end, first note that for any $1 \leq p < \infty$ we have
$$\|\mu_y \tau - \mu_y E[\tau]\|_{L^p} \leq \| \sum_{i=1}^{\tau} W_i - \tau \mu_y\|_{L^p} + \| (m+ \sum_{i=1}^{\tau} W_i ) - ( m + E[\tau]\mu_y)\|_{L^p}.$$
The second term on the RHS is exactly $\frac{1}{h}\|Y_{\tau} - E[Y_{\tau}]\|_{L^p}$, and is bounded above by $r$ since, by our overshooting observation, $Y_{\tau}$ and $E[Y_{\tau}]$ both must lie in the interval $[-rh,0]$.  For the first term, applying Theorem \ref{thm:frombook} and noting $E[|W_1|^p] \leq r^p$ gives $\| \sum_{i=1}^{\tau} W_i - \tau \mu_y\|_{L^p} \leq (B_p)^{\frac{1}{p}}r E[\tau^{\frac{p}{2}}]^{\frac{1}{p}}$, from which it follows that
\begin{equation} \label{eqn:tauLp}
\|\tau - E[\tau]\|_{L^p}  \leq  \frac{1}{|\mu_y|}\left[ (B_p)^{\frac{1}{p}}r E[\tau^{\frac{p}{2}}]^{\frac{1}{p}}+r\right] \leq \frac{1}{|\mu_y|} [(B_p)^{\frac{1}{p}}+1]r E[\tau^{\frac{p}{2}}]^{\frac{1}{p}}
\end{equation}
Since $\tau \geq 1$.  Next, we take $p=2$ and square both sides to obtain
$$E[\tau^2]-E[\tau]^2 \leq \frac{1}{|\mu_y|^2}[(B_2)^{\frac{1}{2}}+1]^2r^2 E[\tau].$$
But, since $-rh \leq E[Y_{\tau}] = mh + E[\tau]\mu_y h \leq 0$, it follows that
$$E[\tau] \leq \frac{m+1}{|\mu_y|} \leq \frac{2N}{|\mu_y|} = \frac{2}{|\mu_y|h}.$$
After some short algebra this gives
$$E[\tau^2] \leq \left( 4 + 2 \frac{[(B_2)^{\frac{1}{2}}+1]^2 r^2}{\mu_y^2}  \right)\frac{1}{|\mu_y|^2 h^2}.$$
Next, note that
\begin{eqnarray*}
\|X_{\tau}-E[X_{\tau}]\|_{L^p} &=& \| (nh + h \sum_{i=1}^{\tau} V_i - (nh + E[\tau]\mu_x)\|_{L^p} \\
&\leq& h \| \sum_{i=1}^{\tau} V_i  - \tau \mu_x\|_{L^p}+h|\mu_x|\|\tau - E[\tau]\|_{L^p}.
\end{eqnarray*}
The first term in the RHS we bound with another application of Theorem \ref{thm:frombook}, together with the observation $E[|V_1|^p] \leq r^p$.  For the second term we already have the bound \eqref{eqn:tauLp}.  Together we get
$$\|X_{\tau}-E[X_{\tau}]\|_{L^p} \leq \left( (B_p)^{\frac{1}{p}}+\frac{|\mu_x|}{|\mu_y|}(1+(B_p)^{\frac{1}{p}}) \right) rh E[\tau^{\frac{p}{2}}]^{\frac{1}{p}}.$$
Setting now $p=4$ and applying our bound on $E[\tau^2]$ gives
$$\|X_{\tau}-E[X_{\tau}]\|_{L^4} \leq \left[ (B_4)^{\frac{1}{4}}+\frac{|\mu_x|}{|\mu_y|}(1+(B_4)^{\frac{1}{4}}) \right]\left[4 + 2(1+(B_2)^{\frac{1}{2}})^2\frac{r^2}{|\mu_y|^2} \right]^\frac{1}{4}\sqrt{rh}.$$
Since $rh < 1$ it follows that $rh < \sqrt{rh}$.  Combining the above bound with \eqref{eqn:overallBound} gives
$$\|{\bf X}_{\tau} - E[{\bf X}_{\tau}]\|_{L^4} \leq C \sqrt{rh}$$
as claimed, with
$$C := 1+\left( (B_4)^{\frac{1}{4}}+\cot(\theta^*_r)(1+(B_4)^{\frac{1}{4}}) \right)\left(4 + 2(1+(B_2)^{\frac{1}{2}})^2\frac{r^2}{|\mu_y|^2} \right)^\frac{1}{4}.$$
\qed
\end{proof}

\vskip 2mm
\noindent {\bf Stage 2.}  The previous stage established quantitative bounds on the rate at which ${\bf X}_{\tau}$ is concentrating around its mean, while its mean converges to $\hat{\bf x}$.  The next step is to use the regularity of $u_0$ to express the rate at which $|E[u_0({\bf X}_{\tau})-u_0(\hat{\bf x})]|$ is tending towards zero in terms of these rates, as well as the probability of a ``rogue'' event (which we denote by $E^c_s$) that $E[{\bf X}_{\tau}]$ and $\hat{\bf x}$ each belong to one of the well behaved sets $U_i$ from Figure \ref{fig:setup}, but ${\bf X}_{\tau}$ does not.  Specifically, our bound will depend on $P(E^c_s)^{\frac{1}{2}}$, and it was in order to obtain a power of $\frac{1}{2}$ that we needed the fourth moment of ${\bf X}_{\tau}$ - if we had only the variance, we would end up with a power that depends on the regularity constants $0 \leq s' \leq s$.   This is accomplished by the following lemma, which, although we have stated it in a slightly more general setting with a general random vector ${\bf X}$ taking values in a general convex set $\Omega \subset \field{R}^2$, general convex sets $\{ U_{i}\}_{i=1}^M$ contained within $\Omega$, a general function $u : \Omega \rightarrow \field{R}$, a general norm $\|{\bf X}-E[{\bf X}]\|_{L^{p'}}$ with $p' \geq 4$, and a general point $\hat{\bf x} \in \Omega$, we will ultimately be applying this lemma only to ${\bf X}_{\tau}$, $\Omega = \mathcal U$, $p'=4$, and $\{U_i\}$ and $\hat{\bf x}$ as described above. 

\begin{lemma} \label{lemma:regularity}
Let $\Omega \subset \field{R}^2$ be convex and suppose $u \in W^{s',\infty}(\Omega)$, and in addition $u \in W^{s,\infty}(U_i)$ ($s \geq s'$) for each of a finite collection of convex sets $\{ U_{i}\}_{i=1}^M$ where each $U_i \subseteq \Omega$.  Let $\hat{\bf x} \in \Omega$ and let ${\bf X}$ be a random vector taking values in $\Omega$.  
Define
\begin{enumerate}
\item $C_{u,\Omega} := \|u\|_{W^{s',\infty}(\Omega)},$
\item $C_{u,\Omega,\{U_i\}} := \max\left\{ \|u\|_{W^{s',\infty}(\Omega)}, \|u\|_{W^{s,\infty}(U_1)},\ldots, \|u\|_{W^{s,\infty}(U_M)}\right\}$
\end{enumerate}
Then for any $p' \geq 1$ we have

1.
\begin{eqnarray*}
\qquad |E[u({\bf X})-u(\hat{\bf x})]|  \leq & C_{u,\Omega} \left\{ (\|{\bf X}-E[{\bf X}]\|_{L^{p'}})^{s' \wedge 2}+\|E[{\bf X}]-\hat{\bf x}\|^{s' \wedge 1}\right\},
\end{eqnarray*}
Next, suppose $\hat{\bf x} \in U_i$, $E[{\bf X}] \in U_i$ for some $1 \leq i \leq M$.  Denote by $E_s$ the event that ${\bf X} \in U_i$ as well.  Then for any $p' \geq 4$ such that $\|{\bf X}-E[{\bf X}]\|_{L^{p'}} \leq 1$ we have

2.
\begin{eqnarray*}
\qquad |E[u({\bf X})-u(\hat{\bf x})]| & \leq & C_{u,\Omega,\{U_i\}} \big\{ (\|{\bf X}-E[{\bf X}]\|_{L^{p'}})^{s \wedge 2}\\
&+&2(\|{\bf X}-E[{\bf X}]\|_{L^{p'}})^{s' \wedge 2}P(E^c_s)^\frac{1}{2}+\|E[{\bf X}]-\hat{\bf x}\|^{s \wedge 1} \big\},
\end{eqnarray*}
\end{lemma}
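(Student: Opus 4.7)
The plan is to decompose $u({\bf X}) - u(\hat{\bf x}) = [u({\bf X}) - u(E[{\bf X}])] + [u(E[{\bf X}]) - u(\hat{\bf x})]$ and treat the two brackets separately. The deterministic second bracket is handled directly by the H\"older-type property \eqref{eqn:HolderLike}: for Part~1 I would apply it on $\Omega$ with exponent $s'$, yielding $|u(E[{\bf X}]) - u(\hat{\bf x})| \leq C_{u,\Omega}\|E[{\bf X}]-\hat{\bf x}\|^{s' \wedge 1}$, while for Part~2 the hypothesis $E[{\bf X}],\hat{\bf x}\in U_i$ combined with convexity of $U_i$ lets me use the sharper exponent $s$, producing the corresponding $\|E[{\bf X}]-\hat{\bf x}\|^{s \wedge 1}$ term in the target bound.

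For the first bracket $E[u({\bf X}) - u(E[{\bf X}])]$, the key maneuver (valid whenever the relevant regularity index is at least $1$) is to add and subtract the linear Taylor term at $E[{\bf X}]$. Since $\nabla u(E[{\bf X}])$ is deterministic and $E[{\bf X}-E[{\bf X}]] = 0$, this linear contribution vanishes in expectation, so $E[u({\bf X}) - u(E[{\bf X}])] = E[T({\bf X})]$, where $T({\bf X}) := u({\bf X}) - u(E[{\bf X}]) - \nabla u(E[{\bf X}])\cdot({\bf X}-E[{\bf X}])$. The integral form of Taylor's theorem, together with \eqref{eqn:HolderLike}, then yields $|T({\bf X})| \leq C\|{\bf X}-E[{\bf X}]\|^{t \wedge 2}$ whenever the segment from $E[{\bf X}]$ to ${\bf X}$ lies inside a convex set on which $u$ enjoys $W^{t,\infty}$-regularity. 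For Part~1 this segment lies in $\Omega$ unconditionally, so one application of this Taylor-remainder bound with $t=s'$, followed by Lyapunov's inequality (which converts $E[\|{\bf X}-E[{\bf X}]\|^q]^{1/q}$ into $\|{\bf X}-E[{\bf X}]\|_{L^{p'}}$ for $q \leq p'$), closes the estimate.

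For Part~2, I would split further as $E[T({\bf X})] = E[T({\bf X})\mathbf{1}_{E_s}] + E[T({\bf X})\mathbf{1}_{E_s^c}]$. On $E_s$ the segment from $E[{\bf X}]$ to ${\bf X}$ lies entirely in the convex set $U_i$, so the Taylor remainder bound applies with exponent $s$, and the first expectation is majorized by $C(\|{\bf X}-E[{\bf X}]\|_{L^{p'}})^{s \wedge 2}$, matching the first term of the claim. On $E_s^c$ only the global $W^{s',\infty}(\Omega)$-regularity is available, so I would apply Cauchy--Schwarz to detach $T({\bf X})$ from $\mathbf{1}_{E_s^c}$, bound $T$ pointwise by $C\|{\bf X}-E[{\bf X}]\|^{s' \wedge 2}$, and invoke Lyapunov to obtain $E[T({\bf X})^2]^{1/2} \leq C\|{\bf X}-E[{\bf X}]\|_{L^{p'}}^{s' \wedge 2}$; this is precisely where the hypothesis $p' \geq 4$ is needed, since it guarantees $2(s' \wedge 2) \leq p'$.

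The main obstacle is the low-regularity range $s' < 1$ (and potentially $s < 1$), where $\nabla u$ may fail to exist on $\Omega$ and the add-and-subtract-the-linear-term trick breaks down. There I would bypass Taylor entirely and instead bound $|u({\bf X}) - u(E[{\bf X}])|$ pointwise via \eqref{eqn:HolderLike}, with exponent $s$ on $E_s$ (using convexity of $U_i$) and exponent $s'$ on $E_s^c$, then apply Cauchy--Schwarz and Lyapunov on the latter piece exactly as before. The factor of $2$ in the middle term of the Part~2 bound is inessential for convergence rates and I expect it to emerge naturally from combining the two pointwise pieces comprising $|T({\bf X})|$ on $E_s^c$ (namely $|u({\bf X})-u(E[{\bf X}])|$ and $\|\nabla u(E[{\bf X}])\|\|{\bf X}-E[{\bf X}]\|$) and absorbing the result into the generic constant $C_{u,\Omega,\{U_i\}}$; the auxiliary hypothesis $\|{\bf X}-E[{\bf X}]\|_{L^{p'}} \leq 1$ is what lets a smaller exponent uniformly dominate a larger one so that all the pieces can be combined cleanly.
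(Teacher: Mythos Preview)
Your proposal is correct and recovers the stated bound, but it takes a genuinely different route from the paper's proof for Part~2. The paper conditions first, writing $\Pi_1 = |E[u({\bf X})-u({\bf x}^*)\mid E_s]P(E_s) + E[u({\bf X})-u({\bf x}^*)\mid E_s^c]P(E_s^c)|$, and only then applies Taylor on each piece. Because the expectations are now conditional, the linear Taylor term $\nabla u({\bf x}^*)\cdot E[{\bf X}-{\bf x}^*\mid E_s]P(E_s)$ does \emph{not} vanish; the paper handles it via the identity $({\bf x}^*_{E_s}-{\bf x}^*)P(E_s) = -({\bf x}^*_{E_s^c}-{\bf x}^*)P(E_s^c)$ together with the contractive property of conditional expectation, $\|{\bf x}^*_A - {\bf x}^*\|^{p'}P(A) \leq E[\|{\bf X}-{\bf x}^*\|^{p'}]$. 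Your approach reverses the order: you subtract the linear term \emph{before} splitting, so that it vanishes unconditionally and the problematic cross-term never appears, and then you decouple $T({\bf X})$ from $\mathbf{1}_{E_s^c}$ by Cauchy--Schwarz rather than by conditional moment bounds.

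Your route is arguably cleaner: it avoids the conditional-mean machinery entirely and makes the role of the hypothesis $p'\geq 4$ more transparent (it is exactly what is needed for $E[T^2]^{1/2} \leq C\|{\bf X}-{\bf x}^*\|_{L^{p'}}^{s'\wedge 2}$ via Lyapunov, since $2(s'\wedge 2)\leq 4\leq p'$). The paper's route, on the other hand, never needs to bound $E[T^2]$ over the whole space, and its use of the contractive property is a standard tool that generalizes readily. Both approaches yield the factor $2$ in the mixed case $s'<1\leq s$ for the same underlying reason you identify: the remainder $T$ splits into a H\"older piece of order $s'$ and a linear piece of order $1$, and the hypothesis $\|{\bf X}-E[{\bf X}]\|_{L^{p'}}\leq 1$ lets the smaller exponent absorb the larger. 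For Part~1 your argument coincides with the paper's.
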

\begin{proof}
First let us define for convenience
$${\bf x}^* := E[{\bf X}],  \qquad {\bf x}_{E_s}^* := E[{\bf X} | E_s],  \qquad {\bf x}_{E^c_s}^* := E[{\bf X} | E^c_s].$$
For both statements, the first step is to divide the expectation into two pieces.
$$|E[u({\bf X}) - u(\hat{\bf x})]| \leq |E[u({\bf X}) - u({\bf x}^*)]|+|u({\bf x}^*) - u(\hat{\bf x})|:=\Pi_1+\Pi_2.$$
To prove statement one, we apply the H\"older condition \eqref{eqn:HolderLike} to find \\$\Pi_2 \leq C_{u,\Omega} \|{\bf x}^*-\hat{\bf x}\|^{s' \wedge 1}$.  Hence it suffices to prove $\Pi_1 \leq C_{u,\Omega}  E[\|{\bf X}-E[{\bf X}]\|^{p'}]^{\frac{s' \wedge 2}{p'}}.$  We proceed by cases.  If $s'<1$, then 
$$\Pi_1 \leq C_{u,\Omega}  E[\|{\bf X}-{\bf x}^*\|^{s'}]\leq C_{u,\Omega}  E[\|{\bf X}-{\bf x}^*\|^{p'}]^{\frac{s'}{p'}} = C_{u,\Omega}  E[\|{\bf X}-{\bf x}^*\|^{p'}]^{\frac{s' \wedge 2}{p'}}.$$ 
where we have used Jensen's inequality together with the concavity of $x^{\frac{s'}{p'}}$ on $[0,\infty)$ for the second inequality.  On the other hand, if $s' \geq 1$, then $\nabla u$ exists and by Taylor's theorem
$u({\bf X})-u({\bf x}_*)=\nabla u({\bf z}) \cdot ({\bf X}-{\bf x}_*)$ where ${\bf z} = (1-t){\bf x}_*+t{\bf X}$ for some $t \in [0,1]$.  Therefore
$$\Pi_1  = |E[ \nabla u({\bf z}) \cdot ({\bf X}-{\bf x}^*) ]|=|E[ (\nabla u({\bf z})-\nabla u({\bf x}^*)) \cdot ({\bf X}-{\bf x}^*)]+E[ \nabla u({\bf x}^*) \cdot ({\bf X}-{\bf x}^*) ]|.$$
But $$E[ \nabla u({\bf x}^*) \cdot ({\bf X}-{\bf x}^*) ]=\nabla u({\bf x}^*) \cdot E[{\bf X}-{\bf x}^* ]=\nabla u({\bf x}^*) \cdot ({\bf x}^*-{\bf x}^* )=0,$$ therefore
\begin{eqnarray*}
\Pi_1 &=& |E[ (\nabla u({\bf z})-\nabla u({\bf x}^*)) \cdot ({\bf X}-{\bf x}^*)]| \\
&\leq& E[\|\nabla u({\bf z})-\nabla u({\bf x}^*)\|\|{\bf X}-{\bf x}^*\| ] \\
&\leq& C_{u,\Omega}  E[\|{\bf z}-{\bf x}^*\|^{(s'-1)\wedge 1}\|{\bf X}-{\bf x}^*\| ] \\
&\leq& C_{u,\Omega}   E[\|{\bf X}-{\bf x}^*\|^{s' \wedge 2}] \\
&\leq& C_{u,\Omega}  E[\|{\bf X}-E[{\bf X}]\|^{p'}]^{\frac{s' \wedge 2}{p'}},
\end{eqnarray*}
where we have used the Cauchy-Schwarz inequality on line two, the H\"older condition \eqref{eqn:HolderLike} together with the convexity of $\Omega$ on line three, the bound \\$\|{\bf z}-{\bf x}^*\| \leq \|{\bf X}-{\bf x}^*\|$ on line four, and Jensen's inequality again on line five.

For the second statement, the idea is to split the expectation up as a sum of conditional expectations conditioning on $E_s$ and $E^c_s$, apply the corresponding regularity estimates, and express the results in terms of the conditional moments of ${\bf X}$.  Then, using the contractive property of conditional expectation, namely
$$E[\|E[{\bf X}|Y]\|^{p'}]\leq E[\|{\bf X}\|^{p'}]$$
valid for all random variables $Y$ adapted to the same sigma algebra as ${\bf X}$ and all $ p' \geq 1$, we can eliminate the conditional moments.  We do not use the contractive property itself, but rather two identities which may be easily derived from it.  In particular, let $A \in \sigma({\bf X})$ and define ${\bf x}^*_A = E[{\bf X}|A]$.  Then for any $p' \geq 1$ 
\begin{eqnarray} \label{eqn:contraction1}
\|{\bf x}^*_A-{\bf x}^*\|^{p'}P(A) &\leq& \|E[{\bf X}-{\bf x}^*|A]\|^{p'}P(A)+\|E[{\bf X}-{\bf x}^*|A^c]\|^{p'}P(A^c)  \\
&=&E[\|E[{\bf X}-{\bf x}^*| 1_A]\|^{p'}] \nonumber \\
&\leq&E[\|{\bf X}-{\bf x}^*\|^{p'}], \nonumber
\end{eqnarray}
and a similar manipulation gives 
\begin{equation} \label{eqn:contraction2}
E[\|{\bf X}-{\bf x}^*\|^{p'}|A]P(A) \leq E[\|{\bf X}-{\bf x}^*\|^{p'}].
\end{equation}
\noindent Similarly to part one, we have $\Pi_2 \leq C_{u,\Omega,\{U_i\}}\|{\bf x}^*-\hat{\bf x}\|^{s \wedge 1}$, since ${\bf x}^*,\hat{\bf x} \in U_i$.  It remains to prove
\begin{eqnarray*}
\Pi_1 \leq \Pi_1^* &:=& C_{u,\Omega,\{U_i\}} E[\|{\bf X}-E[{\bf X}]\|^{p'}]^{\frac{s \wedge 2}{p'}}\\
&+&2C_{u,\Omega,\{U_i\}} E[\|{\bf X}-E[{\bf X}]\|^{p'}]^{\frac{s' \wedge 2}{p'}}P(E^c_s)^{\frac{1}{2}}.
\end{eqnarray*}
We split $\Pi_1$ up as
\begin{eqnarray*}
\Pi_1 &=& |E[u({\bf X}) - u({\bf x}^*)]|\\ 
&= & |E[u({\bf X}) - u({\bf x}^*) | E_s]P(E_s)\\
&+&E[u({\bf X}) - u({\bf x}^*) | E^c_s]P(E^c_s)| \\
& := & |\Pi_{1,1} +\Pi_{1,2}|
\end{eqnarray*}
and proceed by cases.  First assume $0 \leq s' \leq  s < 1$.  Then, by a manipulation identical to the case $s'<1$ in part one, we have 
$$\Pi_{1,1} \leq C_{u,\Omega,\{U_i\}} (E[\|{\bf X}-{\bf x}^*\|^{p'} | E_s])^{\frac{s \wedge 2}{p'}} P(E_s)$$
and
$$\Pi_{1,2} \leq C_{u,\Omega,\{U_i\}} (E[\|{\bf X}-{\bf x}^*\|^{p'} | E^c_s])^{\frac{s' \wedge 2}{p'}} P(E^c_s).$$
We therefore have
\begin{eqnarray*}
\Pi_1 &\leq & C_{u,\Omega,\{U_i\}} (E[\|{\bf X}-{\bf x}^*\|^{p'} | E_s]P(E_s))^{\frac{s \wedge 2}{p'}} P(E_s)^{1-\frac{s \wedge 2}{p'}}\\
&+&C_{u,\Omega,\{U_i\}} (E[\|{\bf X}-{\bf x}^*\|^{p'} | E^c_s]P(E^c_s))^{\frac{s' \wedge 2}{p'}} P(E^c_s)^{1-\frac{s' \wedge 2}{p'}} \\
&\leq & C_{u,\Omega,\{U_i\}} E[\|{\bf X}-{\bf x}^*\|^{p'}]^{\frac{s \wedge 2}{p'}}+C_{u,\Omega,\{U_i\}}E[\|{\bf X}-{\bf x}^*\|^{p'}]^{\frac{s' \wedge 2}{p'}} P(E^c_s)^{\frac{1}{2}} \\
&\leq & \Pi^*_1.
\end{eqnarray*}
Where we have used \eqref{eqn:contraction2} and $p' \geq 4$ on line three.  Next suppose $0 \leq s' \leq 1$ but $s \geq 1$.  Then $\nabla u$ exists on $U_i$ and has norm bounded by $C_{u,\Omega,\{U_i\}}$, and hence whenever ${\bf X} \in U_i$ we can find a ${\bf z}=t{\bf x}^*+(1-t){\bf X} \in U_i$ for some $t \in [0,1]$ such that
$u({\bf X}) = \nabla u({\bf z}) \cdot ({\bf X}-{\bf x}^*)$, by Taylor's theorem and the convexity of $U_i$.  Therefore
\begin{eqnarray*}
\Pi_{1,1} &\leq & |E[(\nabla u({\bf z})-\nabla u({\bf x}^*)) \cdot ({\bf X}-{\bf x}^*) | E_s]P(E_s)\\
&+&\nabla u({\bf x}^*)  \cdot E[{\bf X}-{\bf x}^* | E_s]P(E_s)|,
\end{eqnarray*}
For the first term we can apply an argument that is identical to the case $s' \geq 1$ in part one.  However, unlike in part one, the second term does not vanish as
our expectation is now conditional.  We obtain
\begin{eqnarray*}
\Pi_{1,1} &\leq &  (E[\|{\bf X}-{\bf x}^*\|^{p'} | E_s]P(E_s))^{\frac{s \wedge 2}{p'}}P(E_s)^{1-\frac{s \wedge 2}{p'}}+\|\nabla u({\bf x}^*)\|\|{\bf x}^*_{E_s}-{\bf x}^*\|P(E_s) \\
& \leq & C_{u,\Omega,\{U_i\}} E[\|{\bf X}-{\bf x}^*\|^{p'}]^{\frac{s \wedge 2}{p'}}+\|\nabla u({\bf x}^*)\|\|{\bf x}^*_{E^c_s}-{\bf x}^*\|P(E^c_s) \\
& \leq & C_{u,\Omega,\{U_i\}} E[\|{\bf X}-{\bf x}^*\|^{p'}]^{\frac{s \wedge 2}{p'}}+C_{u,\Omega,\{U_i\}}E[\|{\bf X}-{\bf x}^*\|^{p'}]^{\frac{1}{p'}}P(E^c_s)^{1-\frac{1}{p'}}.
\end{eqnarray*}
Where we have used the manipulation 
$${\bf x}^*_{E_s}P(E_s)+{\bf x}^*_{E^c_s}P(E^c_s)={\bf x}^*={\bf x}^*P(E_s)+{\bf x}^*P(E^c_s)$$
\begin{eqnarray} \label{eqn:manipulation}
&\Rightarrow& ({\bf x}^*-{\bf x}^*_{E_s})P(E_s)=({\bf x}^*_{E^c_s}-{\bf x}^*)P(E_s^c).
\end{eqnarray}
together with \eqref{eqn:contraction2} on line two, and \eqref{eqn:contraction1} on line three.  The final trick is to note that since $E[\|{\bf X}-{\bf x}^*\|^{p'}] \leq 1$ and $s' \leq 1$, we have $E[\|{\bf X}-{\bf x}^*\|^{p'}]^{\frac{1}{p'}} \leq E[\|{\bf X}-{\bf x}^*\|^{p'}]^{\frac{s' \wedge 2}{p'}}$.  Hence, bounding $\Pi_{1,2}$ in exactly the same way as in the previous case, we have
$$\Pi_1 \leq C_{u,\Omega,\{U_i\}} E[\|{\bf X}-{\bf x}^*\|^{p'}]^{\frac{s \wedge 2}{p'}}+2C_{u,\Omega,\{U_i\}}E[\|{\bf X}-{\bf x}^*\|^{p'}]^{\frac{s' \wedge 2}{p'}}P(E^c_s)^{\frac{1}{2}} \leq \Pi^*_1,$$
where we have used $p \geq 4$ again to write $P(E^c_s)^{\frac{1}{p'}} \leq P(E^c_s)^{\frac{1}{2}}$.  Finally, we consider $1 \leq s' \leq  s$.  In this case $\nabla u$ exists everywhere and applying Taylor's theorem twice we have
\begin{eqnarray*}
\Pi_1 & = &  | E[(\nabla u({\bf z}_1)-\nabla u({\bf x}^*)) \cdot ({\bf X}-{\bf x}^*) | E_s]P(E_s)\\
&+&E[(\nabla u({\bf z}_2)-\nabla u({\bf x}^*)) \cdot ({\bf X}-{\bf x}^*) | E^c_s]P(E^c_s)  \\
& + & \nabla u({\bf x}^*) \cdot \underbrace{ \left\{  ({\bf x}_{E_s}^*-{\bf x}^*)P(E_s)+({\bf x}_{E^c_s}^*-{\bf x}^*)P(E^c_s) \right\}|, }_{\mbox{ =0 by \eqref{eqn:manipulation}}}
\end{eqnarray*}
where ${\bf z}_1=t_1{\bf x}^*+(1-t_1){\bf X} \in U_i$, ${\bf z}_2=t_2{\bf x}^*+(1-t_2){\bf X} \in U_i$ for some $t_1, t_2 \in [0,1]$ (again, by the convexity of $U_i$).  Applying an argument almost identical to the previous step, we then obtain
\begin{eqnarray*}
\Pi_1 & \leq &  C_{u,\Omega,\{U_i\}} (E[\|{\bf X}-{\bf x}^*\|^{p'} | E_s]P(E_s))^{\frac{s \wedge 2}{p'}}P(E_s)^{1-{\frac{s \wedge 2}{p'}}}\\
&+&C_{u,\Omega,\{U_i\}} (E[\|{\bf X}-{\bf x}^*\|^{p'} | E_s]P(E_s^c))^{\frac{s' \wedge 2}{p'}}P(E^c_s)^{1-\frac{s' \wedge 2}{p'} } \\
& \leq & C_{u,\Omega,\{U_i\}} E[\|{\bf X}-{\bf x}^*\|^{p'} ]^{\frac{s \wedge 2}{p'}}+C_{u,\Omega,\{U_i\}} E[\|{\bf X}-{\bf x}^*\|^{p'}]^{\frac{s' \wedge 2}{p'}}P(E^c_s)^{\frac{1}{2}} \\
& \leq & \Pi_1^*,
\end{eqnarray*}
where we have used \eqref{eqn:contraction2} and $p' \geq 4$ on line three.
\qed
\end{proof}

\begin{figure}
\centering\includegraphics[width=1.0\linewidth]{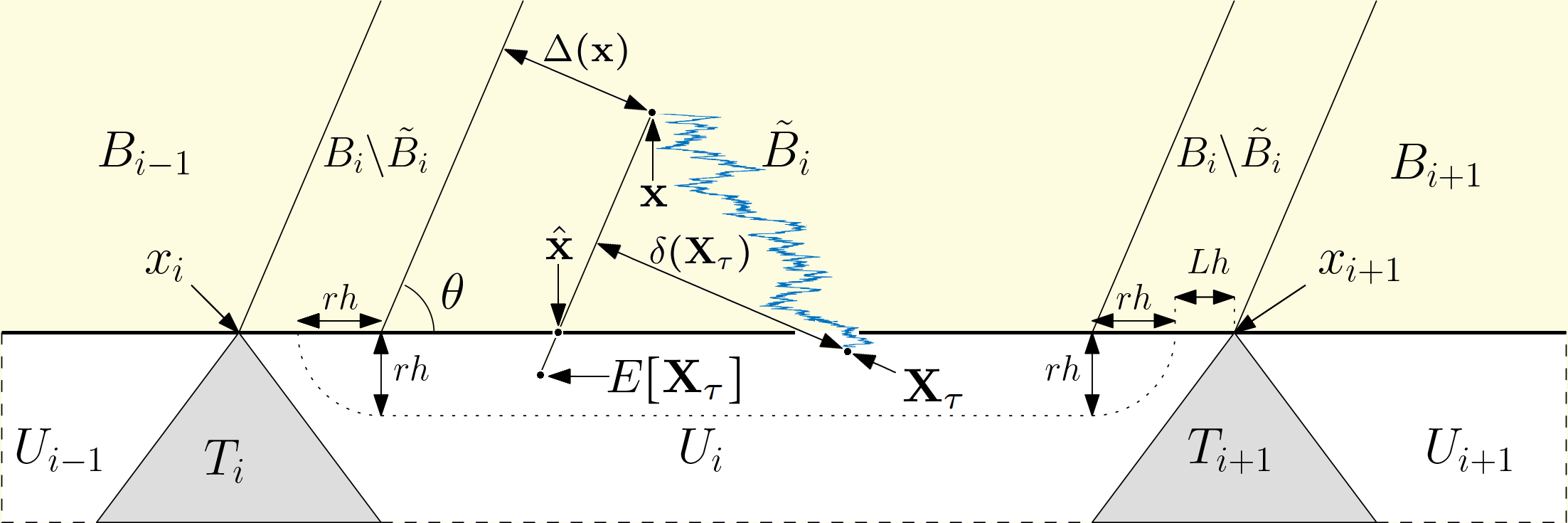}
\caption{{\bf Illustration of the stopped random walk ${\bf X}_{\tau}$ as well as the subdivision of $D$ into bands:}  The inpainting domain $D=(0,1]^2$ is partitioned into bands $\{B_i\}_{i=1}^M$, such that each band $B_i:=\Pi_{\theta^*_r}^{-1}((x_i,x_{i+1}] \times \{ 0 \})$, where $\Pi_{\theta^*_r}$ is the transport map \eqref{eqn:weak}.  With in each band we identify a subband $\tilde{B}_i$, of width dependent on $h$, such that if ${\bf X}_{\tau}$ starts in $\tilde{B}_i$, then $E[{\bf X}_{\tau}] \in U_i$ and $\hat{\bf x} \in U_i$.  As $h \rightarrow 0$, the area of $B_i \backslash \tilde{B}_i$ goes to zero.  At the same time we illustrate an example stopped random walk ${\bf X}_{\tau}$ started from ${\bf x}$ (blue curve) as well the relationship between ${\bf x}$, $E[{\bf X}_{\tau}]$ and $\hat{\bf x} = \Pi_{\theta^*_r}({\bf x})$.  The orthogonal distances $\Delta({\bf x})$ from ${\bf x}$ to $\partial \tilde{B}_i$ and $\delta({\bf X}_{\tau})$ from ${\bf X}_{\tau}$ to the ray from ${\bf x}$ to $\hat{\bf x}$ are also illustrated.  Note that $\hat{\bf x}$ is always on the line $y=0$, whereas ${\bf X}_{\tau}$ and $E[{\bf X}_{\tau}]$ will typically overshoot it.  However, they can only overshoot by distance at most $rh$.  That is, ${\bf X}_{\tau}$ and $E[{\bf X}_{\tau}]$ must always sit between the lines $y=-rh$ and $y=0$.  Note that Pythagorean theorem implies $\Delta({\bf x}) \leq 1$ regardless of where ${\bf x}$ is placed in $D$ and regardless of the number and position of the bands.}
\label{fig:bands}
\end{figure}

\vskip 2mm
\noindent {\bf Stage 3.}  In this stage our objective is to partition $D$ into a series of bands $\{B_i\}_{i=1}^M$ and sub-bands $\tilde{B}_i \subset B_i$ such that the area of the complement $B_i \backslash \tilde{B}_i$ goes to zero as $h \rightarrow 0$, and such that the starting position ${\bf x} \in \tilde{B}_i$ guarantees $E[{\bf X}_{\tau}] \in U_i$ and $\hat{\bf x} \in U_i$.  Each band $B_i$ is associated with the interval $(x_i,x_{i+1}] \times \{0\}$ (see Figure \ref{fig:setup} for a reminder of what $x_i$ is) and is equal to its inverse image under the transport map $ \Pi_{\theta^*_r} : D \rightarrow (0,1] \times \{0\}$ given by \eqref{eqn:weak}.  Within each band $B_i$ we define a sub-band $\tilde{B}_i$ given by the inverse image of $(x^+_i,x^-_{i+1}] \times \{0\}$ under $\Pi_{\theta_r}$, where $x^+_i:=x_i+(L+r)h$ and $x^-_{i+1}:=x_{i+1}-(L+r)h$.  Note that the width of $\tilde{B}_i$ is dependent on $h$, even though our choice of notation does not make this obvious.  As usual, we denote by $B_{i,h}$, $\tilde{B}_{i,h}$ the intersection of these bands with $D_h$.  The set $\tilde{B}_{i,h}$ is significant because we know $\hat{\bf x}, E[{\bf X}_{\tau}] \in U_i$ if ${\bf x} \in \tilde{B}_{i,h}$.  This situation is illustrated in Figure \ref{fig:bands}.

\vskip 2mm
\noindent {\bf Stage 4.}  In stage two we gave a bound for $|E[u_0({\bf X}_{\tau})-u_0(\hat{\bf x})]|$, valid when $E[{\bf X}_{\tau}]$ and $\hat{\bf x}$ each belong to one of the well behaved sets $U_i$, that depends on $\|E[{\bf X}_{\tau}]-\hat{\bf x}\|$ and $\|{\bf X}_{\tau}-E[{\bf X}_{\tau}]\|_{L^{p'}}$ for $p' \geq 4$, as well as the probability of the event $E^c_s$, a ``rogue'' event where even though $E[{\bf X}_{\tau}]$ and $\hat{\bf x}$ each belong to $U_i$, the stopped random walk ${\bf X}_{\tau}$ nevertheless lands outside $U_i$.  In stage three we broke $D$ into a series of bands $\{B_i\}_{i=1}^M$ and sub-bands $\tilde{B}_i \subset B_i$ such that the starting position ${\bf x} \in \tilde{B}_i$ guarantees $E[{\bf X}_{\tau}] \in U_i$ and $\hat{\bf x} \in U_i$.  We already have bounds on $\|E[{\bf X}_{\tau}]-\hat{\bf x}\|$ and $\|{\bf X}_{\tau}-E[{\bf X}_{\tau}]\|_{L^{4}}$ from stage one.  What remains is to bound $P(E^c_s)$.  We will accomplish this in two steps using elementary arguments based on martingales and Azuma's inequality.  In step one, we show that $\tau$ is bounded above by a constant (dependent on $h$) with a high probability.  In step two, we show that so long as $\tau$ is smaller than this constant, then ${\bf X}_{\tau}$ is unlikely to drift very far from $E[{\bf X}_{\tau}]$.  We then put these facts to together to bound $P(E^c_s)$.

\vskip 1mm

\noindent {\em Step 1.}  $P\left(\tau > \lceil \frac{2}{|\mu_y|h} \rceil \right) \leq \exp\left(-\frac{1}{4\frac{r}{|\mu_y|}rh}\right).$
\begin{proof}
We define
$$M_k := Y_{k \wedge \tau} - (\tau \wedge k) \mu_yh -mh$$
which the reader may verify is a zero mean martingale with bounded increments
$$|M_{k+1}-M_k| \leq rh.$$
Next we note that for any $k$ the following events are equal:
$$\{ \tau \geq k \} = \{Y_{\tau \wedge k} \geq 0\} = \{ M_k \geq -(k \wedge \tau) \mu_y h - mh \} = \{ M_k \geq -k\mu_y h - mh \}.$$
Therefore
$$\left \{ \tau \geq \left \lceil \frac{2}{|\mu_y|h} \right \rceil \right \} = \left \{ M_{ \left \lceil \frac{2}{|\mu_y|h} \right \rceil} \geq - \left \lceil \frac{2}{|\mu_y|h} \right \rceil\mu_y h - mh \right \} \subseteq \left \{ M_{ \left \lceil \frac{2}{|\mu_y|h} \right \rceil} \geq 1 \right \}$$
where we have used the inequality
$$-\left \lceil \frac{2}{|\mu_y|h} \right \rceil\mu_y h - mh = \left \lceil \frac{2}{|\mu_y|h} \right \rceil|\mu_y| h - mh \geq 2-mh \geq 1.$$
Noting that $M_0 = 0$ we apply Azuma's inequality to find
\begin{eqnarray*}
P\left(\tau > \left \lceil \frac{2}{|\mu_y|h} \right \rceil \right) &\leq& P \left(M_{ \left \lceil \frac{2}{|\mu_y|h} \right \rceil} \geq 1 \right)\\
& \leq & \exp\left( - \frac{1}{2\left \lceil \frac{2}{|\mu_y|h} \right \rceil r^2h^2} \right) \leq \exp\left(-\frac{1}{4\frac{r}{|\mu_y|}rh}\right).
\end{eqnarray*}
\qed
\end{proof}
\vskip 1mm

\noindent {\em Step 2.}  Let $\Delta({\bf x})$ denote the orthogonal distance from ${\bf x}$ to $\partial \tilde{B}_i$ (see Figure \ref{fig:bands}).  Then 
$$P(E^c_s) \leq 3\exp\left(-\frac{\Delta({\bf x})^2}{4\frac{r}{|\mu_y|}rh}\right).$$
\begin{proof}
This time we define
$$M_k := ({\bf X}_{\tau \wedge k} - {\bf x}) \cdot (-\sin\theta^*_r,\cos\theta^*_r).$$
Once again, $M_k$ is a zero-mean martingale with bounded increments \\$|M_{k+1}-M_k| \leq rh$ obeying $M_0 = 0$.  Moreover, if $\tau \leq k$, then $|M_k|$ has a geometric interpretation as the orthogonal distance $\delta({\bf X}_{\tau})$ from ${\bf X}_{\tau}$ to the line passing through the points ${\bf x}$ and $\hat{\bf x}$.  In addition, we clearly have ${\bf X}_{\tau} \notin U_i$ only if $\delta({\bf X}_{\tau}) > \Delta({\bf x})$, where $\Delta({\bf x})$ denotes the orthogonal distance from ${\bf x}$ to $\partial \tilde{B}_i$.  See Figure \ref{fig:bands} for an illustration.  Hence we have the containment
$$E^c_s \cap \{ \tau \leq k\} \subseteq \{|M_k| > \Delta({\bf x})\}.$$
Applying Azuma's inequality again gives, for any $k \in \field{N}$,
$$P(|M_k| > \Delta({\bf x})) \leq 2\exp\left( -\frac{\Delta({\bf x})^2}{2kr^2h^2}\right).$$
At the same time, for any integer $k \in \field{N}$ we have 
$$P(E^c_s)  \leq  P(E^c_s \cap \{ \tau \leq k \})+P(E^c_s \cap \{ \tau > k \}) \leq P(|M_k| > \Delta({\bf x})) + P(\tau > k).$$
Taking $k = \lceil \frac{2}{|\mu_y|h} \rceil$, substituting the above bound as well as the one from Step 1 gives
$$P(E^c_s)  \leq 2\exp\left(-\frac{\Delta({\bf x})^2}{4\frac{r}{|\mu_y|}rh}\right)+\exp\left(-\frac{1}{4\frac{r}{|\mu_y|}rh}\right) \leq 3\exp\left(-\frac{\Delta({\bf x})^2}{4\frac{r}{|\mu_y|}rh}\right),$$
since $\Delta({\bf x}) \leq 1$ (this follows trivially from the Pythagorean theorem - see Figure \ref{fig:bands}).
\qed
\end{proof}

\vskip 2mm
\noindent {\bf Stage 5.}  We are now ready to prove our main result, which we do assuming $p < \infty$ (the case $p = \infty$ is treated as a limit).  The idea is to split $\|u-u_h\|_p$, which is defined as a sum over $D_h$, into separate sums over the bands $\{ B_{i,h} \}$ defined in stage three.  Within in each band we split further into a sum over $\tilde{B}_{i,h}$ and $B_{i,h} \backslash \tilde{B}_{i,h}$, that is
\begin{eqnarray} \label{eqn:splitting}
\|u-u_h\|_p &=& \|E[u({\bf X}_{\tau})-u(\hat{\bf x})]\|_p \nonumber\\
&\leq & \sum_{i=1}^M \Big\{  \|E[u({\bf X}_{\tau})-u(\hat{\bf x})]1_{B_{i,h} \backslash \tilde{B}_{i,h}}\|_p \nonumber\\
&+& \|E[u({\bf X}_{\tau})-u(\hat{\bf x})]1_{\tilde{B}_{i,h}}\|_p   \Big\}.
\end{eqnarray}
This stage is itself divided into three steps, where first to derive a bound for \\$\|E[u({\bf X}_{\tau})-u(\hat{\bf x})]1_{B_{i,h} \backslash \tilde{B}_{i,h}}\|_p$, then derive one for $\|E[u({\bf X}_{\tau})-u(\hat{\bf x})]1_{\tilde{B}_{i,h}}\|_p$, and then put the bounds together.  Step one is by far the hardest.
\vskip 1mm
\noindent{\em Step 1:  A bound for $\|E[u({\bf X}_{\tau})-u(\hat{\bf x})]1_{B_{i,h} \backslash \tilde{B}_{i,h}}\|_p$.}  
\vskip 1mm
\noindent On each $\tilde{B}_{i,h}$, we can apply our estimate from statement two of Lemma \ref{lemma:regularity} from Stage 2.
\begin{eqnarray*}
\|E[u({\bf X}_{\tau})-u(\hat{\bf x})]1_{\tilde{B}_{i,h}}\|_p &\leq&  C_{u_0,\mathcal U,\{U_i\}}\Big\| \Big(\left\{\|{\bf X}_{\tau}-E[{\bf X}_{\tau}]\|_{L^{4}}\right\}^{s \wedge 2} \\
&+& 2\left\{\|{\bf X}_{\tau}-E[{\bf X}_{\tau}]\|_{L^{4}}\right\}^{\frac{s' \wedge 2}{4}}P(E^c_s)^\frac{1}{2}\\
 &+&\|E[{\bf X}_{\tau}]-\hat{\bf x}\|^{s \wedge 1}\Big) 1_{\tilde{B}_{i,h}} \Big \|_p
\end{eqnarray*}
Substituting in our estimates of $\|{\bf X}_{\tau}-E[{\bf X}_{\tau}]\|_{L^{4}}$ and $\|E[{\bf X}_{\tau}]-\hat{\bf x}\|$ from Stage 1 gives
$$\|E[u({\bf X}_{\tau})-u(\hat{\bf x})]1_{\tilde{B}_{i,h}}\|_p \leq C_{u_0,\mathcal U,\{U_i\}}(C+1)(rh)^{\frac{s}{2} \wedge 1} + 2(rh)^{\frac{s'}{2} \wedge 1} \| P(E^c_s) 1_{\tilde{B}_{i,h}}\|_p,$$
where we have used $rh < 1$ to bound $(rh)^{s \wedge 1} \leq (rh)^{\frac{s}{2} \wedge 1}$.  Our next job is to bound $\| P(E^c_s) 1_{\tilde{B}_{i,h}}\|_p$, which we do by applying our bound from stage four and then making an argument based on numerical quadrature to replace the sum in $\|\cdot\|_p$ with an integral that is easier to evaluate.  Specifically, let us define
$$f({\bf x}) = \exp\left(-\frac{p\Delta({\bf x})^2}{4\frac{r}{|\mu_y|}rh}\right)$$
for convenience ($f$ is one third of the bound on $P(E^c_s)$ we derived in stage 4).  Then
\begin{eqnarray*}
\| P(E^c_s) 1_{\tilde{B}_{i,h}}\|_p &\leq& 3\|f({\bf x})1_{\tilde{B}_{i,h}}\|_p \\
&\leq& 3\left( \int_{\tilde{B}_i} f({\bf x})d{\bf x} + \max_{{\bf x} \in \tilde{B}_i} \|\nabla f({\bf x})\|h + \max_{{\bf x} \in \tilde{B}_i} |f({\bf x})|h \right)^{\frac{1}{p}}.
\end{eqnarray*}
This is because $\|f({\bf x})1_{\tilde{B}_{i,h}}\|^p_p$ can be interpreted as a Riemann sum for $\int_{\tilde{B}_i} f({\bf x})d{\bf x}$.  The two error terms on the left come respectively from the error in the rectangle rule and error in the approximation of our non-rectangular domain by squares.  One may readily show $|f({\bf x})| \leq 1$ while 
$$\|\nabla f({\bf x})\|h \leq \|\nabla f({\bf x})\|rh \leq \frac{e^{-\frac{1}{2}}p^{\frac{1}{2}}}{\sqrt{2\frac{r}{|\mu_y|}}}\sqrt{rh}.$$
At the same time, changing coordinates to $(u,v)$ with $u$ parallel to $\partial \tilde{B}_i$ and $v$ perpendicular one easily obtains
$$\int_{\tilde{B}_i} f({\bf x})d{\bf x} \leq |\csc({\theta_r^*})| \int_{-\infty}^{\infty} 2\exp\left(-\frac{pv^2}{4\frac{r}{|\mu_y|}rh}\right)dv = 4|\csc({\theta_r^*})|\sqrt{\frac{\pi r}{p|\mu_y|}}(rh)^{\frac{1}{2}},$$
where the factor of $|\csc({\theta_r^*})|$ has appeared because this is the length of each side of $\partial \tilde{B}_i$ (see Figure \ref{fig:bands}).  Putting it together gives
$$\| P(E^c_s) 1_{\tilde{B}_{i,h}}\|_p \leq A_p (rh)^{\frac{1}{2p}},$$
where $A_p$ is given by
\begin{eqnarray*}
A_p &=& 3\left(1+\frac{e^{-\frac{1}{2}}p^{\frac{1}{2}}}{\sqrt{2\frac{r}{|\mu_y|}}}+2|\csc(\theta^*_r)|\sqrt{\frac{\pi r}{p|\mu_y|}}\right)^{\frac{1}{p}} \\
&\leq& 3\left(1+\frac{e^{-\frac{1}{2}}}{\sqrt{2\frac{r}{|\mu_y|}}}+2|\csc(\theta^*_r)|\sqrt{\frac{\pi r}{|\mu_y|}}\right)e^{\frac{1}{2e}}:=A,
\end{eqnarray*}
and where we have used $1 \leq p$ to pull out a common factor of $p^{\frac{1}{2}}$, applied the bound $p^{\frac{1}{2p}} \leq e^{\frac{1}{2e}}$, and noticed that what is left over is maximized at $p=1$.  Thus we have 
$$\| P(E^c_s) 1_{\tilde{B}_{i,h}}\|_p \leq A (rh)^{\frac{1}{2p}},$$
where $A$ is a constant depending only on $\frac{r}{|\mu_y|}$ and $\theta^*_r$.  Finally, we obtain the bound
\begin{eqnarray} \label{eqn:bound1}
\|E[u({\bf X}_{\tau})-u(\hat{\bf x})]1_{\tilde{B}_{i,h}}\|_p &\leq& C_{u_0,\mathcal U,\{U_i\}}(C+1)(rh)^{\frac{s}{2} \wedge 1} + 2A (rh)^{\left(\frac{s'}{2}+\frac{1}{2p}\right) \wedge \left(1+\frac{1}{2p}\right)} \nonumber \\
&\leq& \max(C_{u_0,\mathcal U,\{U_i\}}(C+1),2A) (rh)^{ \frac{s}{2} \wedge \left(\frac{s'}{2}+\frac{1}{2p}\right) \wedge 1}.
\end{eqnarray}
\vskip 1mm
\noindent {\em Step 2:  A bound for $\|E[u({\bf X}_{\tau})-u(\hat{\bf x})]1_{\tilde{B}_{i,h}}\|_p$.}  
\vskip 1mm
\noindent This step is easier.  Applying  statement one of Lemma \ref{lemma:regularity} from Stage 2 this time, and substituting in our results from stage one as before gives
\begin{eqnarray*}
\|E[u({\bf X}_{\tau})-u(\hat{\bf x})]1_{B_{i,h} \backslash \tilde{B}_{i,h}}\|_p &\leq & C_{u_0,\mathcal U} \Big\| \Big( \left\{ \|{\bf X}_{\tau}-E[{\bf X}_{\tau}]\|_{L^{4}}\right\}^{s' \wedge 2} \\
&+&\|E[{\bf X}_{\tau}]-\hat{\bf x}\|^{s' \wedge 1}\Big) 1_{B_{i,h} \backslash \tilde{B}_{i,h}} \Big \|_p \\
& \leq & C_{u_0,\mathcal U}(C+1)(rh)^{\frac{s'}{2} \wedge 1} \| 1_{B_{i,h} \backslash \tilde{B}_{i,h}} \|_p
\end{eqnarray*}
It remains to bound $\| 1_{B_{i,h} \backslash \tilde{B}_{i,h}} \|_p$.  But 
$$\|1_{B_{i,h} \backslash \tilde{B}_{i,h}} \|_p=(|B_{i,h} \backslash \tilde{B}_{i,h}|h^2)^{\frac{1}{p}} =(2(L+r)Nh^2)^{\frac{1}{p}} \leq 2(L+1)(rh)^{\frac{1}{p}} \leq 2(L+1)(rh)^{\frac{1}{2p}},$$ 
hence
\begin{equation} \label{eqn:bound2}
\|E[u({\bf X}_{\tau})-u(\hat{\bf x})]1_{B_{i,h} \backslash \tilde{B}_{i,h}}\|_p \leq 2C_{u_0,\mathcal U}(C+1)(L+1)(rh)^{\frac{s'}{2}+\frac{1}{2p}}.
\end{equation}
\vskip 1mm
\noindent{\em Step 3:  Putting the bounds together.} 
\vskip 1mm
\noindent Combining \eqref{eqn:splitting}, \eqref{eqn:bound1}, and \eqref{eqn:bound2}, we at last obtain
\begin{equation} \label{eqn:finalBound}
\|u-u_h\|_p \leq K^* (rh)^{ \frac{s}{2} \wedge \left(\frac{s'}{2}+\frac{1}{2p}\right) \wedge 1},
\end{equation}
where $K^*$ is the constant 
$$K^* = \max(C_{u_0,\mathcal U,\{U_i\}}(C+1),2A,2C_{u_0,\mathcal U}(C+1)(L+1))$$
which depends only on $u_0$, $\mathcal U$, $\{U_i\}$, $r/|\mu_y|$, and $\theta^*_r$ as claimed.  Moreover, $K$ is a monotonically increasing function of $\frac{r}{|\mu_y|}$ and $K^* \rightarrow \infty$ as $\theta^*_r \rightarrow 0$ or $|\mu_y| \rightarrow 0$ as claimed.

\vskip 2mm

\noindent {\bf Stage 6.}  The previous five stages all assumed that $a^*_r \subseteq b^-_r$ or $a^*_r \subseteq b^0_r$ so that we could exploit the connection between Algorithm 1 and stopped random walks.  Now all that remains is to generalize to arbitrary stencils $a^*_r$ possibly containing ghost pixels.  This follows easily from the idea of equivalent weights from Section \ref{sec:fiction}.

Now assume $a^*_r$ is arbitrary.  By \eqref{eqn:universalContainment} we have $\mbox{Supp}(a^*_r) \subseteq b^0_{r+2}$, we know that \eqref{eqn:finalBound} holds with $r$ replaced by $r+2$, where $u$ is the solution to the transport equation \eqref{eqn:transport} with transport direction ${\bf g}^*_r$ given in terms of $\mbox{Supp}(a^*_r)$ and the equivalent weights $\tilde{w}_r$.  But we have already noted in Definition \ref{def:stencil} that it doesn't matter whether the center of mass is calculated in terms of the original weights $w_r$ and stencil $a^*_r$, or the equivalent weights $\tilde{w}_r$ and modified stencil $\mbox{Supp}(a^*_r)$.  Hence we have
$${\bf g}^*_r = \frac{\sum_{{\bf y} \in \mbox{Supp}(a^*_r)} \tilde{w}_r({\bf 0},{\bf y}){\bf y}}{\sum_{{\bf y} \in \mbox{Supp}(a^*_r)} \tilde{w}_r({\bf 0},{\bf y})} = \frac{\sum_{{\bf y} \in a^*_r} w_r({\bf 0},{\bf y}){\bf y}}{\sum_{{\bf y} \in a^*_r} w_r({\bf 0},{\bf y})}$$
as claimed.  

We are now almost done.  All that remains is some tidying up of the constant in \eqref{eqn:finalBound}, which we are forced to do since we had to replace $r$ with $r+2$.  Specifically, we know
$$\|u-u_h\|_p \leq K\left(\frac{r+2}{|\mu_y|}\right)^* ((r+2)h)^{ \frac{s}{2} \wedge \left(\frac{s'}{2}+\frac{1}{2p}\right) \wedge 1}$$
where we have written down the dependence of $K^*$ on $\frac{r}{|\mu_y|}$ explicitly.  Since $K^*$ increases monotonically with $\frac{r}{|\mu_y|}$, and since $(\cdot)^{\left(\frac{s'}{2}+\frac{1}{2p}\right) \wedge 1}$ is itself a monotonically increasing function, we can replace all occurrences of $r+2$ with $3r$.  This gives
$$\|u-u_h\|_p \leq 3K\left(3\frac{r}{|\mu_y|}\right)^* (rh)^{ \frac{s}{2} \wedge \left(\frac{s'}{2}+\frac{1}{2p}\right) \wedge 1}.$$
We certainly have $K\left(\frac{r}{|\mu_y|}\right)^* \leq 3K\left(3\frac{r}{|\mu_y|}\right)^*$, so we now define
$$K = 3K\left(3\frac{r}{|\mu_y|}\right)^*$$
to obtain our final constant that works for both ghost pixels and real pixels.
\qed

\subsection{Convergence to M\"arz's limit} \label{sec:marzLimit}

In the previous section we proved convergence of the inpainted function $u_h$ to the fixed ratio continuum limit $u$ given by the weak solution \eqref{eqn:weak} to the transport problem \eqref{eqn:transport}, as $h \rightarrow 0$ with $r = \epsilon/h$ fixed.  In \cite{Marz2007}, M\"arz and Bornemann also considered convergence of Algorithm 1 (which they called ``the generic single pass algorithm'') to a continuum limit, under a high resolution and vanishing viscosity double limit where first $h \rightarrow 0$ and then $\epsilon=rh \rightarrow 0$.  Their limit, which we refer to here as $u_{\mbox{m\"arz}}$, is also expressed as the solution to a transport equation, but with different coefficients.  As already stated in Section \ref{sec:continuumLimit1}, if the additional condition \eqref{eqn:centerOfMass} is satisfied, then we can draw a connection between these limits.  We have also already stated that coherence transport, Guidefill, and semi-implicit Guidefill all satisfy \eqref{eqn:centerOfMass} and converge to the same limit $u_{\mbox{m\"arz}}$, in this case with transport direction given by
$${\bf g}^* = \frac{\int_{B^-_1({\bf 0})} w_1({\bf 0},{\bf y}) {\bf y}d{\bf y}}{\int_{B^-_1({\bf 0})} w_1({\bf 0},{\bf y})d{\bf y}}$$
where as before $B^-_1({\bf 0})$ is the unit disk intersected with the lower half plane and $w_1({\bf 0},{\bf y})$ are the weights \eqref{eqn:weight} used by all three methods with $\epsilon =1$.  Note that ${\bf g}^*$ depends implicitly on the parameter $\mu$ in the weights \eqref{eqn:weight}.  Let us make this explicit for a moment by writing ${\bf g}^*_{\mu}$ in place of ${\bf g}^*$.  Then, if ${\bf g}=(\cos\theta,\sin\theta)$ with $\theta \in [0,\pi)$ denotes the guidance direction, we have from \cite[pg. 14, equation (14)]{Marz2007}
\begin{equation} \label{eqn:marzNoKink}
\lim_{\mu \rightarrow \infty} \theta({\bf g}^*_{\mu}) = 
\begin{cases} 
\theta & \mbox{ if } \theta \neq 0 \\
\frac{\pi}{2} & \mbox{ if } \theta = 0
\end{cases}
\end{equation}
where $\theta({\bf g}^*_{\mu})$ denotes as usual the counterclockwise angle that the line \\$L_{{\bf g}^*_{\mu}}:=\{ \lambda{\bf g}^*_{\mu} : \lambda \in \field{R}\} $ makes with the $x$-axis.  In other words, the continuum limit from \cite{Marz2007} predicts no kinking unless ${\bf g}$ is exactly parallel to $\partial D_h$, and moreover, predicts that the three methods listed above exhibit exactly the same behaviour as $\mu \rightarrow \infty$.  This is not what is observed in practice and we will draw very different conclusions in Section \ref{sec:kink3main}.
We have three objectives:
\begin{enumerate}
\item To rigorously establish convergence of $u_h$ to $u_{\mbox{m\"arz}}$, which was treated as a ``formal limit'' in \cite{Marz2007}, at least under the simplifying assumptions of Section \ref{sec:symmetry}.  In fact, although $u_{\mbox{m\"arz}}$ was treated as an iterated double limit in \cite{Marz2007}, we will see that $u_h \rightarrow u_{\mbox{m\"arz}}$ in the single {\em simultaneous} limit where $h \rightarrow 0$ and $\epsilon \rightarrow 0$ at the same time, but with a nuance.  It is not in general enough that $h$ and $\epsilon$ separately vanish - in general they do so in such a way that their ratio $r = \epsilon/h$ diverges.
\item To show that for $r \gg 1$, the two limits $u$ and $u_{\mbox{m\"arz}}$ are similar, i.e. $u \rightarrow u_{\mbox{m\"arz}}$.
\item To argue that when $r \in \field{N}$ is small (say close to $r=5$, the recommended default value in \cite{Marz2007}), our limit $u$ typically does a better job of capturing the behaviour of $u_h$ than $u_{\mbox{m\"arz}}$ does, i.e. $\|u_h-u\|_p \ll \|u_h - u_{\mbox{m\"arz}}\|_p$.
\end{enumerate}
The following Theorem accomplishes objectives 1-2.  Objective 3 is discussed in Remark \ref{rem:marzLimit} and supported numerically in Section \ref{sec:numerics}.

\vskip 1mm

\begin{theorem} \label{thm:convergence2}  Suppose ${\bf g}^* = \lim_{r \rightarrow \infty} \frac{{\bf g}^*_r}{r}$ exists and converges with rate at least $O(r^{-q})$ as in \eqref{eqn:centerOfMass}, for some $q>0$.  Let $u_{\mbox{m\"arz}}({\bf x}) := u_0(\Pi_{\theta^*}({\bf x}))$ denote the weak solution to the transport equation \eqref{eqn:transport} with ${\bf g}^*_r$ replaced by ${\bf g}^*$, and $\Pi_{\theta^*}$ the transport operator defined as in \eqref{eqn:weak} but with $\theta^*_r = \theta({\bf g}^*_r)$ replaced by $\theta^* = \theta({\bf g}^*)$.  Assume $h \leq r^{-q}$.  Then under the same conditions as in Theorem \ref{thm:convergence} we have the bound
\begin{equation} \label{eqn:marzbound}
\|u_h-u_{\mbox{m\"arz}}\|_p  \leq  K_1 \cdot (rh)^{(\frac{s'}{2}+\frac{1}{2p}) \wedge \frac{s}{2} \wedge 1}+K_2 r^{-q\left\{s \wedge \left( s' +\frac{1}{p} \right) \wedge 1\right\}}
\end{equation}
where $K_1 >0$, $K_2 >0$ are constants depending only on $\theta^* = \theta({\bf g}^*)$, ${\bf g}^* \cdot e_2$, $u_0$, $\mathcal U$, and $\{U_i\}_{i=1}^M$ (where $\{U_i\}_{i=1}^M$ are the sets illustrated in Figure \ref{fig:setup}).  Moreover, The dependence on $\theta^*$ and ${\bf g}^* \cdot e_2$ is continuous but $K_1$ and $K_2$ diverge as $\theta^* \rightarrow 0$ or $\theta^* \rightarrow \pi$.  Additionally, we have
\begin{equation} \label{eqn:chabuduo}
\|u-u_{\mbox{m\"arz}}\|_p \rightarrow 0 \qquad \mbox{ as } r \rightarrow \infty
\end{equation}
for all $1\leq p < \infty$ if $s'=0$ and for $p = \infty$ as well if $s'>0$.
\end{theorem}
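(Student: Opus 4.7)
The plan is to split the error via the triangle inequality
\begin{equation*}
\|u_h - u_{\mbox{m\"arz}}\|_p \leq \|u_h - u\|_p + \|u - u_{\mbox{m\"arz}}\|_p,
\end{equation*}
invoke Theorem~\ref{thm:convergence} directly to obtain the $K_1(rh)^{\cdots}$ contribution from the first term, and devote the main effort to establishing the bound $\|u - u_{\mbox{m\"arz}}\|_p \leq K_2 r^{-q\{s \wedge (s'+1/p) \wedge 1\}}$. This second bound is then precisely the content of \eqref{eqn:chabuduo}, since neither $u$ nor $u_{\mbox{m\"arz}}$ depends on $h$ and its exponent is strictly positive in every claimed case.

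First I would translate the hypothesis $\|r^{-1}{\bf g}^*_r - {\bf g}^*\| = O(r^{-q})$ into an angular statement. Since $u_{\mbox{m\"arz}}$ is well defined by the weak formula only when ${\bf g}^* \cdot e_2 \neq 0$, the angle $\theta^*$ is bounded away from $\{0,\pi\}$, and continuity of both the angle map and the cotangent away from $\{0,\pi\}$ yields $\delta_r := |\cot\theta^*_r - \cot\theta^*| = O(r^{-q})$, with the implicit constant depending continuously on $\theta^*$ and $1/|{\bf g}^* \cdot e_2|$. Combined with \eqref{eqn:weak}, this produces the uniform pointwise estimate $\|\Pi_{\theta^*_r}({\bf x}) - \Pi_{\theta^*}({\bf x})\| \leq \delta_r |y| \leq \delta_r$ for all ${\bf x} = (x,y) \in D$.

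Next I would perform a band decomposition of $D$ in the style of Stage~3 of the proof of Theorem~\ref{thm:convergence}. Define sub-bands $\tilde{B}_i \subset D$ on which both transported points $\Pi_{\theta^*_r}({\bf x})$ and $\Pi_{\theta^*}({\bf x})$ lie in the interior of a common smooth piece $U_j \cap \{y=0\}$; the complement $D \setminus \bigcup_i \tilde{B}_i$ is contained in an $O(\delta_r)$-neighborhood of $M$ straight lines and so has Lebesgue measure at most $C M |\csc\theta^*| \delta_r$. On each $\tilde{B}_i$, the H\"older property \eqref{eqn:HolderLike} with exponent $s$ produces the pointwise bound $|u - u_{\mbox{m\"arz}}|({\bf x}) \leq C'\delta_r^{s \wedge 1}$, while on the bad complement we use only the coarser regularity $u_0 \in W^{s',\infty}(\mathcal U)$ to get the pointwise bound $C'\delta_r^{s' \wedge 1}$. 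Taking the $L^p$ norm of each contribution and summing gives
\begin{equation*}
\|u - u_{\mbox{m\"arz}}\|_p \leq C''\left(\delta_r^{s \wedge 1} + \delta_r^{(s' \wedge 1) + 1/p}\right),
\end{equation*}
and a short case analysis on whether each of $s, s'$ exceeds $1$ shows that $\min\{s \wedge 1,\, (s' \wedge 1) + 1/p\} = s \wedge (s' + 1/p) \wedge 1$, so substituting $\delta_r = O(r^{-q})$ yields the claimed second term. The dichotomy in \eqref{eqn:chabuduo} then follows since this exponent is strictly positive whenever $s' > 0$, and for $s' = 0$ collapses to $1/p$, which is positive for every $1 \leq p < \infty$ but zero at $p = \infty$.

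The main obstacle I foresee is careful bookkeeping of the constants. One must verify that the $r$-dependent constant from Theorem~\ref{thm:convergence} can be replaced by an $r$-independent $K_1$ depending only on the limiting quantities $\theta^*$ and ${\bf g}^* \cdot e_2$. Since $|{\bf g}^*_r|/r \to |{\bf g}^*|$, the ratio $r/|{\bf g}^*_r \cdot e_2|$ is uniformly bounded in $r$; the technical hypothesis $h \leq r^{-q}$ then forces $r$ to scale appropriately with $h$ so that $\theta^*_r$ stays in a fixed compact subset of $(0,\pi)$, allowing the continuous dependence of the Theorem~\ref{thm:convergence} constant on $\theta^*_r$ to be absorbed into a single uniform $K_1$ with precisely the stated dependencies.
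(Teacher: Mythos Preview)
Your proposal is correct and follows essentially the same route as the paper: the same triangle-inequality split, the same invocation of Theorem~\ref{thm:convergence} for the first piece, the same cotangent estimate $|\cot\theta^*_r-\cot\theta^*|=O(r^{-q})$, and the same band decomposition into a good region (exponent $s\wedge 1$) and a thin bad region of measure $O(r^{-q})$ (exponent $s'\wedge 1$) for the second piece. One small correction: the hypothesis $h\le r^{-q}$ is not used to keep $\theta^*_r$ in a compact set (that follows already from $\theta^*_r\to\theta^*$, and the paper handles $K_1$ by taking a maximum over finitely many small $r$ plus a continuity argument for large $r$); rather, $h\le r^{-q}$ is needed in Step~2 to absorb the $O(h)$ discretization error that arises when estimating the \emph{discrete} $L^p$ measure of the bad bands by their continuous area.
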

\begin{proof}
Appendix \ref{app:marz}.
\qed
\end{proof}

\vskip 1mm

\begin{remark} \label{rem:marzLimit}  Theorem \ref{thm:convergence2} implies our claim in objective one, because the bound \eqref{eqn:marzbound} implies that if $\epsilon = rh \rightarrow 0$ and $r \rightarrow \infty$, then $u_h$ converges to $u_{\mbox{m\"arz}}$ in $L^p$.  This does not, however, mean that $r \rightarrow \infty$ is {\em required}.  In Section \ref{sec:kink3main}, we will see that there are many ways for \eqref{eqn:chabuduo} to be satisfied.  We will see examples where:
\begin{itemize}
\item $\|u-u_{\mbox{m\"arz}}\|_p \rightarrow 0$ as $r \rightarrow \infty$, but remains strictly positive for all $r$.
\item $\|u-u_{\mbox{m\"arz}}\|_p > 0$ for all $r<R$ and then $u = u_{\mbox{m\"arz}}$ for all $r \geq R$.
\item $u = u_{\mbox{m\"arz}}$ independent of $r$.
\end{itemize}
Hence the bound \eqref{eqn:marzbound} is not in general tight and $r \rightarrow \infty$ is a sufficient but not necessary condition.  This means that our third objective, which is to argue that when $r$ is a small integer then
$$\|u_h - u\|_p \ll \|u_h - u_{\mbox{m\"arz}}\|_p$$
is not in general true.  However, numerical experiments in Section \ref{sec:MarzIsFar} provide strong numerical evidence that it does hold in many cases.  Finally, objective two is clearly implied by \eqref{eqn:chabuduo}.
\end{remark}

\section{Consequences} \label{sec:consequences}

In this section we apply our analysis, in particular Theorem \ref{thm:convergence}, in order to explain:
\begin{enumerate}
\item The artifacts listed in Section \ref{sec:shellBased} and illustrated in Figures \ref{fig:specialDir}, \ref{fig:taleOfTwo}, and \ref{fig:artifacts2}.
\item The differences between the direct form of Algorithm 1 and its semi-implicit extension in terms of coping with these artifacts.
\end{enumerate}
We begin by considering kinking artifacts, and then go on to blur.  Shocks and cut off isophotes are not considered as they have already been adequately explained elsewhere \cite{Marz2011,Marz2015}.

\subsection{Kinking} \label{sec:kinking}

We begin by exploring kinking artifacts.  First we prove a fundamental distinction between the direct form of Algorithm 1 and the semi-implicit extension.  Then we go on to look at the limiting transport directions associated with three specific methods:  coherence transport, Guidefill, and semi-implicit Guidefill.

\begin{figure}
\centering
\begin{tabular}{cc}
\subfloat[Repeat of the experiment in Figure \ref{fig:specialDir} for Guidefill with $r=3$, with $\mbox{Conv}(-\bar{b}^-_r)$ superimposed.]{\includegraphics[width=.45\linewidth]{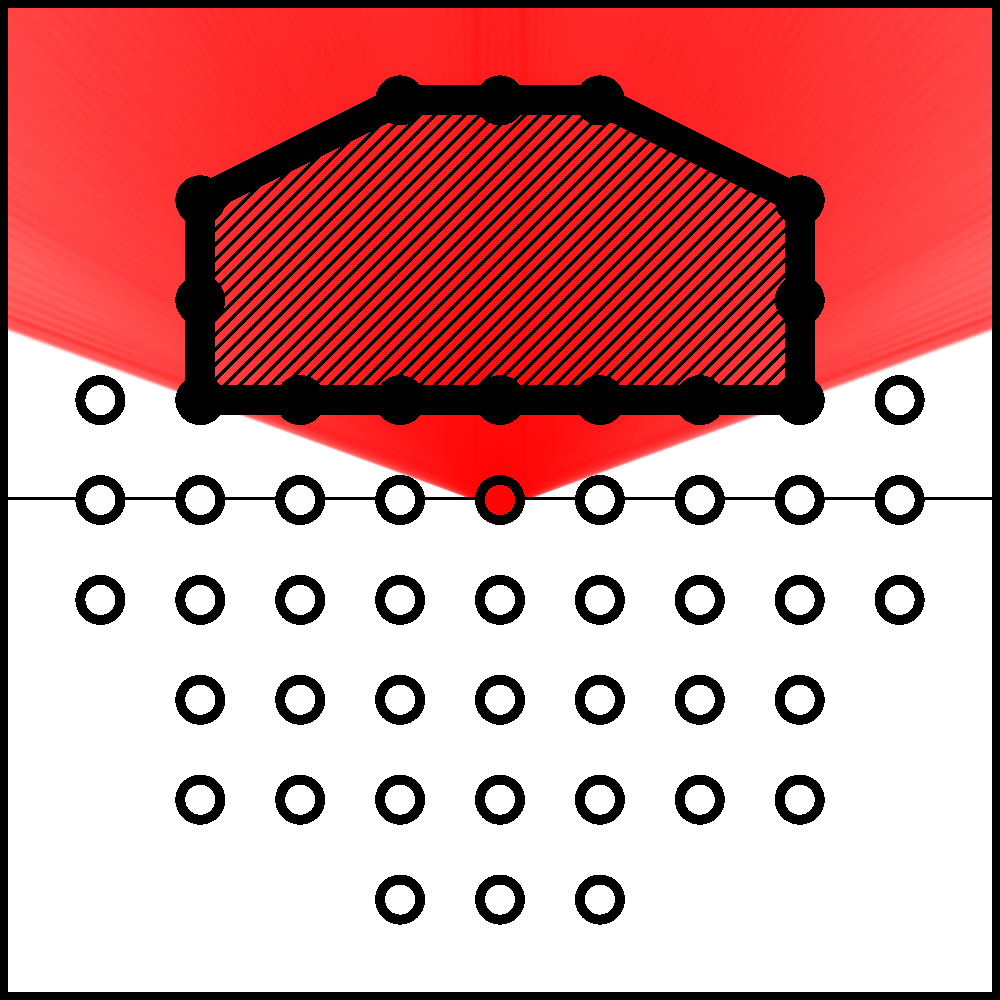}} & 
\subfloat[Repeat of the experiment in Figure \ref{fig:specialDir} for semi-implicit Guidefill with $r=3$, with $\mbox{Conv}(-\bar{b}^0_r)$ superimposed.]{\includegraphics[width=.45\linewidth]{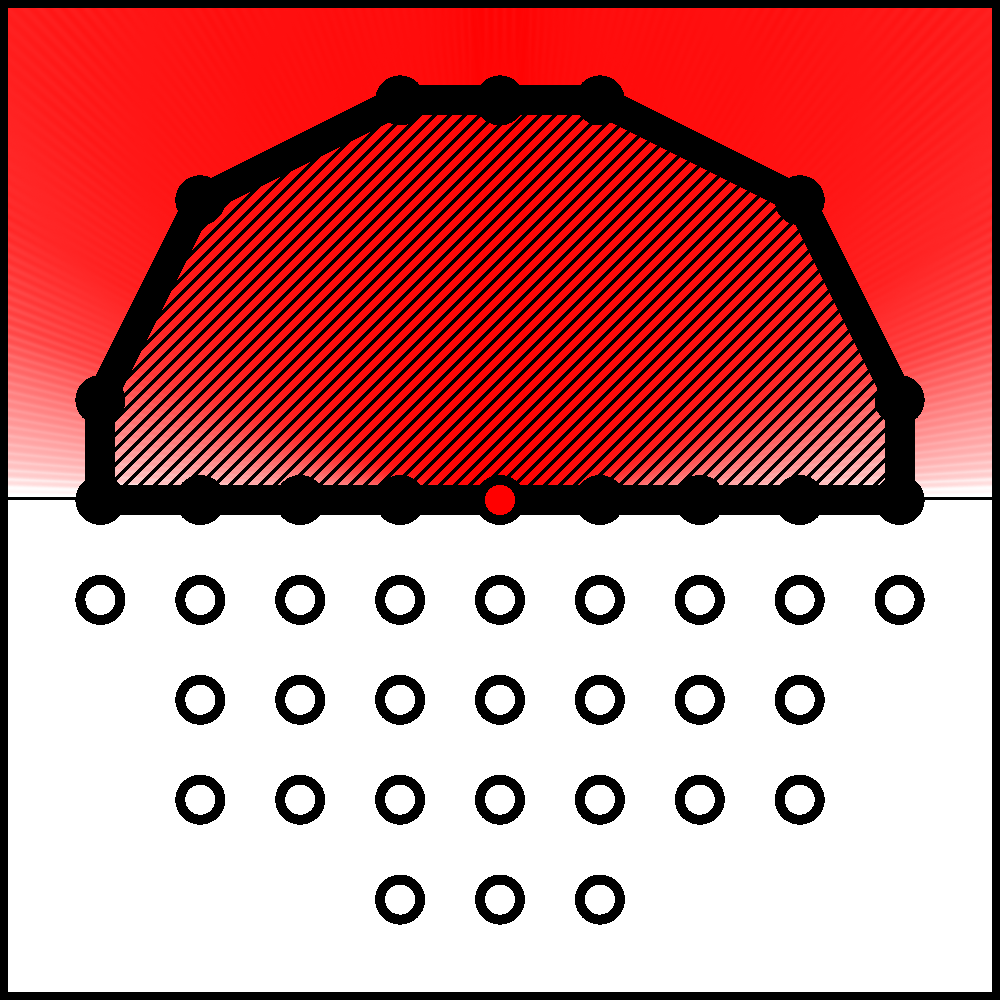}} \\
\end{tabular}
\caption{{\bf Convex hulls and transport directions:}  An immediate corollary of Theorem \ref{thm:convergence} is that the limiting transport direction ${\bf g}^*_r$ lies in the convex hull of $\bar{b}^-_r$ for the direct form of Algorithm 1, and the convex hull of $\bar{b}^0_r$ for the semi-implicit extension.  Since $\mbox{Conv}(\bar{b}^-_r)$ contains only a cone of directions while $\mbox{Conv}(\bar{b}^0_r)$ contains the full arc from $0$ to $\pi$, this explains why Guidefill (and more generally {\em all} direct methods of the general form given by Algorithm 1) fails for shallow angles, while this is not true of the semi-implicit extension.  To illustrate this, we have repeated the experiment from Figure \ref{fig:specialDir} using Guidefill (a) and semi-implicit Guidefill (b), while superimposing the sets $\mbox{Conv}(-\bar{b}^-_r)$ and $\mbox{Conv}(-\bar{b}^0_r)$ respectively (we have negated the sets for convenience which we can do since ${\bf g}^*_r$ and $-{\bf g}^*_r$ define the same transport equation). Note that in the case of semi-implicit Guidefill, the lines appear to be getting fainter as $\theta({\bf g}^*_r) \rightarrow 0$ and $\theta({\bf g}^*_r) \rightarrow \pi$.  This likely has two causes.  For one, the angular footprint of the red dot in Figure \ref{fig:specialDir}(a), i.e. the width of the dot multiplied by $\sin\theta^*_r$ (which tells you how much of the dot is ``visible'' from direction $\theta^*_r$) is going to zero.  Secondly, we will see in Section \ref{sec:blur} (see in particular Figure \ref{fig:blurangle}) that semi-implicit Guidefill has blur artifacts that become arbitrarily bad as $\theta({\bf g}^*_r) \rightarrow 0$ or $\theta({\bf g}^*_r) \rightarrow \pi$.  Nevertheless, Figure \ref{fig:shallowAngles} demonstrates that semi-implict Guidefill can successfully extrapolate lines with $\theta$ very close to $0$ without serious issues of faintness.  All parameters are the same as in Figure \ref{fig:specialDir}.}
\label{fig:convexHull}
\end{figure}

\vskip 2mm

\subsubsection{The direct form Algorithm 1 kinks, the semi-implicit extension need not}  \label{sec:kinkingAndConvex}
An immediate corollary of Theorem \ref{thm:convergence} is that the direct form of Algorithm 1 is limited in terms of what directions it can extrapolate along, while this is not true of the semi-implicit extension.  This follows from the geometric interpretation of the limiting transport direction ${\bf g}^*_r$ as the center of mass of the stencil $a^*_r$ with the respect to the stencil weights $\{ w_r({\bf 0},{\bf y})/W : {\bf y} \in a^*_r \}$.  Specifically, since the original weights $\{ \frac{w_r({\bf 0},{\bf y})}{W} \}_{{\bf y} \in a^*_r}$ are non-negative and sum to one, by the preservation of total mass \eqref{eqn:totalMass} and inheritance of non-negativity \eqref{eqn:nonNegativeInheritance} properties of equivalent weights, the same is true of the equivalent weights $\{ \frac{\tilde{w}_r({\bf 0},{\bf y})}{W} \}_{{\bf y} \in \mbox{Supp}(a^*_r)}$.  Hence
$${\bf g}^*_r = \sum_{{\bf y} \in \mbox{Supp}(a^*_r)} \frac{\tilde{w}_r({\bf 0},{\bf y})}{W} {\bf y} \in \mbox{Conv}(\mbox{Supp}(a^*_r)),$$
where $\mbox{Conv}(\mbox{Supp}(a^*_r))$ denotes the convex hull of $\mbox{Supp}(a^*_r)$.  But by \eqref{eqn:universalContainment} we have
\begin{equation}
\mbox{Supp}(a^*_r)  \subseteq \begin{cases} \bar{b}^-_r & \mbox{ if we use the direct form of Algorithm 1} \\ 
\bar{b}^0_r & \mbox{ if we use the semi-implicit extension} \end{cases}
\end{equation}
where $\bar{b}^0_r$ and $\bar{b}^-_r$ are the dilated sets defined by \eqref{eqn:bb0r} and \eqref{eqn:bbmr} respectively.  It follows that ${\bf g}^*_r$ has to lie in the convex hull of $\bar{b}^-_r$ if the direct form of Algorithm 1 is used, or the convex hull of $\bar{b}^0_r$ if the semi-implicit form is used instead.  That is
\begin{equation} \label{eqn:convHull}
{\bf g}^*_r   \in \begin{cases} \mbox{Conv}(\bar{b}^-_r) & \mbox{ if we use the direct form of Algorithm 1.} \\ 
\mbox{Conv}(\bar{b}^0_r) & \mbox{ if we the semi-implicit extension.} \end{cases}
\end{equation}
This in turn immediately implies that if we use the direct form of Algorithm 1, then $\theta({\bf g}^*_r)$ is restricted to the cone
\begin{equation} \label{eqn:cone}
\theta_c \leq \theta({\bf g}^*_r) \leq \pi - \theta_c
\end{equation}
where 
\begin{equation} \label{eqn:thetaC}
\theta_c = \arcsin\left(\frac{1}{r}\right)
\end{equation}
 is the smallest possible angle one can make given the restriction \eqref{eqn:convHull}, while for the semi-implict method we have
\begin{equation} \label{eqn:unrestricted}
0 <  \theta({\bf g}^*_r) < \pi
\end{equation}
where the angles $\theta = 0$ and $\theta = \pi$ are omitted not because of the restriction \eqref{eqn:convHull}, but because Theorem \ref{thm:convergence} does not apply in either case (indeed, the continuum limit $u$ given by \eqref{eqn:transport} is not even defined).  The cone \eqref{eqn:cone} is exactly what we saw in Figure \ref{fig:specialDir}(c).  At the same time, the lack of restrictions implied by \eqref{eqn:unrestricted} is consistent with our experience in Figures \ref{fig:shallowAngles} and \ref{fig:rates}, where semi-implicit Guidefill was able to successfully extrapolate lines making an angle as small as $1^{\circ}$ with the inpainting domain boundary for $r=3$, well under the critical angle $\theta_c= \arcsin(\frac{1}{3}) \approx 19.5^{\circ}$ where standard Guidefill breaks down.  Figure \ref{fig:convexHull} illustrates this result.

\subsubsection{ Limiting Transport Directions for Coherence Transport, Guidefill, and semi-implicit Guidefill} \label{sec:kink3main}

\begin{figure}
\centering
\begin{tabular}{cc}
\subfloat[Coherence transport, $r=3$.]{\includegraphics[width=.43\linewidth]{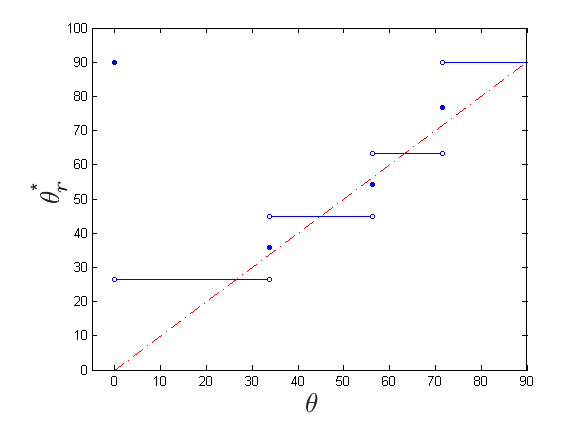}} & 
\subfloat[Coherence transport, $r=5$.]{\includegraphics[width=.43\linewidth]{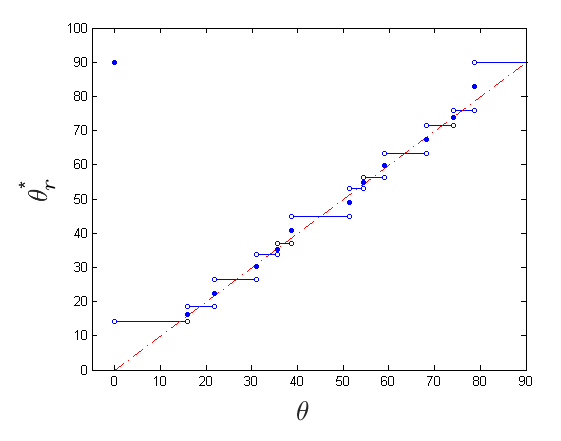}} \\
\subfloat[Guidefill, $r=3$.]{\includegraphics[width=.43\linewidth]{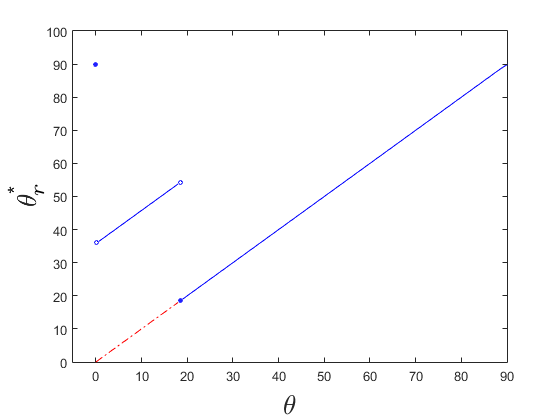}} & 
\subfloat[Guidefill, $r=5$.]{\includegraphics[width=.43\linewidth]{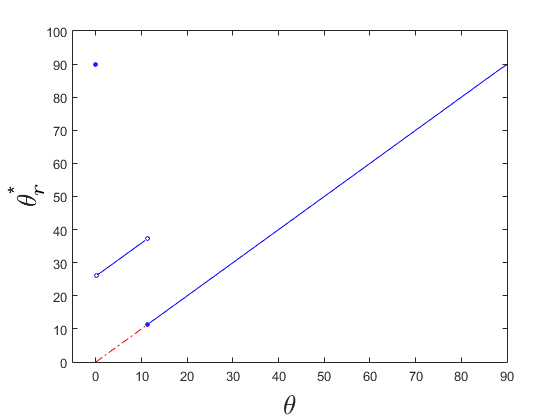}} \\
\subfloat[Semi-implicit Guidefill $r=3$.]{\includegraphics[width=.43\linewidth]{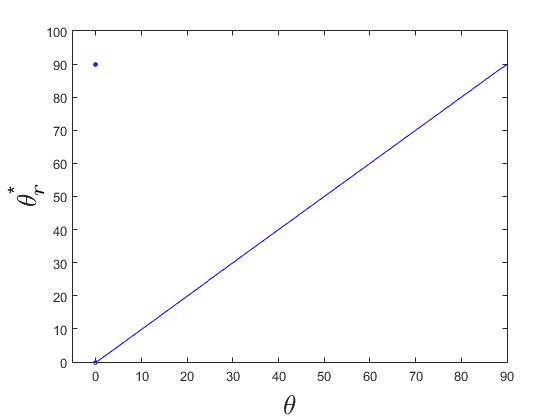}} & 
\subfloat[Semi-implicit Guidefill $r=5$.]{\includegraphics[width=.43\linewidth]{semiImplictTheory3b2.png}} \\
\end{tabular}
\caption{{\bf Limiting transport direction $\theta^*_r=\theta({\bf g}^*_r)$ as a function of the guideance direction $\theta = \theta({\bf g})$ for the three main methods:}  The theoretical limiting curves $\theta^*_r =F(\theta)$ obtained for coherence transport (a)-(b), Guidefill (c)-(d), and semi-implicit Guidefill (e)-(f).  The desired curve $F(\theta)=\theta$ is highlighted in red.  Coherence transport exhibits a staircase pattern with $F(\theta) \neq \theta$ for all but finitely many $\theta$, Guidefill obeys $F(\theta)=\theta$ for all $\theta \in (\theta_c, \pi - \theta_c)$ where $\theta_c$ is the critical angle \eqref{eqn:thetaC}, and semi-implicit Guidefill obeys $F(\theta) = \theta$ for all $\theta \neq 0$.}
\label{fig:limitingCurves}
\end{figure}

Here we derive formulas for the limiting transport directions of coherence transport, Guidefill, and semi-implicit Guidefill in the limit as $\mu \rightarrow \infty$.  For convenience, in this section we rescale the limiting transport direction ${\bf g}^*_r$ by a factor of $W = \sum_{{\bf y} \in a^*_r} w_r({\bf 0},{\bf y})$, giving
\begin{equation} \label{eqn:rescaled}
{\bf g}^*_r =  \sum_{{\bf y} \in a^*_r} w_r({\bf 0},{\bf y}){\bf y}.
\end{equation}
This is more convenient to work with and defines the same transport equation.  In fact, the ability to rescale ${\bf g}^*_r$ by any $\lambda \neq 0$ without changing the underlying transport equation is a tool that we will use repeatedly in our arguments throughout this section.  To make the dependence of the transport direction on $\mu$ explicit, within this section we write this direction as ${\bf g}^*_{r,\mu}$, and define
$${\bf g}^*_r := \lim_{\mu \rightarrow \infty} {\bf g}^*_{r,\mu}.$$
In order to resolve the ambiguity that ${\bf g}^*_r$ and $-{\bf g}^*_r$ define the same transport equation, we will frequently be taking angles modulo $\pi$.  This is reflected in the definition we have selected for $\theta({\bf v})$ - recall that this refers to the angle that the line $L_{{\bf v}}=\{ \lambda {\bf v} : \lambda \in \field{R} \}$ makes with the $x$-axis, and hence always lies in the range $[0,\pi)$.  We define ${\bf g}=(\cos\theta,\sin\theta)$, $\theta^*_{r,\mu} :=\theta({\bf g}^*_{r,\mu})$, $\theta^*_r :=\theta({\bf g}^*_r)$ and consider the function $\theta^*_r = F(\theta)$.  Our main concern is to determine when $\theta^*_r = \theta$ (no kinking) and when $\theta^*_r \neq \theta$ (kinking).  Results are illustrated in Figure \ref{fig:limitingCurves}.

\vskip 2mm

\noindent { \bf Coherence Transport.}  We have already stated in Section \ref{sec:shellBased} and illustrated in Figure \ref{fig:specialDir} that coherence transport in the limit $\mu \rightarrow \infty$ kinks unless ${\bf g}=\lambda {\bf v}$ for some ${\bf v} \in B_{\epsilon,h}({\bf 0})$ and some $\lambda \in \field{R}$.  Under the assumptions of Section \ref{sec:symmetry}, a more precise statement is that coherence transport  in the limit $\mu \rightarrow \infty$ fails to kink if and only if ${\bf g}=\lambda {\bf v}$ for some ${\bf v} \in b^-_r$ and $\lambda \in \field{R}$.  Before we prove this, let us make some definitions.  We define the {\em angular spectrum} of $b^-_r$ as
\begin{equation} \label{eqn:spectrumbr}
\Theta(b^-_r) := \{ \theta_1,\theta_2,\ldots,\theta_n : \mbox{ each } \theta_i = \theta({\bf y}) \mbox{ for some } {\bf y} \in b^-_r\}.
\end{equation}
In other words, $\Theta(b^-_r)$ is the collection of angles modulo $\pi$ that are representable using members of $b^-_r$ (or which elements of the projective space $\field{RP}^1$ are representable, to be more mathematically precise), see Figure \ref{fig:angularSpectrum} for an illustration.  The angular spectrum may be similarly defined in the obvious way for more general sets, and we do so in Appendix \ref{app:angSpect}.  We will show that for coherence transport, when $0 < \theta < \pi$, we either have 
$$\theta^*_r \in \Theta(b^-_r) \quad \mbox{ or } \quad  \theta^*_r = \frac{\theta_i+\theta_{i+1}}{2} \quad \mbox{ for two consecutive }\theta_i, \theta_{i+1} \in \Theta(b^-_r).$$
To begin, note that in this case \eqref{eqn:rescaled} becomes
$${\bf g}^*_{r,\mu} := \sum_{{\bf y} \in b^-_r} e^{-\frac{\mu^2}{2r^2}({\bf y} \cdot {\bf g}^{\perp})^2}\frac{\bf y}{\|{\bf y}\|},$$
where $b^-_r$ is the discrete half ball \eqref{eqn:bmr}.
Denote by $\Psi$ the set of minimizers of $|{\bf y} \cdot {\bf g}^{\perp}|$ for ${\bf y} \in b^-_r$, meaning that $|{\bf y} \cdot {\bf g}^{\perp}|:=\Delta \geq 0$ for all ${\bf y} \in \Psi$ and $|{\bf y} \cdot {\bf g}^{\perp}|>\Delta$ for all ${\bf y} \in b^-_r \backslash \Psi$.  After rescaling by $e^{\frac{\mu^2}{2r^2}\Delta^2}$, the transport direction ${\bf g}^*_{r,\mu}$ becomes
\begin{eqnarray} \label{eqn:coherenceLimitNotExplicit}
{\bf g}^*_{r,\mu} &=& \sum_{{\bf y} \in \Psi} \frac{\bf y}{\|{\bf y}\|}+\sum_{{\bf j} \in b^-_r \backslash \Psi} e^{-\frac{\mu^2}{2r^2}\left\{({\bf y} \cdot {\bf g}^{\perp})^2 - \Delta^2 \right\}}\frac{\bf y}{\|{\bf y}\|} \nonumber \\
& \rightarrow & \sum_{{\bf y} \in \Psi} \frac{\bf y}{\|{\bf y}\|} \qquad \mbox{ as } \mu \rightarrow \infty.
\end{eqnarray}
Note that $|{\bf y} \cdot {\bf g}^{\perp}|$ represents the distance from the point ${\bf y}$ to the line through the origin $L_{\bf g}$.  Thus computing the set $\Psi$ is equivalent to finding the set of points in $b^-_r$ closest to a given line through the origin.  In Appendix \ref{app:angSpect} we prove that as $\theta$ sweeps out an arc from $0$ to $\pi$, for all but finitely many $\theta$ the set $\Psi$ is a singleton, containing a sequence of lone minimizers that we enumerate (in order of occurrence) as ${\bf y}_1, {\bf y}_2,\ldots {\bf y}_{n'}$ (for some finite $n'$).  Now, it turns out that $n'=n$ and moreover for every $\theta_i \in \Theta(b^-_r)$ we have
$$\theta_i = \theta({\bf y}_i)$$
\begin{figure}
\centering \includegraphics[width=.7\linewidth]{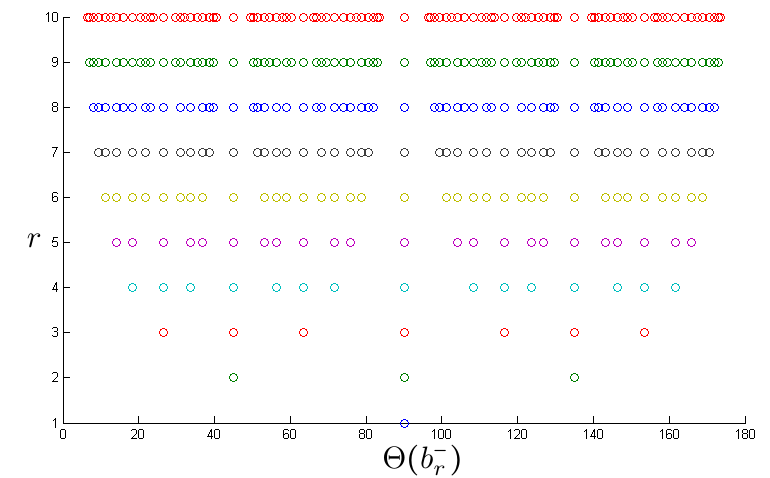}
\caption{{\bf Illustration of the angular spectrum:} The angular spectrum $\Theta(b^-_r)$ tells us which angles (modulo $\pi$), are representable using elements of $b^-_r$.   Here we have illustrated $\Theta(b^-_r)$, measured in degrees, for $r=1,2,\ldots$.  }
\label{fig:angularSpectrum}
\end{figure}
\noindent (Appendix \ref{app:angSpect}, Proposition \ref{prop:correspondence}).  In other words, we have a 1-1 correspondence between (singleton) minimizers of $|{\bf y} \cdot {\bf g}^{\perp}|$ and the angular spectrum $\Theta(b^-_r)$.  Moreover, it can be shown that if $\theta_i < \theta < \theta_{i+1}$ for some $\theta_i, \theta_{i+1} \in \Theta(b^-_r)$, then either $\Psi = \{ {\bf y}_i \}$ (for $\theta$ close to $\theta_i$) or $\Psi = \{ {\bf y}_{i+1} \}$ (for $\theta$ close to $\theta_{i+1}$) or $\Psi = \{{\bf y}_i,  {\bf y}_{i+1} \}$ if $\theta=\theta_{i,i+1}$, where $\theta_{i,i+1} \in (\theta_i,\theta_{i+1})$ is a critical angle given by
$$\theta_{i,i+1} = \theta({\bf y}_i + {\bf y}_{i+1}) \quad \mbox{ for } 1 \leq i \leq n-1.$$
For convenience, we also define $\theta_{0,1}=0$ and $\theta_{n,n+1}=\pi$.  Since one can also prove $\Psi = \{ {\bf y}_1 \}$ for $0:=\theta_{0,1}<\theta<\theta_1$ and $\Psi = \{{\bf y}_n\}$ for $\theta_n < \theta < \theta_{n,n+1}:=\pi$, with this notation we have the general result
$$\Psi=\begin{cases}
\{{\bf y}_i\} & \mbox{ if } \theta_i < \theta < \theta_{i,i+1} \quad \mbox{ for some } i=1,\ldots,n \\
\{{\bf y}_i,{\bf y}_{i+1}\} & \mbox{ if } \theta=\theta_{i,i+1} \quad \mbox{ for some } i=1,\ldots,n-1 \\
\{{\bf y}_{i+1}\} & \mbox{ if } \theta_{i,i+1} < \theta < \theta_{i+1} \quad \mbox{ for some } i=0,\ldots,n-1 \\
\end{cases}$$

\begin{figure}
\centering
\begin{tabular}{cc}
\subfloat[]{\includegraphics[width=.5\linewidth]{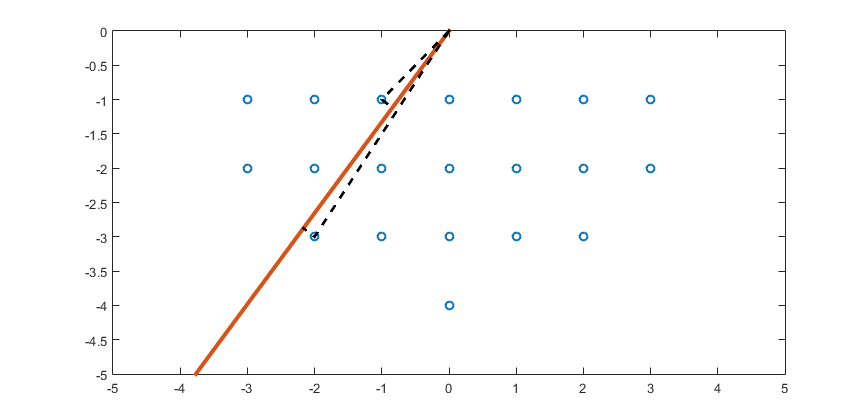}} & 
\subfloat[]{\includegraphics[width=.35\linewidth]{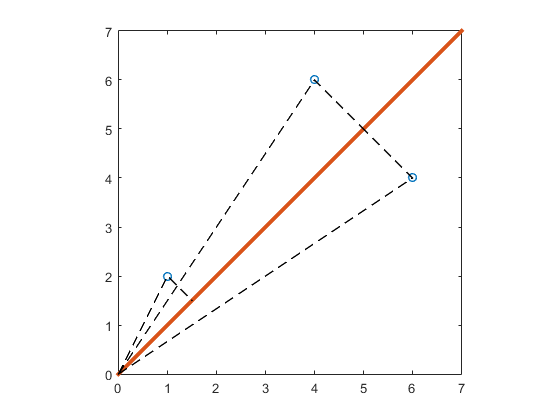}} \\
\end{tabular}
\caption{{\bf Closest points and shallowest angles:}  We claim that the closest point in the set $b^-_r \subset \field{Z}^2$ to a given line $L_{{\bf g}}$ (${\bf g}=(\cos\theta,\sin\theta)$) is always one of the two points casting the shallowest angle with $L_{\bf g}$ on either side, as illustrated in (a) for $\theta = 53^{\circ}$, $r=4$.  This statement does {\em not} hold if $b^-_r$ is replaced with a generic $A \subset \field{Z}^2$, as demonstrated by the counterexample $A=\{(1,2),(4,6),(6,4)\}$ and $\theta = 45^{\circ}$ in (b).}
\label{fig:genericVsNot}
\end{figure}

\noindent (Appendix \ref{app:angSpect}, Proposition \ref{prop:eitherOr} - note that we have carefully excluded $\theta_{0,1}:=0$ and $\theta_{n,n+1}=\pi$ from the middle case).  In words, this means that the element(s) of $b^-_r$ closest to the line $L_{\bf g}$ are also the member(s) of $b^-_r$ that cast the shallowest angles with $L_{\bf g}$ from above and below.  This statement is not true if $b^-_r$ is replaced with a generic subset of $\field{Z}^2$ - see Figure \ref{fig:genericVsNot}.  The remaining cases are $\theta=\theta_i \in \Theta(b^-_r)$ and $\theta=0$.  We deal with the former first - in the first case we have
$$\Psi = \{ {\bf y} \in b^-_r : \theta(y) = \theta_i \},$$
a set containing up to $r$ members, all of which are parallel to each other and to ${\bf g}$.  In order to make these ideas more concrete, Example \ref{example:Psi} gives explicit expressions for $\Theta(b^-_r)$ and $\Psi$ in the case $r=3$.  Also, in Appendix \ref{app:angSpect} Remark \ref{rem:practicalAlg}, we give an algorithm for computing $\Theta(b^-_r)$ and $Y(b^-_r):=\{{\bf y}_1,{\bf y}_2,\ldots,{\bf y}_n\}$ for any $r$.

\vskip 1mm

\begin{example} \label{example:Psi}
When $r=3$ we have
\begin{eqnarray*}
\Theta(b^-_3)&:=&\{\theta_1,\theta_2,\theta_3,\theta_4,\theta_5,\theta_6,\theta_7\}\\
&=&\{ \arctan(1/2), \frac{\pi}{4}, \arctan(2), \frac{\pi}{2}, \frac{\pi}{2}+\arctan(1/2),\frac{3\pi}{4},\frac{\pi}{2}+\arctan(2)\}.
\end{eqnarray*}
For $0 < \theta \leq \frac{\pi}{2}$, (we omit $\frac{\pi}{2} < \theta < \pi$ for brevity) the set of minimizers $\Psi$ is given by
$$\Psi = \begin{cases} 
\{(-2,-1)\}:=\{{\bf y}_1\} & \mbox{ if } 0 < \theta < \theta_{1,2}. \\ 
\{(-2,-1),(-1,-1)\}:=\{{\bf y}_1,{\bf y}_2\} & \mbox{ if } \theta =\theta_{1,2}. \\ 
\{(-1,-1)\}:=\{{\bf y}_2\} & \mbox{ if } \theta_{1,2}< \theta < \theta_2. \\ 
\{(-1,-1),(-2,-2)\}:=\{{\bf y}_2,2{\bf y}_2\} & \mbox{ if } \theta = \theta_2. \\ 
\{(-1,-1)\}:=\{{\bf y}_2\} & \mbox{ if } \theta_2 < \theta < \theta_{2,3}. \\ 
\{(-1,-1),(-1,-2)\}:=\{{\bf y}_2,{\bf y}_3\} & \mbox{ if } \theta = \theta_{2,3}. \\ 
\{(-1,-2)\}=\{ {\bf y}_3\} & \mbox{ if } \theta_{2,3}< \theta < \theta_{3,4}. \\ 
\{(-1,-2),(0,-1)\}=\{{\bf y}_3,{\bf y}_4 \} & \mbox{ if } \theta =\theta_{3,4}. \\ 
\{(0,-1)\}:=\{{\bf y}_4\} & \mbox{ if } \theta_{3,4}< \theta < \theta_4. \\
\{(0,-1),(0,-2),(0,-3)\}:=\{{\bf y}_4,2{\bf y}_4,3{\bf y}_4\}  & \mbox{ if } \theta = \theta_4.
\end{cases}$$
where $\theta_{0,1}=0$, $\theta_{1,2}=\arctan(2/3)$, $\theta_{2,3}=\arctan(3/2)$, $\theta_{3,4}=\arctan(3)$.
\end{example}

\vskip 1mm

\noindent When $\Psi$ is a singleton set, that is $\Psi = \{ {\bf y}_i \}$ for some $1 \leq i \leq n$, \eqref{eqn:coherenceLimitNotExplicit} becomes ${\bf g}^*_r = \frac{{\bf y}_i}{\|{\bf y}_i\|}$ and we have
$$\theta({\bf g}^*_r) = \theta\left(\frac{{\bf y}_i}{\|{\bf y}_i\|}\right) = \theta_i$$
On the other hand, if $\theta = \theta_i \in \Theta(b^-_r)$, ${\bf g}^*_r$ is a sum of vectors all parallel to one another and to ${\bf g}$, and we get $\theta^*_i = \theta_i$ again.  This is the lone case in which coherence transport doesn't kink.  Next, at the transition angles $\theta_{i,i+1}$ where $\Psi = \{ {\bf y}_i, {\bf y}_{i+1} \}$, we have ${\bf g}^*_r = \frac{{\bf y}_i}{\|{\bf y}_i\|}+\frac{{\bf y}_{i+1}}{\|{\bf y}_{i+1}\|}$, so that
$$\theta({\bf g}^*_r) = \theta\left(\frac{{\bf y}_i}{\|{\bf y}_i\|}+\frac{{\bf y}_{i+1}}{\|{\bf y}_{i+1}\|}\right) = \frac{\theta_i+\theta_{i+1}}{2},$$
where we have used the observation (proved in Appendix \ref{app:angSpect}, Observation \ref{obs:unitVec}) that $\theta({\bf v}+{\bf w}) = \frac{\theta({\bf v}) + \theta({\bf w})}{2}$ holds for all unit vectors ${\bf v}, {\bf w} \in S^1$.  Finally, suppose $\theta = 0$.  Here we have $\Psi = \{ (i,-1) : -r+1 \leq i \leq r-1 \}$, giving
$${\bf g}^*_r = e_2 \qquad \mbox{for } \theta = 0$$
after rescaling.  In summary, for $\theta \in [0,\pi)$ we then have
\begin{equation} \label{eqn:stepFunc}
\theta^*_r = \begin{cases} \frac{\pi}{2} & \mbox{ if } \theta = 0 \\ \theta_i & \mbox{ if } \theta_{i-1,i} < \theta < \theta_{i,i+1} \mbox{ for some } i = 1,\ldots n \\ \frac{\theta_i+\theta_{i+1}}{2} & \mbox{ if } \theta = \theta_{i,i+1} \mbox{ for some } i = 1,\ldots n \end{cases}
\end{equation}
See Figure \ref{fig:limitingCurves}(a)-(b) for an illustration of \eqref{eqn:stepFunc} for $r=3$ and $r=5$.  In Appendix \ref{app:angSpect}, Corollary \ref{corollary:generalizedCT} we prove a generalization of this result that applies if, for example, the discrete ball used by coherence transport is replaced with a discrete square.

\begin{remark} \label{rem:irrationalAngles}  Since $\theta^*_r = \theta$ if and only if $\theta \in \Theta(b^-_r)$, and since the latter is generated by vectors with integer coordinates, it follows that for fixed $\theta = \arctan(x)$, we have $\theta^*_r = \theta$ for all $r$ sufficiently large if $x$ is rational and $\theta^*_r \neq \theta$ for all $r$ if $x$ is irrational.  In light of \eqref{eqn:marzNoKink} and Theorem \ref{thm:convergence2}, this means that in the former case we have $u=u_{\mbox{m\"arz}}$ for all $r$ sufficiently large, but in the latter case $\|u -u_{\mbox{m\"arz}}\|_p \rightarrow 0$ for all $1\leq p < \infty$, but we always have $u \neq u_{\mbox{m\"arz}}$ for generic boundary data $u_0$.

\end{remark}

\vskip 2mm

\noindent { \bf Guidefill.}  In this case \eqref{eqn:rescaled} becomes
$${\bf g}^*_{r,\mu} := \sum_{{\bf y} \in \tilde{b}^-_r} e^{-\frac{\mu^2}{2r^2}({\bf y} \cdot {\bf g}^{\perp})^2}\frac{\bf y}{\|{\bf y}\|},$$
where $\tilde{b}^-_r$ is given by \eqref{eqn:bmr}.  It is useful to patition $\tilde{b}^-_r$ into a disjoint union of sets $\ell^-_k$ such at each $\ell^-_k$ is the collection of points in $\tilde{b}^-_r$ distance $k$ from the line $L_{\bf g} = \{ \lambda {\bf g} : \lambda \in \field{R}\}$, that is
$$\ell^-_k =\{ n{\bf g}+m{\bf g}^{\perp} \in \tilde{b}^-_r : m = \pm k\}.$$
Since the weights \eqref{eqn:weight} exponentially decay with distance from $L_{\bf g}$, then so long as $\ell^-_{0}$ is non-empty, we expect the contribution of the other $\ell^-_k$ to vanish.  For Guidefill, $\ell^-_0$ can be explicitly parametrized as 
\begin{eqnarray*}
\ell^-_0 &:=& \{ n{\bf g} \in \tilde{b}^-_r : n{\bf g} \cdot e_2 \leq -1\} \\
&=&\{ n{\bf g} : n=-r,\ldots,-\lceil \csc\theta\rceil \}.
\end{eqnarray*}
For $\ell^-_0$ to be non-empty, we need $\lceil \csc\theta\rceil \leq r$, which occurs only if $\theta_c \leq \theta \leq \pi -\theta_c$, where $\theta_c$ is the same critical angle \eqref{eqn:thetaC} from Section \ref{sec:kinkingAndConvex}.  If $\ell^-_0 \neq \emptyset$, we have  
\begin{eqnarray*}
{\bf g}^*_{r,\mu} &=& \sum_{{\bf y} \in \ell^-_0} \frac{\bf y}{\|{\bf y}\|}+\sum_{k=1}^r \sum_{{\bf y} \in \ell^-_k} e^{-\frac{\mu^2}{2r^2}k^2 \|{\bf g}\|^2 }\frac{\bf y}{\|{\bf y}\|} \\
& \rightarrow & \sum_{{\bf y} \in \ell^-_0} \frac{\bf y}{\|{\bf y}\|} \qquad \mbox{ as } \mu \rightarrow \infty. \\
& = & \sum_{n=-r}^{-\lceil \csc\theta\rceil } n {\bf g} \\
& = & {\bf g} \qquad \mbox{ after rescaling. }
\end{eqnarray*}
Hence we transport in the correct direction in this case.

One the other hand, if $\ell^-_0 = \emptyset$ but $\theta \neq 0$, then the weights \eqref{eqn:weight} concentrate all their mass into $\ell^-_1$, with all other $\ell^-_k$ vanishing.  Unfortunately, unlike $\ell^0$, the set $\ell^-_1$ is not parallel to ${\bf g}$ so we expect kinking in this case.  In general $\ell^-_1$ can consist of two parallel components on either side of $\ell^-_0$.  But in this case, since $\ell^-_0$ lies entirely above the line $y=-1$, we know $\ell^-_1$ consists of just one component and we can write it explicitly as
$$\ell^-_1 =\{ n{\bf g}-\mbox{sgn}(\cos\theta){\bf g}^{\perp} : n=-r+1,\ldots,-1 \}$$
(remember that ${\bf g}^{\perp}$ denotes the {\em counterclockwise} rotation of ${\bf g}$ by $90^{\circ}$).  After rescaling by $e^{\frac{\mu^2}{2r^2}\|{\bf g}\|^2}$ we obtain
\begin{eqnarray*}
{\bf g}^*_{r,\mu} &=& \sum_{{\bf y} \in \ell^-_1} \frac{\bf y}{\|{\bf y}\|}+\sum_{k=2}^r \sum_{{\bf y} \in \ell^-_k} e^{-\frac{\mu^2}{2r^2}(k^2-1) \|{\bf g}\|^2 }\frac{\bf y}{\|{\bf y}\|} \\
& \rightarrow & \sum_{{\bf y} \in \ell^-_1} \frac{\bf y}{\|{\bf y}\|} \qquad \mbox{ as } \mu \rightarrow \infty. \\
& = & \left( \sum_{n=-r+1}^{-1} \frac{n}{\sqrt{1+n^2}} \right){\bf g} -\mbox{sgn}(\cos\theta) \left( \sum_{n=-r+1}^{-1} \frac{1}{\sqrt{1+n^2}} \right){\bf g}^{\perp} \\
& = & {\bf g}+\mbox{sgn} (\cos\theta)\alpha_r {\bf g}^{\perp} \qquad \mbox{ after rescaling. }
\end{eqnarray*}
where
$$\alpha_r = \frac{\sum_{n=1}^{r-1} \frac{1}{\sqrt{1+n^2}}}{\sum_{n=1}^{r-1} \frac{n}{\sqrt{1+n^2}}}.$$
Finally, if $\theta=0$ we have $\tilde{b}^-_r = b^-_r$ and we obtain ${\bf g}^*_r = e_2$ as for coherence transport.  Defining $\Delta\theta_r = \arctan(\alpha_r)$, for $\theta \in [0,\pi)$ we obtain
\begin{equation} \label{eqn:guidefill}
\theta^*_r = \begin{cases} \frac{\pi}{2} & \mbox{ if } \theta = 0 \\ 
\theta+\Delta\theta_r & \mbox{ if } 0 < \theta < \theta_c \\ 
\theta & \mbox{ if } \theta_c \leq \theta \leq \pi-\theta_c \\  
\theta-\Delta\theta_r & \mbox{ if } \pi- \theta_c < \theta < \pi. \end{cases}
\end{equation}
In other words, aside from exceptional case $\theta=0$, we have $\theta_r^*=\theta$ for the ``well behaved'' range of values $\theta_c \leq \theta \leq \pi-\theta_c$, but $\theta_r^*$ jumps by a constant angle $\Delta\theta_r$ near $\theta = 0$ and $\theta=\pi$.  The first few values of $\Delta\theta_r$ are $\Delta\theta_3 \approx 35.8^{\circ}$, $\Delta\theta_4 \approx 30.0^{\circ}$, $\Delta\theta_5 \approx 25.9^{\circ}$.  See Figure \ref{fig:limitingCurves}(c)-(d) for an illustration of \eqref{eqn:guidefill} for $r=3$ and $r=5$.  

\vskip 1mm

\begin{remark} \label{rem:GuidefillAllDifferent}
Since $\theta_c = \arcsin(1/r) \rightarrow 0$ as $r \rightarrow \infty$, it follows that for each fixed $\theta \in (0,\pi)$ we have $\theta^*_r = \theta$ and hence $u=u_{\mbox{m\"arz}}$ for all $r$ sufficiently large.  On the other hand, since $\theta_c > 0$, $\alpha_r>0$ for all $r$, we never have $\theta^*_r = \theta$ independent of $\theta$ for any fixed $r$.  There are always angles we can't reach.
\end{remark}

\vskip 2mm

\noindent { \bf Semi-implicit Guidefill.}  The analysis of semi-implicit Guidefill is the same as for Guidefill except that the set $\tilde{b}^-_r$ is replaced by $\tilde{b}^0_r$.  Defining $\ell^0_k$  as the collection of points in $\tilde{b}^0_r$ distance $k$ from the line $L_{\bf g} = \{ \lambda {\bf g} : \lambda \in \field{R}\}$ as before, we find that in this case
$$\ell^0_0 := \{ n{\bf g} \in \tilde{b}_r : n {\bf g} \cdot e_2 \leq 0\}$$
is {\em never} empty.  In fact, for $0 \leq \theta < \pi$ we have
$$\ell^0_0 = \begin{cases} \{ n{\bf g} : -r \leq n \leq -1\} & \mbox{ if } 0<\theta<\pi \\ \{ n{\bf g} : -r \leq n \leq r, n \neq 0\} & \mbox{ if } \theta = 0. \end{cases}$$
If $\theta > 0$, we proceed as before and find
\begin{eqnarray*}
{\bf g}^*_{r,\mu} &=& \sum_{{\bf y} \in \ell^0_0} \frac{\bf y}{\|{\bf y}\|}+\sum_{k=1}^r \sum_{{\bf y} \in \ell^0_k} e^{-\frac{\mu^2}{2r^2}k^2 \|{\bf g}\|^2 }\frac{\bf y}{\|{\bf y}\|} \\
& \rightarrow & \sum_{{\bf y} \in \ell^0_0} \frac{\bf y}{\|{\bf y}\|} \qquad \mbox{ as } \mu \rightarrow \infty. \\
& = & \sum_{n=-r}^{-1} n {\bf g} \\
& = & {\bf g} \qquad \mbox{ after rescaling. }
\end{eqnarray*}
However, if $\theta=0$, this argument doesn't work because the elements of $\ell^0_0$ all cancel each other out.  In fact, in this case we have
$${\bf g}^*_{r,\mu} = \underbrace{\sum_{{\bf y} \in \ell^0_0} \frac{\bf y}{\|{\bf y}\|}}_{=0}+\underbrace{\sum_{{\bf y} \in b^-_r} e^{-\frac{\mu^2}{2r^2}({\bf y} \cdot {\bf g}^{\perp})^2}\frac{\bf y}{\|{\bf y}\|}}_{\text{the ${\bf g}^*_{r,\mu}$ from coherence transport.}}.$$
Hence, in this case ${\bf g}^*_r = e_2$ yet again, just like for Guidefill and coherence transport.  In general, for $0 \leq \theta < \pi$, we have 
\begin{equation} \label{eqn:semiImplicitGuidefill}
\theta^*_r = \begin{cases} \frac{\pi}{2} & \mbox{ if } \theta = 0 \\ \theta & \mbox{ if } 0 < \theta < \pi \end{cases}
\end{equation}
In other words, semi-implicit Guidefill kinks only if ${\bf g}$ is exactly parallel to boundary of the inpainting domain.  See Figure \ref{fig:limitingCurves}(e)-(f) for an illustration of \eqref{eqn:semiImplicitGuidefill} for $r=3$ and $r=5$ (the curves are of course the same, since \eqref{eqn:semiImplicitGuidefill} is independent of $r$).

\vskip 1mm

\begin{remark} \label{rem:semiImplicitGuidefillAllSame}
In contrast to Guidefill and coherence transport, \eqref{eqn:semiImplicitGuidefill} tells us that for semi-implicit Guidefill in the limit $\mu \rightarrow \infty$ we have $\theta^*_r = \theta$ for all $\theta \in (0,\pi)$, independent of $r$.  This is in fact exactly the same prediction \eqref{eqn:marzNoKink} (albeit under stronger simplifying assumptions) that M\"arz and Bornemann obtained for coherence transport in their own continuum limit $u_{\mbox{m\"arz}}$ as $\mu \rightarrow \infty$ \eqref{eqn:marzNoKink}.  So in this case we have $u=u_{\mbox{m\"arz}}$ independent of $r$.  We have in some sense come full circle - the original predictions of \cite{Marz2007} for coherence transport under their high resolution and vanishing viscosity limit are the same as ours for semi-implicit Guidefill under our fixed ratio limit.
\end{remark}

\subsection{Asymptotic limit and blur} \label{sec:blur}

In this section we utilize the connection between Algorithm 1 and stopped random walks to explore the origins of blur artifacts.  The main idea is that the continuum limit studied in Theorem \ref{thm:convergence} - while useful for studying kinking artifacts - is inadequate for studying blur.  For the latter, one instead needs to consider an {\em asymptotic limit} where $h$ is very small but still non-zero.  This is accomplished by leveraging a central limit theorem for the type of stopped random walk relevant to us, which states that our stopped random walk (or more precisely, its x-coordinate) is converging in distribution to a Gaussian as $h \rightarrow 0$.  Since we have the identity
\begin{equation} \label{eqn:blurKernel}
u_h({\bf x}) = \sum_{{\bf y} \in \mathcal U_h} \rho_{{\bf X}_{\tau }}({\bf y}) u_0({\bf y}),
\end{equation}
this suggests that $u_h$ can be related to a mollified version of $u_0$, with a Gaussian mollifier.  This idea is made precise in Conjecture \ref{conj:blur}, and is supported by numerical experiments (Figures \ref{fig:wholeStory} and \ref{fig:blurwin}).  However, because the central limit theorem we utilize gives no rates of convergence, Conjecture \ref{conj:blur} remains a conjecture - its rigorous proof will be the subject of future work.  Before elaborating on these ideas, we first provide some motivation with a closer look at our $L^p$ convergence rates from Theorem \ref{thm:convergence}, and a case in which they can be tightened - what we call {\em degenerate} stencils.  If Conjecture \ref{conj:blur} is true, then degenerate stencils give a sufficient condition under which blur artifacts can be avoided.  However, they also tend to increase kinking artifacts, so there is an apparent trade-off (recall from Figure \ref{fig:taleOfTwo} that coherence transport kinks but doesn't blur, while Guidefill blurs but doesn't kink).

\begin{figure}
\centering
\begin{tabular}{ccc}
\subfloat[{Original inpainting problem.  $D=[0,1]^2$ is $200 \times 200$px and shown in yellow.}]{\includegraphics[width=.3\linewidth]{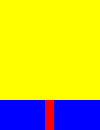}} &
\subfloat[Inpainting with Guidefill, $\mu=100$, $r=3$, ${\bf g}=e_2$.]{\includegraphics[width=.3\linewidth]{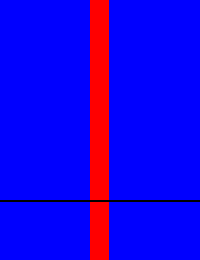}} & 
\subfloat[Inpainting with Guidefill, $\mu=100$, $r=3$, ${\bf g}=e_1$.]{\includegraphics[width=.3\linewidth]{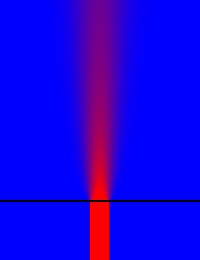}} \\
\end{tabular}
\begin{tabular}{cc}
\subfloat[Slice of the result from (c) at $y=0.1$, compared with the theoretical curve from Conjecture \ref{conj:blur}.]{\includegraphics[width=.45\linewidth]{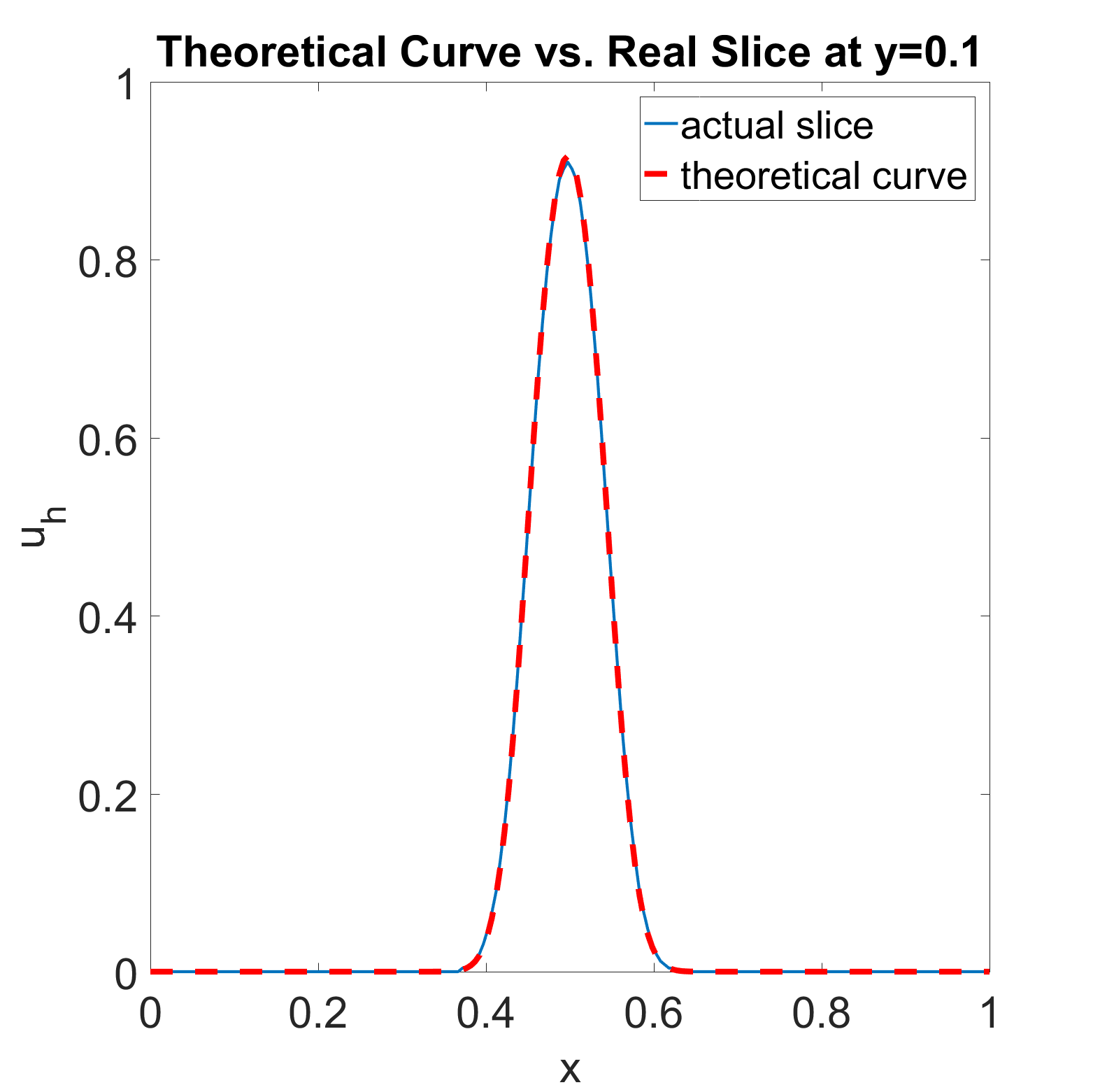}} & 
\subfloat[Slice of the result from (c) at $y=1.0$, compared with the theoretical curve from Conjecture \ref{conj:blur}.]{\includegraphics[width=.45\linewidth]{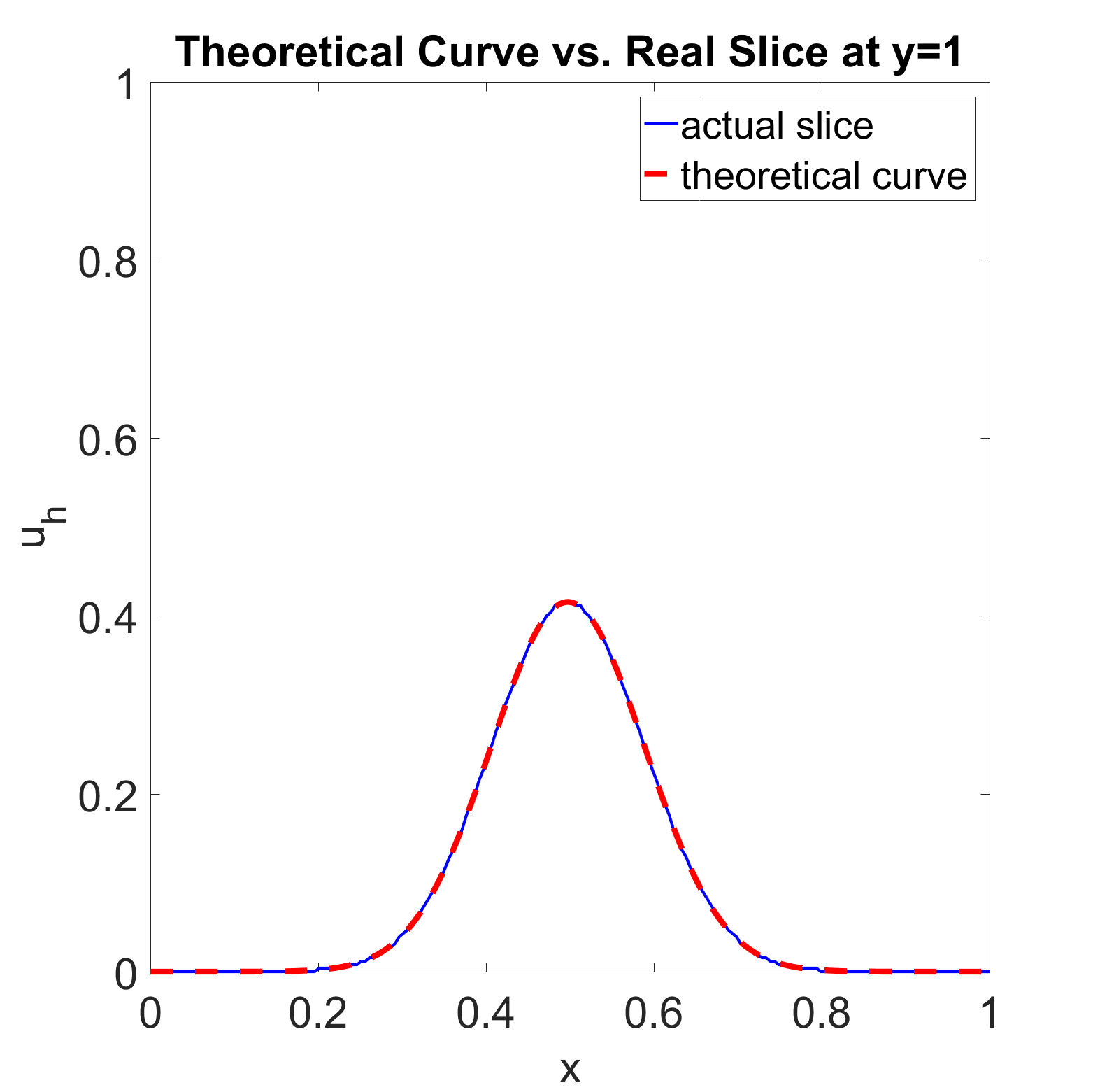}} \\
\end{tabular}
\caption{{\bf Transport is not the whole story:}  In this experiment, the problem shown in (a) of inpainting $D=[0,1]^2$ ($200 \times 200$px) given data on $[0,1] \times [-0.3,0)$ is solved using Guidefill with $\mu = 100$, $r=3$ and ${\bf g}=(\cos\theta,\sin\theta)$ for $\theta = \frac{\pi}{2}$ (b) and $\theta = 0$ (c).  Theorem \ref{thm:convergence} and the subsequent analysis of the limiting transport direction for Guidefill in Section \ref{sec:kink3main} (Figure \ref{fig:limitingCurves} and \eqref{eqn:guidefill}) suggests that these two situations are in some sense the same, as they are both converging to the same transport equation \eqref{eqn:transport} with the same transport direction ${\bf g}^*_r = e_2$.  However, one case yields a clean extrapolation while the other suffers from heavy blur.  This means the continuum limit of Algorithm 1 presented in Theorem \ref{thm:convergence}, while useful, is inadequate for studying blur artifacts.  Instead of a continuum limit, Conjecture \ref{conj:blur} proposes an {\em asymptotic} limit where $h$ is very small but non-zero.  As we illustrate here, this limit is able to make quantitative predictions that are in excellent agreement with blur artifacts measured in practice.  In (c)-(d) we compare horizontal slices of (c) at $y=0.1$ and $y=1$ respectively with the predictions of Conjecture \ref{conj:blur}.  In this case the predictions are accurate to within an error of $1/255$, the minimum increment of an image on our machine.}
\label{fig:wholeStory}
\end{figure}

\vskip 2mm

\subsubsection{Degerate stencils and $L^p$ convergence rates}  The first clue that blur artifacts might be present in $u_h$ comes from Theorem \ref{thm:convergence}, where we see convergence to $u$ in $L^p$ for every $p \in [1,\infty)$ but not necessarily in $L^{\infty}$.  This is consistent with blur that is present for every $h > 0$ but becomes less and less pronounced as $ h \rightarrow 0$.  A clue that these artifacts might be in some cases be avoidable comes from the fact that if the stencil weights put all of their mass into a single ${\bf y} \in a^*_r$ (we call such weights {\em degenerate}), as occurs in coherence transport with ${\bf g}=(\cos\theta,\sin\theta)$ for all but finitely many $\theta$ (Section \ref{sec:kink3main}), then the bounds in Theorem \ref{thm:convergence} can be tightened.  This is because the random walk ${\bf X}_{\tau}$ has become deterministic.  All terms related to variances or probabilities are killed off, and the bound immediately changes to 
\begin{equation} \label{eqn:degenerateBound}
\|u-u_h\|_p \leq K (rh)^{s \wedge \left( s' + \frac{1}{p}\right) \wedge 1}.
\end{equation}
In some cases, dependent on the boundary data $u_0 : \mathcal U_h \rightarrow \field{R}^d$, it is even possible to prove convergence in $L^{\infty}$ for $u_0$ with jump discontinuities.

\begin{figure}
\centering
\begin{tabular}{cc}
\subfloat[]{\includegraphics[width=.43\linewidth]{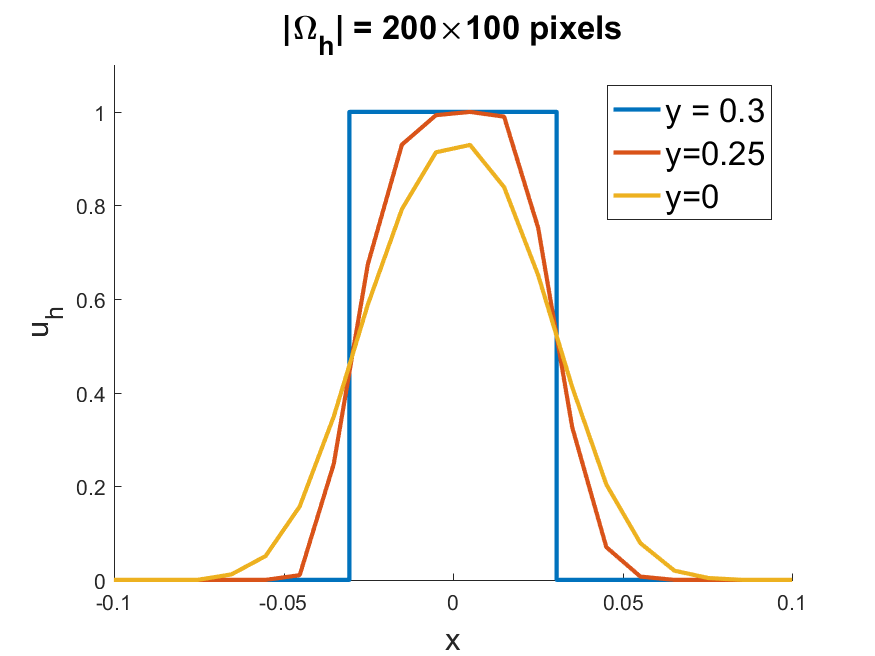}} & 
\subfloat[]{\includegraphics[width=.43\linewidth]{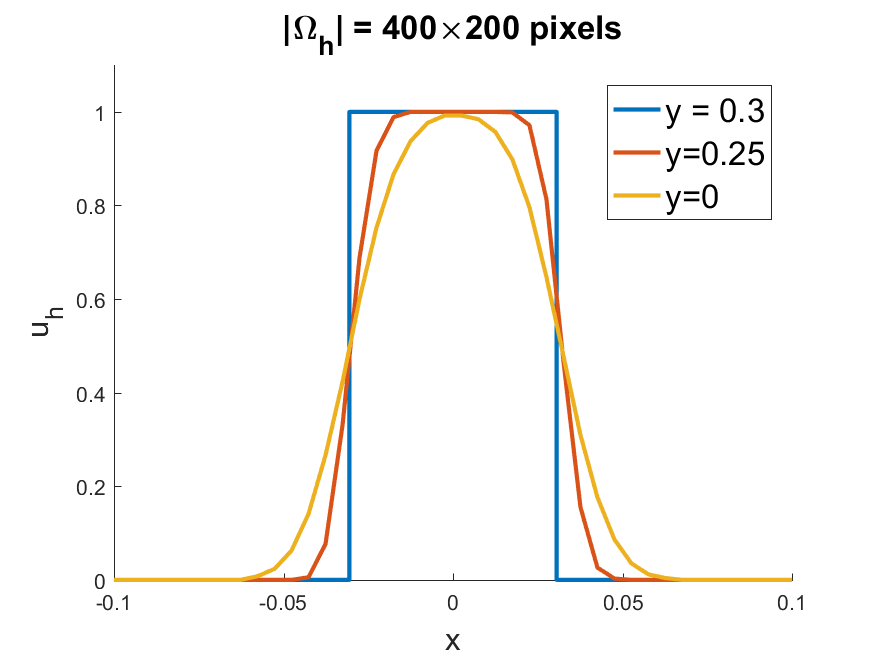}} \\
\subfloat[]{\includegraphics[width=.43\linewidth]{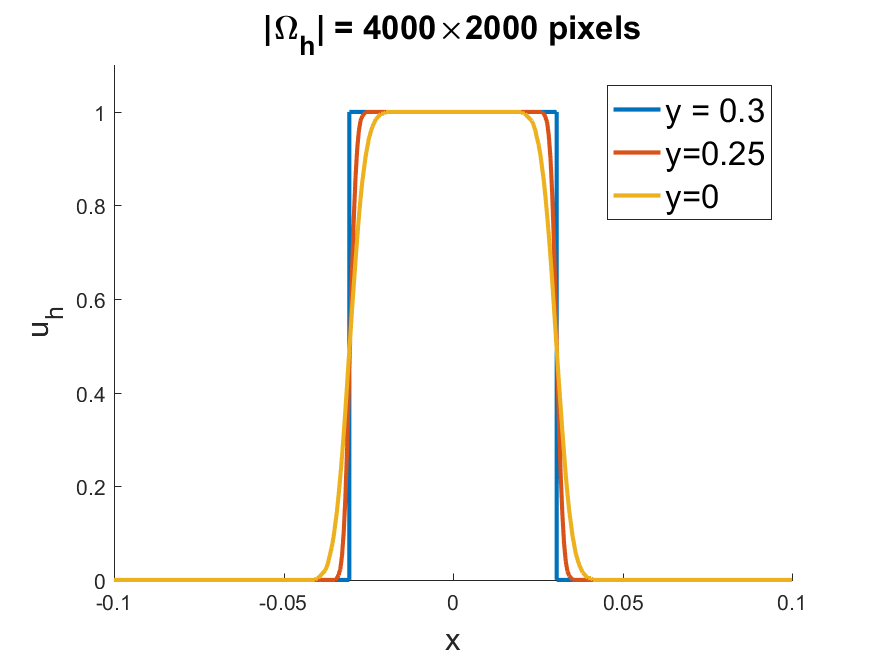}} & 
\subfloat[]{\includegraphics[width=.43\linewidth]{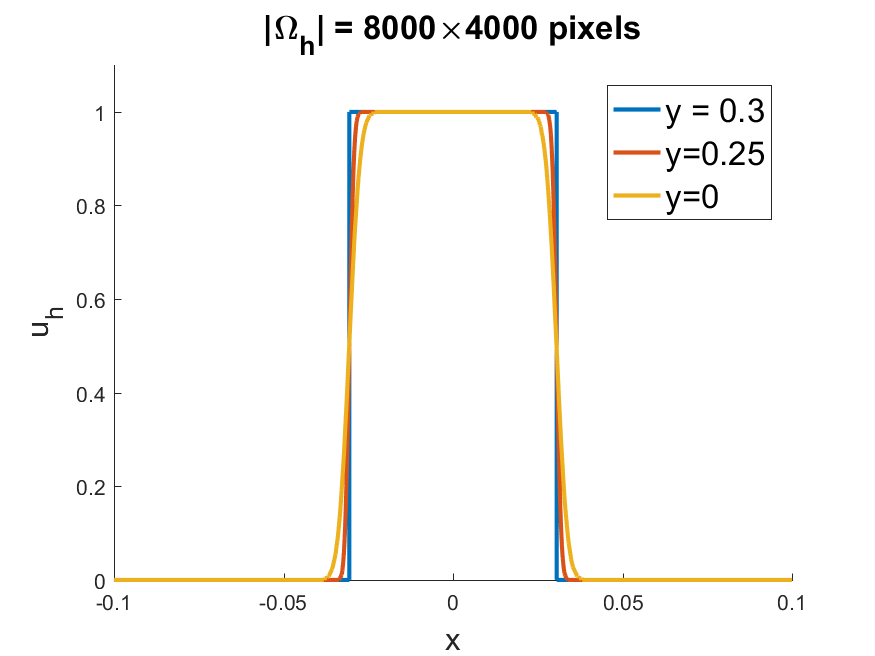}} \\
\end{tabular}
\caption{{\bf Signal degradation and $L^p$ convergence of Guidefill:}  The continuum problem of inpainting the line $\tan(73^{\circ}) - 0.1 \leq y \leq \tan(73^{\circ}) + 0.1$ with image domain $\Omega=[-1,1] \times [-0.5,0.5]$ and inpainting domain $D = [-0.8,0.8] \times [-0.3,0.3]$ is rendered at a variety of resolutions and inpainted each time using Guidefill.  Examining cross-sections of $u_h$ at $y=0.3$ (on the boundary of $D_h$), $y=0.25$ (just inside), and $y=0$ (in the middle of $D_h$) we notice a gradual deterioration of the initially sharp signal.  
If Conjecture \ref{conj:blur} is correct, then this deterioration is to be expected, as $u_h({\bf x})$ is related to a mollified version of $u_0$ with a Gaussian mollifier $g_{\sigma(h)}$.  This does not contradict our $L^p$ convergence results in Theorem \ref{thm:convergence}, since $\sigma(h) \rightarrow 0$ as $h \rightarrow 0$ (it does, however, shed some light on why Theorem \ref{thm:convergence} can establish convergence in $L^p$ for every $p<\infty$ but not in $L^{\infty}$ in general).  Here we see directly how decreasing $h$ leads to less and less loss of signal.  Another perspective is that since Guidefill is based on iterated bilinear interpolation, we should expect this effect as iterated bilinear interpolation is known to lead to signal degradation \cite[Sec. 5]{VariationalGradient}.  However, despite this we have less degradation in higher resolution images, even though we have applied more bilinear interpolation operations.}
\label{fig:degradation}
\end{figure}

\vskip 2mm

\subsubsection{An asymptotic limit for Algorithm 1}  The ideas in this section rely on the book \cite{gut2009stopped}, which covers the asymptotic distribution functions of certain stopped random walks.  Recall that we have the identity \eqref{eqn:blurKernel}, where $\rho_{{\bf X}_{\tau }}$ is the probability density function of the stopped random walk ${\bf X}_{\tau} ={\bf x} + h \sum_{i=1}^j {\bf Z}_i$.  The steps $\{{\bf Z}_i\} = \{(V_i,W_i)\}$ are i.i.d. and take values in $\mbox{Supp}(a^*_r) \subseteq \bar{b}^-_r$ (if we use the direct version of Algorithm 1) or $\mbox{Supp}(a^*_r) \subseteq \bar{b}^0_r$ (if we use the semi-implicit extension) with probability density
\begin{equation} \label{eqn:density}
P({\bf Z}_i = {\bf y}) = \frac{\tilde{w}_r({\bf 0},{\bf y})}{W},
\end{equation}
where $\tilde{w}_r$ are the equivalent weights defined in Section \ref{sec:fiction}.  Denote the mean of ${\bf Z}_i := (V_i,W_i)$ by $(\mu_x,\mu_y)$ and let $\tau$ given by \eqref{eqn:tau} be the first passage time through $y=0$.  The bound $\mu_y < 0$ is guaranteed (otherwise, the transport equation  \eqref{eqn:transport} is not well posed), and hence ${\bf X}_{\tau}$ has negative drift in the $y$-direction.  As we have already stated in Section \ref{sec:continuumLimit1}, this type of random walk is well understood \cite[Chapter 4]{gut2009stopped}, \cite{GutAndJanson1983}, \cite{gut1974}, \cite{GUT1974115}.  The way to understand blur artifacts is to recognize that $\rho_{{\bf X}_{\tau }}$ is a mollifier of sorts and moreover, because these types of stopped random walks obey a central limit theorem \cite[Chapter 4, Theorem 2.3]{gut2009stopped}, $\rho_{{\bf X}_{\tau }}$ is converging in distribution to a Gaussian mollifier.  In fact, if Conjecture \ref{conj:blur} is correct, then the precise form of mollification is identical to the definition of discrete mollifaction presented in applications such as \cite[Section 3]{DiscM}, in the context of stabilizing noisy data for the purposes of numerical differentiation, and in \cite{MollBook} for more general applications.  The mollifier is also equivalent to the one proposed in \cite[Section 2]{DiscM}.

\begin{figure}
\centering
\begin{tabular}{cc}
\subfloat[Original inpainting problem ($D_h$ in yellow).]{\includegraphics[width=.43\linewidth]{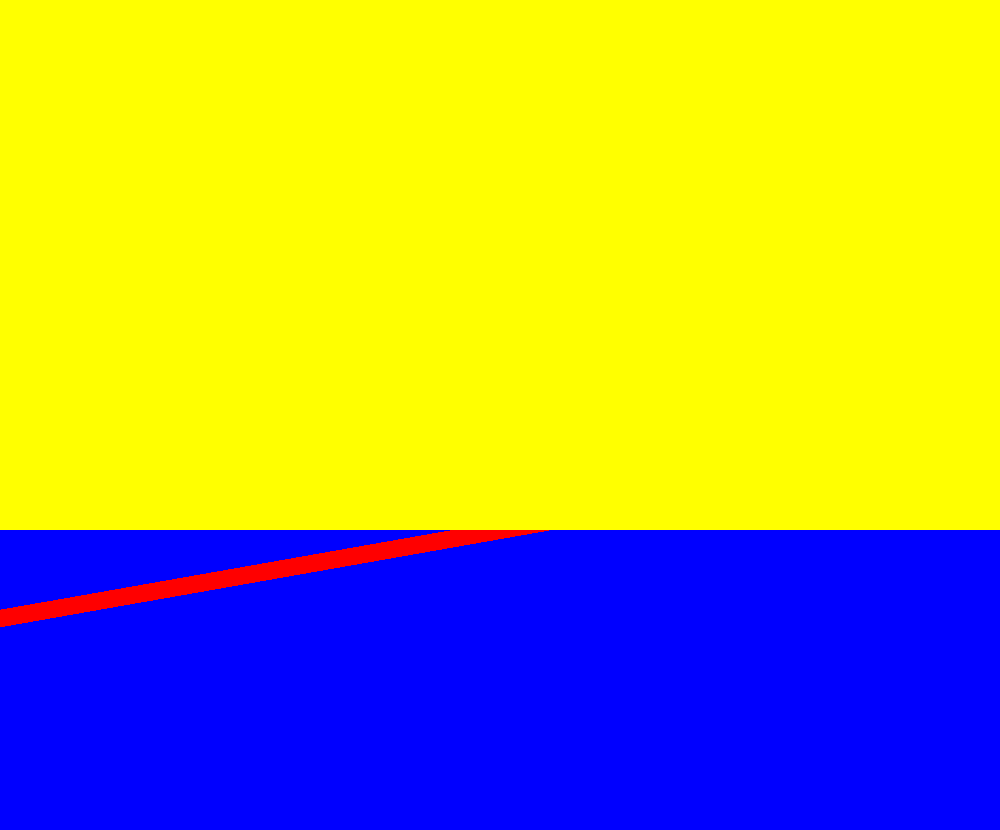}} & 
\subfloat[Inpainted using semi-implicit Guidefill with periodic boundary conditions.]{\includegraphics[width=.43\linewidth]{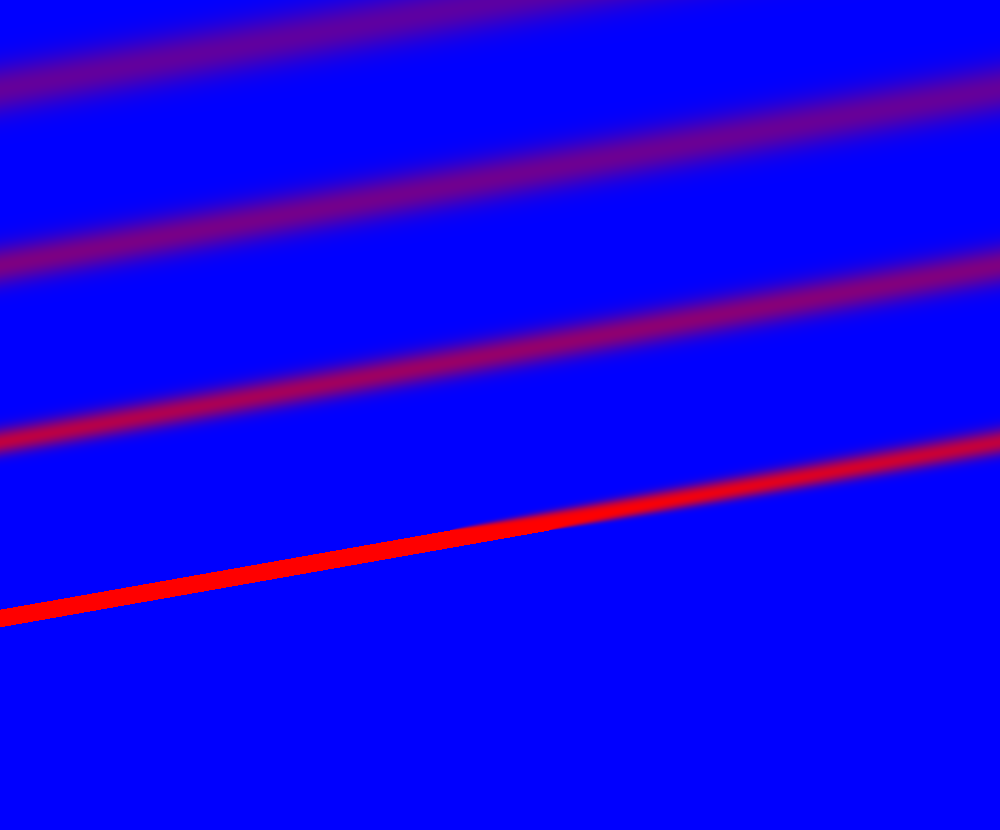}} \\
\subfloat[Boundary data at $y=0$.]{\includegraphics[width=.48\linewidth]{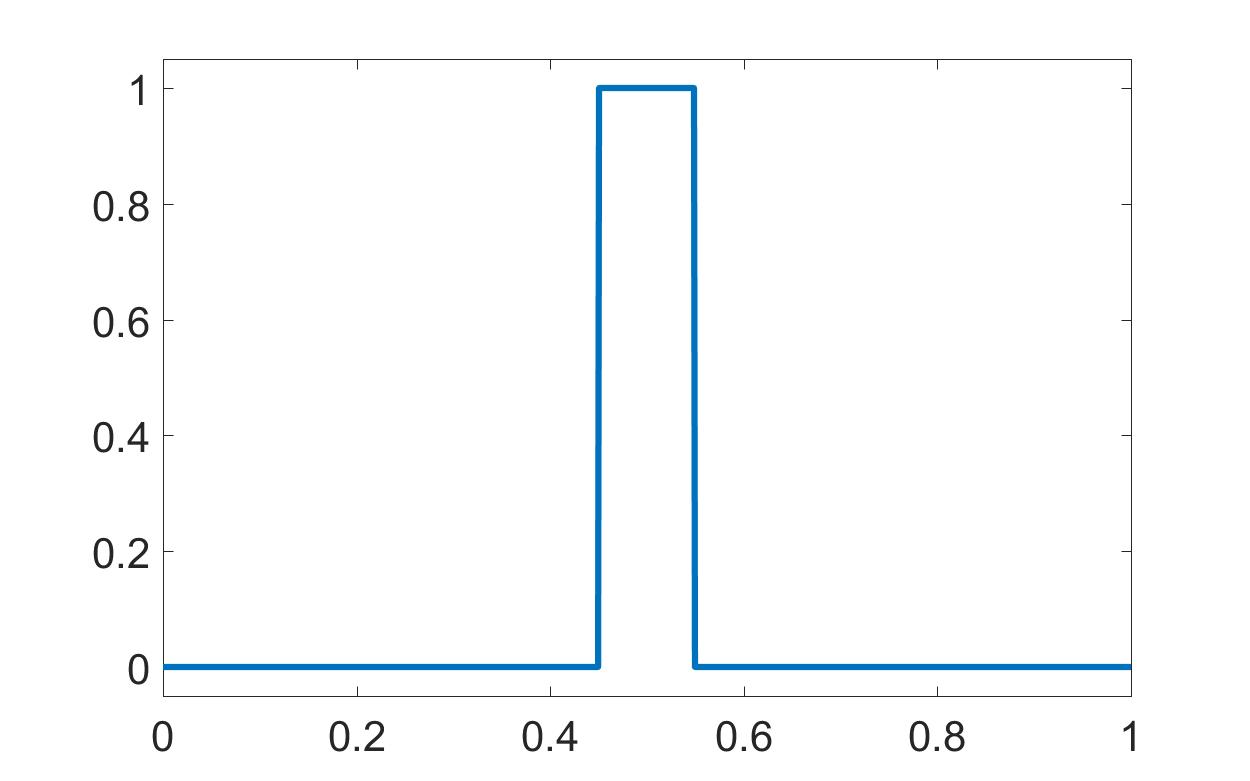}} & 
\subfloat[$y = \tan(10^{\circ}) \approx 0.18$.]{\includegraphics[width=.48\linewidth]{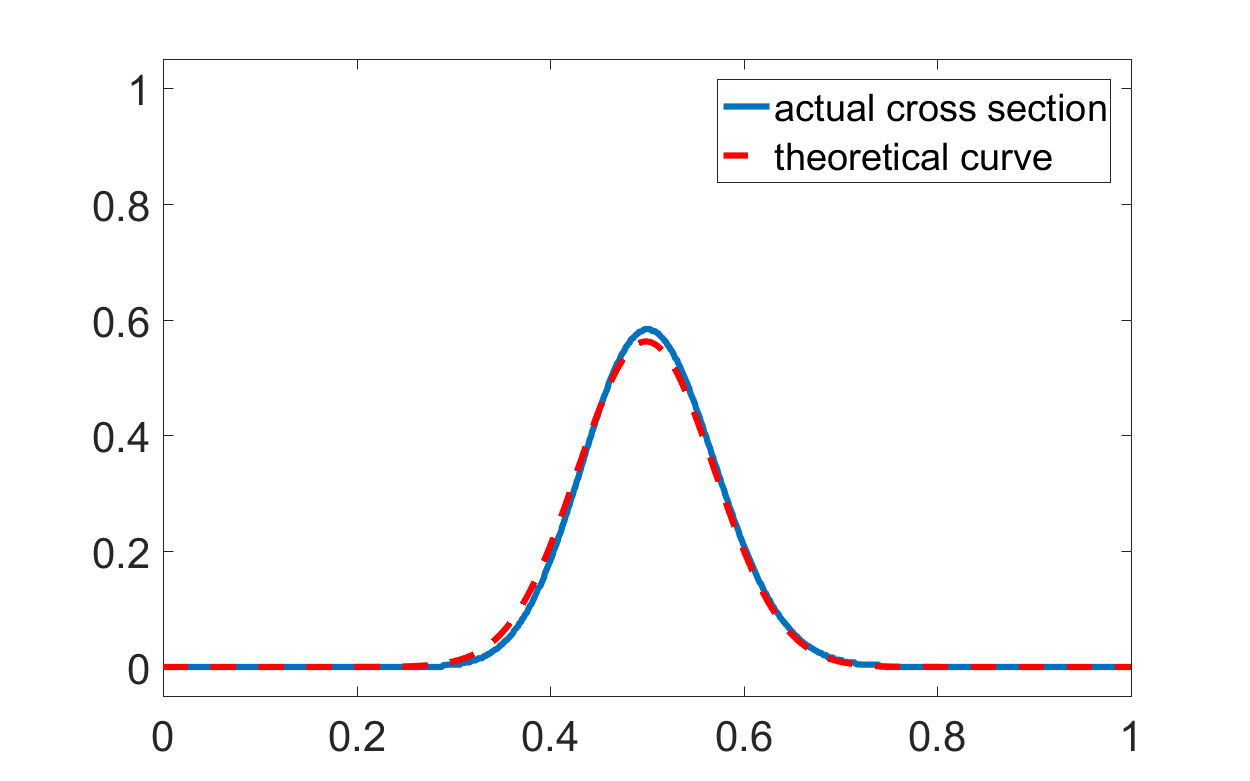}} \\
\subfloat[$y = 2\tan(10^{\circ}) \approx 0.35$.]{\includegraphics[width=.48\linewidth]{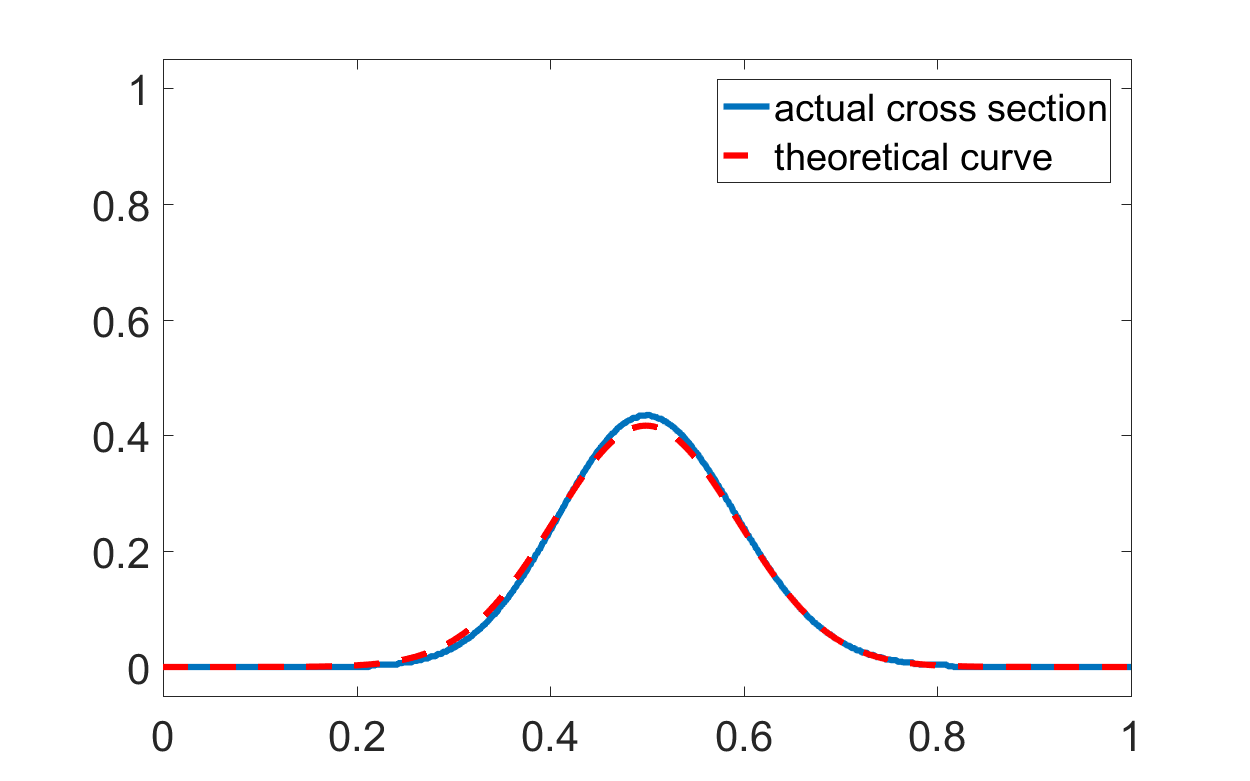}} & 
\subfloat[$y=3\tan(10^{\circ}) \approx 0.53$.]{\includegraphics[width=.48\linewidth]{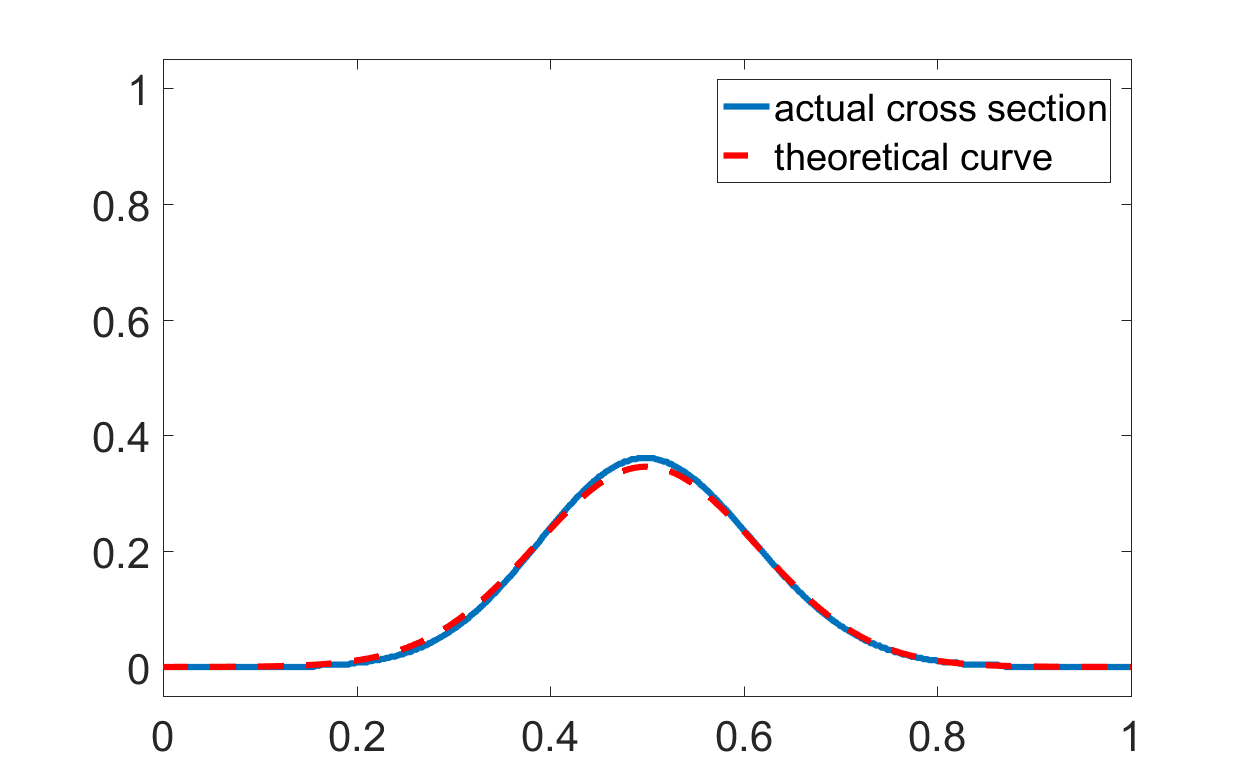}} \\
\end{tabular}
\caption{{\bf Numerical evidence for Conjecture \ref{conj:blur}:} Similarly to Figure \ref{fig:wholeStory}, here we consider again the problem of inpainting $D=[0,1]^2$ given data on $[0,1] \times [-0.3,0)$.  This time $u_0$ consists of a line making a an angle of $10^{\circ}$ with the horizontal, but the slice $u_0(x,0)$ is the same step as in Figure \ref{fig:wholeStory}.   This time $D_h$ is $1000 \times 1000$px.  Inpainting is done using semi-implicit Guidefill ($r=3$, ${\bf g}=(\cos10^{\circ},\sin10^{\circ})$  $\mu = 100$).  In (c) we show the initially sharp signal at $y=0$, while (d)-(f) compare horizontal slices at $y = \tan(10^{\circ}) \approx 0.18$, $y = 2\tan(10^{\circ} \approx 0.35$ and $y=3\tan(10^{\circ}) \approx 0.53$ with the predictions of Conjecture \ref{conj:blur}.  Even though in this case $u_0$ is not independent of $y$, we ignore this and use \eqref{eqn:molification} for our predictions, once again obtaining a very good prediction.  Compared with Figure \ref{fig:wholeStory}, notice that despite the fact that $h$ has decreased by an order of magnitude, our loss of signal is much more rapid.  This is consistent with the divergence $\sigma(h) \rightarrow \infty$ as $\theta \rightarrow 0$ we will encounter later in Figure \ref{fig:blurangle}.} 
\label{fig:blurwin}
\end{figure}

\vskip 2mm

\begin{conjecture} \label{conj:blur}
Let the inpainting domain $D$ and undamaged area $\mathcal U$, as well as their discrete counterparts $D_h$, $\mathcal U_h$ be as described in Section \ref{sec:symmetry}.  Let $u_0 : \mathcal U \rightarrow \field{R}^d$ denote as usual the undamaged portion of the image, and assume $u_0$ is non-negative and bounded, that is, there is an $M > 0$ such that $0 \leq u_0 \leq M$ on $\mathcal U$.  Suppose we inpaint $D_h$ using Algorithm 1 or its semi-implicit extension, and denote the result by $u_h : D_h \rightarrow \field{R}^d$.  Assume the assumptions of Section \ref{sec:symmetry} hold and let $a^*_r$ denote the stencil  of our inpainting method.  Let ${Z_i} = (V_i,W_i)$ taking values in $\mbox{Supp}(a^*_r)$ with mean $(\mu_x,\mu_y)$ denote the increments of the random walk described above, with probability density given by \eqref{eqn:density}.  
Let $\Pi_{\theta^*_r} : D \rightarrow \partial D$ denote the transport operator defined in \eqref{eqn:weak} (recall that that the transport direction ${\bf g}^*_r$ obeys ${\bf g}^*_r=(\mu_x,\mu_y)$).  Then, if $u_0 : \mathcal U \rightarrow \field{R}^d$ is independent of its $y$-coordinate, we have
\begin{equation} \label{eqn:molification}
u_h(x,y) \rightarrow (u_0 \Big | _{y=0}* g_{\sigma(h)})(\Pi_{\theta^*_r}(x,y)) \qquad \mbox{ asymptotically as } h \rightarrow 0
\end{equation}
at a rate that is dependent on $y$ but independent of $x$, where $u_0 \Big | _{y=0}* g_{\sigma(h)}$ denotes the discrete mollification of $u_0 \Big | _{y=0}$ with mollifier $g_{\sigma(h)}$ defined for any $x \in (0,1]$ by
$$\left(u_0 \Big | _{y=0}* g_{\sigma(h)} \right)(x):=\sum_{i=1}^N \left[ \int_{(i-1)h}^{ih} g_{\sigma(h)}(x-t)dt\right] u_0 \Big | _{y=0}(ih).$$
and where $g_{\sigma(h)}$ is a one dimensional Gaussian kernel with $h$-dependent variance given by
\begin{equation} \label{eqn:varianceH}
\sigma(h)^2 = \frac{\gamma^2 y h}{|\mu_y|^3} \quad \mbox{ where } \gamma^2 = \operatorname{Var}(\mu_x W_1 - \mu_y V_1)
\end{equation}
Moreover, for general $u_0$ we have
\begin{equation} \label{eqn:molification2}
u_h(x,y) \rightarrow (\tilde{u}_0* g_{\sigma(h)})(\Pi_{\theta^*_r}(x,y)) \qquad \mbox{ asymptotically as } h \rightarrow 0,
\end{equation}
(again, at a rate dependent on $y$ but independent of $x$) where $\tilde{u}_0 : \partial D_h \rightarrow \field{R}^d$ obeys $\tilde{u}_0(ih) = \sum_{j=-r-1}^0 \alpha_j(x) u_0(ih,jh)$ where for each $(ih,0) \in \partial D_h$ we have $0 \leq \alpha_j(ih) \leq 1$ and $\sum_{j=-r-1}^0 \alpha_j(ih) = 1$ (in other words, for each $ih$, $\tilde{u}_0(ih)$ is a convex combination of $u_0(ih,0)$ and the $r+1$ pixels directly below it). 
\end{conjecture}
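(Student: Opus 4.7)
\vskip 2mm

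\noindent\textbf{Proof proposal.} The plan is to exploit the representation $u_h({\bf x}) = E[u_0({\bf X}_\tau)]$ from \eqref{eqn:reformulation} and establish that, as $h \to 0$ with starting position ${\bf x}=(x_0,y_0)$ held fixed in $D$, the stopped random walk ${\bf X}_\tau = (X_\tau, Y_\tau)$ converges in distribution to a random vector whose $x$-coordinate is Gaussian with mean $\Pi_{\theta^*_r}(x_0,y_0)_x$ and variance $\sigma(h)^2 = \gamma^2 y_0 h / |\mu_y|^3$, and whose $y$-coordinate concentrates near $0$ (with overshoot bounded by $rh$). The intuition is exactly the one sketched in Figure \ref{fig:intuition}: the density $\rho_{{\bf X}_\tau}$ is squeezed between the lines $y=-rh$ and $y=0$, while in the $x$-direction it broadens out as a Gaussian of width $O(\sqrt{h})$.

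First I would invoke the central limit theorem for first passage times of random walks with negative $y$-drift (see \cite[Chapter 4, Theorem 2.3]{gut2009stopped} and the results of Gut \cite{gut1974,GUT1974115}). Rescaling so that the walk has unit step size and the barrier sits at height $y_0/h$, the theorem gives that $(X_\tau - E[X_\tau])/\sqrt{y_0/h}$ converges in distribution to a normal with mean zero and variance $\gamma^2/|\mu_y|^3$, where $\gamma^2 = \operatorname{Var}(\mu_x W_1 - \mu_y V_1)$. Scaling back by $h$ yields $\operatorname{Var}(X_\tau) \approx \sigma(h)^2$, which is exactly the variance claimed in \eqref{eqn:varianceH}. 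Wald's first identity (already used in Stage 1 of the proof of Theorem \ref{thm:convergence}) simultaneously gives $E[X_\tau] = \Pi_{\theta^*_r}(x_0,y_0)_x + O(h)$, so that the Gaussian is centered on the correct transport point.

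Next I would translate the weak convergence of $X_\tau$ to an identity for $E[u_0({\bf X}_\tau)]$. In the $y$-independent case, since the overshoot $|Y_\tau| \leq rh$ is negligible, the expectation reduces (up to an $h$-dependent error) to an integral of $u_0|_{y=0}$ against the density of $X_\tau$. Approximating that density by a Gaussian and noting that $u_0|_{y=0}$ is piecewise constant on pixels (of width $h$) yields precisely the discrete mollification \eqref{eqn:molification}. For general $u_0$, the overshoot $Y_\tau$ distributes mass among the $r+1$ rows just below $y=0$, which is exactly the convex combination $\tilde{u}_0(ih) = \sum_{j=-r-1}^0 \alpha_j(ih) u_0(ih,jh)$ in \eqref{eqn:molification2}; computing $\alpha_j$ requires describing the overshoot distribution of ${\bf X}_\tau$ at $y=0$, which by the renewal theorem has a limit independent of $y_0$.

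The main obstacle is obtaining rates strong enough to support the pointwise statement in \eqref{eqn:molification}. A classical CLT only gives weak convergence, which is insufficient for comparing $u_h$ to a specific mollified expression pointwise; what is actually needed is a \emph{local} central limit theorem for $X_\tau$, or equivalently a Berry-Esseen-type bound on the density of $X_\tau$. Such results exist in the literature but typically require non-lattice or strong non-degeneracy assumptions on the step distribution, which may fail for sparse stencils (e.g.\ nearly degenerate weights as $\mu \to \infty$). A secondary difficulty is that the rate in $h$ is allowed to depend on $y$: near $y=0$ the variance vanishes and the Gaussian approximation degrades, so the asymptotic statement must be read carefully and likely requires $y$ to be bounded away from $0$ on the scale of $h$. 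Making these rate estimates fully rigorous, and identifying the precise class of stencils for which the local CLT applies, is what prevents us from promoting Conjecture \ref{conj:blur} to a theorem in the present work.
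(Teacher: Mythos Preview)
Your proposal is essentially the same heuristic argument the paper gives: the paper's ``Proof Idea'' also starts from $u_h({\bf x})=E[u_0({\bf X}_\tau)]$, invokes \cite[Chapter 4, Theorem 2.3]{gut2009stopped} for the CLT on $X_\tau$, and identifies exactly the same obstruction (the CLT gives no rate, while the number of terms in the sum diverges as $h\to 0$). Both treatments stop at the same point, and for the same reason.

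One small difference worth noting: for the general case \eqref{eqn:molification2}, the paper defines $\tilde u_0$ \emph{exactly} via conditioning,
\[
\tilde u_0(ih) := \sum_{j=-r-1}^0 \rho_{Y_\tau \mid X_\tau}(jh \mid ih)\, u_0(ih,jh), \qquad \alpha_j(ih) := \rho_{Y_\tau \mid X_\tau}(jh \mid ih),
\]
so that the identity $u_h(x,y)=\sum_i \rho_{X_\tau}(ih)\,\tilde u_0(ih)$ is an exact decomposition with no approximation whatsoever at this step. Your appeal to the renewal theorem for the overshoot is a reasonable intuition but is both less direct and slightly off: the $\alpha_j$ are conditional on the landing column $X_\tau=ih$, not the unconditional overshoot law, and the paper's formulation does not require (or assert) that they are independent of $y_0$. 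The paper's decomposition is cleaner here because it isolates all of the remaining difficulty in a single comparison between $\rho_{X_\tau}(ih)$ and $\int_{(i-1)h}^{ih} g_{\sigma(h)}(\hat x - t)\,dt$, which is precisely where a local CLT or Berry--Esseen bound would be needed---as you correctly identify.
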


\noindent{\it Proof Idea:}
Although proving this conjecture is beyond the scope of our current work, we briefly sketch how a proof might go and where the technical challenges arise.  First, note that it suffices to prove claim two, as the first claim is then a special case.  Our first job is to define $\tilde{u}_0$.  To that end, note that
\begin{eqnarray*}
u_h(x,y) & = & \sum_{i=1}^N \sum_{j=-r-1}^0 \rho_{{\bf X}_{\tau}}(ih,jh)u_0(ih,jh) \\
               & = & \sum_{i=1}^N \rho_{X_{\tau}}(ih) \underbrace{\sum_{j=-r-1}^0 \rho_{Y_{\tau} | X_{\tau}}(jh | ih) u_0(ih,jh)}_{:=\tilde{u}_0(ih)}.
\end{eqnarray*}
This definition of $\tilde{u}_0$ obeys the claimed properties since $\alpha_j(ih) := \rho_{Y_{\tau} | X_{\tau}}(jh | ih)$ obeys $0 \leq \alpha_j(ih) \leq 1$ and $\sum_{j=-r-2}^0 \alpha_j(ih)=1$ for all $i=1,\ldots N$ as claimed.  Next, define $\hat{x}(x,y) := \Pi_{\theta^*_r}(x,y)$ for convenience.  Fix $y > 0$ and let $\epsilon > 0$.  Our goal is to show that for $h$ sufficiently small we have
$$|u_h(x,y) - (\tilde{u}_0 * g_{\sigma(h)})(\hat{x}(x,y))| < \epsilon$$
independent of $x$.  To that end, note that
\begin{equation*}
|u_h(x,y) - (\tilde{u}_0 * g_{\sigma(h)})(\hat{x}(x,y))| =  \left|\sum_{i=1}^N \tilde{u}_0(ih) \left[ \rho_{X_{\tau}}(ih) -  \int_{(i-1)h}^{ih} g_{\sigma(h)}(\hat{x}-t)dt \right]\right|.\\
\end{equation*}
It follows from \cite[Chapter 4, Theorem 2.3]{gut2009stopped} that 
$$\rho_{X_{\tau}}(ih) \rightarrow \int_{(i-1)h}^{ih} g_{\sigma(h)}(\hat{x}-t)dt$$
as $h \rightarrow 0$.  However, since the number of terms in the above sum is also diverging to infinity as $h \rightarrow 0$, a convergence rate is needed to quantify the balance between these competing effects.  Unfortunately, none are given.  In the future, we hope to derive rates ourselves, so that we can upgrade Conjecture \ref{conj:blur} to a theorem.

\vskip 2mm

\begin{remark}
Conjecture \ref{conj:blur} says that $u_h(x,y)$, which we already know from Theorem \ref{thm:convergence} convergences as $h \rightarrow 0$ to the solution of the transport equation \eqref{eqn:transport}, can also be viewed asymptotically for small but non-zero $h$ as a solution to the same transport equation but where the boundary data has been mollified by a Gaussian kernel $g_{\sigma(h)}$.  Moreover, the degree of mollification \eqref{eqn:varianceH} increases as we move further into the inpainting domain  (that is, $\sigma(h)$ increases) and decreases as $h \rightarrow 0$ ($\sigma(h)$ decreases).  In fact, we have already observed in Figure \ref{fig:intuition} that $g_{\sigma(h)}$ is converging as $h \rightarrow 0$ to a Dirac delta distribution.
\end{remark}

\vskip 2mm

\subsubsection{Angular dependence of blur artifacts}  Conjecture \ref{conj:blur} proposes an asymptotic relationship between $u_h(x,y)$ and the convolution of $u_0(x,0)$ with a Gaussian kernel of $h$-dependent variance $\sigma(h)^2$ given by \eqref{eqn:varianceH}.  Although currently only a conjecture and valid only valid asymptotically as $h \rightarrow 0$, Figures \ref {fig:wholeStory} and \ref{fig:blurwin} suggest that the conjecture is not only valid but gives an extremely good approximation even for very small images (The example in Figure \ref{fig:wholeStory} is only $200 \times 200$px).  Conjecture \ref{conj:blur} has three takeaway messages:
\begin{itemize}
\item Blur gets {\em worse} as one moves further into the inpainting domain and (e.g. $y$ increases) - Figures \ref{fig:wholeStory} and \ref{fig:blurwin}.
\item Blur gets {\em better} as $h \rightarrow 0$ - Figure \ref{fig:degradation}.
\item Blur is {\em non-existent} if the stencil weights are {\em degenerate} - that is, put all of their mass into a single {\em real} pixel ${\bf y}$ (since in this case ${\bf Z}_i$ is deterministic and all variances are $0$).
\end{itemize}
All three of these observations are consistent with experience.  The third point in particular explains why coherence transport, which has a degenerate stencil in the limit $\mu \rightarrow \infty$ for all but finitely many $\theta=\theta({\bf g})$ (Section \ref{sec:kink3main}), does not appear to suffer from blur artifacts at all, e.g. Figure \ref{fig:taleOfTwo}.  However, it also predicts that for those few special angles where coherence transport puts its mass into more than one ${\bf y} \in b^-_r$, it will blur just like everything else (experiments, which we omit for the sake of space, confirm this).

\begin{figure}[H]
\centering
\includegraphics[width=1.0\linewidth]{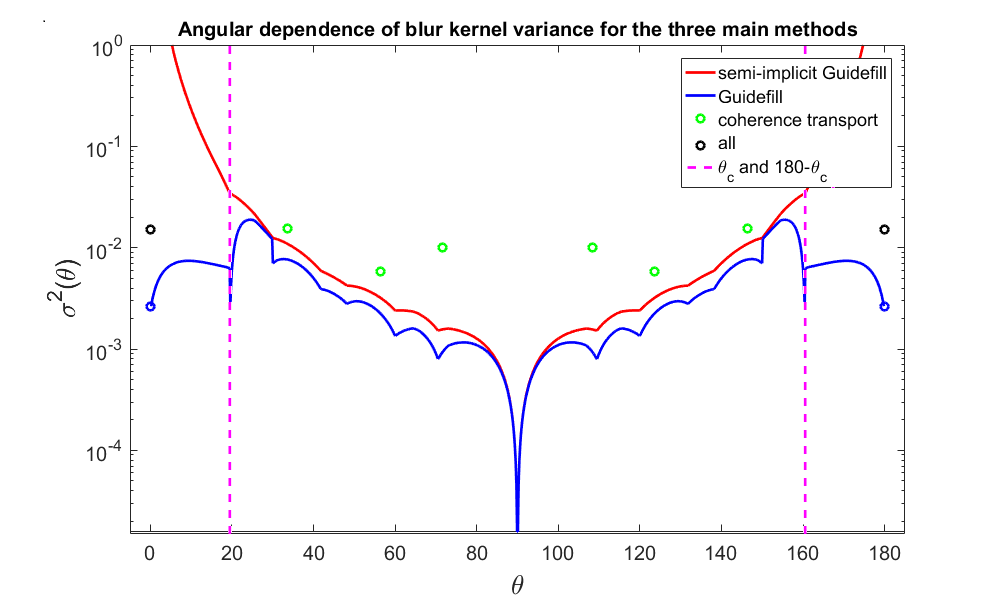}
\caption{{\bf Angular variation of blurring artifacts:}   Here we plot the angular dependence of the variance $\sigma^2(h)$ from the asymptotic blur kernel $g_{\sigma(h)}$ in Conjecture \ref{conj:blur}, for the three methods coherence transport, Guidefill, and semi-implicit Guidefill.  We fix with $r=3$, take $\mu \rightarrow \infty$, vary ${\bf g}=(\cos\theta,\sin\theta)$, and plot $\sigma^2(h)$ as a function of $\theta$.  We fix $y=1$ and $h=0.01$.  Note that for coherence transport, $\sigma^2(h) = 0$ for all but finitely many angles, explaining the methods apparent lack of blur artifacts (Figure \ref{fig:taleOfTwo}).  These special angles correspond precisely to the jumps in Figure \ref{fig:limitingCurves}(a), where coherence transport puts its mass into more than one ${\bf y} \in b^-_r$ (Section \ref{sec:kink3main}).  Note also that while $\sigma(h)^2$ remains bounded for Guidefill, for the semi-implicit extension it grows arbitrarily large as $\theta \rightarrow 0$ or $\theta \rightarrow \pi$.  Note also that all three methods agree at $\theta = 0$ and $\theta=\pi$, just like they did in Section \ref{sec:kink3main}.  Note the log scale.}
\label{fig:blurangle}
\end{figure}

What is less clear from  \eqref{eqn:varianceH} is how $\sigma(h)^2$ depends on ${\bf g}=(\cos\theta,\sin\theta)$, as $\frac{\gamma^2}{|\mu_y^3|}$ can be quite complex in general (certainly for semi-implicit Guidefill at least, where $|\mu_y| = g^*_r \cdot e_2$ can get arbitrarily close to $0$, we expect severe blur artifacts for shallow angles).  Figure \ref{fig:blurangle} illustrates the angular dependence of $\sigma(h)$ as a function of $\theta \in [0,\pi]$ with $y=1$ and $h=0.01$ fixed, for the three main methods of interest - coherence transport, Guidefill, and semi-implicit Guidefill (note the log scale).  In every case we have $r=3$ and $\mu = 100$.

\vskip 2mm

\begin{remark} \label{rem:blurTradeoff} Figure \ref{fig:blurangle} suggests a trade-off of sorts between kinking artifacts and blur artifacts.  If Conjecture \ref{conj:blur} is true, then semi-implicit Guidefill, the only method considered so far capable of avoiding kinking artifacts unless ${\bf g}=(\cos\theta,\sin\theta)$ is exactly parallel to the inpainting domain, that is ${\bf g}^*_r = {\bf g}$ unless $\theta \in \{0,\pi\}$, pays a price for this ability with blurring artifacts that become arbitrarily bad as $\theta \rightarrow 0$ or $\theta \rightarrow \pi$.  At the opposite extreme, coherence transport suffers from no blur at all for all but finitely many angles, but also kinks (that is ${\bf g}^*_r \neq {\bf g}$) for all but finitely many angles (Figure \ref{fig:specialDir}).  Guidefill is in the middle: ${\bf g}^*_r ={\bf g}$ so long as $\theta$ is not too shallow, and blur exists but remains bounded.  It remains to be seen whether a method be completely free of both kinking and blur artifacts, but it seems unlikely.  A more modest goal would be to design a method which like semi-implicit Guidefill suffers from no kinking artifacts, but for which $\sigma(h)$ defined by \eqref{eqn:varianceH} remains bounded for all $\theta$, like Guidefill.  Whether or not this is possible also remains to be seen, and is something we would like to investigate in the future.
\end{remark}

\section{Numerical Experiments} \label{sec:numerics}

In this section we have three objectives:
\begin{enumerate}
\item To compare the limiting transport directions derived in Section \ref{sec:kink3main} for coherence transport, Guidefill, and semi-implicit Guidefill as $\mu \rightarrow \infty$ with the orientation of extrapolated isophotes when the algorithm is run in practice.
\item To check experimentally our claim in Section \ref{sec:marzLimit} that when $r$ is a small integer, our limit $u$ typically does a better job of capturing the behaviour of $u_h$ than $u_{\mbox{m\"arz}}$ does, that is $\|u_h-u\|_p \ll \|u_h - u_{\mbox{m\"arz}}\|_p$.
\item To verify our bounds in Theorem \ref{thm:convergence} and check that they are tight.  Further, we wish to see whether or not our results continue to hold if some of our simplifying assumptions are relaxed (specifically, Guidefill with a constant guidance direction ${\bf g}$ replaced by a smoothly varying transport field ${\bf g}({\bf x})$).
\end{enumerate}
\noindent We will perform one experiment for each objective, but for the sake of space, Experiment III is deferred to Appendix \ref{app:exp3}.  For Experiments II and III, we will experiment using a variety of boundary data $u_0$ satisfying the hypotheses of Section \ref{sec:continuumLimit1}, as well as a few different inpainting methods that are each special cases of Algorithm 1.  These are presented in detail in Appendix \ref{app:exampleList}.

\begin{figure}
\centering
\begin{tabular}{cc}
\subfloat[Original Inpainting problem, inpainting domain shown in yellow.]{\includegraphics[width=.35\linewidth]{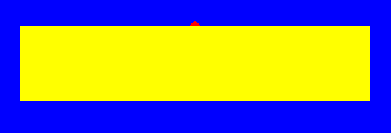}} & 
\subfloat[Inpainting using coherence transport with ${\bf g}=(\cos45^{\circ},\sin45^{\circ})$, $r=3$, $\mu=40$.]{\includegraphics[width=.35\linewidth]{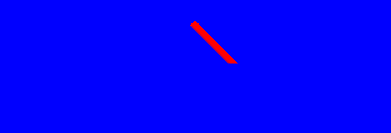}} \\
\end{tabular}
\caption{{\bf Stretching a dot into a line:}  In (a) we have an inpainting problem consisting of a red dot on a blue background, with the inpainting domain in yellow.  In (b), we see the result of inpainting using coherence transport with $\mu=40$, $r=3$, and ${\bf g}=(\cos45^{\circ},\sin45^{\circ})$.  The dot is now stretched into a line, the orientation of which may be measured to deduce ${\bf g}^*_r$.}
\label{fig:dot}
\end{figure}

\subsection{Experiment I:  Validation of limiting transport directions for coherence transport, Guidefill, and semi-implicit Guidefill.}

\begin{figure}
\centering
\begin{tabular}{ccc}
\subfloat[Coherence transport, theoretical curve, $r=3$.]{\includegraphics[width=.30\linewidth]{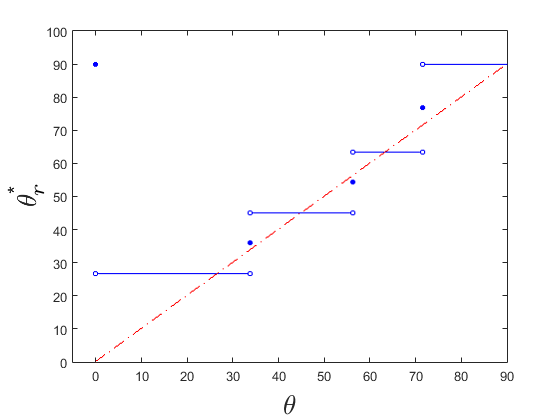}} & 
\subfloat[Guidefill, theoretical curve, $r=3$.]{\includegraphics[width=.30\linewidth]{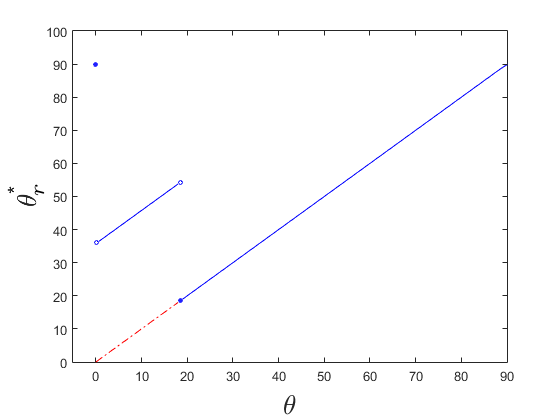}} &
\subfloat[Semi-implicit Guidefill, theoretical curve, $r=3$.]{\includegraphics[width=.30\linewidth]{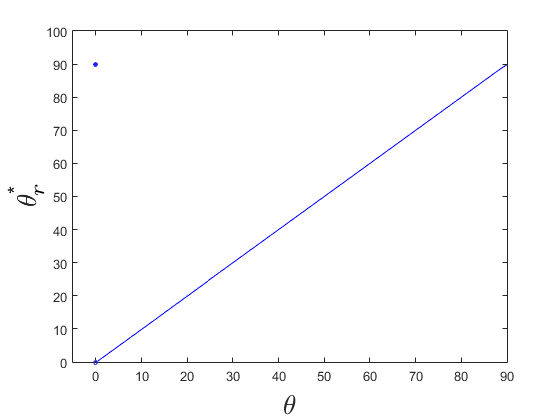}} \\ 
\subfloat[Coherence transport, actual result, $r=3$.]{\includegraphics[width=.30\linewidth]{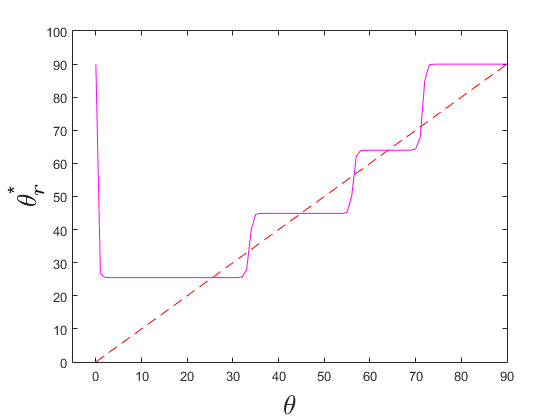}} & 
\subfloat[Guidefill, actual result, $r=3$.]{\includegraphics[width=.30\linewidth]{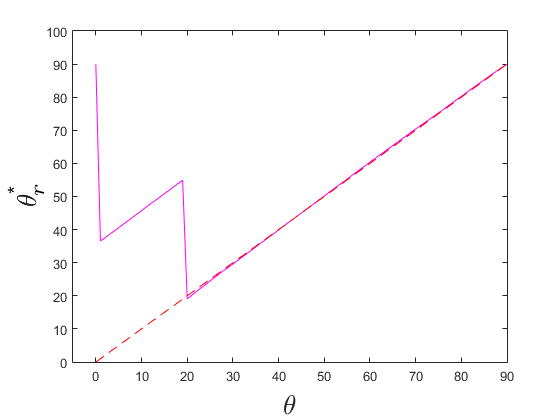}} &
\subfloat[Semi-implicit Guidefill, actual result, $r=3$.]{\includegraphics[width=.30\linewidth]{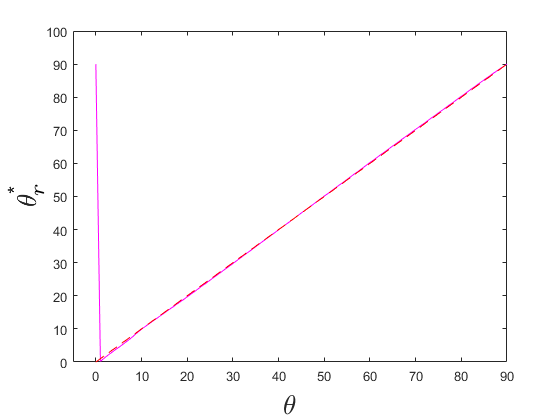}} \\ 
\subfloat[Coherence transport, theoretical curve, $r=5$.]{\includegraphics[width=.30\linewidth]{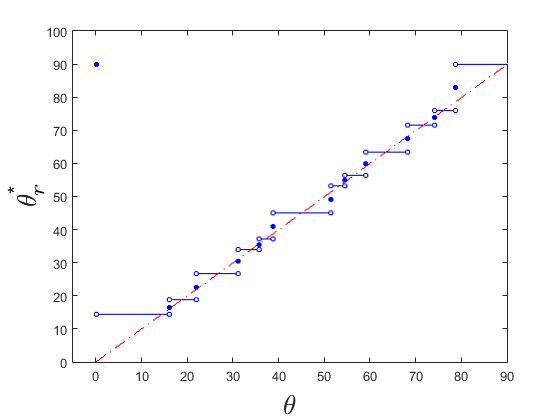}} & 
\subfloat[Guidefill, theoretical curve, $r=5$.]{\includegraphics[width=.30\linewidth]{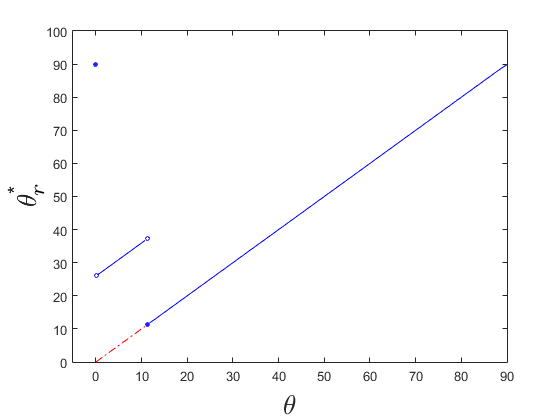}} &
\subfloat[Semi-implicit Guidefill, theoretical curve, $r=5$.]{\includegraphics[width=.30\linewidth]{semiImplictTheory3b.png}} \\ 
\subfloat[Coherence transport, actual result, $r=5$.]{\includegraphics[width=.30\linewidth]{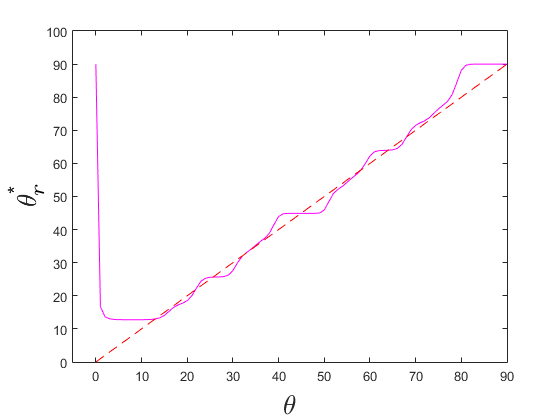}} & 
\subfloat[Guidefill, actual result, $r=5$.]{\includegraphics[width=.30\linewidth]{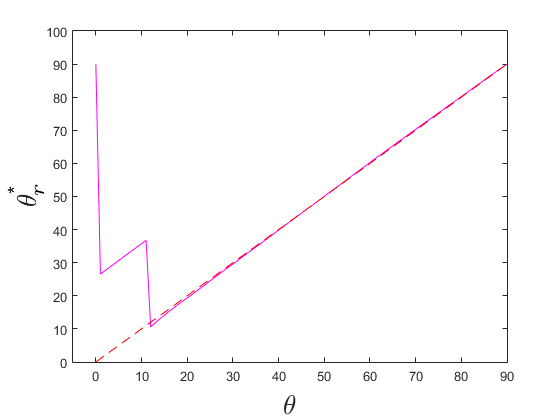}} &
\subfloat[Semi-implicit Guidefill, actual result, $r=5$.]{\includegraphics[width=.30\linewidth]{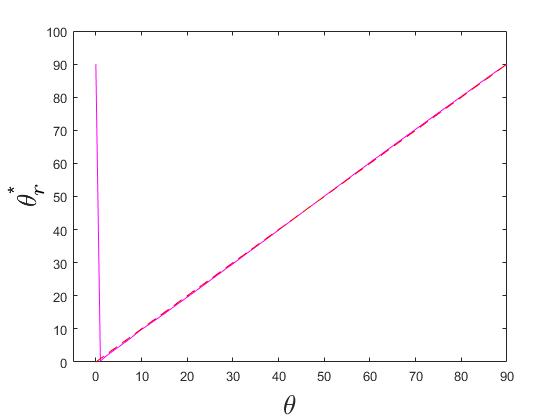}} \\ 
\end{tabular}
\caption{{\bf Validation of limiting transport directions for coherence transport, Guidefill, and semi-implicit Guidefill:}  Here we compare the limiting transport directions $\theta^*_r=\theta({\bf g}^*_r)$ as a function of $\theta = \theta({\bf g})$ derived in Section \ref{sec:kink3main} for coherence transport, Guidefill, and semi-implicit Guidefill as $\mu \rightarrow \infty$ with the orientation of extrapolated isophotes in the inpainting problem shown in Figure \ref{fig:dot}(a) (where $\mu = 40$).  The top row (a)-(c) gives the theoretical curves for $r=3$, with the actual measured results in the row underneath (d)-(f).  The third row (g)-(i) gives the theoretical curves for $r=5$, while the final row (j)-(l) gives corresponding real measurements.}
\label{fig:Refraction}
\end{figure}

In this experiment we compare the limiting transport directions derived in Section \ref{sec:kink3main} for coherence transport, Guidefill, and semi-implicit Guidefill as $\mu \rightarrow \infty$ with the orientation of extrapolated isophotes in an actual inpainted image $u_h$ obtained in practice with finite $\mu$.  In each case we choose as our boundary data the image shown in Figure \ref{fig:dot}(a), consisting of a red dot on a blue background, with the inpainting domain shown in yellow.  We run each algorithm with
$${\bf g}=(\cos (k^{\circ}),\sin (k^{\circ}))$$
for $k=0,1,\ldots,90$, with $\mu=40$ fixed and for various values of $r$.  The dot is then stretched into a line as in Figure \ref{fig:dot}(b), the orientation of which gives ${\bf g}^*_{\mu,r}$ and which can be measured numerically.  

Results are shown in Figure \ref{fig:Refraction} for $r=3$ and $r=5$.  The top row (a)-(c) gives the theoretical curves for $r=3$, with the actual measured results in the row underneath (d)-(f).  The third row (g)-(i) gives the theoretical curves for $r=5$, while the final row (j)-(l) gives corresponding real measurements.  While we see some smoothing out of the jump discontinuities in the case of coherence transport, this is expected as one can easily show that the convergence to $\theta^*=\lim_{\mu \rightarrow \infty} \theta^*_{r,\mu}$ is pointwise but not uniform, becoming arbitrarily slow in the vicinity of the jumps.  On the other hand, for Guidefill and semi-implicit Guidefill we see excellent agreement with theory even in the vicinity of jump discontinuities.  This is to be expected as well, since one can easily show that the relevant limits are uniform in this case.

\begin{figure}
\centering
\begin{tabular}{cc}
\subfloat[Example 1, $h=2^{-10}$, $u_0$ given by \eqref{eqn:u0} with $s=2$, $s'=0$, $L^1$ norm.]{\includegraphics[width=.46\linewidth]{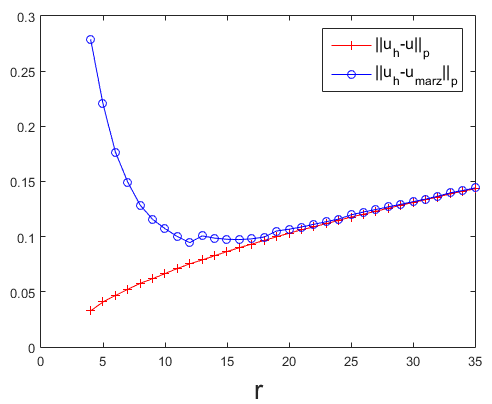}} &
\subfloat[Example 2, $h=2^{-10}$, $u_0$ given by \eqref{eqn:u0} with $s=s'=0.5$, $L^2$ norm.]{\includegraphics[width=.46\linewidth]{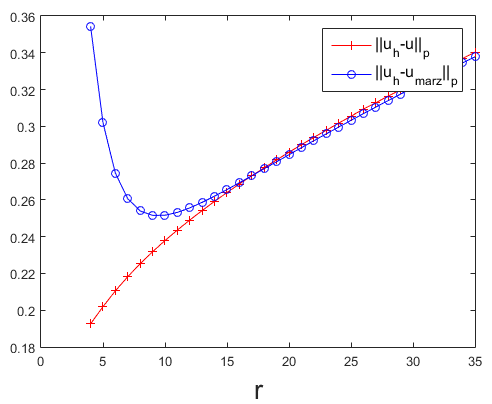}} \\
\subfloat[Example 1, $h=2^{-13}$, $u_0$ given by \eqref{eqn:u0} with $s=2$, $s'=0$, $L^1$ norm.]{\includegraphics[width=.46\linewidth]{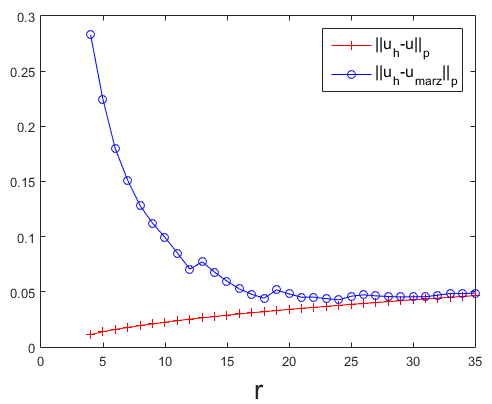}} &
\subfloat[Example 2, $h=2^{-13}$, $u_0$ given by \eqref{eqn:u0} with $s=s'=0.5$, $L^2$ norm.]{\includegraphics[width=.46\linewidth]{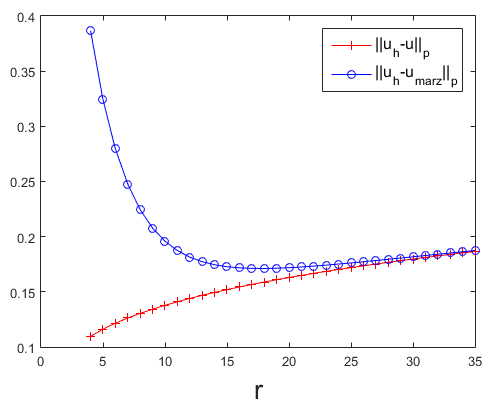}} \\
\end{tabular}
\caption{{\bf Quantitative comparison of the $L^p$ distance between $u_h$ and its two continuum limits:}  Plots of $\|u_h - u\|_p$ (starred line) and $\|u_h-u_{\mbox{m\"arz}}\|_p$ (dotted line) as a function of $r$ for $h=2^{-10}$ (top row) and $h=2^{-13}$ (bottom row) fixed. In each case $u_0$ is given by \eqref{eqn:u0} for different values of $s'$ and $s$. In the the left column we have $s=2$, $s'=0$, $w(\cdot,\cdot)$ is given by M\"arz's weights \eqref{eqn:weight} with $\mu=10$, ${\bf g}=(\cos 20^{\circ},\sin 20^{\circ})$ as in Appendix \ref{app:exampleList}, Example 1, and error is measured using the $L^1$ norm. In the right column we have $s=s'=0.5$, $w(\cdot,\cdot)$ is given by the offset Gaussian \eqref{eqn:offsetG} as in Appendix \ref{app:exampleList}, Example 2, and error is measured using the $L^2$ norm.  In every case we have $\|u_h - u\|_p \ll \|u_h-u_{\mbox{m\"arz}}\|_p$ when $r$ is small.  This sheds some light on why our continuum limit $u$ appears to much more closely match the actual inpainted solution $u_h$ than the alternative limit $u_{\mbox{m\"arz}}$ does.  Results for other values of $s$ and $s'$ as well as different choices of norm are similar but omitted. }
\label{fig:Errors}
\end{figure}

\subsubsection{Experiment II: Comparison of convergence rates to both continuum limits} \label{sec:MarzIsFar}

Our second experiment explores the relationship between $\|u_h-u\|_p$ and \\$\|u_h-u_{\mbox{m\"arz}}\|_p$, for $u_h$ such that the limit $u_{\mbox{m\"arz}}$ exists, and under the assumption that we are not in the trivial case $u = u_{\mbox{m\"arz}}$. From Section \ref{sec:marzLimit} we have the bound
\begin{equation} \label{eqn:boundAgain}
\|u-u_{\mbox{m\"arz}}\|_p  \leq  K_1 \cdot (rh)^{(\frac{s'}{2}+\frac{1}{2p}) \wedge \frac{s}{2} \wedge 1}+K_2 r^{-q\left\{s \wedge \left( s' +\frac{1}{p} \right) \wedge 1\right\}}.
\end{equation}
If this bound is tight, then when $r$ is a small integer and $h$ is small, we expect $\|u_h-u\|_p \ll \|u_h - u_{\mbox{m\"arz}}\|_p$.  At the same time, Theorem \ref{thm:convergence2} shows that $u \rightarrow u_{\mbox{m\"arz}}$ as $ r \rightarrow \infty$, so we expect $\|u_h-u\|_p \approx \|u_h - u_{\mbox{m\"arz}}\|_p$ when $r$ is large.  Figure \ref{fig:Errors} tests this by plotting $\|u_h-u\|_p$ and $\|u_h - u_{\mbox{m\"arz}}\|_p$ as a function of $r$ with $h$ fixed and for various choices of $p$, and for a few choices of boundary data $u_0$ and inpainting methods from Appendix \ref{app:exampleList}.  Specifically, the boundary data $u_0$ is given by \eqref{eqn:u0} for various values of $s$ and $s'$, and the weights and neighborhoods are as in Example 1 and Example 2 from Appendix \ref{app:exampleList}.  The results confirm our expectations.  For the sake of space, we only show a few illustrative examples.

\section{Conclusions and Future Work} \label{sec:conclusion}

In this paper we have presented a detailed analysis of a class of geometric inpainting algorithms based on the idea of filling the inpainting domain in successive shells from the boundary inwards, where every pixel to be filled is assigned a color equal to a weighted average of its already filled neighbors.  These methods are all special cases of a generic framework sketched in Algorithm 1.  Methods in the literature falling within this framework include 
\begin{itemize}
\item Telea's Algorithm \cite{Telea2004}.
\item Coherence Transport \cite{Marz2007,Marz2011}.
\item Guidefill \cite{Guidefill}.
\end{itemize}
A subtle but important point about these methods is that pixels in the current inpainting boundary are filled independently.  Noting this, we have proposed a semi-implicit extension of these methods in which pixels in the current shell are instead filled simultaneously by solving a linear system.  We have also sketched in Algorithm 1 a straightforward extension of the original framework that is equivalent to solving this linear system using damped Jacobi or successive over-relaxation (SOR).  A theoretical convergence analysis of these methods is presented for the semi-implicit extension of Guidefill, where we show that SOR is extremely effective.  This analysis is backed up by numerical experiments.  We also present in some detail additional features of semi-implicit Guidefill relating to how the method decides on a good order to fill pixels, but this is not the main focus of our work.

As all of the algorithms listed above (with the exception of semi-implicit Guidefill, which is presented for the first time in this paper) are known to create some disturbing visual artifacts, the main objective of our analysis is to understand why these occur and whether or not they are inevitable.  We focus specifically on kinking artifacts and blur artifacts.  Other artifacts including the formation of shocks and extrapolated isophotes that end abruptly are discussed but not analyzed, as they have already been studied elsewhere \cite{Marz2007,Marz2015} and are well understood.  Our analysis is based on two key ideas:
\begin{itemize}
\item A continuum limit, which we use to explain kinking artifacts.
\item A connection to the theory of stopped random walks, which we use both to prove convergence to our continuum limit, and to explore blur artifacts.
\end{itemize}
Similarly to the earlier work of Bornemann and M\"arz \cite{Marz2007}, our continuum limit is a transport equation.  However, our limit process is different and so are the coefficients of the resulting transport equation.  Moreover, numerical experiments show that our transport equation is a better reflection of the behaviour of Algorithm 1 (the direct form and our proposed semi-implicit extension) in practice, capable of accurately predicting kinking phenomena that is not captured by the alternative continuum limit proposed in \cite{Marz2007}.  The second core idea of our analysis, which is to relate Algorithm 1 and its extension to stopped random walks, is critical for two reasons.  Firstly, it allows us to prove convergence to our continuum limit even for boundary data with low regularity, such as (finitely many) jump discontinuities.   By contrast, the analysis in \cite{Marz2007} assumes smooth boundary data, which is an unrealistic assumption for images.  Secondly, this connection is central to our analysis of blur artifacts, which we analyze based not on a continuum limit where $h \rightarrow 0$, but rather an asymptotic limit where $h$ very small but nonzero.  While we have not (yet) been able to prove convergence to this asymptotic limit, it allows us to make quantitative predictions that are in excellent agreement with numerical experiments, even for relatively low resolution images (e.g. Figure \ref{fig:wholeStory} which is only $200\times200$px).  Our analysis operates in a highly idealized setting (Section \ref{sec:symmetry}), but our conclusions are far reaching.  In particular, we prove the following:
\begin{enumerate}
\item The rate of convergence to our continuum limit depends on the regularity of the boundary data, and convergence is slower for boundary data with lower regularity.  Thus, our continuum limit $u$ does a better job of approximating the actual inpainted image $u_h$ when the boundary data $u_0$ is smooth (Theorem \ref{thm:convergence}).
\item The difference between our continuum limit and the one proposed in \cite{Marz2007} is most significant when the radius $r$ of the averaging neighborhood $A_{\epsilon,h}({\bf x})$ (measured in pixels) is small, and goes to zero as $r \rightarrow \infty$ (Theorem \ref{thm:convergence2}).
\item In the direct form of Algorithm 1, kinking artifacts will always be present.  That is, certain isophotes cannot be extended into the inpainting domain without bending (Section \ref{sec:kinkingAndConvex}).
\item This is not true of the semi-implicit extension of Algorithm 1.  In particular, semi-implicit Guidefill can extrapolate isophotes with any orientation\footnote{that is, unless the isophotes are exactly parallel to the boundary of the inpainting domain.  But in this case extrapolation is not defined.}, and moreover is able to do so efficiently by using SOR to solve the required linear system (Section \ref{sec:kinkingAndConvex}, Section \ref{sec:kink3main}, Corollary \ref{corollary:actualRates}).
\end{enumerate}
The following results, which we do not prove, are implied by Conjecture \ref{conj:blur} if it is true:
\begin{enumerate}
\item Blur artifacts exhibit an angular dependence, which for semi-implicit Guidefill becomes arbitrarily bad as the angle an extrapolated isophote makes with the boundary of the inpainting domain goes to zero.  Thus, semi-implicit Guidefill pays a heavy price (on top of the need to solve a linear system for every shell) for its ability to successfully extrapolate such isophotes (Conjecture \ref{conj:blur}, Figure \ref{fig:blurangle}).
\item Blur artifacts become less significant as the resolution of the image goes up, and get worse the further into the inpainting domain you extrapolate (Conjecture \ref{conj:blur}).
\item Methods that put all of their weight into a single pixel exhibit no blur, but can only extrapolate without kinking in finitely many directions (Remark \ref{rem:blurTradeoff}).  
\end{enumerate}
The last of these conclusions suggest that there is an apparent trade off between kinking artifacts and blur artifacts, however, this is something that requires additional investigation.  Our inability to prove Conjecture \ref{conj:blur} is due to a result on stopped random walks from \cite{gut2009stopped} - that our result depends on - failing to give rates of convergence.  In the future we hope to derive those rates ourselves so that we can prove or disprove this conjecture.  In fact, there are a number of questions we would like to explore in the future.  
\begin{enumerate}
\item Does there exist an algorithm within the framework we have described that avoids blurring artifacts and kinking artifacts at the same time?  If not, is it at least possible to design an algorithm that, like semi-implicit Guidefill, avoids kinking artifacts so long as the guidance direction ${\bf g}=(\cos\theta,\sin\theta)$ obyes $\theta \neq 0$, but for which the severity of blur as a function of $\theta$ remains bounded?  Could this be done, for example, by the replacing the bilinear interpolation used to define ghost pixels with a more sophisticated interpolation scheme?
\item While we have already shown that SOR (with appropriate ordering of unknowns) is particularly effective for solving the linear system arising in semi-implicit Guidefill, is this generically true regarding the semi-implicit extension of {\em any} inpainting method within the class under investigation?  Moreover, SOR is not ideal because it is sequential whereas Guidefill was designed to be a parallel algorithm.  Does another method exist which maintains the fast convergence rate of SOR, but can be implemented in parallel?  One possibility here is the scheduled Jacobi method \cite{ScheduledJacobi}.
\item Is our semi-implicit extension of Algorithm 1 the best way of avoiding kinking artifacts?  In our analysis, we have assumed that the radius of our averaging neighborhood remains fixed.  Another possibility, based on the observation that as $r$ increases, (direct) Guidefill is able to successfully extrapolate shallower and shallower angles (Section \ref{sec:kink3main}), is to use the direct form of Guidefill but dynamically resize $\tilde{B}_{\epsilon,h}({\bf x})$ as needed.  
\item What happens if the semi-implicit version of Algorithm 1 is generalized to a fully-implicit extension in which pixel colors are computed as a weighted average not only of their (known) already filled neighbors in $A_{\epsilon,h}({\bf x}) \cap \Omega \backslash D^{(k)}$ and (unknown) neighbors within the same shell $\partial D^{(k)}_h$ , but of {\em all} neighbors in $A_{\epsilon,h}({\bf x})$?  Are their additional benefits in terms of artifact reduction and, if so, can the resulting linear system be solved efficiently enough to make this worthwhile?
\end{enumerate}

\section{Acknowledgements}
The authors would like to thank Vittoria Silvestri, James Norris, and Arieh Iserles for helpful conversations.  Thanks is also due to D\"om\"ot\"or P\'alv\"olgyi, whose idea of a symmetry-based argument eventually led to Appendix \ref{app:angSpect}, Proposition \ref{prop:eitherOr}.

\bibliographystyle{siamplain}
\bibliography{SIAMBib2}

\begin{thebibliography}{10}

\bibitem{ScheduledJacobi}
{\sc J.~Adsuara, I.~Cordero-Carrión, P.~Cerdá-Durán, and M.~Aloy}, {\em
  Scheduled relaxation jacobi method: Improvements and applications}, Journal
  of Computational Physics, 321 (2016), pp.~369 -- 413,
  \href{http://dx.doi.org/http://dx.doi.org/10.1016/j.jcp.2016.05.053}
  {doi:http://dx.doi.org/10.1016/j.jcp.2016.05.053},
  \url{http://www.sciencedirect.com/science/article/pii/S002199911630198X}.

\bibitem{Arias2011}
{\sc P.~Arias, G.~Facciolo, V.~Caselles, and G.~Sapiro}, {\em A variational
  framework for exemplar-based image inpainting}, Int. J. Comput. Vision, 93
  (2011), pp.~319--347, \href{http://dx.doi.org/10.1007/s11263-010-0418-7}
  {doi:10.1007/s11263-010-0418-7},
  \url{http://dx.doi.org/10.1007/s11263-010-0418-7}.

\bibitem{JointGreyVector}
{\sc C.~Ballester, M.~Bertalmio, V.~Caselles, G.~Sapiro, and J.~Verdera}, {\em
  Filling-in by joint interpolation of vector fields and gray levels}, IEEE
  Transactions on Image Processing, 10 (2001), pp.~1200--1211,
  \href{http://dx.doi.org/10.1109/83.935036} {doi:10.1109/83.935036}.

\bibitem{MollBook}
{\sc J.~V. Beck}, {\em The mollification method and the numerical solution of
  ill-posed problems (diego a. murio)}, SIAM Review, 36 (1994), pp.~502--503,
  \href{http://dx.doi.org/10.1137/1036117} {doi:10.1137/1036117},
  \url{https://doi.org/10.1137/1036117},
  \href{http://arxiv.org/abs/https://doi.org/10.1137/1036117}
  {arXiv:https://doi.org/10.1137/1036117}.

\bibitem{bertalmio2000image}
{\sc M.~Bertalmio, G.~Sapiro, V.~Caselles, and C.~Ballester}, {\em Image
  inpainting}, in Proceedings of the 27th annual conference on Computer
  graphics and interactive techniques, ACM Press/Addison-Wesley Publishing Co.,
  2000, pp.~417--424.

\bibitem{Marz2007}
{\sc F.~Bornemann and T.~M\"arz}, {\em Fast image inpainting based on coherence
  transport}, Journal of Mathematical Imaging and Vision, 28 (2007),
  pp.~259--278, \href{http://dx.doi.org/10.1007/s10851-007-0017-6}
  {doi:10.1007/s10851-007-0017-6},
  \url{http://dx.doi.org/10.1007/s10851-007-0017-6}.

\bibitem{Burger2009}
{\sc M.~Burger, L.~He, and C.~Sch\"onlieb}, {\em Cahn-hilliard inpainting and a
  generalization for grayvalue images}, SIAM J. Imaging Sci., 2 (2009),
  pp.~1129--1167.

\bibitem{OtherPipelineDoneWell}
{\sc P.~Buyssens, O.~L. Meur, M.~Daisy, D.~Tschumperlé, and O.~Lézoray}, {\em
  Depth-guided disocclusion inpainting of synthesized rgb-d images}, IEEE
  Transactions on Image Processing, 26 (2017), pp.~525--538,
  \href{http://dx.doi.org/10.1109/TIP.2016.2619263}
  {doi:10.1109/TIP.2016.2619263}.

\bibitem{chan2002euler}
{\sc T.~Chan, S.~Kang, and J.~Shen}, {\em Euler's elastica and curvature-based
  inpainting}, SIAM Journal on Applied Mathematics,  (2002), pp.~564--592.

\bibitem{chan2005variational}
{\sc T.~Chan and J.~Shen}, {\em Variational image inpainting}, Communications
  on pure and applied mathematics, 58 (2005), pp.~579--619.

\bibitem{Cilleruelo1993198}
{\sc J.~Cilleruelo}, {\em The distribution of the lattice points on circles},
  Journal of Number Theory, 43 (1993), pp.~198 -- 202,
  \href{http://dx.doi.org/http://dx.doi.org/10.1006/jnth.1993.1017}
  {doi:http://dx.doi.org/10.1006/jnth.1993.1017},
  \url{http://www.sciencedirect.com/science/article/pii/S0022314X83710176}.

\bibitem{Criminisi04regionfilling}
{\sc A.~Criminisi, P.~P\'erez, and K.~Toyama}, {\em Region filling and object
  removal by exemplar-based image inpainting}, IEEE Transactions on Image
  Processing, 13 (2004), pp.~1200--1212.

\bibitem{DepthAidedInpainingForNovelViewSynthesis}
{\sc I.~Daribo and B.~Pesquet-Popescu}, {\em Depth-aided image inpainting for
  novel view synthesis}, in 2010 IEEE International Workshop on Multimedia
  Signal Processing, Oct 2010, pp.~167--170,
  \href{http://dx.doi.org/10.1109/MMSP.2010.5662013}
  {doi:10.1109/MMSP.2010.5662013}.

\bibitem{derr2009}
{\sc V.~Derr and D.~Kinzebulatov}, {\em On the extension of schwartz
  distributions to the space of discontinuous test functions of several
  variables}, Rocky Mountain J. Math., 39 (2009), pp.~1173--1193,
  \href{http://dx.doi.org/10.1216/RMJ-2009-39-4-1173}
  {doi:10.1216/RMJ-2009-39-4-1173},
  \url{http://dx.doi.org/10.1216/RMJ-2009-39-4-1173}.

\bibitem{MathOverflow}
{\sc domotorp (http://mathoverflow.net/users/955/domotorp)}, {\em Conjecture
  regarding closest point inside a discrete ball to a line}.
\newblock MathOverflow, \url{http://mathoverflow.net/q/187830},
  \href{http://arxiv.org/abs/http://mathoverflow.net/q/187830}
  {arXiv:http://mathoverflow.net/q/187830}.
\newblock URL:http://mathoverflow.net/q/187830 (version: 2014-11-22).

\bibitem{Erdos}
{\sc P.~Erdös and R.~Hall}, {\em On the angular distribution of gaussian
  integers with fixed norm}, Discrete Mathematics, 200 (1999), pp.~87 -- 94,
  \href{http://dx.doi.org/http://dx.doi.org/10.1016/S0012-365X(98)00329-X}
  {doi:http://dx.doi.org/10.1016/S0012-365X(98)00329-X},
  \url{http://www.sciencedirect.com/science/article/pii/S0012365X9800329X}.

\bibitem{ImageInpaintingOverview}
{\sc C.~Guillemot and O.~L. Meur}, {\em Image inpainting : Overview and recent
  advances}, IEEE Signal Processing Magazine, 31 (2014), pp.~127--144,
  \href{http://dx.doi.org/10.1109/MSP.2013.2273004}
  {doi:10.1109/MSP.2013.2273004}.

\bibitem{gut1974}
{\sc A.~Gut}, {\em On the moments and limit distributions of some first passage
  times}, Ann. Probab., 2 (1974), pp.~277--308,
  \href{http://dx.doi.org/10.1214/aop/1176996709} {doi:10.1214/aop/1176996709},
  \url{http://dx.doi.org/10.1214/aop/1176996709}.

\bibitem{GUT1974115}
{\sc A.~Gut}, {\em On the moments of some first passage times for sums of
  dependent random variables}, Stochastic Processes and their Applications, 2
  (1974), pp.~115 -- 126,
  \href{http://dx.doi.org/http://dx.doi.org/10.1016/0304-4149(74)90015-5}
  {doi:http://dx.doi.org/10.1016/0304-4149(74)90015-5},
  \url{http://www.sciencedirect.com/science/article/pii/0304414974900155}.

\bibitem{gut2009stopped}
{\sc A.~Gut}, {\em Stopped Random Walks: Limit Theorems and Applications},
  Springer Series in Operations Research and Financial Engineering, Springer
  New York, 2009, \url{https://books.google.co.uk/books?id=tWfBs5G7jHIC}.

\bibitem{GutAndJanson1983}
{\sc A.~Gut and S.~Janson}, {\em The limiting behaviour of certain stopped sums
  and some applications}, Scandinavian Journal of Statistics, 10 (1983),
  pp.~281--292, \url{http://www.jstor.org/stable/4615930}.

\bibitem{Weirstrass}
{\sc G.~H. Hardy}, {\em Weierstrass's non-differentiable function},
  Transactions of the American Mathematical Society, 17 (1916), pp.~301--325,
  \url{http://www.jstor.org/stable/1989005}.

\bibitem{Guidefill}
{\sc L.~R. Hocking, R.~MacKenzie, and C.-B. Sch\"onlieb}, {\em Guidefill: Gpu
  accelerated, artist guided geometric inpainting for 3d conversion of film},
  \url{http://arxiv.org/abs/1611.05319}.

\bibitem{DepthGuidedInpaintingAlgorithmForFreeViewPointVideo}
{\sc L.~Ma, L.~Do, and P.~H.~N. de~With}, {\em Depth-guided inpainting
  algorithm for free-viewpoint video}, in 2012 19th IEEE International
  Conference on Image Processing, Sept 2012, pp.~1721--1724,
  \href{http://dx.doi.org/10.1109/ICIP.2012.6467211}
  {doi:10.1109/ICIP.2012.6467211}.

\bibitem{Marz2011}
{\sc T.~M\"arz}, {\em Image inpainting based on coherence transport with
  adapted distance functions}, SIAM J. Img. Sci., 4 (2011), pp.~981--1000,
  \href{http://dx.doi.org/10.1137/100807296} {doi:10.1137/100807296},
  \url{http://dx.doi.org/10.1137/100807296}.

\bibitem{Marz2015}
{\sc T.~M{\"a}rz}, {\em A well-posedness framework for inpainting based on
  coherence transport}, Foundations of Computational Mathematics, 15 (2015),
  pp.~973--1033, \href{http://dx.doi.org/10.1007/s10208-014-9199-7}
  {doi:10.1007/s10208-014-9199-7},
  \url{http://dx.doi.org/10.1007/s10208-014-9199-7}.

\bibitem{masnou1998level}
{\sc S.~Masnou and J.~Morel}, {\em Level lines based disocclusion}, in Image
  Processing, 1998. ICIP 98. Proceedings. 1998 International Conference on,
  IEEE, 1998, pp.~259--263.

\bibitem{DiscM}
{\sc D.~Murio, C.~Mejía, and S.~Zhan}, {\em Discrete mollification and
  automatic numerical differentiation}, Computers \& Mathematics with
  Applications, 35 (1998), pp.~1 -- 16,
  \href{http://dx.doi.org/http://dx.doi.org/10.1016/S0898-1221(98)00001-7}
  {doi:http://dx.doi.org/10.1016/S0898-1221(98)00001-7},
  \url{http://www.sciencedirect.com/science/article/pii/S0898122198000017}.

\bibitem{Hitchhiker}
{\sc E.~D. Nezza, G.~Palatucci, and E.~Valdinoci}, {\em Hitchhiker's guide to
  the fractional sobolev spaces}, Bulletin des Sciences Mathématiques, 136
  (2012), pp.~521 -- 573,
  \href{http://dx.doi.org/http://dx.doi.org/10.1016/j.bulsci.2011.12.004}
  {doi:http://dx.doi.org/10.1016/j.bulsci.2011.12.004},
  \url{http://www.sciencedirect.com/science/article/pii/S0007449711001254}.

\bibitem{VariationalGradient}
{\sc R.~Sadek, G.~Facciolo, P.~Arias, and V.~Caselles}, {\em A variational
  model for gradient-based video editing}, International Journal of Computer
  Vision, 103 (2013), pp.~127--162,
  \href{http://dx.doi.org/10.1007/s11263-012-0597-5}
  {doi:10.1007/s11263-012-0597-5},
  \url{http://dx.doi.org/10.1007/s11263-012-0597-5}.

\bibitem{CarolaBook}
{\sc C.-B. Sch\"onlieb}, {\em {Partial Differential Equation Methods for Image
  Inpainting}}, 2015.

\bibitem{Telea2004}
{\sc A.~Telea}, {\em An image inpainting technique based on the fast marching
  method}, Journal of Graphics Tools, 9 (2004), pp.~23--34.

\bibitem{CurvatureSmoothingPDEs}
{\sc D.~Tschumperl{\'E}}, {\em Fast anisotropic smoothing of multi-valued
  images using curvature-preserving pde's}, International Journal of Computer
  Vision, 68 (2006), pp.~65--82,
  \href{http://dx.doi.org/10.1007/s11263-006-5631-z}
  {doi:10.1007/s11263-006-5631-z},
  \url{http://dx.doi.org/10.1007/s11263-006-5631-z}.

\bibitem{Varah}
{\sc J.~Varah}, {\em A lower bound for the smallest singular value of a
  matrix}, Linear Algebra and its Applications, 11 (1975), pp.~3 -- 5,
  \href{http://dx.doi.org/http://dx.doi.org/10.1016/0024-3795(75)90112-3}
  {doi:http://dx.doi.org/10.1016/0024-3795(75)90112-3},
  \url{http://www.sciencedirect.com/science/article/pii/0024379575901123}.

\bibitem{varga1962matrix}
{\sc R.~Varga}, {\em Matrix iterative analysis}, Prentice-Hall series in
  automatic computation, Prentice-Hall, 1962,
  \url{https://books.google.co.uk/books?id=PFYWAAAAIAAJ}.

\bibitem{Spacetime}
{\sc Y.~Wexler, E.~Shechtman, and M.~Irani}, {\em Space-time completion of
  video}, IEEE Trans. Pattern Anal. Mach. Intell., 29 (2007), pp.~463--476,
  \href{http://dx.doi.org/10.1109/TPAMI.2007.60} {doi:10.1109/TPAMI.2007.60}.

\bibitem{williams1991probability}
{\sc D.~Williams}, {\em Probability with Martingales}, Cambridge mathematical
  textbooks, Cambridge University Press, 1991,
  \url{https://books.google.co.uk/books?id=RnOJeRpk0SEC}.

\end{thebibliography}

\appendix

\section{Punctured sums} \label{app:punctured}

Here we provide further justification for our exclusion of the point ${\bf x}$ from the update formula \eqref{eqn:update} in Algorithm 1 as mentioned in Remark \ref{rem:punctured}.  As we mentioned there, this makes no difference to the direct form of Algorithm 1, because the subroutine FillRow only involves sums taken over $A_{\epsilon,h}({\bf x}) \cap (\Omega \backslash D^{(k)})$, which never contains ${\bf x}$.  However, the semi-implicit extension of Algorithm 1 expresses $u_h({\bf x})$ as a sum of $u_h({\bf y})$ over a set of points that might include ${\bf x}$.  The reason we deliberately exclude ${\bf x}$ is because, as the following proposition shows, if $w_{\epsilon}({\bf x},{\bf x})  <\infty$, it doesn't matter, but if $w_{\epsilon}({\bf x},{\bf x}) =\infty$, it wreaks havoc.  Moreover, the weights \eqref{eqn:weight} which we wish to study {\em do} have the property that $w_{\epsilon}({\bf x},{\bf x}) = \infty$.
\vskip 1mm
\begin{proposition} \label{prop:zeroOrHero}
Suppose ${\bf x} \in B$ for some finite set $B \subset \field{R}^2$, and suppose there exist non negative weights $w : B \times B \rightarrow [0,\infty]$, finite everywhere except possibly at $({\bf x},{\bf x})$.  Then if $w({\bf x},{\bf x}) <\infty$, we have
$$u_h({\bf x}) = \frac{\sum_{{\bf y} \in B} w({\bf x},{\bf y})u_h({\bf y})}{\sum_{{\bf y} \in B} w({\bf x},{\bf y})} = \frac{\sum_{{\bf y} \in B \backslash \{{\bf x} \}} w({\bf x},{\bf y})u_h({\bf y})}{\sum_{{\bf y} \in B \backslash \{{\bf x} \}} w({\bf x},{\bf y})}.$$
On the other hand, if $w({\bf x},{\bf x}) =\infty$, we have an indeterminate expression
$$u_h({\bf x}) =\frac{\infty}{\infty}.$$
\end{proposition}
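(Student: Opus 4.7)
\section*{Proof plan}

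The plan is to treat the two cases as straightforward algebraic manipulations on the defining identity. For the first statement, I would begin by introducing the shorthand
\[
W := \sum_{{\bf y} \in B \setminus \{{\bf x}\}} w({\bf x},{\bf y}) u_h({\bf y}), \qquad S := \sum_{{\bf y} \in B \setminus \{{\bf x}\}} w({\bf x},{\bf y}),
\]
both of which are finite because $B$ is finite and the only possible infinite weight is $w({\bf x},{\bf x})$. Under the assumption $w({\bf x},{\bf x}) < \infty$, the ``unpunctured'' sums appearing in the first displayed equation are likewise finite, so both fractions are well defined as soon as the denominators are nonzero. I would then argue the equivalence by showing that the equation
\[
u_h({\bf x}) = \frac{W + w({\bf x},{\bf x})u_h({\bf x})}{S + w({\bf x},{\bf x})}
\]
is equivalent, after clearing the (finite, positive) denominator, to
\[
u_h({\bf x})\,S + u_h({\bf x})\,w({\bf x},{\bf x}) = W + w({\bf x},{\bf x})\,u_h({\bf x}),
\]
from which the $w({\bf x},{\bf x})\,u_h({\bf x})$ terms cancel, leaving $u_h({\bf x}) = W/S$, i.e.\ the punctured form. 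This cancellation is the one-line core of the argument.

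For the second claim, I would observe that once $w({\bf x},{\bf x}) = \infty$ the manipulation above breaks down precisely because the cancellation we relied on is no longer legitimate in the extended reals. Concretely, if $u_h({\bf x}) \neq 0$ then $w({\bf x},{\bf x})\,u_h({\bf x}) = \infty$ and the numerator reduces to $W + \infty = \infty$, while the denominator is $S + \infty = \infty$; thus the ratio takes the indeterminate form $\infty/\infty$. (The boundary case $u_h({\bf x}) = 0$ would give $0 \cdot \infty$ in the numerator, which is also indeterminate; I would mention this for completeness.) The main moral to draw is that excluding ${\bf x}$ from the sum in \eqref{eqn:update} is harmless for every finite-weight scheme, but is necessary in order for the semi-implicit extension to be well defined under the Gaussian weights \eqref{eqn:weight}, for which $w_\epsilon({\bf x},{\bf x}) = \infty$.

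There is essentially no obstacle here, since both steps reduce to elementary arithmetic; the only thing to be careful about is bookkeeping in the extended reals so that we do not quietly use $\infty - \infty = 0$ when arguing the second case.
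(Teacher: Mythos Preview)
Your proposal is correct and is precisely the ``exercise in cancellation'' the paper itself invokes; the paper's own proof is a single sentence deferring the details to the reader, so your write-up is in fact more complete than the original.
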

\begin{proof}
The proof is an exercise in cancellation and left to the reader.
\qed
\end{proof}

\section{Properties of Ghost Pixels and Equivalent Weights} \label{app:ghostPixels}
In this appendix we prove the six properties of ghost pixels and equivalent weights listed in Section \ref{sec:fiction}.  These properties all follow from the definition of $u_h({\bf y})$ when ${\bf y}$ is a ghost pixel, namely
$$u_h({\bf y}) = \sum_{{\bf z} \in \mbox{Supp}(\{{\bf y}\})}\Lambda_{{\bf z},h}({\bf y})u_h({\bf z}),$$
where $\{ \Lambda_{{\bf z},h} \}_{{\bf z} \in \field{Z}^2_h}$ denote the basis functions of bilinear interpolation associated with the lattice $\field{Z}^2_h$, and where $\mbox{Supp}(A)$ denotes the set of real pixels needed to define a set $A$ of ghost pixels using bilinear interpolation.  Note that if ${\bf y} \in A$, then 
\begin{equation} \label{eqn:outside0}
\Lambda_{{\bf z},h}({\bf y})=0 \quad \mbox{ for all } {\bf z} \notin \mbox{Supp}(A).
\end{equation}
The following explicit formula for $\mbox{Supp}(\{(x,y)\})$ (which comes from the definition of bilinear interpolation) will occasionally be useful.  
\begin{equation} \label{eqn:explicit}
\mbox{Supp}(\{(x,y)\}) = \left\{ \left( \left \lfloor \frac{x}{h} \right \rfloor h,\left \lfloor \frac{y}{h} \right \rfloor h \right),
\left( \left \lceil \frac{x}{h} \right \rceil h,\left \lfloor \frac{y}{h} \right \rfloor h \right),
\left( \left \lfloor \frac{x}{h} \right \rfloor h,\left \lceil \frac{y}{h} \right \rceil h \right),
\left( \left \lceil \frac{x}{h} \right \rceil h,\left \lceil \frac{y}{h} \right \rceil h \right)\right\}.
\end{equation}
First we prove property one.

\vskip 1mm

\noindent {\em 1.  Explicit formula:}
$$\tilde{w}({\bf x},{\bf z}) = \sum_{{\bf y} \in A({\bf x})} \Lambda_{{\bf y},h}({\bf z})w({\bf x},{\bf y})$$
\begin{proof}
This follows straightforwardly from the definition of ghost pixels, the property \eqref{eqn:outside0}, and a few exchanges of finite sums.
\begin{eqnarray*}
u_h({\bf x})  &=& \sum_{{\bf y} \in A({\bf x})}w({\bf x},{\bf y})u_h({\bf y})  \\
                      &=& \sum_{{\bf y} \in A({\bf x})}w({\bf x},{\bf y})\sum_{{\bf z} \in \mbox{Supp}(\{{\bf y}\})}\Lambda_{{\bf z},h}({\bf y})u_h({\bf z})  \\
                      &=& \sum_{{\bf y} \in A({\bf x})}w({\bf x},{\bf y})\sum_{{\bf z} \in \mbox{Supp}(A({\bf x}))}\Lambda_{{\bf z},h}({\bf y})u_h({\bf z})  \\
                      &=& \sum_{{\bf z} \in \mbox{Supp}(A({\bf x}))} \underbrace{\left\{ \sum_{{\bf y} \in A({\bf x})}\Lambda_{{\bf z},h}({\bf y}) w({\bf x},{\bf y})\right\}}_{:=\tilde{w}({\bf x},{\bf z})}u_h({\bf z})  \\
\end{eqnarray*}
\qed
\end{proof}  
Next, instead of proving properties two and three, we prove a stronger result from which they both follow.

\vskip 1mm

\noindent {\em 1.(a)  Preservation of degree 1 polynomials:}  Suppose $f({\bf y})$ is a (scalar valued) polynomial of degree at most 1, that is $f({\bf y})=a+{\bf b} \cdot {\bf y}$ for some $a \in \field{R}$ and ${\bf b} \in \field{R}^2$.  Then
$$\sum_{{\bf y} \in A({\bf x})} w({\bf x},{\bf y})f({\bf y}) = \sum_{{\bf y} \in \mbox{Supp}(A({\bf x}))} \tilde{w}({\bf x},{\bf y})f({\bf y}).$$
\begin{proof}
This follows from the fact that the bilinear interpolant of a polynomial of degree at most 1 is just the polynomial back.  That is,
$$\sum_{{\bf z} \in \mbox{Supp}(\{{\bf y}\})}\Lambda_{{\bf z},h}({\bf y})f({\bf z}) = f({\bf y}).$$
This is obvious and we do not prove it.  We will also use \eqref{eqn:outside0} once.
\begin{eqnarray*}
 \sum_{{\bf y} \in A({\bf x})} w({\bf x},{\bf y})f({\bf y}) &=&  \sum_{{\bf y} \in A({\bf x})} w({\bf x},{\bf y})\left\{\sum_{{\bf z} \in \mbox{Supp}(\{{\bf y}\})}\Lambda_{{\bf z},h}({\bf y})f({\bf z})\right\}   \\
 &=&  \sum_{{\bf y} \in A({\bf x})} w({\bf x},{\bf y})\left\{\sum_{{\bf z} \in \mbox{Supp}(A(x))}\Lambda_{{\bf z},h}({\bf y})f({\bf z})\right\}   \\
    &=&  \sum_{{\bf z} \in \mbox{Supp}(A(x))} \left\{  \sum_{{\bf y} \in A({\bf x})} \Lambda_{{\bf z},h} ({\bf y}) w({\bf x},{\bf y})\right\}f({\bf z}) \\
     &=&  \sum_{{\bf z} \in \mbox{Supp}(A(x))} \tilde{w}({\bf x},{\bf z})f({\bf z}).
\end{eqnarray*}
\qed
\end{proof}

\vskip 1mm

\noindent {\em 2. Preservation of total mass.}
$$\sum_{{\bf y} \in A({\bf x})} w({\bf x},{\bf y}) = \sum_{{\bf y} \in \mbox{Supp}(A({\bf x}))} \tilde{w}({\bf x},{\bf y}).$$
\begin{proof}
Special case of 1.(a), $p({\bf y}) \equiv 1$.
\qed
\end{proof}

\vskip 1mm

\noindent {\em 3. Preservation of center of mass (or first moment).}
$$\sum_{{\bf y} \in A({\bf x})} w({\bf x},{\bf y}){\bf y} = \sum_{{\bf y} \in \mbox{Supp}(A({\bf x}))} \tilde{w}({\bf x},{\bf y}){\bf y}.$$
\begin{proof}
Apply 1.(a) to each component of $f({\bf y}) = {\bf y}$.
\qed
\end{proof}

\vskip 1mm

\noindent {\em 4. Inheritance of non-negativity:
$$\tilde{w}_{\epsilon}({\bf x},{\bf z}) \geq  0 \quad \mbox{ for all } {\bf z} \in \mbox{Supp}(A_{\epsilon,h}({\bf x})).$$
\begin{proof}
This is immediate from the non-negativity of the original weights $\{ w_{\epsilon} \}$, the non-negativity of the basis functions $\{ \Lambda_{{\bf y},h} \}$, and the explicit formula \eqref{eqn:explicitFormula}.
\qed
\end{proof}

\noindent {\em 5.  Inheritance of non-degeneracy condition \eqref{eqn:technical}.}
$$\sum_{{\bf y} \in \mbox{Supp}(A_{\epsilon,h}({\bf x}) \cap (\Omega \backslash D^{(k)}))}\tilde{w}_{\epsilon}({\bf x},{\bf y}) > 0.$$
\begin{proof}
Apply preservation of total mass to \eqref{eqn:technical}.
\qed
\end{proof}

\vskip 1mm

\noindent {\em 6.  Universal Support.}
For any $n \in \field{Z}$, we have
$$\mbox{Supp}(A_{\epsilon,h}({\bf x}) \cap \{ y \leq nh\} ) \subseteq D_h(B_{\epsilon,h}({\bf x})) \cap \{ y \leq nh\}  \subseteq B_{\epsilon+2h,h}({\bf x}) \cap \{ y \leq nh\} .$$
where $\{y \leq nh \} := \{ (x,y) \in \field{R}^2 : y \leq nh\}$, and where $D_h$ is the dilation operator defined in our section on notation.
\begin{proof}
Let $(x,y) \in A_{\epsilon,h}({\bf x}) \cap \{ y \leq nh\}$.  Then $x^2+y^2 \leq \epsilon^2$ and $y \leq nh$.  Hence $\left( \left \lfloor \frac{x}{h} \right \rfloor h,\left \lfloor \frac{y}{h} \right \rfloor h \right) \in B_{\epsilon,h}({\bf x}),$
and by \eqref{eqn:explicit} we have 
$$\mbox{Supp}(\{(x,y)\}) \subseteq \mathcal N \left( \left( \left \lfloor \frac{x}{h} \right \rfloor h,\left \lfloor \frac{y}{h} \right \rfloor h \right)\right) \subseteq D_h(B_{\epsilon,h}({\bf x})),$$
Where $\mathcal N({\bf x})$ denotes the nine-point neighborhood of ${\bf x} \in \field{Z}^2_h$ defined in the notation section.  But since we also know $y \leq nh$, we have
$\left \lceil \frac{y}{h} \right \rceil h \leq \left \lceil \frac{nh}{h} \right \rceil h \leq nh,$
and hence applying \eqref{eqn:explicit} again we conclude
$$\mbox{Supp}(\{(x,y)\}) \subseteq \{ y \leq nh\}$$
as well.  Since $(x,y) \in A_{\epsilon,h}({\bf x}) \cap \{ y \leq nh\}$ was arbitrary, the first inclusion follows.
For the second inclusion, note that every element of $D_h(B_{\epsilon,h}({\bf x})) \cap \{ y \leq nh\}$ is of the form $(x,y) = {\bf y} + \Delta {\bf y}$ where ${\bf y} \in B_{\epsilon,h}({\bf x})$ and $ \Delta {\bf  y} \in \{-h,0,h\} \times \{-h,0,h\}$.  Hence
$$\|(x,y)\|=\|{\bf y}+ \Delta {\bf y}\| \leq \|{\bf y}\|+\|\Delta{\bf  y}\| \leq \epsilon + \sqrt{2}h < \epsilon + 2h.$$
At the same time we have $y \leq nh$, so $(x,y) \in B_{\epsilon+2h,h}({\bf x}) \cap \{ y \leq nh\}$ as claimed.
\qed
\end{proof}

\section{Proof that our proposed extension of Algorithm 1 is equivalent to damped Jacobi or SOR} \label{app:secret}

Here we supply the proof, promised in Section \ref{sec:semiImplicit}, that the blue text in Algorithm 1 is actually an implementation of either damped Jacobi or SOR, depending on whether the ``FillBoundary'' subroutine is executed sequentially or in parallel.

\vskip 2mm

\begin{proposition} \label{prop:secretlyDampedJacobi}
Changing the boolean ``semiImplicit'' to true in Algorithm 1 is equivalent to solving \eqref{eqn:Linearboundary} with damped Jacobi if ``FillBoundary'' is executed in parallel, and to SOR if it is executed sequentially.  In either case, the relaxation parameter is given by
$$\omega^* = \left( 1- \frac{\tilde{w}_{\epsilon}({\bf x},{\bf x})}{W}\right).$$
\end{proposition}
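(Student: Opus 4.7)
The plan is to unfold one iteration of the \textsc{FillBoundary} subroutine (with semiImplicit set to true), rewrite it using the equivalent weights of Section \ref{sec:fiction} so that ghost pixels are replaced by real pixels, and then match it term-by-term against the canonical damped Jacobi or SOR iteration for $\mathcal{L}{\bf u} = {\bf f}$.

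First I would fix ${\bf x} \in \partial_{\mbox{ready}} D^{(k)}_h$ and write the update performed by \textsc{FillBoundary} on the $n$-th iteration as
$$u_h^{(n)}({\bf x}) = \frac{1}{W}\sum_{{\bf y} \in (A_{\epsilon,h}({\bf x}) \setminus \{{\bf x}\}) \cap (\Omega \setminus D^{(k+1)})} w_\epsilon({\bf x},{\bf y})\, \hat u({\bf y}),$$
where $W$ is the common normalizer \eqref{eqn:2ndway}, and $\hat u({\bf y}) = u_h^{(n-1)}({\bf y})$ in the parallel case while $\hat u({\bf y}) = u_h^{(n)}({\bf y})$ for pixels already visited in the sequential case. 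Using the equivalence principle from Section \ref{sec:fiction}, this sum over ghost pixels converts into a sum over real pixels with equivalent weights:
$$u_h^{(n)}({\bf x}) = \frac{1}{W}\sum_{{\bf y} \in \mbox{Supp}(A_{\epsilon,h}({\bf x}) \setminus \{{\bf x}\})} \tilde w_\epsilon({\bf x},{\bf y})\, \hat u({\bf y}),$$
and the mass preservation identity \eqref{eqn:totalMass} guarantees that the normalizer $W$ in \eqref{eqn:2ndway} agrees with the one in \eqref{eqn:1stway}, so the two denominators are consistent. I would then split the support set into the unknown pixels $S^{(k)}_{\epsilon,h}({\bf x})$ on the current ready boundary and the known pixels in $\Omega_h \setminus D^{(k)}_h$, recognize the contribution of the latter as exactly ${\bf f}({\bf x})$ from \eqref{eqn:f}, and isolate the self-contribution to obtain
$$u_h^{(n)}({\bf x}) = \frac{\tilde w_\epsilon({\bf x},{\bf x})}{W} u_h^{(n-1)}({\bf x}) + \sum_{{\bf y} \in S^{(k)}_{\epsilon,h}({\bf x}) \setminus \{{\bf x}\}} \frac{\tilde w_\epsilon({\bf x},{\bf y})}{W}\, \hat u({\bf y}) + {\bf f}({\bf x}).$$

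Next I would write down the canonical damped Jacobi and SOR updates for $\mathcal{L}{\bf u} = {\bf f}$ with $\mathcal{L}$ given by \eqref{eqn:Matrix}, whose diagonal entries are $\mathcal{L}_{ii} = 1 - \tilde w_\epsilon({\bf x},{\bf x})/W$. The crucial algebraic observation is that for the specific choice $\omega = \omega^{\ast} = 1 - \tilde w_\epsilon({\bf x},{\bf x})/W$ we have $\omega^{\ast}/\mathcal{L}_{ii} = 1$, so the diagonal scaling that usually appears in both schemes collapses to unity. After this cancellation, the $(1-\omega^\ast)u_h^{(n-1)}({\bf x})$ contribution that damping produces equals precisely $\tilde w_\epsilon({\bf x},{\bf x})u_h^{(n-1)}({\bf x})/W$, the off-diagonal coefficients reduce to $\tilde w_\epsilon({\bf x},{\bf y})/W$, and the forcing term reduces to ${\bf f}({\bf x})$. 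One then reads off that the parallel variant of \textsc{FillBoundary} matches damped Jacobi with $\omega=\omega^{\ast}$ and the sequential variant matches SOR with $\omega=\omega^{\ast}$ (under whatever ordering of $\partial_{\mbox{ready}} D^{(k)}_h$ the sequential loop uses).

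The main subtlety, and essentially the only real obstacle, is accounting correctly for the self-term. The FillBoundary loop was deliberately written (see Remark \ref{rem:punctured}) to omit ${\bf x}$ from the ghost-pixel sum, yet the equivalent-weight reformulation re-introduces ${\bf x}$ through $\tilde w_\epsilon({\bf x},{\bf x}) > 0$ whenever ${\bf x} \in \mbox{Supp}(A_{\epsilon,h}({\bf x})\setminus\{{\bf x}\})$ (which is exactly the regime where the semi-implicit extension is non-trivial). The identity $\omega^{\ast} = 1 - \tilde w_\epsilon({\bf x},{\bf x})/W$ is precisely what is needed to reconcile this re-emergent diagonal contribution with the relaxation term $(1-\omega^{\ast})u_h^{(n-1)}({\bf x})$ of damped Jacobi/SOR. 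Once this alignment is in place the rest is bookkeeping.
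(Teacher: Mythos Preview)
Your proposal is correct and follows essentially the same approach as the paper: convert the \textsc{FillBoundary} update to equivalent weights over real pixels, split into the known part (which yields ${\bf f}({\bf x})$) and the unknown part on $S^{(k)}_{\epsilon,h}({\bf x})$, isolate the self-term $\tilde w_\epsilon({\bf x},{\bf x})/W \cdot u_h^{(n-1)}({\bf x})$, and recognize the result as the damped Jacobi (or SOR) recursion with relaxation parameter $\omega^*$. Your explicit observation that $\omega^*/\mathcal{L}_{ii}=1$ is a clean way to phrase why the diagonal scaling disappears; the paper achieves the same thing by writing the plain Jacobi iterate $\tilde u^{(n+1)}$ first and then checking directly that the \textsc{FillBoundary} output equals $(1-\omega^*)u^{(n)}+\omega^*\tilde u^{(n+1)}$.
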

\begin{proof}  First, note that the Jacobi iteration for solving the linear system \eqref{eqn:Linearboundary} may be written as 
$$\tilde{u}^{(n+1)}_h({\bf x}) = \frac{1}{1-\frac{\tilde{w}_{\epsilon}({\bf x},{\bf x})}{W}} \left( \sum_{{\bf y} \in S^{(k)}_{\epsilon,h}({\bf x}) \backslash \{ {\bf x} \}}\frac{\tilde{w}_{\epsilon}({\bf x},{\bf y})}{W}u^{(n)}_h({\bf y}) + {\bf f} \right)$$
with ${\bf f}$ defined as in \eqref{eqn:f}.  By comparison, repeated (parallel) executation of \\$\mbox{FillBoundary}(D^{(k+1)}_h, \partial D^{(k)}_h)$ is equivalent (after applying the definition of equivalent weights) to 
\begin{eqnarray*}
u^{(n+1)}_h({\bf x}) &=& \sum_{{\bf y} \in S^{(k)}_{\epsilon,h}({\bf x})} \frac{\tilde{w}_{\epsilon}({\bf x},{\bf y})}{W}u^{(n)}_h({\bf y}) + {\bf f} \\
 &=& \frac{\tilde{w}({\bf x},{\bf x})}{W}u^{(n)}_h({\bf x})+\sum_{{\bf y} \in S^{(k)}_{\epsilon,h}({\bf x}) \backslash \{ {\bf x} \}} \frac{\tilde{w}_{\epsilon}({\bf x},{\bf y})}{W}u^{(n)}_h({\bf y}) + {\bf f} \\
 &=& (1-\omega^*)u^{(n)}_h({\bf x})+\omega^* \tilde{u}^{(n+1)}_h({\bf x}),
\end{eqnarray*}
which is a definition of damped Jacobi.  The proof for SOR is analogous.
\qed
\end{proof}

\section{Proof of Proposition \ref{prop:rates}} \label{app:rates}

First we fix some notation.  Suppose we are on iteration $k$ of semi-implicit Guidefill and let ${\bf x} := x^{(k)}_0$ denote a fixed but arbitrary member of $\partial D_h^{(k)}$.  The pixel $x^{(k)}_0$ is coupled by \eqref{eqn:Linearboundary} to its immediate neighbors $x_j^{(k)}$ for $-r-2 \leq j \leq r+2$, and also depends on the pixels $x^{(k-\delta )}_{j}:= {\bf x}+ h (j,\delta) \in \partial D_h^{(k-\delta)}$ for $(j,\delta) \in b^-_{r+2}$ which appear in the right hand side of \eqref{eqn:Linearboundary} within the vector ${\bf f}$.

\begin{figure}
\centering
\includegraphics[width=.7\linewidth]{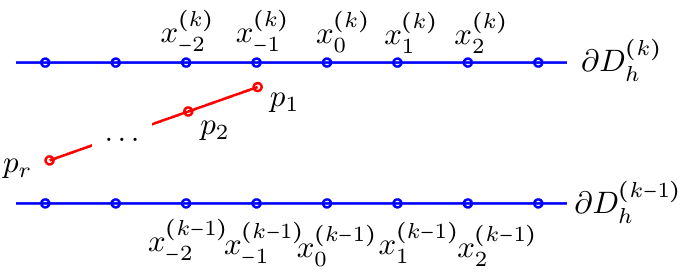}
\caption{{\bf Illustration of the position of the line $\ell^-_r$ relative to the current shell $\partial D^{(k)}_h$ and previous shell $\partial D^{(k-1)}_h$:}  Here we visualize the line $\ell^-_r := \{ -j {\bf g} \}_{j=1}^r \subseteq \tilde{b}^-_r$ when ${\bf g}=(\cos\theta,\sin\theta)$ with $0 < \theta < \theta_c = \arcsin(1/r)$.  In this case $\ell_r^-$ fits entirely into the space between $\partial D^{(k)}_h$ and $\partial D^{(k-1)}_h$.  For convenience, we enumerate $\ell^-_r$ as $\ell^-_r:=\{ p_j \}_{j=1}^r$ where $p_j = -j {\bf g}$.  We write the current pixel of interest ${\bf x}$ as $x_0^{(k)}$ for convenience, and its unknown neighbors in $\partial D^{(k)}_h$ as $x_j^{(k)}$ for $-r-2 \leq j \leq r+2$, while its already filled neighbors (we only show the ones in $\partial D^{(k-1)}_h$) are denoted by $x^{(k-\delta )}_{j}:= {\bf x}+ h (j,\delta) \in \partial D_h^{(k-\delta)}$ for $(j,\delta) \in b^-_{r+2}$.}
\label{fig:convergence}
\end{figure}

Next, note that since $\mu \rightarrow \infty$, the weights $w_r$ have vanished on all of $\tilde{b}^0_r$ except for the line of ghost pixels
$$\ell^-_r := \{ -j {\bf g} \}_{j=1}^r.$$
For convenience, we enumerate $\ell^-_r$ as $\ell^-_r:=\{ p_j \}_{j=1}^r$ where $p_j = -j {\bf g}$.  Each $p_j$ receives weight
$$w_j := \frac{1}{j}.$$
This situation is illustrated in Figure \ref{fig:convergence} for the case $0 < \theta < \theta_c$, where $\ell^-_r$ fits entirely into the space between $\partial D^{(k)}_h$ and $\partial D^{(k-1)}_h$.

To compute the entries of $\mathcal L$, we follow the idea of Section \ref{sec:fiction} and consider how the weight $w_j$ of each ghost pixel $p_j$ gets distributed to its real pixel neighbors.    For example, in Figure \ref{fig:convergence}, the weight $w_1$ of $p_1$ gets redistributed amongst the four pixels $x^{(k)}_0$, $x^{(k)}_{-1}$, $x^{(k-1)}_{0}$, and $x^{(k-1)}_{-1}$.

How exactly this weight gets redistributed is for the most part not something we need to know precisely.  For example, it is already clear from Figure \ref{fig:convergence} that if $0< \theta \leq \frac{\pi}{2}$, then none of the weight of any of the $p_j$ make it into any of $x^{(k)}_1$, $x^{(k)}_2$, $x^{(k)}_3\ldots$.  Similarly, if $\frac{\pi}{2} \leq \theta < \pi$, no weight makes it to any of $x^{(k)}_{-1}$, $x^{(k)}_{-2}$, $x^{(k)}_{-3}\ldots$.  This means that, given our assumed ordering of pixels within each layer $\partial D^{(k)}_h$, we already know that $\mathcal L$ is a lower triangular matrix.  Hence $L = \mathcal L$, $U = O$.  Therefore, $G_{\omega}$ takes on the simplified form 
$$G_{\omega} = (1-\omega)(I-\omega D^{-1}L)^{-1}.$$
We begin with $\|G_{\omega}\|_{\infty}$, the harder case.  In this case, defining
$$A := I-\omega D^{-1}L,$$
we have 
\begin{equation} \label{eqn:GwInTermsOfA}
\|G_{\omega}\|_{\infty} = |1-\omega|\|A^{-1}\|_{\infty}
\end{equation}
We know $\mathcal L = D-L$ is strictly diagonally dominant, so the following computation shows that $A$ is as well, provided $0<\omega \leq 1$:
$$\sum_{j \neq i} |A_{ij}| = \frac{|\omega|}{|D_{ii}|}\sum_{j \neq i} |L_{ij}| < |\omega| \leq 1 = |A_{ii}|.$$
Hence, the following classical bound due to Jim Varah \cite[Theorem 1]{Varah} applies:
$$\|A^{-1}\|_{\infty} \leq \frac{1}{\min_{i=1}^N \Delta_i(A)} \qquad \mbox{ where } \qquad \Delta_i(A):=\big||A_{ii}|-\sum_{j \neq i} | A_{ij}|\big|.$$
Since $A$ is a Toeplitz matrix with band width $r+2$ and at the same time a lower triangular matrix , we know that $\Delta_i(A)$ is the same for all $i \geq r+3$, but increases somewhat for $i \leq r+2$ as there are fewer off diagonal terms (due to our assumed Dirichlet boundary conditions).  In particular, the first row has {\em no} off diagonal terms, so we have $\Delta_1(A) = A_{11}=1$.  It follows that
$$\Delta_1(A) \geq \Delta_2(A) \geq \ldots \geq \Delta_{r+3}(A) = \Delta_{r+4}(A) = \ldots = \Delta_N(A).$$
Choosing row $N$ as a representative row for convenience gives
$$\|A^{-1}\| \leq \frac{1}{\Delta_{N}(A)}.$$
However, the identity
$$\|A^{-1}\|^{-1} = \inf_{{\bf x}} \frac{\|A{\bf x}\|}{\|{\bf x}\|}$$
(valid for any induced norm) means that in particular, for the vector $e$ containing $r+2$ zeros followed by $N-r-2$ ones, that is
$$ e = (\underbrace{0,\ldots,0}_{r+2},\underbrace{1,\ldots,1}_{N-r-2}),$$
we have 
$$\|A^{-1}\|^{-1}_{\infty} \leq \frac{\|Ae\|_{\infty}}{\|e\|_{\infty}}=\max_{i=1}^N \left|\sum_{j=1}^N A_{ij}e_j\right| =\max_{i=r+3}^N \left| |A_{ii}| - \sum_{j \neq i} |A_{ij}| \right|=\Delta_N(A).$$
where we have used the fact that for all $i$ we have $A_{ii} >0$ and $A_{ij} \leq 0$ for $j \neq i$.  The vector $e$ was chosen deliberately in order to avoid the first $r+2$ rows of $A$, which we have already said are different due to boundary conditions.  Hence
$$\|A^{-1}\|_{\infty} \geq \frac{1}{\Delta_{N}(A)}$$
as well, and having proven the bound in both directions we conclude
\begin{equation} \label{eqn:VarahEquality}
\|A^{-1}\|_{\infty} = \frac{1}{\Delta_{N}(A)}.
\end{equation}
\vskip 2mm
\begin{remark}
It appears that Varah's bound \cite[Theorem 1]{Varah}  should generalize to equality not only in our case, but to general strictly diagonally dominant Toeplitz matrices obeying $A_{ii}>0$ for all $i$ and $A_{ij} \leq 0$ whenever $j \neq i$, using a very similar argument.  However, this generalization does not appear in \cite{Varah} and we have been unable to find it in the literature.
\end{remark}
\vskip 2mm
The next step is to compute $\Delta_N(A)$.  To that end, note that by definition $A = I - \omega D^{-1}L$ obeys $A_{ii}=1$ for all $i$ and
$$A_{ij}=-\omega\frac{\left(\frac{\tilde{w}_r({\bf 0},(j-i){\bf e}_1)}{W}\right)}{\left(1-\frac{\tilde{w}_r({\bf 0},{\bf 0})}{W}\right)}=-\omega\frac{\tilde{w}_r({\bf 0},(j-i){\bf e}_1)}{W-\tilde{w}_r({\bf 0},{\bf 0})} \quad \mbox{ for } \quad \max(i-r-2, 1) \leq j < i.$$
by \eqref{eqn:Lsimplified}.  Here $W$ are the total weight and equivalent weights $\tilde{w}_r$ defined in Section \ref{sec:symmetry}.  Hence
$$\Delta_N(A) = \left| 1 - \omega \frac{\sum_{j=-r-2}^{-1} \tilde{w}_r({\bf 0},j{\bf e}_1)}{W-\tilde{w}_r({\bf 0},{\bf 0})} \right|=\left| 1 - \omega \left(\frac{\tilde{W}-\tilde{w}_{0,0}}{W-\tilde{w}_{0,0}}\right)\right|,$$
where $\tilde{W}$ and $\tilde{w}_{0,0}$ are defined as in \eqref{eqn:leakedMass}.  Combining the above with \eqref{eqn:GwInTermsOfA} and \eqref{eqn:VarahEquality} we finally obtain
$$\|G_{\omega}\|_{\infty} = \frac{|1-\omega|}{1-\omega \left(\frac{\tilde{W}-\tilde{w}_{0,0}}{W-\tilde{w}_{0,0}}\right)}$$
as claimed.  We leave deriving expressions for $W$, $\tilde{W}$, and $\tilde{w}_{0,0}$ until the end.  First we derive an expression for $\|J_{\omega}\|_{\infty}$ in terms of these three quantities.  Since $U=O$ we have
$$J_{\omega} = I - \omega D^{-1}\mathcal L = I - \omega D^{-1}(D-L) = (1-\omega)I + \omega D^{-1} L.$$
By definition we have
$$\|J_{\omega}\|_{\infty} := \max_{i=1}^N \sum_{j=1}^N | (J_{\omega})_{ij}|.$$
So long as $i \geq r+3$, this sum becomes 
$$\sum_{j=1}^N | (J_{\omega})_{ij}|=|1-\omega| +\omega \left(\frac{\tilde{W}-\tilde{w}_{0,0}}{W-\tilde{w}_{0,0}}\right).$$
If $i \leq r+2$, then this sum includes fewer terms and is potentially smaller.  Hence 
$$\|J_{\omega}\|_{\infty}=|1-\omega| +\omega \left(\frac{\tilde{W}-\tilde{w}_{0,0}}{W-\tilde{w}_{0,0}}\right).$$
Our remaining task is to derive the claimed expressions for $W$, $\tilde{W}$, and $\tilde{w}_{0,0}$.  The easiest is $W$.  By \eqref{eqn:2ndway} we have
$$W = \sum_{j=1}^r w_j = \sum_{j=1}^r \frac{1}{j}.$$
It is also not difficult to compute $\tilde{w}_{0,0}$, which represents fraction of the mass $w_1=1$ of the point $p_1$ that gets redistributed back to $x^{(k)}_0$ (see Figure \ref{fig:convergence}).  Since $p_1$ sits $h\sin\theta$ units below $\partial D^{(k)}_h$ and $h(1-\sin\theta)$ units above $\partial D^{(k-1)}_h$, and either $h\cos\theta$ units to the left $x^{(k)}_0$ and $h(1-\cos\theta)$ units right of $x^{(k)}_{-1}$ if $0 \leq \theta \leq \frac{\pi}{2}$ or $h|\cos\theta|$ units right of $x^{(k)}_0$ and $h(1-|\cos\theta|)$ units left of $x^{(k)}_{1}$ otherwise, it follows that 
$$\tilde{w}_{0,0} = (1-\sin\theta)(1-|\cos\theta|)w_1 = (1-\sin\theta)(1-|\cos\theta|).$$
For $\tilde{W}$, we split into cases.  If $0 \leq \theta \leq \theta_c$ or $\pi - \theta_c \leq \theta \leq \pi$, then $\ell^-_r$ fits entirely between $\partial D^{(k)}_h$ and $\partial D^{(k-1)}_h$, as in Figure \ref{fig:convergence}.  If $\theta_c < \theta < \pi - \theta_c$, then only $p_1$ up to $p_{j^*}$ fit (recall the definition of $j^*$ from the statement of the proposition).  As a result, in the first case every $p_j$ for $1 \leq j \leq r$ contribute mass to $\tilde{W}$.  In the second case, only the first $j^*$ contribute.  Each contributing $p_j$ is situated $h j \sin\theta$ units below $\partial D_h^{(k)}$ and $h(1-j\sin\theta)$ units above $\partial D^{(k-1)}_h$.  Hence each contributing $p_j$ contributes $(1-j\sin\theta)w_j$ towards $\tilde{W}$.  Hence, in the first case we have
$$\tilde{W} = \sum_{j=1}^r (1-j\sin\theta)\frac{1}{j}= \sum_{j=1}^r \frac{1}{j} - r\sin\theta,$$
while in the second we have
$$\tilde{W} = \sum_{j=1}^{j^*} \frac{1}{j} - j^* \sin\theta.$$
Our final claim on the expressions for $\|J_1\|_{\infty}$ and $\|G_1\|_{\infty}$ and the optimality of $\omega = 1$ is now a simple exercise and is left to the reader.
\qed

\section{Proof of Theorem \ref{thm:convergence2}} \label{app:marz}

We begin by breaking the error up into pieces as
$$\|u_h - u_{\mbox{m\"arz}}\|_p \leq \|u_h - u\|_p+\|u- u_{\mbox{m\"arz}}\|_p,$$
where $u$ is our proposed limit \eqref{eqn:weak}.  We will then separately prove
\begin{eqnarray*}
\|u_h - u\|_p & \leq &  K_1 \cdot (rh)^{(\frac{s'}{2}+\frac{1}{2p}) \wedge \frac{s}{2} \wedge 1} \\
\|u- u_{\mbox{m\"arz}}\|_p & \leq & K_2 r^{-q\left\{s \wedge \left( s' +\frac{1}{p} \right) \wedge 1\right\}}
\end{eqnarray*}
for constants $K_1>0$, $K_2>0$ with the claimed properties.  Taken together, these two inequalities prove the first claim \eqref{eqn:marzbound}.  For the second claim \eqref{eqn:chabuduo}, just the second is enough.

\vskip 1mm

\noindent { \bf Step 1: Bounding $\|u_h - u\|_p$:}  This step is straightforward.  By Theorem \ref{thm:convergence} we know 
$$\|u_h - u\|_p  \leq   K \cdot (rh)^{(\frac{s'}{2}+\frac{1}{2p}) \wedge \frac{s}{2} \wedge 1}$$
where $K(\theta^*_r,\frac{r}{{\bf g}^*_r \cdot e_2})>0$ depends on $u_0$, $\mathcal U$, and $\{U_i\}_{i=1}^M$ (which are fixed) and continuously on $\frac{r}{{\bf g}^*_r \cdot e_2}$ and $\theta^*_r$ (which are not fixed in this case).  By assumption, $\frac{{\bf g}^*_r}{r} \rightarrow {\bf g}^*$ as $r \rightarrow \infty$.  Moreover, by continuity we know there is a $\eta> 0$ s.t. $|\frac{{\bf g}^*_r \cdot e_2}{r} - {\bf g} \cdot e_2| < \eta$ and $|\theta^*_r - \theta^*| < \eta$ implies 
$$K\left(\theta^*_r,\frac{r}{{\bf g}^*_r \cdot e_2}\right) \leq K\left(\theta^*, \frac{1}{{\bf g}^* \cdot e_2}\right)+1.$$
We simply apply our assumption $\frac{{\bf g}^*_r}{r} \rightarrow {\bf g}^*$ to find an $R$ such that $r > R$ implies that the bounds involving $\eta$ are satisfied, and then define
$$K_1 = \max\left\{ \max_{r=1}^R\left\{ K\left(\theta^*_r,\frac{r}{{\bf g}^*_r \cdot e_2}\right)\right\}, K\left(\theta^*,\frac{1}{{\bf g}^* \cdot e_2}\right)+1\right\}.$$
All the claimed dependency properties of $K_1$ follow from the analogous properties of $K$.

\vskip 1mm

\noindent {\bf Step 2: Bounding $\|u- u_{\mbox{m\"arz}}\|_p$:}  This step is a little more work, but also straightforward.  We begin by relating $|u-u_{\mbox{m\"arz}}|$ to $|\cot\theta-\cot\theta_r|$, which in turn is related to $\|\frac{{\bf g}^*_r}{r} - {\bf g}^*\|$.  Indeed, it is straightforward to show
$$|\cot\theta - \cot\theta_r| \leq \frac{r}{|{\bf g}_r^* \cdot e_2 |}\left\|\frac{{\bf g}^*_r}{r} - {\bf g}^*\right\| \leq C r^{-q},$$
where
$$C = \max\left\{ \max_{r=1}^{R'} \left\{ \frac{r}{{\bf g}^*_r \cdot e_2} \right\}, \frac{1}{{\bf g}^* \cdot e_2}+1\right\}\cdot D,$$
and where $D$ is the hidden constant in our assumption of the $O(r^{-q})$ convergence of $\frac{{\bf g}^*_r}{r}$ to ${\bf g}^*$, and where $r > R'$ implies $\frac{r}{{\bf g}^*_r \cdot e_2} < \frac{1}{{\bf g}^* \cdot e_2}+1$ (similarly to before, $R'$ exists since we assume $\frac{{\bf g}^*_r}{r} \rightarrow {\bf g}^*$).  Note that $\frac{1}{{\bf g}^* \cdot e_2} \rightarrow \infty$ as $\theta^* \rightarrow 0$ or $\theta^* \rightarrow \pi$, and so $C$ does as well.  Next, first note that 
$$|u({\bf x})-u_{\mbox{m\"arz}}({\bf x})|=|u_0(\Pi_{\theta^*_r}({\bf x}))-u_0(\Pi_{\theta^*}({\bf x}))| \mbox{ for all } {\bf x} \in D.$$ 
where $\Pi_{\theta^*_r}$, $\Pi_{\theta^*}$ are the usual transport operators defined by \eqref{eqn:weak}.  Then note that we have the bound
$$|\Pi_{\theta^*_r}({\bf x})-\Pi_{\theta^*}({\bf x})| \leq |\cot\theta^*_r -\cot\theta^*| \leq Cr^{-q}.$$
This estimate will play the same role as our estimate on the moments of ${\bf X}_{\tau}$ in the proof of Theorem \ref{thm:convergence}.

Next, similarly to the proof of Theorem \ref{thm:convergence}, we divide $D$ into bands.  This time, however, we have two separate sets of bands $\{ B_i \}_{i=1}^M$ and $\{ \hat{B}_i \}_{i=1}^M$, each of which individually partition $D$.  The bands $B_i$ are the same as in the proof of theorem \ref{thm:convergence}, while the bands $\hat{B}_i$ are the same as $B_i$ except that the transport operator $\Pi_{\theta^*_r}$ has been replaced with $\Pi_{\theta^*}$.  That is, 
$$B_i := \Pi_{\theta^*_r}^{-1}((x_i,x_{i+1}]) \quad \mbox{ and } \quad \hat{B}_i := \Pi_{\theta^*}^{-1}((x_i,x_{i+1}]).$$
As usual we define $B_{i,h}:=B_i \cap \Omega_h$ and $\hat{B}_{i,h}:=\hat{B}_i \cap \Omega_h$.  It will be convenient in a moment to have an estimate of $\|\hat{B}_{i,h} \backslash B_{i,h}\|_p$ (for $\|\hat{B}_{i,h} \cap B_{i,h}\|_p$, the trivial bound $\|\hat{B}_{i,h} \cap B_{i,h}\|_p \leq 1$ suffices).  To that end, note that the set difference $\hat{B}_i \backslash B_i$ is a triangle with a base of length $|\cot\theta^* - \cot\theta^*_r|$ and a height of $1$.  At the same time $\|\hat{B}_{i,h} \backslash B_{i,h}\|^p_p = |\hat{B}_{i,h} \backslash B_{i,h}|h^2$ can be thought of as a Riemann sum approximating $\mbox{Area}(\hat{B}_i \backslash B_i)$.  It is elementary to show that this Riemann sum overestimates the area by at most $2h$.  Hence
$$\|\hat{B}_{i,h} \backslash B_{i,h}\|^p_p \leq \mbox{Area}(\hat{B}_i \backslash B_i)+2h = \frac{1}{2}|\cot\theta^* - \cot\theta^*_r|+2h \leq \left(\frac{C}{2}+2 \right)r^{-q},$$
and therefore 
$$\|\hat{B}_{i,h} \backslash B_{i,h}\|_p \leq \left(\frac{C}{2}+2 \right)^{\frac{1}{p}}r^{-\frac{q}{p}} \leq \left(\frac{C}{2}+2 \right)r^{-\frac{q}{p}},$$
since we have assumed $h \leq r^{-q}$.  Proceeding now as in the proof of Theorem \ref{thm:convergence}, we note that
\begin{eqnarray*}
\|u- u_{\mbox{m\"arz}}\|_p & \leq & \sum_{i=1}^M \Big\{ \|(u- u_{\mbox{m\"arz}})1_{\hat{B}_{i,h} \backslash B_{i,h}}\|_p+\|(u- u_{\mbox{m\"arz}})1_{\hat{B}_{i,h} \cap B_{i,h}}\|_p \Big\} \\
& \leq & C_{u_0,\mathcal U,\{U_i\}}\sum_{i=1}^M \Big\{ |\cot\theta^* - \cot\theta^*_r|^{s' \wedge 1}\|1_{\hat{B}_{i,h} \backslash B_{i,h}}\|_p\\
&+&|\cot\theta^* - \cot\theta^*_r|^{s \wedge 1}\|1_{\hat{B}_{i,h} \cap B_{i,h}}\|_p \Big\} \\
& \leq & C_{u_0,\mathcal U,\{U_i\}} \sum_{i=1}^M \Big\{ Cr^{-q (s' \wedge 1)} \cdot \|1_{\hat{B}_{i,h} \backslash B_{i,h}}\|_p+Cr^{-q(s \wedge 1)} \cdot \|1_{\hat{B}_{i,h} \cap B_{i,h}}\|_p \Big\} \\
& \leq & C_{u_0,\mathcal U,\{U_i\}} \sum_{i=1}^M \Big\{ C\left(\frac{C}{2}+1\right)r^{-q\left(s'+\frac{1}{p}\right) \wedge \left(1+\frac{1}{p}\right)}+Cr^{-q(s \wedge 1)} \cdot 1 \Big\} \\
& \leq & MC_{u_0,\mathcal U,\{U_i\}}C\left(\frac{C}{2}+1\right)\left( r^{-q\left\{{\left(s'+\frac{1}{p}\right) \wedge \left(1+\frac{1}{p}\right)}\right\}}+r^{-q(s \wedge 1)}\right)\\
& \leq &2MC_{u_0,\mathcal U,\{U_i\}}C\left(\frac{C}{2}+1\right)r^{-q\left\{{\left(s'+\frac{1}{p}\right) \wedge s \wedge 1}\right\}}\\
& := & K_2r^{-q\left\{\left(s'+\frac{1}{p}\right) \wedge s \wedge 1\right\}}.\\
\end{eqnarray*}
That $K_2$ also satisfies the claimed properties follows from its relationship to $C_{u_0,\mathcal U,\{U_i\}}$ and $C$.
\qed

\section{Additional details on coherence transport and the angular spectrum} \label{app:angSpect}

In Section \ref{sec:kink3main} we related the limiting transport direction ${\bf g}^*_r = \lim_{\mu \rightarrow \infty} {\bf g}^*_{\mu,r}$ of coherence transport to the angular spectrum $\Theta(b^-_r)$ of $b^-_r$ defined by \eqref{eqn:spectrumbr}.  More precisely, first we connected ${\bf g}^*_r$ to the set of minimizers $\Psi$ within $b^-_r$ of the orthogonal distance to the line $L_{\bf g} = \{ \lambda {\bf g} : \lambda \in \field{R} \}$, where ${\bf g}$ is the guidance direction of coherence transport.  Then we claimed that $\Psi$ and $\Theta(b^-_r)$ are related.  Now is the time to prove that claim.  We will do this in Proposition \ref{prop:correspondence} not just for $b^-_r$, but for a general finite subset $A \subseteq \field{Z}^2 \cap \{ y \leq -1\}$.  To do this, however, first we generalize the concept of angular spectrum to a general subset $A \subseteq \field{Z}^2$.

\vskip 2mm

\begin{definition}
Given $A \subseteq \field{Z}^2$ we define the angular spectrum of $A$ by
\begin{equation} \label{eqn:spectrum}
\Theta(A) = \{ \theta \in [0,\pi) : \theta=\theta({\bf y}) \mbox{ for some } {\bf y} \in A \backslash \{{\bf 0}\} \}
\end{equation}
If $A$ is finite it follows that $\Theta(A)$ is as well, and we write
$$\Theta(A)=\{0 \leq \theta_1 < \theta_2 < \ldots <\theta_n < \pi\}.$$
See Figure \ref{fig:relatedTriangles}(b) for an illustration of $\Theta(A)$ in the case $A = b^-_r$.  
\end{definition}

Once again we have defined $\Theta(A)$ modulo $\pi$ to reflect the fact that ${\bf g}^*_r$ and $-{\bf g}^*_r$ define the same transport equation.  The characterization of $\Theta(A)$ is of interest in and of itself and has been studied for $A = b_r$ by the likes of Erd\"os \cite{Erdos} and many others, see for example \cite{Cilleruelo1993198} (they, however, do not define it modulo $\pi$).

\vskip 2mm

\begin{remark} \label{rem:why}

 The point of generalizing the concept of angular spectrum and generalizing Proposition \ref{prop:correspondence} from $b^-_r$ to a general $A \subseteq \field{Z}^2 \cap \{ y \leq -1\}$ is so that we can show (Corollary \ref{corollary:generalizedCT}) that our kinking results for coherence transport from Section \ref{sec:kink3main} continue to hold, essentially unchanged, if coherence transport is modified to sum over a discrete square, for example, rather than a discrete ball.
\end{remark}

\vskip 2mm

\begin{proposition} \label{prop:correspondence}
Let $A \subseteq \field{Z}^2 \cap \{ y \leq -1\}$ obey $|A|<\infty$, and let $\Theta(A) = \{\theta_1,\theta_2,\ldots,\theta_n\}$ denote the angular spectrum of $A$, and assume $n=|\Theta(A)|\geq 2$ in order to avoid degenerate cases (that is, the elements of $A$ do not all sit on a single line through the origin).  Let ${\bf g}_{\theta}=(\cos\theta,\sin\theta)$ and denote by $\Psi_{\theta}$ the set of minimizers of $|{\bf g}_{\theta}^{\perp} \cdot {\bf y}|$ over ${\bf y} \in A$ (that is, the point(s) in $A$ minimizing the orthogonal distance to the line $L_{{\bf g}_{\theta}}$.  Given ${\bf y} \in A$, we say that ${\bf y}$ is a singleton minimizer if there is some $\theta \in [0,\pi)$ for which $\Psi_{\theta} = \{ {\bf y} \}$.  Let $Y:=\{ {\bf y}_1,{\bf y}_2,\ldots,{\bf y}_{n'}\}$ denote the set of all singleton minimizers, ordered so that $\theta({\bf y}_i) \leq \theta({\bf y}_{i+1})$ for all $i=1,\ldots n'-1$.  Then $n'=n$,
$$\Theta(A) = \{\theta({\bf y}_1),\theta({\bf y}_2),\ldots,\theta({\bf y}_n)\},$$
and moreover $\theta_i = \theta({\bf y}_i)$ for all $i=1,\ldots,n$.  Finally, each singleton minimizer ${\bf y}_i$ is the {\em shortest} vector in $A$ such that $\theta({\bf y})=\theta_i$, that is for every ${\bf y} \in A$ such that $\theta({\bf y})=\theta_i$, we have $\|{\bf y}\| \geq \|{\bf y}_i\|$.
\end{proposition}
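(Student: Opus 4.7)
The plan rests on the clean trigonometric identity $|{\bf g}_\theta^{\perp} \cdot {\bf y}| = \|{\bf y}\|\,|\sin(\theta({\bf y})-\theta)|$, which is valid modulo $\pi$ and reduces the problem to a joint minimization in length and angular distance. First, observe that because $A \subseteq \field{Z}^2 \cap \{y \leq -1\}$, any two vectors in $A$ with the same angle in $[0,\pi)$ lie on a common ray through the origin in the lower half plane; in particular, for each $\theta_i \in \Theta(A)$ there is a \emph{unique} shortest element of $A$ with that angle, which I will denote ${\bf y}_i^{\star}$. My goal is to show that the map $\theta_i \mapsto {\bf y}_i^{\star}$ is an order-preserving bijection between $\Theta(A)$ and $Y$.

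For the forward direction, I would show that each ${\bf y}_i^{\star}$ is in fact a singleton minimizer. Fix $i$ and consider $\theta$ close to but distinct from $\theta_i$. For ${\bf y} \in A$ with $\theta({\bf y})=\theta_i$, the identity gives $|{\bf g}_\theta^{\perp} \cdot {\bf y}| = \|{\bf y}\|\,|\sin(\theta_i-\theta)|$, uniquely minimized by ${\bf y}_i^{\star}$. For ${\bf y} \in A$ with $\theta({\bf y})=\theta_j \neq \theta_i$, the same identity gives a lower bound $\|{\bf y}\|\,|\sin(\theta_j-\theta)|$, which tends to a strictly positive limit as $\theta \to \theta_i$ (using $n \geq 2$ to ensure some $\theta_j \neq \theta_i$ exists, and the finiteness of $A$ for a uniform bound over $j$). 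For $\theta$ sufficiently close to $\theta_i$ the former is strictly smaller than the latter, so $\Psi_\theta=\{{\bf y}_i^{\star}\}$.

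For the converse, suppose ${\bf y}$ is a singleton minimizer, with $\Psi_\theta = \{{\bf y}\}$ for some $\theta \in [0,\pi)$. If $\theta({\bf y})=\theta$ then $|{\bf g}_\theta^{\perp}\cdot{\bf y}|=0$, and singleton-ness forces no other element of $A$ to have angle $\theta$, whence $\theta \in \Theta(A)$ and ${\bf y}$ is trivially the unique (hence shortest) vector in $A$ at that angle. Otherwise $\theta \neq \theta({\bf y})=:\theta_i \in \Theta(A)$, and the identity applied to any other ${\bf y}' \in A$ with angle $\theta_i$ gives $|{\bf g}_\theta^{\perp}\cdot{\bf y}'|=\|{\bf y}'\||\sin(\theta_i-\theta)|$ with a nonzero sine factor, so singleton-ness forces $\|{\bf y}'\|>\|{\bf y}\|$; thus ${\bf y}={\bf y}_i^{\star}$. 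Combining the two directions yields the bijection $\Theta(A) \leftrightarrow Y$, which immediately gives $n'=n$, $\Theta(A)=\{\theta({\bf y}_1),\dots,\theta({\bf y}_n)\}$, and (since the ordering of $Y$ by $\theta({\bf y}_i)$ matches the enumeration $\theta_1<\dots<\theta_n$) the identification $\theta_i=\theta({\bf y}_i)$; the final assertion that ${\bf y}_i$ is the shortest vector of $A$ at angle $\theta_i$ is then the defining property of ${\bf y}_i^{\star}$.

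The main technical delicacy is the ``close enough'' quantification in the forward step: one must choose $\theta$ near $\theta_i$ so that $\|{\bf y}_i^{\star}\||\sin(\theta_i-\theta)|$ beats the competing quantities $\|{\bf y}\||\sin(\theta_j-\theta)|$ uniformly over the finitely many $({\bf y},\theta_j)$ with $\theta_j\neq\theta_i$. Since $A$ is finite and $\Theta(A)$ is finite with positive gaps, this is a routine but non-vacuous estimate; everything else is essentially a trigonometric bookkeeping exercise.
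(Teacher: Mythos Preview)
Your proposal is correct and follows essentially the same approach as the paper: both hinge on the identity $|{\bf g}_\theta^{\perp}\cdot{\bf y}|=\|{\bf y}\|\,|\sin(\theta({\bf y})-\theta)|$ and a perturbation argument near each $\theta_i\in\Theta(A)$ to exhibit the shortest vector at that angle as a singleton minimizer. The paper packages the perturbation step via auxiliary functions $\delta(\theta),\Delta(\theta)$ and continuity to first obtain $\Psi_\theta\subseteq\Psi_{\theta_i}$, whereas you apply the identity directly to all of $A$; you are also more explicit about the converse (that every singleton minimizer must be the shortest vector at its angle), which the paper treats as following from the forward construction.
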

\begin{proof}
Let $\theta_i \in \Theta(A)$.  Our main objective is to show that $\theta_i = \theta({\bf y}_i)$.  To achieve that, it suffices to show that the {\em sets} $\Theta(A)$ and $\{ \theta({\bf y}_1),\theta({\bf y_2}),\ldots,\theta({\bf y}_{n'})\}$ are equal, since from here it follows that $n'=n$, and then the desired identity follows from the ordering property $\theta({\bf y}_i) \leq \theta({\bf y}_{i+1})$ for all $i=1,\ldots,n-1$.  Our secondary objective, to show that ${\bf y}_i$ is the shortest vector in $A$ obeying $\theta({\bf y})=\theta_i$ is something that will be proved along the way.

For the first step, the inclusion $\Theta(A) \supseteq \{\theta({\bf y}_1),\theta({\bf y}_2),\ldots,\theta({\bf y}_{n'})\}$ is obvious and follows from the definition of $\Theta(A)$.  Hence it suffices to prove 
\begin{equation} \label{eqn:desiredInclusion}
\Theta(A) \subseteq \{\theta({\bf y}_1),\theta({\bf y}_2),\ldots,\theta({\bf y}_{n'})\}.
\end{equation}
Still fixing $\theta_i \in \Theta(A)$, by definition we have $\theta_i = \theta({\bf y})$ for some ${\bf y} \in A$.  In fact, we have $\theta_i = \theta({\bf y})$ for all ${\bf y} \in \Psi_{\theta_i}$, which in this case is a set of vectors that are all scalar multiples of ${\bf g}_{\theta_i}$ and hence all of which obey $|{\bf g}^{\perp}_{\theta_i} \cdot {\bf y}|=0$.  Define the functions $\Delta(\theta)$ and $\delta(\theta)$ by
$$\delta(\theta) := \max_{{\bf y} \in \Psi_{\theta}} |{\bf g}^{\perp}_{\theta} \cdot {\bf y}|$$
$$\Delta(\theta) := \begin{cases} 
\min_{{\bf y} \in A \backslash \Psi_{\theta}} |{\bf g}^{\perp}_{\theta} \cdot {\bf y}|. & \mbox{ if } A \backslash \Psi_{\theta} \neq \emptyset \\
\delta(\theta) & \mbox{ otherwise. }
\end{cases}$$
Then $\delta(\theta)$ and $\Delta(\theta)$ each depend continuously on $\theta$.  Moreover, it is straightforward to show that $\Delta(\theta_i) > \delta(\theta_i)=0$, since we have assumed $|\Theta(A)|\geq 2$ (this condition could only ever be violated if all elements of $A$ were scalar multiples of one another).  By continuity, it follows that for $|\theta-\theta_i|$ sufficiently small we have $\delta(\theta) < \Delta(\theta)$, which means that $\Psi_{\theta} \subseteq \Psi_{\theta_i}$.  But for $|\theta-\theta_i| \leq \frac{\pi}{2}$ and for ${\bf y} \in \Psi_{\theta_i}$, we have the explicit formula
$$|{\bf g}^{\perp}_{\theta} \cdot {\bf y}|=\|{\bf y}\|\sin|\theta-\theta_i|.$$
This is obviously minimized by whichever ${\bf y}^* \in \Psi_{\theta_i}$ is shortest - i.e. $\|{\bf y}^*\| \leq \|{\bf y}\|$ for all ${\bf y} \in \Psi_{\theta_i}$.  Moreover, since $A$ is contained in the lower half plane we know this minimizer is unique.  Hence $\Psi_{\theta} = \{ {\bf y}^*\}$, which makes ${\bf y}^*$ a singleton minimizer.  Since $\theta_i = \theta({\bf y}^*)$, this proves the desired inclusion \eqref{eqn:desiredInclusion}, and we have already proven our secondary claim on the length of ${\bf y}^*$ being minimal.
\qed
\end{proof}

\vskip 2mm

\begin{figure}
\centering
\includegraphics[width=.6\linewidth]{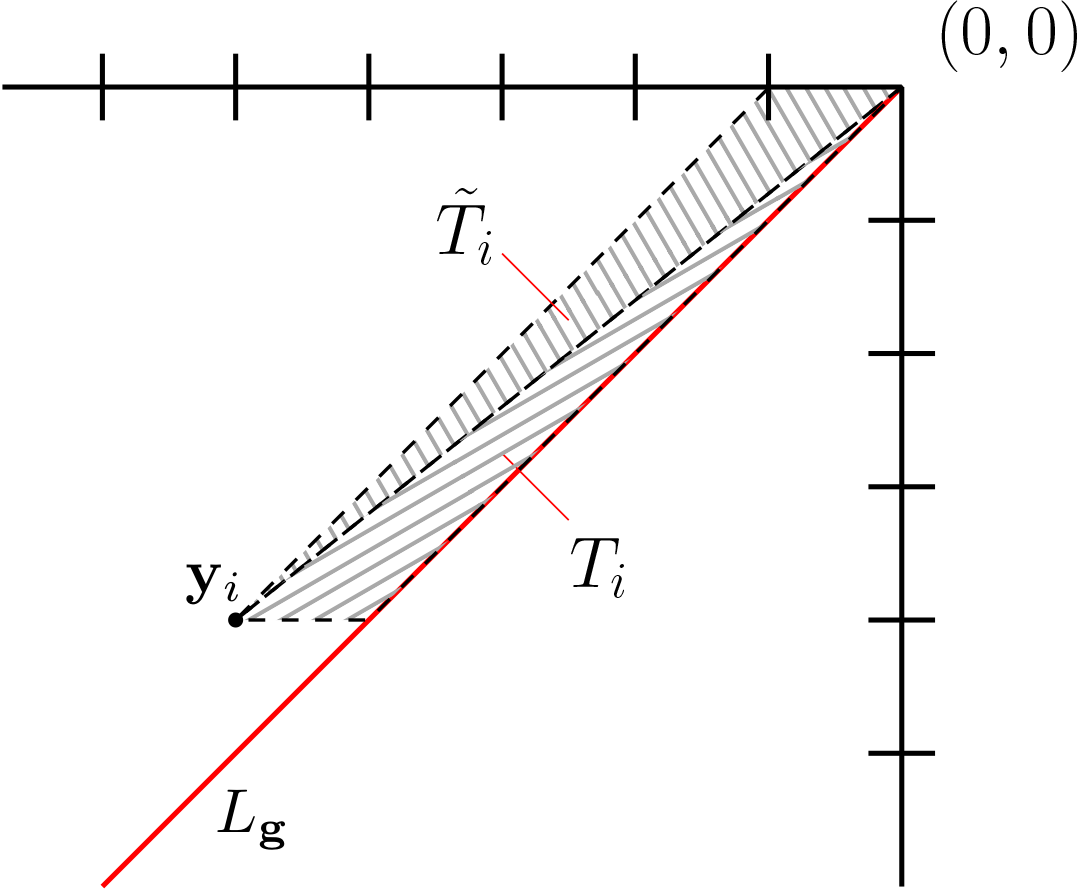}
\caption{{\bf Proving that the point casting the shallowest angle on $L_{\bf g}$ from above is also the point minimizing the orthogonal distance from above:}  Given ${\bf y}_i \in A \subseteq \field{Z}^2 \cap \{ y \leq -1\}$ and line $L_{{\bf g}}=\{\lambda {\bf g} : \lambda \in \field{R}\}$, ${\bf g}=(\cos\theta,\sin\theta)$ passing through the origin, we define the (open) triangles $T_i$, $\tilde{T}_i$ to be the unique pair of open triangles with one side parallel to $L_{{\bf g}}$, another side horizontal, and a third side equal to the ray from the origin to ${\bf y}_i$.  A symmetry-based argument in Proposition \ref{prop:eitherOr} shows that, under modest hypotheses on $A$, the triangle $T_i$ contains a lattice point (element of $\field{Z}^2$) if and only if $\tilde{T}_i$ does.}
\label{fig:relatedTriangles}
\end{figure}

Our next claim was a formula for $\Psi$ valid when $\theta_i < \theta < \theta_{i+1}$ for two consecutive members $\theta_i,\theta_{i+1} \in \Theta(A)$, when $0:=\theta_{0,1}<\theta<\theta_1$, and when $\theta_n < \theta < \theta_{n,n+1}:=\pi$.  Proposition \ref{prop:eitherOr} derives this formula, under the assumption that $A$ can be described a union of discrete rectangles on or below the line $y=-1$ and straddling the line $x=0$.  This includes the case $A = b^-_r$, but also covers a number of other cases, as mentioned in Remark \ref{rem:why}.  Credit for this proposition goes to D\"om\"ot\"or P\'alv\"olgyi, who had the critical idea of using a symmetry based argument \cite{MathOverflow}.  

\vskip 2mm

\begin{proposition} \label{prop:eitherOr}
Let $A \subseteq \field{Z}^2 \cap \{ y \leq -1 \}$ be a finite union of discrete rectangles of the form
$$A = \bigcup_{k=1}^K [a_k, b_k ] \times [c_k, d_k] \cap \field{Z}^2$$
where $-\infty < a_k \leq 0 \leq b_k < \infty$, $-\infty < c_k \leq d_k \leq -1$ for all $k$.  Let 
$$\Theta(A):=\{\theta_1,\theta_2, \ldots,\theta_n\}$$ 
denote the angular spectrum of $A$, let ${\bf g} =  (\cos\theta,\sin\theta)$, and let $Y = \{{\bf y}_1,{\bf y}_2,\ldots,{\bf y}_n\}$ be the set of singleton minimizers defined in Proposition \ref{prop:correspondence} of $|{\bf g}^{\perp} \cdot {\bf y}|$ over $A$ as $\theta$ varies from $0$ to $\pi$.  For each $1 \leq i \leq n-1$, define the transition angle $\theta_{i,i+1} \in (\theta_i,\theta_{i+1})$ by
$$\theta_{i,i+1} = \theta({\bf y}_i+{\bf y}_{i+1}).$$
Define also $\theta_{0,1}:=0$ and $\theta_{n,n+1}=\pi$ for convenience.  Then
$$\Psi=\begin{cases}
\{{\bf y}_i\} & \mbox{ if } \theta_i < \theta < \theta_{i,i+1} \quad \mbox{ for some } i=1,\ldots,n \\
\{{\bf y}_i,{\bf y}_{i+1}\} & \mbox{ if } \theta=\theta_{i,i+1} \quad \mbox{ for some } i=1,\ldots,n-1 \\
\{{\bf y}_{i+1}\} & \mbox{ if } \theta_{i,i+1} < \theta < \theta_{i+1} \quad \mbox{ for some } i=0,\ldots,n-1 \\
\end{cases}$$
\end{proposition}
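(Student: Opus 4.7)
The plan is to separate Proposition~\ref{prop:eitherOr} into two subclaims: (a) a \emph{restriction lemma} asserting that for every $\theta \in (\theta_i,\theta_{i+1})$ any minimizer of $|{\bf g}_\theta^\perp \cdot {\bf y}|$ over ${\bf y}\in A$ already lies in $\{{\bf y}_i,{\bf y}_{i+1}\}$; and (b) an \emph{exchange calculation} identifying the unique angle $\theta^*\in(\theta_i,\theta_{i+1})$ at which ${\bf y}_i$ and ${\bf y}_{i+1}$ become equidistant from $L_{{\bf g}_\theta}$. Granted both, the piecewise description of $\Psi$ follows by continuity and monotonicity: as $\theta$ sweeps from $\theta_i$ to $\theta_{i+1}$ the distance $|{\bf g}_\theta^\perp\cdot{\bf y}_i|=\|{\bf y}_i\|\sin(\theta-\theta_i)$ strictly increases from $0$, while $|{\bf g}_\theta^\perp\cdot{\bf y}_{i+1}|=\|{\bf y}_{i+1}\|\sin(\theta_{i+1}-\theta)$ strictly decreases to $0$, so the two curves cross exactly once at $\theta^*$, yielding $\Psi=\{{\bf y}_i\}$ on $(\theta_i,\theta^*)$, $\Psi=\{{\bf y}_i,{\bf y}_{i+1}\}$ at $\theta^*$, and $\Psi=\{{\bf y}_{i+1}\}$ on $(\theta^*,\theta_{i+1})$.

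Step (b) is a short computation. For $\theta\in(\theta_i,\theta_{i+1})$, the points ${\bf y}_i$ and ${\bf y}_{i+1}$ lie on opposite sides of $L_{{\bf g}_\theta}$ because this line sweeps strictly between the rays carrying them. Hence equidistance is equivalent to ${\bf g}_\theta^\perp\cdot{\bf y}_i=-{\bf g}_\theta^\perp\cdot{\bf y}_{i+1}$, which rearranges to ${\bf g}_\theta^\perp\cdot({\bf y}_i+{\bf y}_{i+1})=0$ and therefore to $\theta=\theta({\bf y}_i+{\bf y}_{i+1})=:\theta_{i,i+1}$. The inclusion $\theta_{i,i+1}\in(\theta_i,\theta_{i+1})$ is clear because ${\bf y}_i+{\bf y}_{i+1}$ is a positive combination of two lower half-plane vectors whose rays from the origin span this open angular interval.

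The heart of the proof is step (a), which is where the symmetry argument suggested by Figure~\ref{fig:relatedTriangles} enters. I would argue by contradiction: suppose some ${\bf z}\in A$ with $\theta({\bf z})\notin\{\theta_i,\theta_{i+1}\}$ satisfies $|{\bf g}_\theta^\perp\cdot{\bf z}|<|{\bf g}_\theta^\perp\cdot{\bf y}_i|$ (the parallel case where ${\bf z}$ beats ${\bf y}_{i+1}$ is symmetric and handled by interchanging the roles of $i$ and $i+1$). A short geometric check places ${\bf z}$ inside one of the two open triangles shown in Figure~\ref{fig:relatedTriangles}, say $T_i$. The key observation is that the point reflection $\sigma:{\bf x}\mapsto {\bf y}_i-{\bf x}$ through the midpoint of the segment from ${\bf 0}$ to ${\bf y}_i$ is a lattice automorphism (because ${\bf y}_i\in\field{Z}^2$), sends horizontal lines to horizontal lines and lines parallel to $L_{{\bf g}_\theta}$ to lines parallel to $L_{{\bf g}_\theta}$, and fixes the segment from ${\bf 0}$ to ${\bf y}_i$ setwise. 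Consequently $\sigma(T_i)=\tilde T_i$ and ${\bf z}':=\sigma({\bf z})$ is a lattice point in $\tilde T_i$. Using the hypothesis that $A$ is a union of axis-aligned rectangles straddling the $y$-axis ($a_k\leq 0\leq b_k$) and contained in $\{y\leq -1\}$, one verifies that ${\bf z}'\in A$: the straddling property absorbs the sign change of the $x$-coordinate that $\sigma$ induces within the rectangle containing ${\bf z}$, and a short case analysis on the $y$-coordinate shows ${\bf z}'$ still satisfies $y\leq -1$ because $\tilde T_i$ sits inside this half-plane. Finally, an angular inspection of the placement of $\tilde T_i$ reveals $\theta({\bf z}')\in(\theta_i,\theta_{i+1})$ strictly, contradicting the assumption that $\theta_i,\theta_{i+1}$ are consecutive in $\Theta(A)$.

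The main obstacle will be the final verification that ${\bf z}'={\bf y}_i-{\bf z}$ really belongs to $A$ itself and not merely to $\field{Z}^2$. The counterexample of Figure~\ref{fig:genericVsNot} shows this step \emph{must} rely on the specific rectangular-union structure of $A$ assumed in the hypotheses, so the argument cannot be made purely lattice-theoretic; a careful case analysis covering whether ${\bf z}$ lies above or below the horizontal through ${\bf y}_i/2$ and inside which constituent rectangle of $A$ appears unavoidable. A secondary technicality is the degenerate boundary cases $\theta_{0,1}:=0$ and $\theta_{n,n+1}:=\pi$, where the triangle picture collapses; these are handled by a limiting argument from the interior of $(\theta_1,\theta_2)$ and $(\theta_{n-1},\theta_n)$ respectively, exploiting the upper semicontinuity of $\theta\mapsto\Psi$.
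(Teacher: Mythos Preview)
Your decomposition into (a) the restriction $\Psi_\theta\subseteq\{{\bf y}_i,{\bf y}_{i+1}\}$ and (b) the transition-angle computation is exactly the paper's, and your treatment of (b) together with the monotone-crossing argument matches the paper line for line. For (a) you also land on the same key idea: the point reflection $\sigma({\bf x})={\bf y}_i-{\bf x}$ swapping the two triangles $T_i,\tilde T_i$ of Figure~\ref{fig:relatedTriangles}.

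Where you diverge is in how you close the argument, and you make it harder than necessary. You propose to start from a hypothetical closer point ${\bf z}\in A$, reflect it to ${\bf z}'={\bf y}_i-{\bf z}$ in the other triangle, and then \emph{verify that ${\bf z}'\in A$} via a case analysis on which constituent rectangle contains ${\bf z}$---and you rightly flag this as the main obstacle. The paper sidesteps this verification entirely by reversing the order of operations. Its one-line ``no holes'' observation is that, under the rectangular-union hypothesis, each of the open triangles satisfies $T\cap A=T\cap\field{Z}^2$: every lattice point the triangle contains already lies in $A$. Applying this to the \emph{shallowest-angle} triangle $T_i$ (which is empty of $A$-points because $\theta_i,\theta_{i+1}$ are consecutive in $\Theta(A)$) upgrades $T_i\cap A=\emptyset$ to $T_i\cap\field{Z}^2=\emptyset$. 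Now the lattice-preserving reflection $\sigma$ gives $\tilde T_i\cap\field{Z}^2=\emptyset$, and since $A\subseteq\field{Z}^2$ the conclusion $\tilde T_i\cap A=\emptyset$ is free. No case analysis on ${\bf z}$ is needed: the rectangular structure of $A$ is invoked once, in the easy direction (lattice point in triangle $\Rightarrow$ point of $A$), rather than in the direction you were anticipating (reflected point of $A$ $\Rightarrow$ still in $A$). Your contrapositive formulation is logically equivalent, but the paper's ordering turns your ``main obstacle'' into a triviality.
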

\begin{proof}
The bulk of the work is to prove that if $\theta_i < \theta < \theta_{i+1}$, then 
$$\Psi_{\theta} := \operatorname{argmin}_{{\bf y} \in A} |{\bf g}^{\perp} \cdot {\bf y}| \subseteq \{ {\bf y}_i, {\bf y}_{i+1}\}.$$
Once this is established, since we evidently have $\Psi_{\theta}=\{ {\bf y}_i \}$, $\Psi_{\theta}=\{{\bf y}_{i+1}\}$ for $\theta$ sufficiently close to $\theta_i$ and $\theta_{i+1}$ respectively, it follows that that
$$\Psi_{\theta} = \begin{cases} \{ {\bf y}_i \}& \mbox{ if } \theta < \theta_c \\  \{ {\bf y}_i,{\bf y}_{i+1} \} & \mbox{ if } \theta = \theta_c \\  \{ {\bf y}_{i+1} \} & \mbox{ if } \theta > \theta_c.\end{cases}$$
where $\theta_c$ is defined by $|{\bf g}^{\perp} \cdot {\bf y}_i|=|{\bf g}^{\perp} \cdot {\bf y}_{i+1}|$.  One can readily show this is equivalent to $\theta_c = \theta({\bf y}_i+{\bf y}_{i+1})$.

To prove that $\Psi_{\theta} := \operatorname{argmin}_{{\bf y} \in A} |{\bf g}^{\perp} \cdot {\bf y}| \subseteq \{ {\bf y}_i, {\bf y}_{i+1}\}$ as claimed, 
consider the open triangles $T_i$, $\tilde{T}_i$ defined in terms of ${\bf y}_i$ as shown in Figure \ref{fig:relatedTriangles}, as well as open triangles $T_{i+1}$, $\tilde{T}_{i+1}$ defined in the same way in terms of ${\bf y}_{i+1}$.  The triangles $T_i$ and $T_{i+1}$ each have empty intersection with $A$, as ${\bf y}_i$ and ${\bf y}_{i+1}$ are the elements of $A$ that cast the shallowest angles on $L_{{\bf g}}$ from above and below.  To prove $\Psi_{\theta} \subseteq \{ {\bf y}_i, {\bf y}_{i+1}\}$, we need to show that ${\bf y}_i$ and ${\bf y}_{i+1}$ are also the two {\em closest} points in $A$ to $L_{{\bf g}}$.  This amounts to proving that the triangles $\tilde{T}_i$ and $\tilde{T}_{i+1}$ have empty intersection with $A$ as well.

To show this, first note that our assumptions on $A$ imply that the intersection of each of our four triangles with $A$ is equal to their intersection with $\field{Z}^2$ as a whole (because $A$ has no ``holes'').  Second, note that the map
$${\bf F}({\bf x}) = {\bf y}_i - {\bf x}$$
is a bijection of the plane taking $T_i$ to $\tilde{T}_i$ such that ${\bf F}(\field{Z}^2) = \field{Z}^2$.  Hence $\tilde{T}_i$ contains a lattice point if and only if $T_i$ does.  But 
$$T_i \cap \field{Z}^2 = T_i \cap A = \emptyset$$
by assumption, and so 
$$\tilde{T}_i \cap \field{Z}^2 = \tilde{T}_i \cap A = \emptyset$$
as well.  This proves the claim for $\tilde{T}_i$ and the proof for $\tilde{T}_{i+1}$ is analogous.  The remaining cases $0:=\theta_{0,1}<\theta<\theta_1$ and $\theta_n < \theta < \theta_{n,n+1}:=\pi$ are straightforward and left as an exercise.
\qed
\end{proof}

\vskip 2mm

Proposition \ref{prop:eitherOr} has a couple of straightforward corollaries.  The first is our claim from \ref{sec:kink3main} that $\Psi$ is a singleton set for all but finitely many $\theta$.  This is obvious from the statement of the proposition (which gives an expression for $\Psi$ for all but finitely many $\theta$) and requires no proof.  The second corollary generalizes our formula for $\theta^*_r$ from Section \ref{sec:kink3main}, and uses the following observation, which we also used in Section \ref{sec:kink3main} and owe a proof of.

\begin{observation} \label{obs:unitVec}
Let ${\bf v}$, ${\bf w}$ be unit vectors in $S^1$.  Then
$$\theta({\bf v}+{\bf w})=\frac{\theta({\bf v})+\theta({\bf w})}{2}.$$
\end{observation}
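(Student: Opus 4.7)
The approach is to reduce the claim to the standard sum--to--product trigonometric identity. Write ${\bf v}=(\cos\alpha,\sin\alpha)$ and ${\bf w}=(\cos\beta,\sin\beta)$ for any choice of real representatives $\alpha,\beta$ of the arguments of ${\bf v}$ and ${\bf w}$. Applying the sum--to--product formulas component-wise gives
\begin{equation*}
{\bf v}+{\bf w} \;=\; 2\cos\!\left(\frac{\alpha-\beta}{2}\right)\Bigl(\cos\tfrac{\alpha+\beta}{2},\,\sin\tfrac{\alpha+\beta}{2}\Bigr),
\end{equation*}
which exhibits ${\bf v}+{\bf w}$ as a real scalar multiple of the unit bisector ${\bf b}_{\alpha,\beta}:=(\cos\tfrac{\alpha+\beta}{2},\sin\tfrac{\alpha+\beta}{2})$. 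Since the map $\theta:\field{R}^2\setminus\{{\bf 0}\}\to[0,\pi)$ is invariant under multiplication by any nonzero real scalar (it depends only on the line $L_{\bf x}$), we immediately obtain $\theta({\bf v}+{\bf w})=\theta({\bf b}_{\alpha,\beta})$, which equals $\tfrac{\alpha+\beta}{2}$ modulo $\pi$.

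The remaining task is to choose the representatives $\alpha,\beta$ so that $\alpha=\theta({\bf v})$ and $\beta=\theta({\bf w})$. This is a matter of picking $\alpha,\beta$ in a common half-period: since $\theta({\bf v})$ and $\theta({\bf w})$ already lie in $[0,\pi)$, I would take $\alpha=\theta({\bf v}) \in [0,\pi)$ and $\beta=\theta({\bf w}) \in [0,\pi)$, which corresponds to the ``upper half plane'' representatives of the two lines. The sum $\tfrac{\alpha+\beta}{2}$ then lies in $[0,\pi)$, so no further reduction modulo $\pi$ is required, and the formula from the previous paragraph collapses to
\begin{equation*}
\theta({\bf v}+{\bf w}) \;=\; \tfrac{\alpha+\beta}{2} \;=\; \tfrac{\theta({\bf v})+\theta({\bf w})}{2},
\end{equation*}
as claimed. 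In the specific application in Section \ref{sec:kink3main}, where ${\bf v}={\bf y}_i/\|{\bf y}_i\|$ and ${\bf w}={\bf y}_{i+1}/\|{\bf y}_{i+1}\|$ with ${\bf y}_i,{\bf y}_{i+1}\in b^-_r$, both vectors lie in the (closed) lower half plane, and an identical argument applies with representatives taken in $(-\pi,0]$ instead of $[0,\pi)$.

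The only subtlety is the bookkeeping of representatives: replacing ${\bf v}$ by $-{\bf v}$ leaves $\theta({\bf v})$ unchanged but alters the vector sum ${\bf v}+{\bf w}$, so one must verify that the representatives chosen in the second paragraph are genuinely the ones for which the sum-to-product identity produces the unit bisector with the correct orientation. This is the main (and essentially the only) obstacle, and it is handled by working consistently within a single half-plane. In the setup of Section \ref{sec:kink3main} this is automatic, since ${\bf y}_i$ and ${\bf y}_{i+1}$ are both elements of $b^-_r\subset\field{Z}^2\cap\{y\leq -1\}$, so the representatives $\alpha,\beta$ arising from the explicit coordinate vectors and those arising from $\theta(\cdot)$ differ by the same integer multiple of $\pi$, ensuring the argument closes cleanly.
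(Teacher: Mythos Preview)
Your core computation via the sum--to--product identity is essentially the same as the paper's, which uses the equivalent complex--exponential factorization $1+e^{i\phi}=2\cos(\phi/2)\,e^{i\phi/2}$ after rotating the plane so that ${\bf v}=1$.

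You are right to flag the mod--$\pi$ bookkeeping, which the paper glosses over, but your proposed resolution in the second paragraph does not work: for a \emph{given} unit vector ${\bf v}$ the argument $\alpha$ with ${\bf v}=(\cos\alpha,\sin\alpha)$ is determined modulo $2\pi$, and you are not free to set $\alpha=\theta({\bf v})\in[0,\pi)$. Doing so may silently replace ${\bf v}$ by $-{\bf v}$, which changes the sum ${\bf v}+{\bf w}$. In fact the Observation is false as stated for arbitrary ${\bf v},{\bf w}\in S^1$: with ${\bf v}=(1,0)$ and ${\bf w}=(0,-1)$ one has $\theta({\bf v})=0$, $\theta({\bf w})=\pi/2$, average $\pi/4$, yet ${\bf v}+{\bf w}=(1,-1)$ has $\theta({\bf v}+{\bf w})=3\pi/4$.

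What \emph{is} true, and what the paper's application actually needs, is the case where ${\bf v}$ and ${\bf w}$ lie in a common open half--plane. Your third paragraph essentially says this: for ${\bf y}_i,{\bf y}_{i+1}\in b^-_r$ one has $\alpha,\beta\in(-\pi,0)$, hence $\theta({\bf v})=\alpha+\pi$, $\theta({\bf w})=\beta+\pi$, and your sum--to--product identity gives $\theta({\bf v}+{\bf w})=(\alpha+\beta)/2+\pi=(\theta({\bf v})+\theta({\bf w}))/2$, with no further reduction needed since the right--hand side already lies in $(0,\pi)$. So the argument is salvageable, but the second paragraph should be dropped and replaced by this half--plane version; as written it purports to prove a statement that is not true.
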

\begin{proof}
This is simplest if we work in complex arithmetic, that is, we write ${\bf v}=e^{i\psi}$ and ${\bf w}=e^{i\phi}$ for some $\psi,\phi \in [0,2\pi)$.  However, by symmetry we may assume ${\bf v}=1$ (otherwise rotate the plane).  Hence, it suffices to prove
$$\frac{1+e^{i\phi}}{|1+e^{i\phi}|}=e^{i\frac{\phi}{2}}.$$
But this follows from the following simple manipulation:
$$1+e^{i \phi} = e^{i\frac{\phi}{2}}(e^{-i\frac{\phi}{2}}+e^{i\frac{\phi}{2}})=2\cos\left(\frac{\phi}{2}\right)e^{i\frac{\phi}{2}}.$$
\qed
\end{proof}

\vskip 2mm

The following corollary shows that coherence transport-like algorithms, which use the same weights but replace $B_{\epsilon,h}({\bf x})$ with a different set of pixels (a finite union of discrete rectangles) exhibit similar kinking behaviour in the limit $\mu \rightarrow \infty$.  

\vskip 2mm

\begin{corollary} \label{corollary:generalizedCT}
Suppose we inpaint $D = (0,1]^2$ using Algorithm 1 with boundary data $u_0 : \mathcal U \rightarrow \field{R}^d$ and suppose the symmetry assumptions of Section \ref{sec:symmetry} hold as usual.  Assume our stencil $a^*_r$ is of the form
$$a^*_r = \bigcup_{k=1}^K [a_k, b_k ] \times [c_k, d_k] \cap \field{Z}^2$$
where $-\infty < a_k \leq 0 \leq b_k < \infty$, $c_k \leq d_k \leq -1$ for all $k$.  Let
$$\Theta(a^*_r):=\{\theta_1,\theta_2, \ldots,\theta_n\}$$ 
denote the angular spectrum of $a^*_r $, let ${\bf g} =  (\cos\theta,\sin\theta)$, assume we use as stencil weights the weights of M\"arz \eqref{eqn:weight}, and denote by ${\bf g}^*_{\mu,r}$ the limiting transport direction from Theorem \ref{thm:convergence}.  Let ${\bf g}^*_r = \lim_{\mu \rightarrow \infty} {\bf g}^*_{\mu,r}$ and define $\theta^*_r := \theta({\bf g}^*_r)$.  Then
\begin{equation} \label{eqn:stepFunc2}
\theta^*_r = \begin{cases} \frac{\pi}{2} & \mbox{ if } \theta = 0 \\ \theta_i & \mbox{ if } \theta_{i-1,i} < \theta < \theta_{i,i+1} \mbox{ for some } i = 1,\ldots n \\ \frac{\theta_i+\theta_{i+1}}{2} & \mbox{ if } \theta = \theta_{i,i+1} \mbox{ for some } i = 1,\ldots n \end{cases}
\end{equation}
\end{corollary}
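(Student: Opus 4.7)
The plan is to mirror almost verbatim the derivation of \eqref{eqn:stepFunc} for coherence transport in Section \ref{sec:kink3main}, simply replacing the specific stencil $b^-_r$ with the hypothesised union-of-rectangles stencil $a^*_r$, and invoking the general forms of Propositions \ref{prop:correspondence} and \ref{prop:eitherOr}, both of which have been set up precisely for this level of generality. First I would write the limiting transport direction (after rescaling by $W$) as
\[
{\bf g}^*_{r,\mu} \;=\; \sum_{{\bf y}\in a^*_r} \exp\!\left(-\tfrac{\mu^{2}}{2r^{2}}({\bf y}\cdot{\bf g}^{\perp})^{2}\right)\frac{{\bf y}}{\|{\bf y}\|},
\]
set $\Delta := \min_{{\bf y}\in a^*_r}|{\bf y}\cdot{\bf g}^{\perp}|$ and $\Psi := \{{\bf y}\in a^*_r : |{\bf y}\cdot{\bf g}^{\perp}|=\Delta\}$, multiply by $e^{\mu^{2}\Delta^{2}/(2r^{2})}$, and pass to the limit to obtain (up to a positive scalar, which does not affect $\theta^*_r$)
\[
{\bf g}^*_r \;=\; \sum_{{\bf y}\in\Psi}\frac{{\bf y}}{\|{\bf y}\|}.
\]
So everything reduces to identifying $\Psi$, which is exactly what Propositions \ref{prop:correspondence} and \ref{prop:eitherOr} do.

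Assuming first that $0<\theta<\pi$, I would split into the following subcases. (i) If $\theta\in(\theta_{i-1,i},\theta_{i,i+1})$ with $\theta\neq\theta_i$, Proposition \ref{prop:eitherOr} gives $\Psi=\{{\bf y}_i\}$, so by Proposition \ref{prop:correspondence} we have $\theta^*_r=\theta({\bf y}_i)=\theta_i$. (ii) If $\theta=\theta_i$, then every ${\bf y}\in\Psi$ sits on the line $L_{{\bf g}}$ and hence is a scalar multiple of ${\bf y}_i$; since $a^*_r\subseteq\{y\leq -1\}$, all such multiples are \emph{positive}, so $\sum_{{\bf y}\in\Psi}{\bf y}/\|{\bf y}\|$ is a positive multiple of ${\bf y}_i/\|{\bf y}_i\|$, giving $\theta^*_r=\theta_i$ again. (iii) If $\theta=\theta_{i,i+1}$, Proposition \ref{prop:eitherOr} gives $\Psi=\{{\bf y}_i,{\bf y}_{i+1}\}$, so ${\bf g}^*_r={\bf y}_i/\|{\bf y}_i\|+{\bf y}_{i+1}/\|{\bf y}_{i+1}\|$, and Observation \ref{obs:unitVec} (applied to two unit vectors, neither of which is the negative of the other since both have strictly negative $y$-component) yields $\theta^*_r=(\theta_i+\theta_{i+1})/2$.

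It remains to handle $\theta=0$. Here ${\bf g}=e_1$, so $|{\bf y}\cdot{\bf g}^{\perp}|=|y|$, and $\Psi$ consists of all points in $a^*_r$ achieving the (necessarily negative) value $y^*:=\max\{y:(x,y)\in a^*_r\}$. Under the straddling hypothesis $a_k\leq 0\leq b_k$, the top row of each rectangle achieving $y^*$ is symmetric about the $y$-axis, so $\Psi$ is itself symmetric in $x$; therefore the $x$-components of ${\bf y}/\|{\bf y}\|$ cancel in pairs and ${\bf g}^*_r$ is a negative multiple of $e_2$, giving $\theta^*_r=\pi/2$. The only subtle points in the whole argument are (a) the case $\theta=\theta_i$, which is not listed explicitly among the three cases of Proposition \ref{prop:eitherOr} and must be treated by the short parallelism argument above, and (b) the need to invoke the straddling assumption $a_k\leq 0\leq b_k$ in the $\theta=0$ case; both amount to very small symmetry observations rather than genuine obstacles, so I do not anticipate any serious difficulty.
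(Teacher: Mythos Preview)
Your approach is exactly the one the paper intends: its entire proof is the single sentence ``This follows from Proposition \ref{prop:eitherOr} and Observation \ref{obs:unitVec} in exactly the same way as when we showed this for the special case $a^*_r=b^-_r$ in Section \ref{sec:kink3main}.'' Your expansion of the cases $\theta\in(\theta_{i-1,i},\theta_{i,i+1})\setminus\{\theta_i\}$, $\theta=\theta_i$, and $\theta=\theta_{i,i+1}$ is correct and matches the paper's earlier computation for $b^-_r$ line by line.

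There is, however, a genuine gap in your treatment of $\theta=0$. You claim that the straddling hypothesis $a_k\le 0\le b_k$ forces the top row $\{(x,y^*)\in a^*_r\}$ to be symmetric about the $y$-axis, but straddling only says each rectangle meets the $y$-axis, not that $a_k=-b_k$. For instance, $a^*_r=([-1,2]\times\{-1\})\cap\field{Z}^2=\{(-1,-1),(0,-1),(1,-1),(2,-1)\}$ satisfies all the hypotheses, yet at $\theta=0$ one has $\Psi=a^*_r$ and
\[
\sum_{{\bf y}\in\Psi}\frac{{\bf y}}{\|{\bf y}\|}
= \frac{(-1,-1)}{\sqrt2}+(0,-1)+\frac{(1,-1)}{\sqrt2}+\frac{(2,-1)}{\sqrt5},
\]
whose $x$-component is $2/\sqrt5\neq 0$, so $\theta^*_r\neq\pi/2$. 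In the paper's Section \ref{sec:kink3main} computation this step works only because $b^-_r$ happens to be symmetric in $x$; that symmetry is not implied by the hypotheses of Corollary \ref{corollary:generalizedCT}. So either an extra symmetry assumption (e.g.\ $a_k=-b_k$ for all $k$, or more generally $a^*_r$ invariant under $x\mapsto -x$) is needed for the $\theta=0$ clause, or that clause should be dropped. The remaining clauses, which carry the real content of the corollary, are unaffected.
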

\begin{proof}
This follows from Proposition \ref{prop:eitherOr} and observation \ref{obs:unitVec} in exactly the same way as when showed this for the special case $a^*_r = b^-_r$ in Section \ref{sec:kink3main}.
\qed
\end{proof}

\vskip 2mm

We conclude this appendix with a remark on a practical algorithm for computing the angular spectrum $\Theta(A)$ given $A \subseteq \field{Z}^2$ satisfying the hypotheses of Proposition \ref{prop:correspondence}.  This algorithm was used to generate the theoretical limiting curves for $\theta^*_r$ for coherence transport in Section \ref{sec:kink3main}.

\begin{remark} \label{rem:practicalAlg}
Given $A \subseteq \field{Z}^2$ satisfying the hypotheses of Proposition \ref{prop:correspondence}, a simple algorithm for computing the angular spectrum $\Theta(A)$ and singleton minimizers $Y$ is as follows:
\begin{enumerate}
\item Starting with $Y^*=\emptyset$, go through each ${\bf y} \in A$ and find the unique ${\bf y}' \in A$ such that $\theta({\bf y})=\theta({\bf y}')$ and ${\bf y}'$ is of minimal length.  If ${\bf y}'$ is not already in $Y^*$, add it.
\item For each ${\bf y} \in Y^*$, compute $\theta({\bf y})$.  Sort $Y^*$ according to $\theta({\bf y})$.  The sorted list $Y^*$ is now equal to $Y$, and the sorted list of angles is $\Theta(A)$.
\end{enumerate}

\end{remark}

\section{List of Examples} \label{app:exampleList}  In this appendix we describe in detail the examples used in some of the numerical experiments in Section \ref{sec:numerics}.

\vskip 1mm

\noindent{\bf Boundary Data.} Our first objective is to build boundary data satisfying the assumptions listed in Section \ref{sec:continuumLimit1} and illustrated in Figure \ref{fig:setup}.   To that end, we let $\mathcal U = (0,1] \times (-\delta,0]$ as usual and define the sets $\{U_i\}_{i=1}^2$ as
$$U_1 := [0,0.25) \times (-\delta,0] \cup  (0.75,1] \times (-\delta,0] \quad  U_2 := (0.25,0.75) \times (-\delta,0]$$
for some $\delta > (r+2)h$. Next, to test that our bounds in Theorem \ref{thm:convergence} are sharp, what we would like to do is build a family $\mathcal F$ of boundary data $u_0$, parameterized by $0<s$, $0\leq s' \leq s$, such that $u_0 \in W^{\nu,\infty}(U_i)$ for $i=1,2$ iff $\nu \in [0,s]$ and $u_0 \in W^{\nu',\infty}(\mathcal U)$ iff $\nu' \in [0,s']$.  We will almost, but not quite be able to do this.  Specifically, if $s \in \field{N}$, we will instead have $u_0 \in W^{\nu,\infty}(U_i)$ for $i=1,2$ iff $\nu \in [0,s)$.  However, we do not expect this detail to have a serious impact on our convergence rates (indeed, our numerical experiments in Appendix \ref{app:exp3} - with a few possible exceptions - suggest that it doesn't matter).  To accomplish this we set
\begin{equation} \label{eqn:u0}
u_0(x,y) = w_s(x)+H_{s'}(x)
\end{equation}
where $w_s$ is Fourier series
\begin{equation} \label{eqn:weirstrass}
w_s(x) = \sum_{n=1}^{\infty} 2^{-sn} \cos(2^n \pi x),
\end{equation}
which for $0<s \leq 1$ is an example of a nowhere-differentiable {\em Weierstrass function} \cite{Weirstrass}.  The other component $H_{s'}$ is the (possibly smooth) step function
$$H_{s'}(x) := 1- 1\left(x \leq \frac{1}{4}\right)\tanh\left(20\left[\frac{1}{4}-x\right]\right)^{s'} - 1\left(x \geq \frac{3}{4}\right)\tanh\left(20\left[x-\frac{3}{4}\right]\right)^{s'}.$$
See Figure \ref{fig:Kink} for an illustration of $w_s$ and $H_{s'}$ for various values of $s$ and $s'$.  The following Lemma establishes that $u_0$ given by \eqref{eqn:u0} has the claimed regularity properties.

\vskip 1mm

\begin{lemma} \label{lem:boundaryData}
Let $0<s$, $0 \leq s' \leq s$.  Let $u_0:=w_s+H_{s'}$ be defined as in \eqref{eqn:u0} and illustrated in Figure \ref{fig:Kink}.  Then $u_0 \in W^{\nu,\infty}(U_i)$ for $i=1,2$ iff $\nu \in [0,s]$ if $s \notin \field{N}$, and iff $\nu \in [0,s)$ if $s \in \field{N}$.  At the same time, $u_0 \in W^{\nu',\infty}(\mathcal U)$ iff $\nu' \in [0,s']$.
\end{lemma}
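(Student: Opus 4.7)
My plan is to decompose the analysis into the two summands of $u_0$: the Weierstrass-type series $w_s$ and the localized step function $H_{s'}$. The regularity of $u_0$ on any open set will then be controlled by the minimum of the regularities of the two summands, with strict equality when their singular sets are disjoint. The key fact I will exploit is that $H_{s'}$ is $C^{\infty}$ away from $x=1/4$ and $x=3/4$, both of which lie outside $U_1 \cup U_2$, while $w_s$ has exactly the same global regularity on both $\mathcal U$ and each $U_i$ because its lack of smoothness is a dense, scale-invariant phenomenon. Throughout, I will identify $u_0(x,y)$ with $u_0(x)$ since neither $w_s$ nor $H_{s'}$ depends on $y$, so $y$-derivatives vanish and the fractional-Sobolev norms reduce to their one-dimensional counterparts on the $x$-projections (uniformly in $y$ after multiplication by $\sqrt{\delta}$, which is an irrelevant constant).

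For $w_s$, the plan is to invoke the classical theory of lacunary Fourier series. The upper bound $w_s \in W^{s,\infty}(\mathbb{R})$ for $s \notin \mathbb{N}$ follows by differentiating term-by-term $\lfloor s \rfloor$ times and estimating the resulting series $\sum 2^{(\lfloor s \rfloor - s)n}\cos(2^n\pi x+\phi_n)$ using the telescoping trick $\big|\cos(2^n\pi x)-\cos(2^n\pi y)\big| \lesssim \min(2^n|x-y|,1)$, splitting the sum at $n \sim \log_2(1/|x-y|)$ to obtain the H\"older exponent $s-\lfloor s \rfloor$. For $s \in \mathbb{N}$ the same argument gives membership only in $W^{\nu,\infty}$ for every $\nu<s$. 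The sharpness (lower bound) is the classical Hardy-Littlewood result: if $w_s$ were in $W^{\nu,\infty}$ for some $\nu>s$, then by Bernstein's inequality applied to each Littlewood–Paley block the lacunary coefficients $2^{-sn}$ would decay like $2^{-\nu n}$, contradicting $\nu>s$.

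For $H_{s'}$, the plan is a direct local analysis. Away from the two critical points, each of the two $\tanh$ factors is either identically zero (because of the indicator $1(x \le 1/4)$ or $1(x \ge 3/4)$) or smooth and bounded away from zero, so $H_{s'} \in C^{\infty}$ there. In particular $H_{s'}\in C^{\infty}(U_i)$ for $i=1,2$, and the restriction $H_{s'}\big|_{U_i}$ contributes no obstruction whatsoever to any Sobolev exponent. Near $x=1/4^-$, Taylor expanding $\tanh(20 t)=20t+O(t^3)$ gives $H_{s'}(x) = 1-(20[1/4-x])^{s'} + O((1/4-x)^{s'+2})$; thus $H_{s'}$ agrees with the prototype $t \mapsto 1-C t_+^{s'}$ up to a smoother remainder. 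This prototype is in $W^{\nu',\infty}$ near $t=0$ if and only if $\nu' \le s'$, by a direct computation of fractional H\"older quotients on the two sides of the origin (taking $t=0$ and $t=h$ yields a difference of order $h^{s'}$, which is the tight scale). The same analysis applies at $x=3/4$. Hence $H_{s'}\in W^{\nu',\infty}(\mathcal U)$ iff $\nu'\in[0,s']$, and the dependence on $s' = 0$ is handled by recalling our definition $W^{0,\infty}=L^\infty$ in \eqref{eqn:sZero}.

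Finally I assemble the pieces. On each $U_i$, $H_{s'}$ is smooth so $\|u_0\|_{W^{\nu,\infty}(U_i)} \le \|w_s\|_{W^{\nu,\infty}(U_i)}+\|H_{s'}\|_{C^{\lfloor\nu\rfloor+1}(\overline{U_i})}<\infty$ iff $w_s \in W^{\nu,\infty}$, yielding the stated range of admissible $\nu$. On $\mathcal U$, for $\nu' \le s'\le s$, both summands lie in $W^{\nu',\infty}$, so does $u_0$; for $\nu'>s'$, the singular behaviour of $H_{s'}$ at $x=1/4$ is \emph{not} cancelled by $w_s$ (since $w_s$ is continuous at $x=1/4$, so the H\"older-quotient divergence of $H_{s'}$ persists in the sum), and $u_0\notin W^{\nu',\infty}(\mathcal U)$. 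I expect the main obstacle to be the sharpness direction for $w_s$ on $U_i$: one must rule out, for $\nu>s$ very close to $s$, that $w_s$ could accidentally be more regular in $W^{\nu,\infty}(U_i)$ than on all of $\mathbb R$, given the self-similarity of the Weierstrass series; this is settled by the Littlewood–Paley/Bernstein argument above, which is local in nature and therefore applies on any open subset of positive measure such as $U_i$.
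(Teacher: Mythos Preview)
Your proposal is correct and follows the same decomposition strategy as the paper's proof: treat $w_s$ and $H_{s'}$ separately, then assemble. The only notable difference is that where the paper cites \cite{Weirstrass} for the sharp H\"older regularity of $w_s$ when $0<s\le 1$ and leaves $s>1$ as an exercise via the Weierstrass M-test, you supply a self-contained lacunary Littlewood--Paley/Bernstein argument and are more explicit about the non-cancellation step showing the singularity of $H_{s'}$ at $x=1/4$ survives in the sum.
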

\begin{proof}
It suffices to prove that $w_s \in W^{\nu,\infty}(\mathcal U)$ for $i=1,2$ iff $\nu \in [0,s]$ if $s \notin \field{N}$, and iff $\nu \in [0,s)$ if $s \in \field{N}$, while $H_{s'} \in W^{\nu',\infty}$ iff $\nu' \in [0,s']$.  The claims involving $H_{s'}$ follow from the fact that this function is $C^{\infty}$ everywhere except $x=0.25$ and $x=0.75$, along with the identity $\tanh(x) \leq x^{\alpha}$ for all $x \geq 0$ iff $0 \leq \alpha \leq 1$.  The details are left as an exercise.

On the other hand, the regularity of $w_s$ has been studied for $0<s\leq1$ by a number of authors.  In particular, our claim for $0<s<1$ follows from \cite[Theorem 1.31]{Weirstrass}, while for $s=1$ it is an easy corollary of a statement on \cite[Pg. 311]{Weirstrass}.  The case $s>1$ is not typically considered, but it is a simple exercise to show that in this case $w_s \in W^{s,\infty}((0,1])$ whenever $s \notin \field{N}$, and $w_s \in W^{s-\epsilon,\infty}((0,1])$ for all $\epsilon > 0$ if $s \in \field{N}$.  To see this, note that if $s \notin \field{N}$ and $\lfloor s \rfloor = n \in \field{N}$, then the $n$th derivative of the partial sums in \eqref{eqn:weirstrass} converges uniformly by the Weierstrass M-test.  Thus the $n$th derivative of $w_s$ exists - moreover, it is related in a simple way to the Weirstrass function $w_{s-\lfloor s \rfloor}$ (in some cases with sines replacing the cosines, but this makes no difference), and hence $w_s^{(n)} \in W^{s-\lfloor s \rfloor,\infty}((0,1]))$.  The argument for $s \in \field{N}$ is similar.
\qed
\end{proof}

\begin{figure}
\centering
\begin{tabular}{cc}
\subfloat[$H_{s'}(x)$]{\includegraphics[width=.5\linewidth]{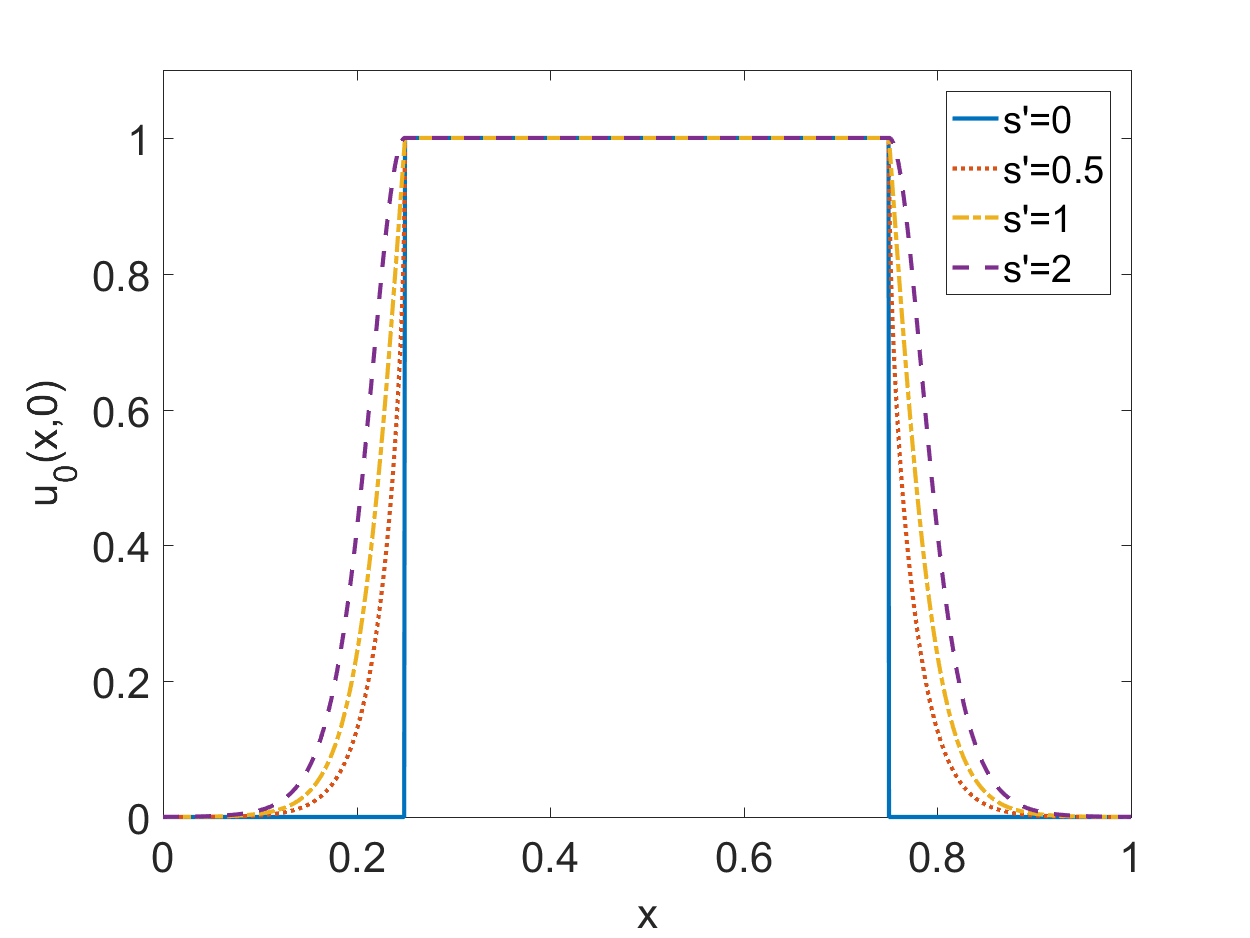}} &
\subfloat[$w_{s}(x)$]{\includegraphics[width=.5\linewidth]{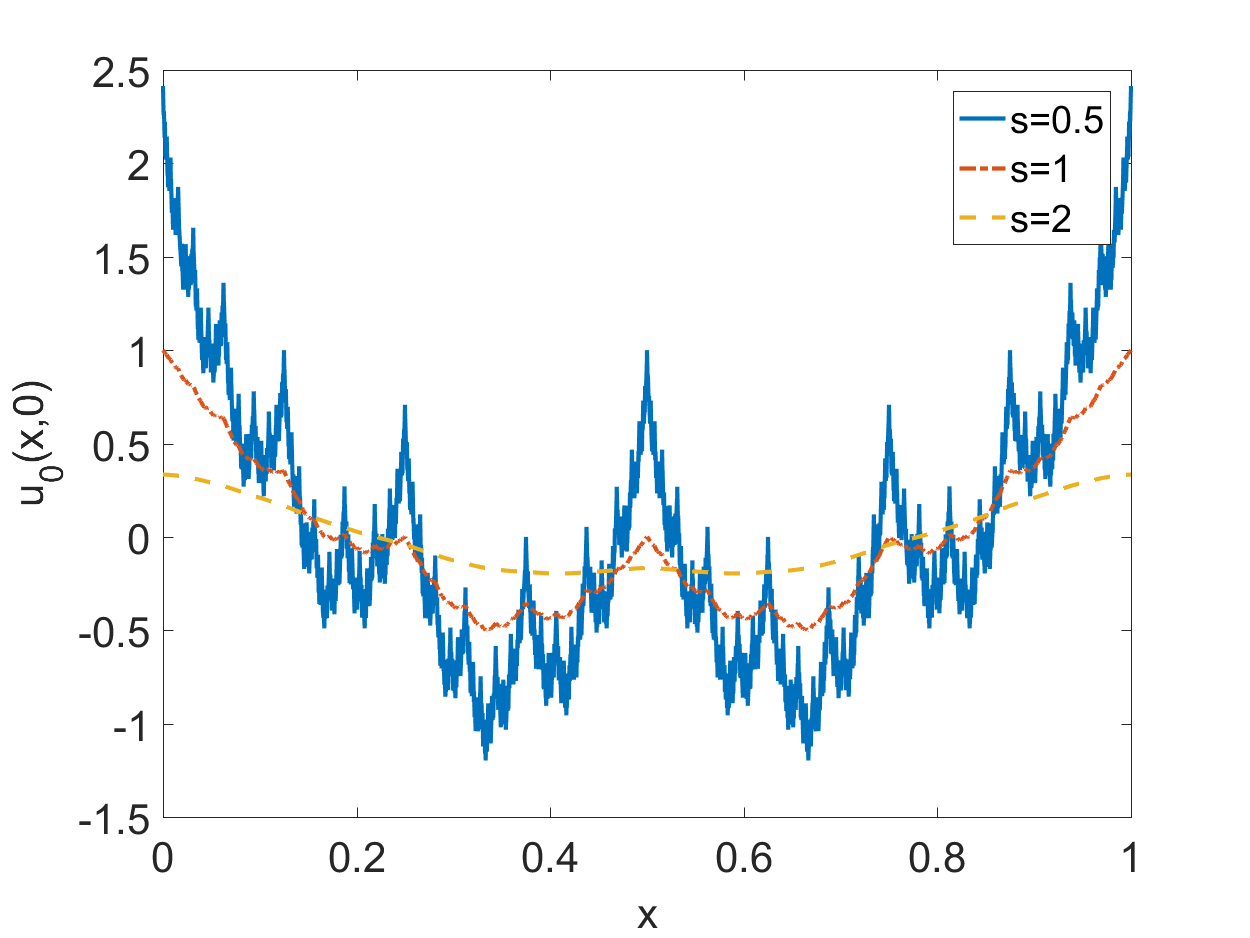}} \\
\end{tabular}
\caption{{\bf Building boundary data with desired smoothness properties:} Our boundary data $u_0(x,y)=w_s(x)+H_{s'}(x)$ is a sum of two components. The first component $H_{s'}$ is smooth everywhere with the exception of two isolated points, while the second component $w_s$ has uniform (but potentially very low) regularity. When $s'=0$, the first component $H_{s'}$ is a step function. When $0<s \leq 1$, the second component $w_s$ is an example of a nowhere differentiable Weierstrass function \cite{Weirstrass}.  See Lemma \ref{lem:boundaryData}.}
\label{fig:Kink}
\end{figure}

\vskip 1mm

\noindent{\bf Neighborhood and weights.} We consider three separate pairings of neighborhoods $A_{\epsilon,h}$ and weights $w_{\epsilon}(\cdot,\cdot)$.

\vskip 1mm

\noindent {\em Example 1: Coherence Transport with constant transport direction}

\vskip 1mm

In this case we choose as our weights $w_{\epsilon}({\bf x},{\bf y})$ M\"arz's weights \eqref{eqn:weight} and neighborhood $\mathcal V_{\epsilon,h}({\bf x})=B_{\epsilon,h}({\bf x})$, that is, we use the same neighborhood and weights as in coherence transport. We fix ${\bf g}=(\cos(20^{\circ}),\sin(20^{\circ}))$ and test two different values of $\mu$, namely $\mu=10$ and $\mu=50$.

\vskip 1mm

\noindent {\em Example 2: Gaussian weights with a box neighbourhood}

\vskip 1mm

In this example for our weights we choose the offset Gaussian
\begin{equation} \label{eqn:offsetG}
w_{\epsilon}({\bf x},{\bf y})= \exp\left(-5\left\|\frac{{\bf x}-{\bf y}}{\epsilon}+\left(\frac{1}{2},\frac{1}{2} \right) \right\|^2 \right).
\end{equation}
and as a neighborhood $A_{\epsilon,h}({\bf x})$ we use the discrete square
$$\mathcal A_{\epsilon,h}({\bf x})={\bf x}+\{-rh,(-r+1)h\ldots (r-1)h,rh)\}^2.$$
\vskip 1mm

\noindent {\em Example 3: Guidefill with a smoothly varying Transport Field}

\vskip 1mm

In this example we consider Guidefill - that is $A_{\epsilon,h}({\bf x}) = \tilde{B}_{\epsilon,h}({\bf x})$ and $w_{\epsilon}(\cdot,\cdot)$ given by \eqref{eqn:weight} - with the spatially varying transport field
\begin{equation} \label{eqn:smoothlyVarying}
{\bf g}(x,y) =\left( \frac{4xy}{1+2y^2},1\right).
\end{equation}
We fix $r=3$ and $\mu=50$. Assuming our results continue to hold in this case, from the discussion in Section \ref{sec:kink3main} for $\mu$ sufficiently large we expect
$${\bf g}^*_r(x,y) = {\bf g}(x,y)$$
to machine precision provided ${\bf g}(x,y)$ never makes an angle shallower than $\theta_c = \arctan(\frac{1}{\sqrt{10}})$ with the x-axis. Indeed, in this case we can easily check that
$$\inf_{(x,y) \in (0,1]^2}\theta({\bf g}(x,y)) = \arctan\left(\frac{5}{4\sqrt{2}}\right) > \theta_c.$$
In this case we know that the characteristics of ${\bf g}(x,y)={\bf g}^*_r(x,y)$ are the level curves of the function $f(x,y) = x/(1+2y^2)$ (See Figure \ref{fig:Characteristics}). This allows us to write the transport operator in this case explicitly as
$$\Pi(x,y) = \left( \frac{x}{1+2y^2},0\right).$$
The continuum limit is then given in terms of our boundary data $u_0$ by $u({\bf x}) = u_0(\Pi({\bf x}))$ as usual.

\begin{figure}
\centering
\includegraphics[width=.5\linewidth]{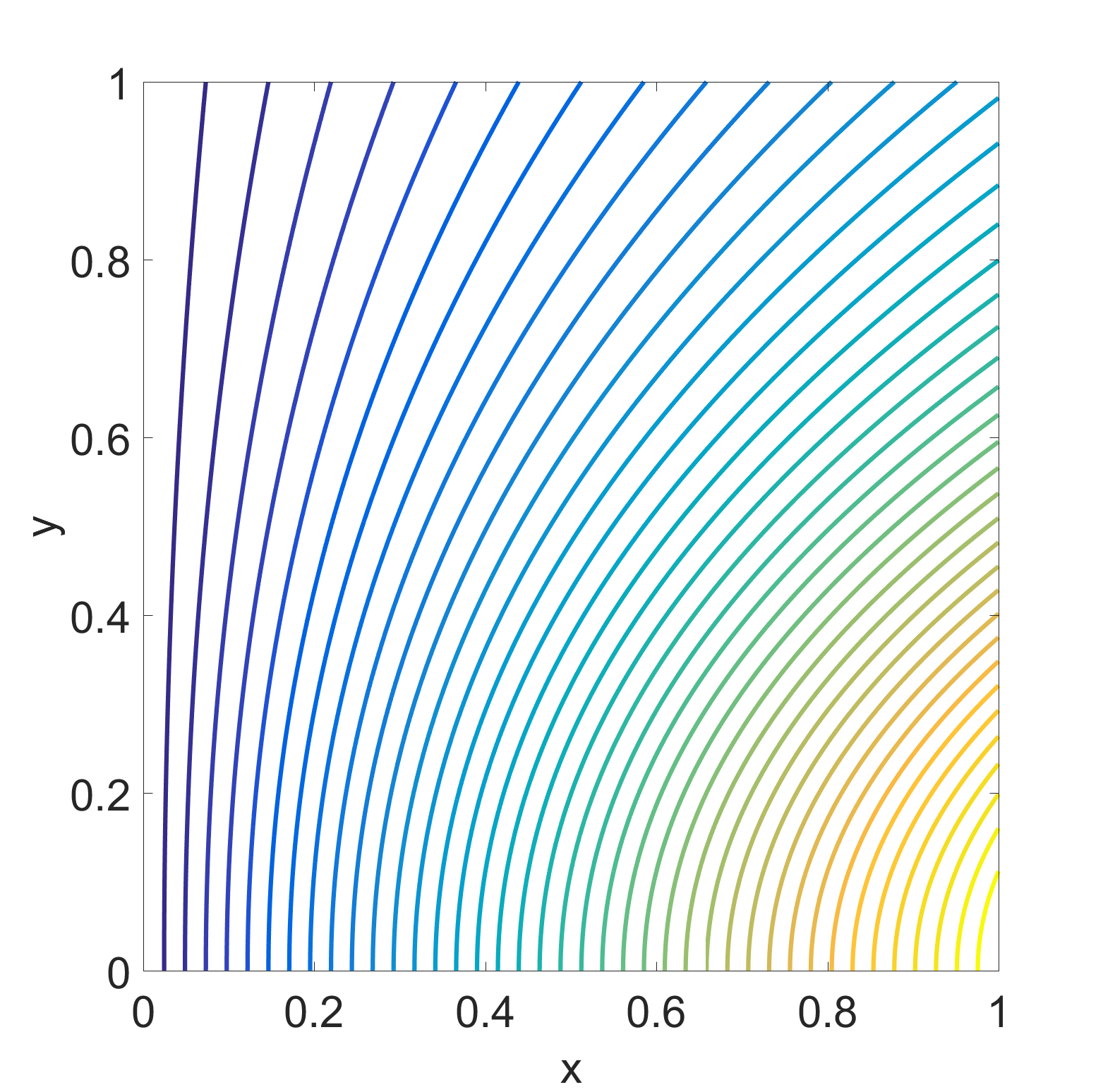}
\caption{{\bf A smoothly varying guideance field:}  Characteristic curves of the smoothly varying transport field ${\bf g}(x,y)$ given by \eqref{eqn:smoothlyVarying}. The transport operator $\Pi({\bf x})$ follows the characteristic passing through ${\bf x}$ backwards to image data located on the boundary of the inpainting domain at $(0,1] \times \{0\}$. }
\label{fig:Characteristics}
\end{figure}

\section{Experiment III : Verifying Theorem \ref{thm:convergence}} \label{app:exp3}

Our final experiment aims to verify the correctness of bounds in Theorem \ref{thm:convergence}, and to check if they are tight.

For each of the examples listed above we first derive the continuum limit $u$ predicted Theorem \ref{thm:convergence}, then compute $u_h$ for a sequence of grids with resolution $h = 2^{-n}$ with $r=3$ fixed. We then look at the ratio
\begin{equation} \label{eqn:Rh}
R_h := \frac{-1}{\log2}\log\left(\frac{\|u-u_{2h}\|_p}{\|u-u_h\|_p} \right).
\end{equation}
Assuming the bounds in Theorem \ref{thm:convergence} are asympototically tight, we expect
\begin{equation} \label{eqn:alpha}
R_h \underset{h \rightarrow 0}{\longrightarrow} \alpha \quad \mbox{ where } \alpha := \frac{s}{2} \wedge \left( \frac{s'}{2}+\frac{1}{2p}\right) \wedge 1
\end{equation}
for stencil weights that are non-degenerate (assign mass to more than one ${\bf y} \in a^*_r$), and
\begin{equation} \label{eqn:alphaDEGENERATE}
\alpha = s \wedge \left( s'+\frac{1}{p}\right) \wedge 1
\end{equation}
otherwise (recall \eqref{eqn:degenerateBound} and the discussion at the start of Section \ref{sec:blur}).

Examples 1 to 3 all use non-degerate stencil weights, with the exception of example 1 with $\mu=50$, where the weights are degenerate to machine precision. We therefore expect convergence rates given by \eqref{eqn:alphaDEGENERATE} in this case and by \eqref{eqn:alpha} in every other case. Although Theorem \ref{thm:convergence} is not technically applicable to Example 3 due to the presence of spatially varying weights, we include this example as a test to see if our bounds continue to hold in this case.  It appears that they do.

Table \ref{table:experiment3} provides a comparison of $R_h$ with the expected convergence rate $\alpha$ for a variety of choices of $s$, $s'$ and $p$, and for neighborhoods and neighborhood weights given by Examples 1 and 3. Results for Example 2 and for additional values of $s$ and $s'$ are analogous but omitted.

Table \ref{table:experiment3} suggests that our bounds are tight {\em asymptotically} as $h \rightarrow 0$, with the resolution required to enter the asymptotic regime apparently dependent on the regularity of $u_0$. In particular, when $u_0$ is very rough (e.g. $s=0.5$, $s'=0$) we need to use a grid with $|\Omega_h|$ greater than a billion before $R_h$ and $\alpha$ agree to a one decimal place. At the opposite extreme, when $u_0$ very smooth (e.g. $s=s'=2$) we enter the asymptotic regime much earlier.  In nearly every case the experimental rate $R_h$, possibly after some initial oscillations, monotically approaches the predicted value $\alpha$ (albeit rather slowly in some cases).  There are, however, two exceptions, namely the case of coherence transport with $\mu = 10$ and ${\bf g} = (\cos20^{\circ},\sin20^{\circ})$ applied to the boundary data \eqref{eqn:u0} parameterized by $s=1$ and $s'=0.5$, with $p \in \{2,\infty\}$.  In these two cases we start above the expected $\alpha$ and then shoot past it.  This may, we concede, have something to do with the exceptional nature of the case $s \in \field{N}$ pointed out in Lemma \ref{lem:boundaryData}.  Or it may be that in these cases we need $h$ to be extremely small before we enter the asymptotic regime.  In any event, the overall message appears to be that the rates from Theorem \ref{thm:convergence} correspond well to experimental rates measured in practice.  Also note that our results do not appear to depend on whether or not we use the constant weights as in Example 1 or the smoothly varying weights from Example 3. This suggests that the hypotheses of Theorem \ref{thm:convergence} can be weakened, which is something we aim to do in the future.

\begin{table}
\caption{Comparison of experimental convergence rate $R_h$ \eqref{eqn:Rh} with the theoretical rate predicted by Theorem \ref{thm:convergence}, for a variety of choices of neighborhoods $A_{\epsilon,h}({\bf x})$, weights $w_{\epsilon}(\cdot,\cdot)$, boundary data $u_0$, and norms $\|\cdot\|_p$. In each case our boundary data is the function \eqref{eqn:u0} parametrized by $s$ and $s'$. For weights we use M\"arz's weights \eqref{eqn:weight}, with either ${\bf g}=(\cos(20^{\circ}),\sin(20^{\circ}))$ fixed or ${\bf g}(x,y)$ given by the smoothly varying transport field \eqref{eqn:smoothlyVarying}. We consider $\mu=10$ and $\mu=50$ - the latter case leads to degenerate stencil weights and a bigger $\alpha$ (notice, for example, $s=s'=0.5$ for $\mu=10$ vs $\mu=50$). As a neighborhood we use either the discrete ball $B_{\epsilon,h}({\bf x})$ used in coherence transport or the rotated ball $\tilde{B}_{\epsilon,h}({\bf x})$ used by guidefill. }
\begin{center}
\begin{tabular}{| c | c | c | c | c | c | c | c | c | c | c | c | c | c |}
\hline
$A_{\epsilon,h}({\bf x})$ & $w(\cdot,\cdot)$ & $ u_0 $ & $\|\cdot \|_p$ & $R_{2^{-9}}$ & $R_{2^{-11}}$ & $R_{2^{-13}}$ & $R_{2^{-15}}$ & $R_{2^{-17}}$ & $\alpha$ \\
\hline
\multirow{24}{*}{$B_{\epsilon,h}({\bf x})$} & \multirow{12}{*}{\parbox{1.1cm}{$\mu=10$, ${\bf g}$ constant}} & \multirow{3}{*}{\parbox{1.1cm}{$s=0.5$\\ $s'=0$}} & $p=1$ & 0.408 & 0.344 & 0.309 & 0.289 & 0.278 & 0.25 \\
\cline{4-10}
& & & $p=2$ & 0.392 & 0.318 & 0.281 & 0.265 & 0.257 & 0.25 \\
\cline{4-10}
& & & $p=\infty$ & 0.126 & 0.07 & 0.036 & 0.019 & 0.009 & 0 \\
\cline{3-10}
& & \multirow{3}{*}{\parbox{1.1cm}{$s=0.5$\\ $s'=0.5$}} & $p=1$ & 0.408 & 0.336 & 0.299 & 0.28 & 0.27 & 0.25 \\
\cline{4-10}
& & & $p=2$ & 0.402 & 0.323 & 0.284 & 0.266 & 0.257 & 0.25 \\
\cline{4-10}
& & & $p=\infty$ & 0.253 & 0.181 & 0.228 & 0.235 & 0.239 & 0.25 \\
\cline{3-10}
& & \multirow{3}{*}{\parbox{1.1cm}{$s=1.0$\\ $s'=0.5$}} & $p=1$ & 0.735 & 0.606 & 0.554 & 0.53 & 0.521 & 0.5 \\
\cline{4-10}
& & & $p=2$ & 0.701 & 0.574 & 0.509 & 0.492 & 0.49 & 0.5 \\
\cline{4-10}
& & & $p=\infty$ & 0.437 & 0.308 & 0.255 & 0.237 & 0.232 & 0.25 \\
\cline{3-10}
& & \multirow{3}{*}{\parbox{1.1cm}{$s=2$\\ $s'=1$}} & $p=1$ & 0.932 & 0.95 & 0.968 & 0.98 & 0.988 & 1 \\
\cline{4-10}
& & & $p=2$ & 0.825 & 0.831 & 0.816 & 0.794 & 0.775 & 0.75 \\
\cline{4-10}
& & & $p=\infty$ & 0.57 & 0.573 & 0.551 & 0.528 & 0.514 & 0.5 \\
\cline{2-10}
& \multirow{12}{*}{\parbox{1.1cm}{$\mu=50$, ${\bf g}$ constant}} & \multirow{3}{*}{\parbox{1.1cm}{$s=0.5$\\ $s'=0.5$}} & $p=1$ & 0.497 & 0.494 & 0.497 & 0.499 & 0.5 & 0.5 \\
\cline{4-10}
& & & $p=2$ & 0.5 & 0.493 & 0.496 & 0.498 & 0.5 & 0.5 \\
\cline{4-10}
& & & $p=\infty$ & 0.477 & 0.472 & 0.494 & 0.494 & 0.499 & 0.5 \\
\cline{3-10}
& & \multirow{3}{*}{\parbox{1.1cm}{$s=1$\\ $s'=0$}} & $p=1$ & 0.92 & 0.937 & 0.947 & 0.954 & 0.96 & 1 \\
\cline{4-10}
& & & $p=2$ & 0.564 & 0.522 & 0.507 & 0.502 & 0.501 & 0.5 \\
\cline{4-10}
& & & $p=\infty$ & 0.072 & 0.021 & 0.005 & 0.001 & 0 & 0 \\
\cline{3-10}
& & \multirow{3}{*}{\parbox{1.1cm}{$s=1$\\ $s'=1$}} & $p=1$ & 0.946 & 0.949 & 0.953 & 0.958 & 0.963 & 1 \\
\cline{4-10}
& & & $p=2$ & 0.94 & 0.946 & 0.952 & 0.958 & 0.962 & 1 \\
\cline{4-10}
& & & $p=\infty$ & 0.882 & 0.864 & 0.907 & 0.916 & 0.928 & 1 \\
\cline{3-10}
& & \multirow{3}{*}{\parbox{1.1cm}{$s=2$\\ $s'=0$}} & $p=1$ & 1.003 & 1.001 & 1 & 1 & 1 & 1 \\
\cline{4-10}
& & & $p=2$ & 0.517 & 0.504 & 0.501 & 0.5 & 0.5 & 0.5 \\
\cline{4-10}
& & & $p=\infty$ & 0.019 & 0.005 & 0.001 & 0 & 0 & 0 \\
\hline
\multirow{12}{*}{$\tilde{B}_{\epsilon,h}({\bf x})$} & \multirow{12}{*}{\parbox{1.1cm}{$\mu=50$, ${\bf g}$ smooth}} & \multirow{3}{*}{\parbox{1.1cm}{$s=0.5$\\ $s'=0$}} & $p=1$ & 0.366 & 0.349 & 0.329 & 0.308 & 0.29 & 0.25 \\
\cline{4-10}
& & & $p=2$ & 0.357 & 0.323 & 0.298 & 0.277 & 0.263 & 0.25 \\
\cline{4-10}
& & & $p=\infty$ & 0.164 & 0.102 & 0.092 & 0.07 & 0.054 & 0 \\
\cline{3-10}
& & \multirow{3}{*}{\parbox{1.1cm}{$s=1$\\ $s'=0$}} & $p=1$ & 0.58 & 0.529 & 0.507 & 0.501 & 0.5 & 0.5 \\
\cline{4-10}
& & & $p=2$ & 0.345 & 0.278 & 0.258 & 0.253 & 0.251 & 0.25 \\
\cline{4-10}
& & & $p=\infty$ & 0.134 & 0.034 & 0.014 & 0.012 & 0.006 & 0 \\
\cline{3-10}
& & \multirow{3}{*}{\parbox{1.1cm}{$s=1$\\ $s'=1$}} & $p=1$ & 0.724 & 0.6 & 0.533 & 0.509 & 0.503 & 0.5 \\
\cline{4-10}
& & & $p=2$ & 0.726 & 0.586 & 0.522 & 0.503 & 0.499 & 0.5 \\
\cline{4-10}
& & & $p=\infty$ & 0.687 & 0.476 & 0.485 & 0.493 & 0.496 & 0.5 \\
\cline{3-10}
& & \multirow{3}{*}{\parbox{1.1cm}{$s=2$\\ $s'=2$}} & $p=1$ & 1.001 & 0.997 & 0.997 & 0.997 & 0.998 & 1 \\
\cline{4-10}
& & & $p=2$ & 0.964 & 0.984 & 0.993 & 0.997 & 0.998 & 1 \\
\cline{4-10}
& & & $p=\infty$ & 0.955 & 0.99 & 0.995 & 0.994 & 0.994 & 1 \\
\cline{3-10}

\hline
\end{tabular}
\end{center}
\label{table:experiment3}
\end{table}

\end{document}